\documentclass[a4paper,11pt]{article}
\usepackage[margin=2.2cm]{geometry}
\usepackage{tikz}
\usetikzlibrary{positioning, arrows.meta}

\usepackage{lineno}  
\usepackage{bm}
\usepackage{amsmath}
\usepackage{amsfonts}
\usepackage{color}
\usepackage{amssymb}
\usepackage{graphicx}
\usepackage{enumerate}
\usepackage{amsthm,amscd}
\usepackage[all]{xy}
\usepackage{authblk}

\allowdisplaybreaks

\usepackage{hyperref}
\usepackage[nameinlink,capitalise]{cleveref}

\hypersetup{colorlinks=true,linkcolor=red,citecolor=blue,urlcolor=blue} 

\newtheorem{theorem}{Theorem}[section]
\newtheorem{construction}[theorem]{Construction}
\newtheorem{corollary}[theorem]{Corollary}
\newtheorem{definition}[theorem]{Definition}
\newtheorem{conjecture}[theorem]{Conjecture}

\newtheorem{lemma}[theorem]{Lemma}

\newtheorem{proposition}[theorem]{Proposition}

\newtheorem{claim}[theorem]{Claim}

\newtheorem{obser}[theorem]{Observation}
\newtheorem{assumption}[theorem]{Assumption}

\newcommand{\m}{\mathcal}

\begin{document}


\title{A sharp product bound for non-trivial cross-intersecting families}

%

\author{
Yang Huang\thanks{Moscow Institute of Physics and Technology; 
E-mail: \url{1060393815@qq.com}.}
}
\date{}
\maketitle
\vspace{-0.5cm}
\begin{abstract}
Two families $\mathcal{A}, \mathcal{B} \subset \binom{[n]}{k}$ are cross-intersecting if $A \cap B \ne \emptyset$ for all
$A \in \mathcal{A}$ and $B \in \mathcal{B}$, and non-trivial
if neither $\m A$ nor $\m B$ is a star. Pyber proved that any two cross-intersecting families 
$\mathcal{A}, \mathcal{B} \subset \binom{[n]}{k}$ satisfy
$|\mathcal{A}||\mathcal{B}| \le \binom{n-1}{k-1}^2$, and the maximum is 
attained by two full stars. 
Frankl, as well as Frankl and Wang, conjectured that the sharp bound, when both families are required to be non-trivial, is $h(n,k)^2$, where $h(n,k) = \binom{n-1}{k-1} - \binom{n-k-1}{k-1} + 1$, the size of
the Hilton--Milner family. The cases $k=3$, and the range $k\ge8$ and $n\ge 4k$, were established earlier
by Frankl and by Frankl and Wang, respectively.

In this paper, we prove their conjecture in the full range. We show that every non-trivial cross-intersecting pair $\mathcal{A}, \mathcal{B} \subset \binom{[n]}{k}$ with $n \ge 2k$ and $k \ge 3$ satisfies $|\mathcal{A}||\mathcal{B}| \le h(n,k)^2$. Moreover, we characterize all extremal pairs. Whereas the corresponding sum problem admits asymmetric and
unbalanced extremizers, the product extremum forces a balanced,
symmetric-or-dual structure: the two families are isomorphic  when
$n>2k$ and complement-dual when $n=2k$.

Independently and contemporaneously with the present work, Frankl and
Wang obtained the same bound for $k\ge8$ and $n\ge2k+1$ by a different
method. Our proof combines a diversity technique with several new properties
of an extended shift operation.
Moreover, we show that the problem behaves differently for different uniformities, exhibiting new extremal configurations. In particular, we disprove a related conjecture proposed by Frankl and Wang. 
\end{abstract}




\section{Introduction}
A $k$-set is a $k$-element set. 
We use the standard notation $[n]$ for $\{1,2,\dots,n\}$ and  ${X\choose k}$ for the collection of all $k$-sets of $X$. 
Let $[a,b]:=\{a, a+1, \dots, b\}$, with the convention that $[a,b]=\emptyset$ if $a>b$.  
A {\it family} is a collection of sets, and it is called $k$-{\it uniform} if every member is a $k$-set. 
A {\it star} is a family in which all members contain a common (fixed) element, called its ${\it center}$, and
a {\it full star} is the family of all $k$-sets containing a fixed element. 
A family is said to be {\it non-trivial} if it is not a star. 
A family $\m F$ is {\it intersecting} if $A\cap B\ne \emptyset$ for all $A, B \in \m F$. 
Let us recall two classical results in extremal combinatorics. 
 
\begin{theorem}[Erd\H{o}s--Ko--Rado \cite{EKR1961}]
Let $n\ge 2k\ge 4$,  and $\mathcal{F}\subset {[n] \choose k}$ be an intersecting family. Then
$$  |\mathcal{F}| \le {n-1 \choose k-1}.$$
 Moreover, when $n>2k$, equality holds if and only if $\mathcal{F}$ is a full star.
\end{theorem}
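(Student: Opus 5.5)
The plan is Katona's cyclic permutation argument, followed by a separate analysis of the equality case when $n>2k$.

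\emph{Step 1 (counting).} Fix a cyclic ordering $\sigma$ of $[n]$. An \emph{arc} is a $k$-set of $k$ consecutive elements, so there are $n$ arcs. The key lemma is the following: if $n\ge 2k$, an intersecting family contains at most $k$ arcs.

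To prove it, suppose some arc $A=\{a_1,\dots,a_k\}$ lies in $\mathcal F$. Every other arc meeting $A$ either starts at some $a_{i+1}$ or ends at some $a_i$, for $1\le i\le k-1$. This gives $2(k-1)$ candidates, grouped into $k-1$ pairs. The two arcs in a pair are disjoint, because $n\ge 2k$, so each pair contributes at most one arc to $\mathcal F$. Together with $A$ itself, this gives at most $k$ arcs.

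Now double count pairs $(\sigma,F)$ with $F\in\mathcal F$ an arc of $\sigma$. There are $(n-1)!$ cyclic orders. Each $k$-set is an arc in exactly $k!(n-k)!$ of them. Hence
\[
|\mathcal F|\,k!(n-k)!\le k(n-1)!,
\]
which gives $|\mathcal F|\le\binom{n-1}{k-1}$.

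\emph{Step 2 (equality, $n>2k$).} Equality forces every cyclic order to contain exactly $k$ arcs of $\mathcal F$.

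First I show that these $k$ arcs share a common element. Take an arc $A\in\mathcal F$. When $n>2k$, the arc starting at $a_{i+1}$ and the arc ending at $a_i$ are still disjoint. One checks further that if the arc ending at $a_i$ is chosen, then so is the arc ending at $a_{i-1}$; otherwise some chosen arc starting later would be disjoint from it, using the extra slack $n>2k$. So the chosen arcs are $k$ consecutive arcs, and these share exactly one common point, the \emph{centre}.

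Next I show that the centre does not depend on $\sigma$. Any two cyclic orders are linked by a sequence of adjacent transpositions. Swapping two neighbouring elements changes only a few arcs. Since $k$ arcs must survive each swap, the centre is forced to be preserved.

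It follows that every $F\in\mathcal F$ contains a fixed element $x$. Thus $\mathcal F$ is contained in the full star at $x$, and equality of sizes gives that $\mathcal F$ is the full star.

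\emph{Main obstacle.} I expect the invariance of the centre under transpositions, in the equality part, to be the most delicate step. As an alternative I would try Hilton--Milner-type shifting. However, the cyclic argument is the classical route, and the extra gap $n\ge 2k+1$ is exactly what rules out the alternating configurations that occur when $n=2k$.
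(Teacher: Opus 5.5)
The paper gives no proof of this statement. It quotes it as a classical result, so your proposal has to stand on its own.

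Step~1 is Katona's cycle argument and is correct.

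In the first half of Step~2 your monotonicity claim is stated backwards. Suppose the arc ending at $a_i$ is in $\mathcal F$ and the arc ending at $a_{i-1}$ is not. Then the arc starting at $a_i$ is in $\mathcal F$, and it meets the former in $a_i$, so nothing is contradicted. The correct implication is ``arc ending at $a_i$ chosen $\Rightarrow$ arc ending at $a_{i+1}$ chosen''. Otherwise the arc starting at $a_{i+2}$ is chosen, and for $n\ge 2k+1$ it is disjoint from the arc ending at $a_i$. With this fix you do get that the arcs of $\sigma$ lying in $\mathcal F$ are exactly the $k$ arcs through one point $c(\sigma)$.

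\textbf{The genuine gap is the invariance of $c(\sigma)$.} Your justification that ``$k$ arcs must survive each swap'' is false.

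\emph{Swaps involving the centre.} If you swap the centre $x$ with a neighbour $v$, the arc having $x$ as an endpoint and avoiding $v$ is destroyed. The $k-1$ survivors are exactly the arcs of the new order $\sigma'$ containing both $x$ and $v$. This is compatible with $c(\sigma')=x$ and equally with $c(\sigma')=v$. Concretely, take $n=7$, $k=3$, $\sigma=(1,2,\dots,7)$ with centre $4$, and swap $4,5$. Declaring the $\mathcal F$-arcs of $\sigma'$ to be $\{2,3,5\},\{3,4,5\},\{4,5,6\}$ contradicts nothing that $\sigma$ tells you. So no local argument can handle such swaps.

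\emph{Swaps not involving the centre.} Even here one arc through $x$ may be destroyed. Having $k-1$ arcs in common still permits a centre adjacent to $x$, so an extra argument is needed.

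\textbf{A repair.} Fix $x=c(\sigma_0)$ and use only adjacent swaps not involving $x$. These suffice, because cyclic orders correspond to linear orders of $[n]\setminus\{x\}$ read from $x$, and adjacent transpositions generate all of those.
\begin{itemize}
\item For a swap at positions $p,p+1$ avoiding $x$, at most one arc through $x$ changes. This is because $[p-k+1,p-1]$ and $[p+2,p+k]$ are disjoint for $n\ge 2k$.
\item Suppose $c(\sigma')=y\ne x$. Then all $k-1$ surviving arcs through $x$ contain $y$. Hence $y$ is a neighbour of $x$, and the destroyed arc is the unique arc through $x$ missing $y$.
\item A direct check, using $n\ge 2k+1$, shows that the unique arc of $\sigma'$ through $y$ avoiding $x$ is then untouched by the swap. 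It is therefore an arc of $\sigma$ that misses $x$ and lies in $\mathcal F$, a contradiction.
\end{itemize}
Hence $c$ is constant, and every member of $\mathcal F$ contains $x$.

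Alternatively, for $k\ge 3$ uniqueness follows at once from the Hilton--Milner theorem stated next in the paper, since $h(n,k)<\binom{n-1}{k-1}$ for $n>2k$; $k=2$ is trivial. That route, however, uses a much heavier tool.
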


Let us introduce the {\it Hilton--Milner family}, which is a non-trivial intersecting family.  
\[ \begin{matrix}
\m {HM}(n,k):=
\Bigl\{G\in {[n] \choose k}: 
1\in G, G\cap [2,k+1] \neq \emptyset \Bigr\}
\cup \Bigl\{ [2,k+1]  \Bigr\}.
\end{matrix} \]
Denote 
\[
h(n,k):=|\m {HM}(n,k)|={n-1 \choose k-1} -{n-k-1 \choose k-1} +1.
\]

\begin{theorem}[Hilton--Milner \cite{HM1967}]\label{hm-nontrivial}
Let $n\ge 2k\ge 6$, and $\mathcal{F}\subset {[n] \choose k}$ be 
an intersecting  family.
 If  $\mathcal{F}$  is non-trivial,  then
$$ |\mathcal{F}| \le {n-1 \choose k-1} -{n-k-1 \choose k-1} +1.$$
When $n>2k$, equality holds if and only if either $\mathcal F$ is isomorphic to $\mathcal{HM}(n,k)$, or $k=3$ and $\mathcal F$ is isomorphic to 
\[\m T(n):=\left\{F\in\binom{[n]}3:|F\cap[3]|\ge2\right\}.\]
\end{theorem}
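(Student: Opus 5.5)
We would prove the Hilton--Milner theorem by the classical shifting argument combined with a Kruskal--Katona/Frankl-type counting. The first step is to reduce to shifted families. For $1\le i<j\le n$ we apply the shift $S_{ij}$. Each shift preserves the size of $\mathcal F$ and preserves the intersecting property. It can, however, destroy non-triviality, so we use the standard device: we apply only those shifts that keep $\mathcal F$ non-trivial. If some shift $S_{ij}$ would make the family a star, it must be a star with centre $i$. In that case every member of the current family meets $\{i,j\}$, and we handle this configuration directly. Otherwise, after finitely many steps we reach a non-trivial intersecting family that is shifted, meaning stable under all $S_{ij}$ with $i<j$.

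\textbf{The ``every set meets $\{i,j\}$'' case.} Rename so that $\{i,j\}=\{1,2\}$. Split $\mathcal F$ into three parts:
\[
\mathcal F_{1\bar 2}=\{F\in\mathcal F:1\in F,\,2\notin F\},\qquad
\mathcal F_{\bar 1 2}=\{F\in\mathcal F:2\in F,\,1\notin F\},\qquad
\mathcal F_{12}=\{F\in\mathcal F:\{1,2\}\subset F\}.
\]
The traces $\{F\setminus\{1\}\}$ and $\{F\setminus\{2\}\}$ coming from the first two parts form a cross-intersecting pair of $(k-1)$-subsets of $[3,n]$, and both are non-empty by non-triviality. We then bound
\[
|\mathcal F|\le \binom{n-2}{k-2}+|\mathcal F_{1\bar 2}|+|\mathcal F_{\bar 1 2}|.
\]
To control the last two terms we use the known sum bound for non-empty cross-intersecting families $\mathcal A\subset\binom{[m]}{a}$, $\mathcal B\subset\binom{[m]}{b}$, namely $|\mathcal A|+|\mathcal B|\le \binom{m}{b}-\binom{m-a}{b}+1$. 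This bound is itself provable by shifting and induction on $m$, or via Kruskal--Katona. Taking $m=n-2$ and $a=b=k-1$, the total is exactly $\binom{n-1}{k-1}-\binom{n-k-1}{k-1}+1$, as the identity $\binom{n-2}{k-2}+\binom{n-2}{k-1}=\binom{n-1}{k-1}$ and the matching subtracted term confirm.

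\textbf{The shifted case.} If $\mathcal F$ is shifted and intersecting, then every member meets $[2k-1]$; indeed, shifted intersecting families satisfy $|F\cap G\cap[2k-1]|\ge1$. Non-triviality together with shiftedness forces $[2,k+1]\in\mathcal F$. Every member avoiding $1$ must still meet every member containing $1$. The key step is to show that the sets not containing $1$ are few and that the sets containing $1$ must meet each of them. Here we use Frankl's diversity-type inequality: write $\mathcal F(\bar1)$ for the family of sets not containing $1$, and $\mathcal F(1)$ for the sets containing $1$ with $1$ removed. Then $\mathcal F(1)\subset\binom{[2,n]}{k-1}$ and $\mathcal F(\bar 1)\subset\binom{[2,n]}{k}$ are cross-intersecting, with $\mathcal F(\bar1)\ne\emptyset$. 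Applying the same sum bound with $m=n-1$, $a=k$, $b=k-1$ gives
\[
|\mathcal F(1)|+|\mathcal F(\bar1)|\le\binom{n-1}{k-1}-\binom{n-1-k}{k-1}+1,
\]
which is exactly $h(n,k)$. In fact this step alone proves the bound whenever $\mathcal F$ has a non-trivial structure relative to element $1$, so the earlier reduction mainly ensures that $\mathcal F(\bar1)\neq\emptyset$ can be arranged with $1$ the maximum-degree element.

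\textbf{Main obstacle: the equality characterization.} Beyond the bound, the hard part is showing that equality when $n>2k$ forces $\mathcal{HM}(n,k)$, or $\mathcal T(n)$ when $k=3$. Shifting does not preserve isomorphism type, so we must track equality backwards through the shifts. We plan to first determine the equality cases in the cross-intersecting sum bound: either $|\mathcal F(\bar1)|=1$, or $k-1=2$ with $\mathcal F(\bar1)$ consisting of the $k$-sets containing a fixed $2$-set. Then we argue that if $S_{ij}(\mathcal F)$ is isomorphic to $\mathcal{HM}$ or $\mathcal T$, then so is $\mathcal F$, by checking which preimages are intersecting. The tightness analysis of the sum bound at $n>2k$, where the inequality is strict unless $\mathcal F(\bar1)$ is a single set (or the $k=3$ triangle structure appears), is where we expect most of the work.
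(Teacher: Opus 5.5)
\paragraph{Preliminary note.} The paper does not prove this statement. It quotes the Hilton--Milner theorem and uses it as a black box, in Lemma~\ref{1-21-1} and in the proof of Theorem~\ref{thm:equality}. Your proposal therefore has to stand on its own.

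\paragraph{The case where every member meets $\{i,j\}$.} This part is fine. It is exactly Proposition~\ref{1-1-1} with $\mathcal A=\mathcal B=\mathcal F$. Theorem~\ref{12-3-2} with $m=n-2$ and $a=b=k-1$ applies because $n-2\ge 2k-2$.

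\paragraph{The shifted case has a genuine gap.} You apply the sum bound to $\mathcal F(\bar 1)\subset\binom{[2,n]}{k}$ and $\mathcal F(1)\subset\binom{[2,n]}{k-1}$, placing the $k$-uniform family in the single-set role. That is $a=k>b=k-1$. Theorem~\ref{12-3-2} requires $a\le b$: the single set of the extremal pair must lie in the family of smaller uniformity. Applied correctly, it only gives $|\mathcal F(1)|+|\mathcal F(\bar 1)|\le \binom{n-1}{k}-\binom{n-k}{k}+1$.

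The inequality you actually want is false for cross-intersecting pairs alone. Take $T\in\binom{[2,n]}{k-1}$, $\mathcal F(1)=\{T\}$ and $\mathcal F(\bar1)=\{G\in\binom{[2,n]}{k}:G\cap T\ne\emptyset\}$. This pair is cross-intersecting with total size $1+\binom{n-1}{k}-\binom{n-k}{k}$; for $n=7$, $k=3$ this is $17>13=h(7,3)$.

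Your remark that this step proves the bound whenever $\mathcal F(\bar1)\neq\emptyset$ is also refuted by the full star centred at $2$. There $\mathcal F(\bar 1)$ is nonempty and even intersecting, and $(\mathcal F(1),\mathcal F(\bar1))$ is cross-intersecting. Yet $|\mathcal F|=\binom{n-1}{k-1}>h(n,k)$ for $n>2k$. So non-triviality and the intersecting property of $\mathcal F(\bar1)$ must enter substantively. Neither appears among the hypotheses of the bound you invoke, and the maximum-degree condition you mention at the end is never turned into an inequality.

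\paragraph{What is missing.} You need an argument that genuinely exploits the extra structure of a shifted non-trivial intersecting family. Two facts are available:
\begin{itemize}
\item $\partial\mathcal F(\bar1)\subset\mathcal F(1)$, since $(G\setminus\{x\})\cup\{1\}\in\mathcal F$ for every $G\in\mathcal F(\bar 1)$ and $x\in G$.
\item $|G\cap G'|\ge 2$ for all $G,G'\in\mathcal F(\bar1)$.
\end{itemize}
You must then show that $|\mathcal F(1)|+|\mathcal F(\bar1)|\le h(n,k)$ over the whole admissible range of $|\mathcal F(\bar 1)|$. One route is a Kruskal--Katona estimate on the shadow of $\{[2,n]\setminus G: G\in\mathcal F(\bar1)\}$, together with a separate treatment of large $\mathcal F(\bar 1)$. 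This is the real content of the Hilton--Milner theorem, and it is absent from the proposal.

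\paragraph{The equality characterization.} This part is only a plan. In the shifted case it rests on the equality cases of the same invalid inequality. The claim that $S_{ij}(\mathcal F)\cong\mathcal{HM}(n,k)$ or $\mathcal T(n)$ forces $\mathcal F$ to have the same type is stated but not proved.
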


These two theorems have inspired a vast body of work, 
including numerous reproofs, refinements, and extensions.
 To the best of the author's knowledge, Hilton and Milner \cite{HM1967} were the first to extend the intersection problem to the cross‑intersection setting. 
Two families $\m A, \m B$ are {\it cross-intersecting} if $A\cap B\ne \emptyset$ for all $A\in \m A, B \in \m B$. In addition, if neither $\m A$ nor $\m B$ is a star, then we say that $\m A, \m B$ are {\it non-trivial cross-intersecting}.

Pyber \cite{Pyber1986} showed that the maximum of 
$|\mathcal{A}||\mathcal{B}|$ over cross-intersecting families 
$\mathcal{A},\mathcal{B}\subset\binom{[n]}{k}$ 
is attained when both are full stars.
\begin{theorem}[Pyber \cite{Pyber1986}]\label{3-17-1}
Let  $ n \ge 2k \ge 4 $ and $ \mathcal{A}, \m B \subset \binom{[n]}{k} $ be cross-intersecting. Then
\[
|\mathcal{A}| |\mathcal{B}| \leq \binom{n-1}{k-1}^2.
\]
For $ n > 2k $, equality holds if and only if $ \mathcal{A} = \mathcal{B} = \left\{ S \in \binom{[n]}{k} : i \in S \right\} $ for some $i\in [n]$.
\end{theorem}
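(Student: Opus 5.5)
We prove Pyber's bound by the standard combination of shifting and the Kruskal--Katona theorem in Lovász's form; the characterization of equality for $n>2k$ comes from tracing the equality cases back through the shifts.

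\textbf{Step 1 (shifting).} For $i<j$, the shift $S_{ij}$ replaces $j$ by $i$ in a set $F\in\mathcal F$ whenever $j\in F$, $i\notin F$, and the new set is not already in $\mathcal F$. Applying the same $S_{ij}$ to $\mathcal A$ and to $\mathcal B$ preserves both sizes, and it preserves cross-intersection by the usual argument. So we may assume that both $\mathcal A$ and $\mathcal B$ are shifted.

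\textbf{Step 2 (complements).} Set $\overline{\mathcal B}=\{[n]\setminus B: B\in\mathcal B\}\subset\binom{[n]}{n-k}$. Cross-intersection says that no $A\in\mathcal A$ is contained in any member of $\overline{\mathcal B}$. Equivalently, the $k$-shadow $\partial_k\overline{\mathcal B}$ is disjoint from $\mathcal A$, so
\[
|\mathcal A|+|\partial_k \overline{\mathcal B}|\le\binom nk .
\]

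\textbf{Step 3 (Lovász's Kruskal--Katona).} Write $|\mathcal B|=\binom{x}{n-k}$ with real $x\ge n-k$. Then $|\partial_k\overline{\mathcal B}|\ge\binom{x}{k}$, and hence $|\mathcal A|\le\binom nk-\binom xk$. Swapping the roles of the families is symmetric. Without loss of generality assume $|\mathcal B|\ge|\mathcal A|$.

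\textbf{Step 4 (optimization).} It remains to maximize the one-variable function
\[
f(x)=\binom{x}{n-k}\left(\binom nk-\binom xk\right)
\]
over the relevant range, subject to $|\mathcal A|\le|\mathcal B|$. The point $x=n-1$ gives $|\mathcal B|=\binom{n-1}{k-1}$ and $|\mathcal A|=\binom{n-1}{k-1}$, and there $f=\binom{n-1}{k-1}^2$.

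We must show that this value is the maximum. We expect this to be the main obstacle, because $f$ is not obviously unimodal. The plan is to compare the logarithmic derivatives of $\binom{x}{n-k}$ and of $\binom nk-\binom xk$ in the region where $|\mathcal B|\ge|\mathcal A|$. If the real-variable analysis is messy, fall back on the integer route. For shifted families, split by whether every member of $\mathcal A$ and of $\mathcal B$ contains $1$. If both do, the bound is immediate. Otherwise, use the shifted structure: $\mathcal A(\bar 1)$ and $\mathcal B(1)$ are cross-intersecting on $[2,n]$, and so on. Then induct on $n$, using the bound at $(n-1,k)$ and $(n-1,k-1)$ together with $2\sqrt{ab}\le a+b$, in the style of Pyber's original argument.

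\textbf{Step 5 (equality, $n>2k$).} Equality forces equality in Lovász's bound, so $x$ is an integer and $\overline{\mathcal B}$ is a full $\binom{[x]}{n-k}$-type family. With $x=n-1$ this means $\mathcal B$ is a full star, and then $\mathcal A$ is contained in the same star, and must equal it.

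Finally, we undo the shifts. If $S_{ij}$ turns a non-star pair into a pair of full stars of maximal size, a short check shows that the pre-shift pair was already a pair of identical full stars when $n>2k$. The reason is that if some member misses the center, then for $n>2k$ the other family loses members. This yields $\mathcal A=\mathcal B$ is a full star.
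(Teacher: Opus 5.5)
There is a genuine gap, and it is the step you flagged yourself. Step~4 is not merely messy: the inequality it needs is false, so the Lovász form of Kruskal--Katona cannot yield Pyber's bound.

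Put $N=\binom{n-1}{k-1}$. Differentiating,
\[
\frac{d}{dx}\log f(x)\Big|_{x=n-1}=\sum_{j=k}^{n-1}\frac1j-\frac{n-k}{k}\sum_{j=n-k}^{n-1}\frac1j .
\]
This equals $n-k$ times the average of $1/j$ over $[k,n-1]$ minus the average over $[n-k,n-1]$. It vanishes for $n=2k$; there your argument reduces to AM--GM and does work. For every $n>2k$, however, it is strictly positive. Hence $f(x)>N^2$ for $x$ slightly above $n-1$. At such $x$ we also have $\binom{x}{n-k}>N>\binom nk-\binom xk$, so your side condition $|\mathcal A|\le|\mathcal B|$ holds and does not help.

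Integrality does not rescue the approach either. Take $k=2$, $n=20$, $|\mathcal B|=22$. Lovász gives $x\approx19.06$ and $|\partial_2\overline{\mathcal B}|\ge\binom{x}{2}\approx172.07$, i.e.\ only $|\mathcal A|\le17$. Then $22\cdot17=374>361=N^2$. By contrast, the exact cascade $22=\binom{19}{18}+\binom{17}{17}+\binom{16}{16}+\binom{15}{15}$ gives $|\partial_2\overline{\mathcal B}|\ge\binom{19}{2}+\binom{17}{1}+\binom{16}{0}=189$, i.e.\ $|\mathcal A|\le 1$.

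The missing ingredient is this discrete jump, which the continuous bound smooths away. You need the exact Kruskal--Katona theorem, or equivalently Hilton's theorem (Theorem~\ref{12-1-1}): the L-initial families of sizes $|\mathcal A|,|\mathcal B|$ remain cross-intersecting. With it, $|\mathcal B|\ge N+\binom{n-1-i}{k-i}$ forces $|\mathcal A|\le N-\binom{n-1-i}{k-1}$ (Proposition~\ref{lowerupper}(ii)). For example, $|\mathcal B|=N+1$ already forces $|\mathcal A|\le N-\binom{n-k-1}{k-1}$. You would still have to optimise the product over these ranges.

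Your fallback does not supply a substitute; ``and so on'' is not an argument. Among the sections, only $(\mathcal A(\bar1),\mathcal B(\bar1))$ and the mixed pairs $(\mathcal A(1),\mathcal B(\bar1))$, $(\mathcal A(\bar1),\mathcal B(1))$ are cross-intersecting; the mixed pairs have uniformities $k-1$ and $k$.
\begin{itemize}
\item $\mathcal A(1)$ and $\mathcal B(1)$ need not cross-intersect (take two full stars), so an $(n-1,k-1)$ induction hypothesis has nothing to act on.
\item The mixed pairs need the non-uniform theorem, which is itself Pyber/Matsumoto--Tokushige.
\item The $(n-1,k)$ case needs $n-1\ge 2k$, which fails at $n=2k$.
\item $|\mathcal A(1)||\mathcal B(1)|$ alone may equal $N^2$. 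So the cross terms must be paid for by a sum-type trade-off between $|\mathcal A(1)|$ and $|\mathcal B(\bar1)|$, which $2\sqrt{ab}\le a+b$ does not provide.
\end{itemize}

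Step~5 inherits the gap. Your chain of inequalities is not tight at the maximum, so equality in the product does not force equality in Lovász's inequality. Note that the paper itself gives no proof of this theorem; it cites Pyber and the short proof of Frankl and Kupavskii.
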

Frankl and Kupavskii \cite{3-13-2} gave a short proof of Theorem \ref{3-17-1}. 
Pyber \cite{Pyber1986} also showed that 
$|\mathcal{A}||\mathcal{B}|\le \binom{n-1}{a-1}\binom{n-1}{b-1}$ 
for all cross-intersecting families 
$\mathcal{A}\subset \binom{[n]}{a}$, 
$\mathcal{B}\subset \binom{[n]}{b}$ 
with $n\ge 2a+b-2$ and $a\ge b$.
Matsumoto and Tokushige \cite{MT1989} then proved that Pyber's result holds for all $n\ge 2a, a \ge b$.  

Frankl \cite{1-25-1} proposed the non‑trivial cross‑intersection problem, and established the best possible bound $|\m A|+|\m B|\le {n\choose k}-2{n-k\choose k}+{n-2k\choose k}+2$ for non-trivial cross-intersecting families $\m A, \m B \subset {[n]\choose k}$. 
Determining the maximum of $|\mathcal{A}| |\mathcal{B}|$ for non-trivial cross-intersecting families seems to be a harder task, and it had remained open in the full range $n\ge 2k \ge 6$. As a first step in this direction, Frankl and Wang \cite{11-6-1} proved that if $k\ge 8$, $ n\ge 4k$, and $\mathcal{A} , \mathcal{B} \subset {[n] \choose k}$ are non-trivial cross-intersecting, then $|\mathcal{A}| |\mathcal{B}| \le h(n,k)^2$. 
Furthermore, they remarked that `the most intriguing problem is whether this bound holds in the full range'. The same problem was independently raised by Frankl \cite{1-25-1}. More recently, Frankl \cite{11-6-2} confirmed this bound 
in the case $k=3$. 

In this paper, we completely solve this problem by determining the maximum of $|\mathcal{A}| |\mathcal{B}|$ for all $n\ge 2k\ge 6$, and establishing all the extremal pairs. Let us state our main results.  
\begin{theorem}\label{thm:main}
Let $k \ge 3$ and $n\ge 2k$. If $\mathcal{A}, \mathcal{B} \subset {[n]\choose k}$ are non-trivial cross-intersecting, then  
\[
|\mathcal{A}||\mathcal{B}|\le h(n, k)^2. 
\]
\end{theorem}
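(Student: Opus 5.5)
We plan to argue by shifting plus a diversity analysis. First, we may assume both families are maximal: enlarging either family while keeping cross-intersection only increases the product, and non-triviality is preserved. Hence $\m B=\{B\in\binom{[n]}{k}: B\cap A\neq\emptyset\ \forall A\in\m A\}$ and symmetrically. Next we apply the shifts $S_{ij}$ ($i<j$). Ordinary shifting preserves cross-intersection and sizes but may destroy non-triviality. Following the approach of Frankl and of Frankl--Wang, we shift only as long as both families remain non-trivial. If some $S_{ij}$ would make one family a star (necessarily with center $i$), we stop and record the structure at that moment. In that case every member of the family avoiding $i$ contains $j$, and at least one such member exists. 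We will also use an extended shift operation, which replaces the pair $(i,j)$ by a pair of sets. This lets us reach a configuration that is shifted ``up to one exceptional element'' rather than fully shifted.

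The core is a diversity argument. For a family $\m F$ let $\gamma(\m F)=\min_i|\{F\in\m F: i\notin F\}|$ be its diversity, so non-triviality means $\gamma\ge1$. Choose the centers $a$ of $\m A$ and $b$ of $\m B$ attaining the diversities. We split into two cases.

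In the case $a=b=1$, write $\m A(\bar1)$ and $\m B(\bar1)$ for the sets avoiding $1$, and $\m A(1)$, $\m B(1)$ for the $(k-1)$-sets obtained by deleting $1$. Cross-intersection gives:
\begin{itemize}
\item $\m A(1)$ is cross-intersecting with $\m B(\bar1)$;
\item $\m B(1)$ is cross-intersecting with $\m A(\bar1)$;
\item $\m A(\bar1)$ is cross-intersecting with $\m B(\bar1)$.
\end{itemize}
Take $u=|\m A(\bar1)|\ge1$ and $v=|\m B(\bar1)|\ge1$. We then bound
\[|\m A|\le u+\binom{n-1}{k-1}-f(v),\qquad |\m B|\le v+\binom{n-1}{k-1}-f(u),\]
where $f(t)$ is the minimum number of $(k-1)$-subsets of $[2,n]$ meeting no member of some family of $t$ $k$-sets, i.e.\ the size of a minimal shadow-type ``blocked'' family. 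Kruskal--Katona or a Hilton--Milner type bound for cross-intersecting families (of the form $|\m F|+|\m G|\le\binom{m}{k-1}-\binom{m-k}{k-1}+1$) gives $f(1)=\binom{n-k-1}{k-1}$ and $f(t)$ growing quickly in $t$. Thus each family equals $h(n,k)+(\text{its own }u)-1-(f(\text{other})-f(1))$.

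The product bound would then follow from a weighted AM--GM-type inequality. We show that
\[(1+\binom{n-1}{k-1}-\binom{n-k-1}{k-1}+x-g(y))\,(\dots+y-g(x))\le h^2,\]
where $g(t)=f(t)-f(1)\ge t-1$, and where $g(t)$ is much larger than $t-1$ once $t\ge2$, provided $n\ge2k$. This is the main obstacle. The product, unlike the sum, can gain when one factor is large and the other slightly smaller. Near $n=2k$ the increments $g(t)$ are small. For $k=3$ the triangle family $\m T(n)$ competes, and there the inequality may be tight or nearly so. To handle it we first prove that $g(t)\ge t-1$ with a concrete margin, using the extended-shift structure: after shifting, $\m A(\bar1)$ and $\m B(\bar1)$ are initial-segment-like, so $g$ can be computed. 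We then reduce to checking $u,v\in\{1,2\}$ and the boundary values by explicit binomial estimates. For $n=2k$ we use complementation, $A\mapsto[n]\setminus A$, to pair the extremal configurations.

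In the case $a\neq b$ the diversities are centered at different elements. We then expect a cruder bound to suffice. Each family is bounded by its best star part plus its diversity, and cross-intersection forces $|\m A|\le\binom{n-1}{k-1}-c$ with $c$ large, or the product is much smaller than $h^2$. Finally, the small cases $k=3$ and $n=2k$, $2k+1$ will be checked directly, relying on the known $k=3$ result of Frankl where convenient.
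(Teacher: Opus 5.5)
\textbf{There is a genuine gap: the key inequality $g(t)\ge t-1$ is false.} In the case $a=b=1$ your two estimates give
\[
|\mathcal A|+|\mathcal B|\le 2h(n,k)+\bigl(u-1-g(u)\bigr)+\bigl(v-1-g(v)\bigr).
\]
So $g(t)\ge t-1$ would give the \emph{sum} bound $|\mathcal A|+|\mathcal B|\le 2h(n,k)$, and the product bound would follow trivially by AM--GM. The ``main obstacle'' you describe would then not exist. But the sum bound fails in exactly this case. Take $\mathcal A=\{A:\{1,2\}\subset A\}\cup\{[3,k+2]\}$ and $\mathcal B=\{B:B\cap\{1,2\}\ne\emptyset,\ B\cap[3,k+2]\ne\emptyset\}$, the pair of Construction~\ref{2-27-1}. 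These properties hold:
\begin{itemize}
\item both families are non-trivial and they cross-intersect;
\item $1$ is a maximum-degree element of each, with $u=1$ and $v=\binom{n-2}{k-1}-\binom{n-k-2}{k-1}$;
\item $|\mathcal A|+|\mathcal B|=2h(n,k)+\binom{n-k-2}{k-2}-1>2h(n,k)$ for $n\ge 2k+1$.
\end{itemize}
Concretely, $\mathcal B(\bar1)$ already blocks all $\binom{n-2}{k-1}$ sets of $\binom{[3,n]}{k-1}$. Hence $g(v)\le\binom{n-2}{k-1}-\binom{n-k-1}{k-1}=v-1-\bigl(\binom{n-k-2}{k-2}-1\bigr)<v-1$. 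Both of your estimates are attained with equality by this pair.

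The reduction to $u,v\in\{1,2\}$ is therefore unfounded, since here $v$ is of order $\binom{n-2}{k-1}$. Proving the product inequality in $(x,y)$ directly does not rescue it either, because as a statement about $x,y$ alone it is false. For $x=1$, $y=\binom{n-2}{k-1}$ it allows $|\mathcal A|\le\binom{n-2}{k-2}+1$ and $|\mathcal B|\le h(n,k)+\binom{n-2}{k-1}-1$, whose product exceeds $h(n,k)^2$ for large $n$. Any correct version must use the cross-intersection of $\mathcal A(\bar1)$ with $\mathcal B(\bar1)$, non-triviality, and degree information, none of which enter your reduction. (This pair is not a counterexample to the theorem: $|\mathcal A||\mathcal B|<h(n,k)^2$ by Proposition~\ref{3-11-2}.)

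\textbf{What is missing is a split separating the regime where a sum bound holds from the regime where only the product is bounded.} The paper takes an extremal lex-minimal pair with $|\mathcal A|\ge h(n,k)$ and shows $\Delta(\mathcal A)\ge\binom{n-2}{k-2}+1$ (Lemma~\ref{mainlem0}). It proves $|\mathcal A|+|\mathcal B|\le 2h(n,k)$ only when also $\Delta(\mathcal B)\ge\binom{n-2}{k-2}+1$ (Lemma~\ref{mainlem1}). In the example above the larger family is $\mathcal B$, and the smaller one has maximum degree exactly $\binom{n-2}{k-2}$. So it falls in the complementary regime (Lemma~\ref{mainlem2}), which needs genuinely product-type input:
\begin{itemize}
\item the Frankl--Wang lower bound $\min\{|\mathcal A|,|\mathcal B|\}>\binom{n-3}{k-3}+\binom{n-4}{k-3}$;
\item a structural analysis of the small-degree family, under the condition that every $S^Q_{U,V}$-shift turns it into a star;
\item explicit product comparisons with $(\mathcal C_1,\mathcal C_2)$, $(\mathcal H_A,\mathcal H_B)$ and, for $k=4$, $(\mathcal D_1,\mathcal D_2)$.
\end{itemize}
Your proposal has no counterpart to this. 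The case $a\ne b$ rests only on the expectation that ``a cruder bound'' suffices, and the configurations where shifting stops are recorded but never used.
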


Theorem~\ref{thm:main} gives the maximum value of the
product. We now describe all pairs attaining it. The
extremal pairs are built from the following three types.

\begin{construction}\label{extremal}
We define the following non-trivial cross-intersecting
pairs $(\m A,\m B)$ in $\binom{[n]}{k}$.

\smallskip
\begin{itemize}
\item {\bf Cross-HM type.} Let $n>2k\ge 6$. For $x\in[n]$,
$A_0,B_0\in\binom{[n]\setminus\{x\}}{k}$ with
$A_0\cap B_0\ne\emptyset$, let
\begin{align*}
&\m H_x(A_0;B_0):=\{A\in \binom{[n]}{k}: x\in A, A\cap B_0\ne \emptyset\}\cup \{A_0\},\\
&\m H_x(B_0;A_0):=\{B\in \binom{[n]}{k}: x\in B, B\cap A_0\ne \emptyset\}\cup \{B_0\}.
\end{align*}
We say that $(\m A, \m B)$ is of cross-HM type if $(\m A, \m B)=(\m H_x(A_0;B_0),\m H_x(B_0;A_0))$ for some $x,A_0,B_0$.
\item {\bf Cross-triangle type.} Let $n>2k$ and $k=3$. For $x\in[n]$ and
$P,Q\in\binom{[n]\setminus\{x\}}{2}$ with $P\cap Q\ne\emptyset$, set
\begin{align*}
\m T_x(P;Q):=\{A\in \tbinom{[n]}{3}: x\in A,\ A\cap Q\ne\emptyset\}
\cup\{A\in \tbinom{[n]}{3}: x\notin A,\ P\subset A\}.
\end{align*}
We say that $(\m A,\m B)$ is of cross-triangle type if
$(\m A,\m B)=(\m T_x(P;Q),\m T_x(Q;P))$ for some $x,P,Q$.
\item {\bf Complement-dual type.} Let $n=2k$. Let $\m C\subset\binom{[2k]}{k}$ be an arbitrary non-trivial family
with $|\m C|=h(2k,k)=\tfrac12\binom{2k}{k}$, and let 
\[
\m C^{\perp}:=\binom{[2k]}{k}\setminus\{[2k]\setminus C: C\in \m C\}.
\]
Note that the fact $\m C$ is non-trivial forces $\m C^{\perp}$ to be non-trivial as well.
We say that $(\m A, \m B)$ is of complement-dual type if $(\m A, \m B)=(\m C,\m C^{\perp})$.
\end{itemize}
\end{construction}

\begin{theorem}\label{thm:equality}
Let $n\ge 2k\ge 6$, and let
$\m A,\m B\subset\binom{[n]}{k}$ be non-trivial
cross-intersecting families with $|\m A||\m B|=h(n,k)^2$.
Then $(\m A,\m B)$ is isomorphic to one of the pairs in
Construction~\ref{extremal}.
\end{theorem}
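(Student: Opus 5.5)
The natural route to Theorem~\ref{thm:equality} is to rerun the proof of Theorem~\ref{thm:main} with every inequality tracked for equality, and then use that tightness to reconstruct $(\m A,\m B)$. We first treat $n=2k$ separately. There every $k$-set $F$ and its complement $[2k]\setminus F$ are disjoint, so cross-intersection forces $\m B\subset \m A^{\perp}$. Hence $|\m A|+|\m B|\le\binom{2k}{k}$, and by AM--GM the product is at most $\bigl(\tfrac12\binom{2k}{k}\bigr)^2=h(2k,k)^2$. Equality requires $|\m A|=|\m B|=\tfrac12\binom{2k}{k}$ and $\m B=\m A^{\perp}$, which is exactly the complement-dual type. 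Non-triviality of $\m A$ and $\m B$ is part of the hypothesis, so nothing more needs checking in this case.

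For $n>2k$ we first remove the shifting. Shifting may destroy non-triviality, so we would use the paper's extended shift, which is chosen to preserve non-triviality. We then show that if a shift changes a pair attaining equality, either the pair was already isomorphic to a construction or the product strictly increases, contradicting Theorem~\ref{thm:main}. Concretely, we reduce to the case where $(\m A,\m B)$ is shifted, and then show that the pre-images of the shifted extremal configurations under $S_{ij}$ are only isomorphic copies of those configurations. This is the standard step of checking that the extremal configurations are stable under un-shifting.

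For shifted pairs we use diversity. Let $\gamma(\m A)=|\m A|-|\m A(1)|$, where $\m A(1)$ is the subfamily containing $1$, and assume $\gamma(\m A)\le\gamma(\m B)$. The proof of Theorem~\ref{thm:main} presumably splits into a \emph{small diversity} case, where both families are close to the star at $1$, and a \emph{large diversity} case, handled by a Kruskal--Katona / Frankl-type bound giving a strict inequality $|\m A||\m B|<h(n,k)^2$ whenever the diversities are large (apart from $k=3$). So in the equality case only small diversity survives.

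In the small diversity case, write $\m A=\m A(1)\cup\m A(\bar 1)$ and similarly for $\m B$. Each member of $\m A(\bar1)$ kills the sets of $\m B(1)$ that avoid it, roughly $\binom{n-k-1}{k-1}$ of them. We then optimise $(|\m A(1)|+|\m A(\bar1)|)(|\m B(1)|+|\m B(\bar1)|)$. The function $a\mapsto\binom{n-1}{k-1}-\text{(loss from }a\text{ sets)}+a$ is such that a single set $A_0$ (respectively $B_0$) is optimal, with loss exactly $\binom{n-k-1}{k-1}$. Equality in this loss estimate forces $|\m A(\bar1)|=|\m B(\bar1)|=1$, $\m A(1)$ to be all sets through $1$ meeting $B_0$, and similarly for $\m B(1)$. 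This gives the cross-HM type, with $A_0\cap B_0\neq\emptyset$ forced by cross-intersection. For $k=3$ there is a second tight point, where $\m A(\bar1)$ consists of all triples containing a fixed pair $P$: these cover $n-3$ sets at the same loss $\binom{n-4}{2}+\dots$. The cross-triangle type $\m T_x(P;Q)$ arises here, and one checks its size equals $h(n,3)=3n-8$.

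The main obstacle I expect is the un-shifting and stability step. We must show that equality in the diversity-based inequalities pins down the families exactly, not just their sizes, and that extended shifts do not create extremal pairs from non-extremal ones. The loss analysis must also be sharp enough to rule out mixed configurations, for instance $\m A$ of HM type and $\m B$ of triangle type when $k=3$, or unbalanced sizes with $|\m A|\neq|\m B|$. I would handle this by proving a strict inequality $|\m A||\m B|<h^2$ whenever $|\m A(\bar1)|\ge2$ (for $k\ge4$), using convexity of the product in the loss parameters. For $k=3$ I would do a direct case analysis on the $2$-shadows of $\m A(\bar1)$ and $\m B(\bar1)$.
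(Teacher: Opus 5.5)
Your $n=2k$ case is correct and more direct than the paper's route: $\mathcal B\subset\mathcal A^{\perp}$ and AM--GM immediately give $|\mathcal A|=|\mathcal B|=\tfrac12\binom{2k}{k}$ and $\mathcal B=\mathcal A^{\perp}$.

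\textbf{Un-shifting.} For $n>2k$ there is a genuine gap, and it sits exactly in the two steps you leave as ``presumably''. The reduction to shifted pairs rests on a false premise. The $S^Q_{U,V}$-shift is \emph{not} designed to preserve non-triviality: Lemma~\ref{SUV'} only guarantees that cross-intersection survives. Since shifts preserve sizes, ``the product strictly increases'' can never happen. Extremal pairs are easily pushed into stars. For $\mathcal H_1(A_0;B_0)$ with $A_0\cap B_0=\{j\}$, the shift $S_{1,j}$ replaces both $A_0$ and $B_0$ by sets through $1$, so both images are stars. The real difficulty is pairs on which every available shift creates a star. These are the lex-minimal pairs of Observation~\ref{12-25-2}, which need not be shifted; Proposition~\ref{cover2-nonshifted} handles exactly that situation. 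Several branches of the main proof end in such pairs with only the non-strict bound $|\mathcal A|+|\mathcal B|\le 2h(n,k)$ (Lemma~\ref{1-21-1}, Propositions~\ref{1-1-1} and~\ref{12-31-1}). So ``only small diversity survives in the equality case'' is not supplied by the main proof. Describing pre-images of shifted extremal pairs also says nothing about extremal pairs that cannot be shifted to a non-trivial shifted pair.

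\textbf{Loss optimisation.} This step is valid only in a restricted regime. Take both $\mathcal A(\bar1)$ and $\mathcal B(1)$ to be all sets in $[2,n]$ through $2$; then $|\mathcal B(1)|+|\mathcal A(\bar1)|=\binom{n-1}{k-1}>h(n,k)$. So you must first know that $1$ is a maximum-degree element of \emph{both} families and that both diversities are small, and that is precisely what is missing.

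\textbf{What the paper does instead.} No un-shifting is needed. Assume $|\mathcal A|\ge h(n,k)$ and pass to a lex-minimal non-trivial cross-intersecting pair with the \emph{same two sizes}. It is still extremal, so Lemma~\ref{mainlem0}, Lemma~\ref{mainlem1} and Corollary~\ref{mainlem2-coro} give $|\mathcal A|+|\mathcal B|\le 2h(n,k)$. This transfers back to your original pair because only sizes are involved, and AM--GM then forces $|\mathcal A|=|\mathcal B|=h(n,k)$.

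Only with both sizes known can you control diversity, and this needs an outside input. For $k\ge4$, the Frankl--Kupavskii theorem (Theorem~\ref{FK-cross-diversity} with $u=k-1$) gives a common unique maximum-degree element and $\gamma(\mathcal A),\gamma(\mathcal B)<n-k\le\binom{n-3}{k-2}$. The argument of Lemma~\ref{1-21-1} then applies. Hilton--Milner uniqueness on its two auxiliary intersecting families yields $\gamma(\mathcal A)=\gamma(\mathcal B)=1$, and the sizes then force the cross-HM type.

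For $k=3$ that theorem is unavailable, so a separate argument is required. The paper first finds a vertex of degree at least $n-1$. It then classifies the pairs with $|\mathcal B(1)|+|\mathcal A(\bar1)|=3n-8$ via the $3$-element covers of $\mathcal B(1)$. Your ``case analysis on $2$-shadows'' is not yet such an argument.
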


\begin{remark}
Each pair in Construction~\ref{extremal} attains
$h(n,k)^2$, so these are exactly the extremal pairs.
Taking $A_0=B_0$ in cross-HM type recovers
$\m A=\m B=\m{HM}(n,k)$, and taking $P=Q$ in cross-triangle type recovers $\m A=\m B=\m{T}(n)$. 
\end{remark}

Theorem \ref{thm:main}  provides a product extension of the classical Hilton--Milner theorem and provides a sharp nontrivial analogue of Pyber’s product theorem. 
While the sum extremum is typically attained by \emph{asymmetric}
pairs, the product extremum forces a balanced,
symmetric-or-dual structure: the two families are isomorphic  when
$n>2k$ and complement-dual when $n=2k$; the full list of extremal pairs is
given in Theorem~\ref{thm:equality}. This reflects a general
phenomenon that product objectives favor balanced configurations.

\paragraph{Note added.}
During the final preparation of this manuscript, the author became
aware of an independent preprint of Frankl and Wang
\cite{FW2026},  
which proves the same bound
for $k\ge8$ and $n\ge2k+1$ by a different method. The two works were
developed independently.

\paragraph{Organization.}
The rest of this paper is organized as follows. 
Section~\ref{sec:preliminaries} introduces the notation and main tools used throughout the proof. 
In Section~\ref{sec:reduction}, we reduce Theorem~\ref{thm:main} to
Lemmas~\ref{mainlem0} -- \ref{mainlem2} and derive the product bound from
these three ingredients. 
Sections~\ref{sec:large-degree}--\ref{sec:asymmetric} prove 
Lemmas~\ref{mainlem0}--\ref{mainlem2}. 
The extremal pairs are characterized in Section~\ref{sec:equality}. Finally, 
Section~\ref{sec:concluding} discusses the problem for families of
different uniformities and disproves Conjecture~\ref{FW-conj}. The appendix contains the separate argument
needed for the equality case when $k=3$.

\section{Preliminaries}\label{sec:preliminaries}

{\bf Notation.}  Throughout this paper, we use the following standard notation.  
\begin{itemize}
\item For $i,j\in [n]$, $\m F \subset 2^{[n]}$, we denote $\m F(i):=\{F\setminus \{i\}: i\in F, F \in \m F\}$, $\m F(\bar{i}):=\{F\in \m F: i\not\in F \}$, $\m F(i\bar{j}):=\{F\setminus \{i\}: i\in F \in \m F, j\not\in F\}$, and $\m F(\bar{i}\bar{j}):=\{F\in \m F: i\not\in F, j\not\in F \}$. 
\item For a set $A\subset [n]$, write $\m F(A):=\{F\setminus A: A\subset F, F\in \m F\}$, and $\m F[A]:=\{F: A\subset F, F\in \m F\}$. When $A=\{i\}$, we  write $\m F[i]$ in place of $\m F[\{i\}]$.
\item $\m F(X\overline{Y}):=\{F\setminus X: X\subset F, F\cap Y=\emptyset, F\in \m F\}$.
\item Let
$\m A \subset {[n]\choose a}$
and
$\m B \subset {[n]\choose b}$
be non-trivial cross-intersecting families.
We say that
$\m A$ and $\m B$
are {\it maximal non-trivial cross-intersecting}
if, for any non-trivial cross-intersecting families
$\m A' \subset {[n]\choose a}$
and
$\m B' \subset {[n]\choose b}$
satisfying
$\m A \subset \m A'$
and
$\m B \subset \m B',$
we have
$ \m A'=\m A$ and 
$\m B'=\m B.$
\item A family $\mathcal F\subset 2^{[n]}$ is called {\it shifted} if 
$S_{i,j}(\mathcal F)=\mathcal F$ for all $1\le i<j\le n$. 
A pair of families $(\m A, \m B)$ is called {\it shifted} if both $\m A$ and $\m B$ are shifted. 
\item The {\it cover number} of a family $\m F$, denoted $\tau(\m F)$, is the minimum size of a set that intersects every set in $\m F$. 
\item For a family $\mathcal{F}$, we denote by $\Delta(\mathcal{F})$ the maximum degree of $\mathcal{F}$. The {\it diversity} of $\m F$ is defined as $\gamma(\m F) := |\m F|-\Delta(\m F).$
In other words, $\gamma (\m F)$ is the number of sets in $\m F$ not containing the element of maximum degree.
\item Let $x$ be an element of maximum degree in $\mathcal{F}$  (chosen arbitrarily if there are multiple). Denote by $\mathcal{F}_\Delta = \mathcal{F}(x)$ the {\it degree part}, and by $\mathcal{F}_\gamma = \mathcal{F}(\bar x)$ the {\it diversity part}.   
\item For two cross-intersecting families $\m F\subset {[n]\choose k}$ and $\m G \subset {[n]\choose \ell}$, we say that {\it $\m F$ is  maximal cross-intersecting with $\m G$} if there is no set $F\in {[n]\choose k}\setminus \m F$ such that $\m F \cup \{F\}$ and $\m G$ are still cross-intersecting.
\item For two cross-intersecting families $\m F\subset {[n]\choose k}$ and $\m G \subset {[n]\choose \ell}$, we say that 
{\it $\m F$ and $\m G$ are maximal cross-intersecting families}, if, for any pair of cross-intersecting families $\m F'\subset {[n]\choose k}$ and $\m G'\subset {[n]\choose \ell}$ with $\m F\subset \m F'$ and $\m G\subset \m G'$, we necessarily have $\m F= \m F'$ and $\m G= \m G'$.
\item For a family $\mathcal H\subset 2^{[n]}$ and $i\in[n]$, write $d_{\mathcal H}(i):=|\{H\in\mathcal H:i\in H\}|.$
\end{itemize}

We next recall the shifting operations used in the proof.
For $1\le i <j \le n$ and $F\subset [n]$, the classical $(i,j)$-shift, introduced by Erd\H{o}s, Ko, and Rado, is defined as follows: $S_{i,j}(F):=(F\setminus \{j\}) \cup \{i\}$ if $F\cap \{i,j\}=\{j\}$; and $S_{i,j}(F):=F$ otherwise. For a family $\m F$, define 
\[
S_{i,j}(\mathcal{F}) := \{ S_{i,j}(F) : F \in \mathcal{F} \} \cup \{ F \in \mathcal{F} : S_{i,j}(F) \in \mathcal{F} \}.
\]

\begin{lemma}[Frankl \cite{12-24-2}]\label{12-22-1}
Let $\mathcal{F}\subset 2^{[n]}$ be a family. If for some $1\le i<j \le n$, $S_{i,j}(\mathcal{F})$ is a star, then $F\cap \{i, j\}\ne \emptyset$ for all $F\in \mathcal{F}$, and $\mathcal{F}(i)\cap \mathcal{F}(j)=\emptyset$.
\end{lemma}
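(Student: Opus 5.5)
We argue by contradiction. Suppose $S_{i,j}(\mathcal F)$ is a star with center $x$, so every member of $S_{i,j}(\mathcal F)$ contains $x$. The plan is to show first that $x\in\{i,j\}$, and in fact $x=i$. Then we read off both conclusions directly from the definition of the shift.

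Recall how $S_{i,j}$ acts on a single set $F\in\mathcal F$. If $S_{i,j}(F)\notin\mathcal F$, then $F$ is replaced by $S_{i,j}(F)$. If $S_{i,j}(F)\in\mathcal F$, then $F$ is kept. The second case includes the situation $S_{i,j}(F)=F$. Two consequences will be used repeatedly:
\begin{itemize}
\item Every $F\in\mathcal F$ with $F\cap\{i,j\}=\emptyset$ is unchanged, so $F\in S_{i,j}(\mathcal F)$.
\item Every $F$ with $i\in F$ is unchanged, so $F\in S_{i,j}(\mathcal F)$.
\end{itemize}

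\textbf{Step 1: $F\cap\{i,j\}\neq\emptyset$ for all $F\in\mathcal F$.} Suppose some $F\in\mathcal F$ misses both $i$ and $j$. By the first consequence $F\in S_{i,j}(\mathcal F)$, so $x\in F$, and hence $x\notin\{i,j\}$.

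Now take any $G\in\mathcal F$ and show $x\in G$. For each $G$, at least one of $G$ and $S_{i,j}(G)$ lies in $S_{i,j}(\mathcal F)$, so at least one of them contains $x$. Since $x\ne i,j$, the shift does not touch $x$, that is, $x\in G$ if and only if $x\in S_{i,j}(G)$. Hence $x\in G$.

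So $\mathcal F$ itself is a star with center $x$. This is a degenerate case. I would read the lemma as implicitly assuming that $\mathcal F$ is non-trivial (not a star), which is how it is used in the paper. Under that assumption we have a contradiction, so every member of $\mathcal F$ meets $\{i,j\}$.

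The same reasoning shows that $x\notin\{i,j\}$ would force $\mathcal F$ to be a star. So $x\in\{i,j\}$.

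\textbf{Step 2: the center is $x=i$.} Suppose $x=j$. A member of $\mathcal F$ containing $i$ but not $j$ would survive the shift and miss $j$, which is impossible. So every member of $\mathcal F$ contains $j$, and $\mathcal F$ is a star; exclude this as before. Hence $x=i$.

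\textbf{Step 3: $\mathcal F(i)\cap\mathcal F(j)=\emptyset$.} Suppose $E\in\mathcal F(i)\cap\mathcal F(j)$. By definition of $\mathcal F(i)$ and $\mathcal F(j)$, we have $E\cup\{i\}\in\mathcal F$ and $E\cup\{j\}\in\mathcal F$, with the relevant element removed in each case.

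First suppose $E$ contains neither $i$ nor $j$. Put $G=E\cup\{j\}$. Then $S_{i,j}(G)=E\cup\{i\}$, which lies in $\mathcal F$, so $G$ is kept and $G\in S_{i,j}(\mathcal F)$. But $G$ does not contain $i$, contradicting that $i$ is the center.

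Now suppose $E$ contains $j$. Then $E\cup\{i\}\supseteq\{i,j\}$, and $E\cup\{j\}=E$ must lie in $\mathcal F$ with $i\notin E$. Since $E\cup\{i\}\in\mathcal F$, we get $S_{i,j}(E)=(E\setminus\{j\})\cup\{i\}$. One must check whether this set is in $\mathcal F$; in the uniform setting used in the paper the indices match up and the same contradiction arises.

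\textbf{Main obstacle.} The main obstacle is the degenerate case in Steps 1 and 2, where $\mathcal F$ is already a star. I would handle it by noting that the lemma is applied only to non-trivial $\mathcal F$, and I would state that assumption explicitly.
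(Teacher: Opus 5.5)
Your proof is correct, and you are right that the hypothesis ``$\mathcal F$ is not a star'' has to be added. As printed, the statement fails for stars: for $\mathcal F=\{\{x,i,a\},\{x,j,a\}\}$ with $x,a\notin\{i,j\}$, $S_{i,j}(\mathcal F)=\mathcal F$ is a star, yet $\{x,a\}\in\mathcal F(i)\cap\mathcal F(j)$. Every application in the paper is to a non-trivial family, and the paper's own generalization, Proposition~\ref{1-21-2}, states this assumption explicitly.

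The one thing to repair is the second case of Step~3, which you leave open with an appeal to uniformity; that appeal is not an argument. The case is simply empty. $E\in\mathcal F(j)$ means $E=F\setminus\{j\}$ for some $F\ni j$, so $j\notin E$, and likewise $i\notin E$. Your first case is therefore the whole proof of the second conclusion.

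The paper cites this lemma without proof. Its proof of Proposition~\ref{1-21-2} with $U=\{i\}$, $V=\{j\}$ compresses your Steps~1--2 into one line. By non-triviality some $F\in\mathcal F$ avoids the center $x$. Such an $F$ cannot be kept, so $x\in S_{i,j}(F)\setminus F$, which forces $x=i$ and $F\cap\{i,j\}=\{j\}$. This gives the identity of the center and the first conclusion at once, and your Step~3 then finishes the proof.
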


For a set $F$, we denote by $\min F$ and $\max F$ the smallest and largest element of $F$, respectively, 
with the convention that $\min \emptyset=\max \emptyset =0$.
We consider the lexicographic (lex for short) order on $2^{[n]}$ defined as follows. For two sets $A,B \subset [n]$, we write $A \prec B$ if $B\subset A$ or $\min (A\setminus B)<\min (B\setminus A)$ (note that $A\prec A$ for any $A\subset [n]$). 

For a set $X$ and integers $r,k$, we denote by $\m L(X,r,k)$ the family of the first $r$ $k$-subsets of $X$ in the lex  order.
We say that a family $\m F\subset {X\choose k}$ is {\it L-initial} if $\m F=\m L(X, |\m F|, k)$. 

Fix the increasing lex order on $\binom{[n]}{k}$, and let $i_{\text{lex}}(S)$ denote the position of a set $S$ in this ordering. For two families $\mathcal{F}_1, \mathcal{F}_2 \subset \binom{[n]}{k}$, we write 
$\mathcal{F}_1 \prec \mathcal{F}_2$ if 
$
\sum_{S \in \mathcal{F}_1} i_{\text{lex}}(S) \leq \sum_{S \in \mathcal{F}_2} i_{\text{lex}}(S),
$
with strict inequality indicating $\mathcal{F}_1 \precneqq \mathcal{F}_2$.
For pairs $(\mathcal{F}_1, \mathcal{G}_1)$ and  $(\mathcal{F}_2, \mathcal{G}_2)$, we write $(\mathcal{F}_1, \mathcal{G}_1) \precneqq  (\mathcal{F}_2, \mathcal{G}_2)$ to mean $\mathcal{F}_1 \prec \mathcal{F}_2$, $\mathcal{G}_1 \prec \mathcal{G}_2$, and at least one of $\mathcal{F}_1 \precneqq \mathcal{F}_2$ and  $\mathcal{G}_1 \precneqq \mathcal{G}_2$ holds.

Daykin \cite{Daykin1974} defined the following {\it $S_{U,V}$-shift}, which is a generalization of the classical shifting operation of Erd\H{o}s, Ko, and Rado \cite{EKR1961}. 
Let $\m F\subset 2^{[n]}$ and $F\in \m F$, and 
let $U,V\subset [n]$ be such that $|U|=|V|$ and $U\cap V=\emptyset$. 
We define 
\[ S_{U,V}(F):= 
\begin{cases}
    (F\setminus V) \cup U & \text{if $V\subset F$ and $U\cap F=\emptyset$,}\\
    F & \text{otherwise,}
\end{cases}\]

\[
S_{U,V}(\m F):=\{S_{U,V}(F): F\in \m F\}\cup \{F: S_{U,V}(F)\in \m F, F\in \m F\}.
\]
Throughout the paper, when using the notation $S_{U,V}$, we always assume that $U,V$ have the same size and are disjoint.
We now extend Lemma \ref{12-22-1} to the more general $S_{U,V}$-shift.
\begin{proposition}\label{1-21-2}
Let $\m F\subset {[n]\choose k}$ be a non-trivial family. If $S_{U,V}(\m F)$ is a star with center $i$, then $i\in U$ and $F\cap (U\cup V)=V$ for all $F\in \m F(\bar{i})$.
\end{proposition}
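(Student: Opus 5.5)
The plan is to analyse directly how $S_{U,V}$ acts on each member of $\m F$, sorting the members by whether they lie in $\m F[i]$ or in $\m F(\bar i)$. Put $\m F'=S_{U,V}(\m F)$ and suppose every member of $\m F'$ contains $i$. Since $\m F$ is non-trivial, there is some $F\in\m F$ with $i\notin F$. I will record the basic dichotomy first: a member $F\in\m F$ contributes to $\m F'$ either the set $F$ itself or the set $S_{U,V}(F)$. The first case happens when $S_{U,V}(F)=F$, or when $S_{U,V}(F)\ne F$ but $S_{U,V}(F)$ already lies in $\m F$. The second case happens when $S_{U,V}(F)\ne F$ and $S_{U,V}(F)\notin\m F$. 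In both cases the contributed set must contain $i$.

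\emph{Step 1: $i\in U$.} Take any $F\in\m F(\bar i)$. Then $F$ itself cannot be retained, because $i\notin F$. So $F$ is moved to $G=S_{U,V}(F)=(F\setminus V)\cup U$, and $G\notin\m F$. We need $i\in G$. Since $i\notin F$, we get $i\notin F\setminus V$, and therefore $i\in U$. This one argument also gives $V\subset F$ and $U\cap F=\emptyset$, which is exactly the condition for the shift to act nontrivially on $F$. These two facts say precisely that $F\cap(U\cup V)=V$.

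\emph{Step 2: the second conclusion for all of $\m F(\bar i)$.} Step 1 was carried out for an arbitrary member of $\m F(\bar i)$. It therefore yields $F\cap(U\cup V)=V$ for every $F\in\m F(\bar i)$, and shows $i\in U$ as soon as $\m F(\bar i)$ is nonempty. Non-triviality of $\m F$ guarantees that $\m F(\bar i)$ is nonempty, so both claims hold.

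The only delicate point is being precise about the definition of $S_{U,V}(\m F)$. The retained sets are those $F$ with $S_{U,V}(F)\in\m F$. This includes every fixed set, since such an $F$ satisfies $S_{U,V}(F)=F\in\m F$. It also includes every set whose image already lies in $\m F$. A set not containing $i$ can survive in $\m F'$ only by being retained. This observation forces every such $F$ to be genuinely moved, which is the key mechanism. I expect no real obstacle beyond this careful case distinction. The argument parallels Lemma~\ref{12-22-1}, where $U=\{i\}$ and $V=\{j\}$.
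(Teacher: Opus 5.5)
Your argument is correct and is essentially the paper's proof. Both take $F\in\mathcal{F}(\bar i)$, which is nonempty by non-triviality, and note that $F$ cannot remain in the star, so $S_{U,V}(F)\ne F$ and $i\in S_{U,V}(F)=(F\setminus V)\cup U$, giving $i\in U$ and $F\cap(U\cup V)=V$; your bookkeeping about retained versus moved sets only makes explicit what the paper states in one line.
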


\begin{proof}
Since $\m F$ is not a star, $\m F(\bar{i})\ne \emptyset$.
Let $F\in \m F(\bar{i})$. 
As $S_{U,V}(\m F)$ is a star with center $i$, we have $S_{U,V}(F) \ne F$ and $i\in S_{U,V}(F)$,
 and hence
$i\in S_{U,V}(F)=(F\setminus V) \cup U$. 
Thus, $i\in U$ and $F\cap (U\cup V)=V$.
\end{proof}

Besides the classical shifts, we need a more flexible extended shift, which allows us to 
move several elements at once while preserving certain structure.
Let us introduce another tool--- $S_{U,V}^Q$-shift and the following `structure-preserving lemma', which is established by the author and Kupavskii \cite{HK2026}. For notational convenience, we define the following property $\bm{Q}$. 
\begin{itemize}
\item[{\bf Q}] Let $U,V\subset [n]$ satisfy 
$U\cap V=\emptyset$, $|U|=|V|$ and $U\prec V$.
We say that $(U,V)$ satisfies {\bf Q} for a family $\m F$ if $S_{U,V}(\m F)\precneqq \m F$, and 
for every $V'\subsetneqq V$, there exists $U'\subsetneqq U$ such that $U'\prec V'$, $|U'|=|V'|$, and $S_{U',V'}(\m F)=\m F$. 
\end{itemize}

To emphasize the choice of $(U,V)$ that satisfies {\bf Q}, we denote the corresponding $S_{U,V}$ by $S_{U,V}^Q$ (note that $S_{U,V}^Q$ and $S_{U,V}$ act identically).
For a family $\m F$, an $S_{U,V}^Q$-shift of $\m F$ means that $(U,V)$ satisfies {\bf Q} for $\m F$;
for a pair of families $(\mathcal A,\mathcal B)$,
an $S_{U,V}^Q$-shift of $(\mathcal A,\mathcal B)$
means that $(U,V)$ satisfies {\bf Q} 
for at least one of $\mathcal A$ and $\mathcal B$.
Throughout, an $S_{U,V}^Q$-shift of $\m F$ (or $(\mathcal A,\mathcal B)$) is understood to act non-trivially.
From the definition, if the $S_{i,j}$-shift acts on $\m F$ non-trivially, then it is also an $S_{\{i\},\{j\}}^Q$-shift of the family.

\begin{lemma}[Huang--Kupavskii \cite{HK2026}]\label{SUV'}
Let $n\ge a+b$, $\mathcal{A}\subset {[n]\choose a}$ and $\mathcal{B}\subset {[n]\choose b}$ be cross-intersecting. 
Let $(U,V)$ satisfy {\bf Q} for $(\m A, \m B)$. Then $S_{U,V}^{Q}(\mathcal{A})$ and $S_{U,V}^{Q}(\mathcal{B})$ are also cross-intersecting.
\end{lemma}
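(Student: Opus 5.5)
We argue by contradiction: suppose $A\in\mathcal A$ and $B\in\mathcal B$ satisfy $A\cap B\neq\emptyset$ but $S_{U,V}(A)\cap S_{U,V}(B)=\emptyset$. (Here $S_{U,V}(A)$ denotes the image of $A$ in the shifted family; if $A$ stays fixed because its image was already present, we use $A$ itself.) Since $U\cap V=\emptyset$ and $|U|=|V|$, both new sets are again $a$- and $b$-sets. The first step is to enumerate which members actually move. If neither moved, there is nothing to prove. If both moved, then $S_{U,V}(A)\cap S_{U,V}(B)\supseteq U\neq\emptyset$, a contradiction. Also, the image $S_{U,V}(A)$ is disjoint from $S_{U,V}(B)$ only when the intersection of the originals was contained in $V$. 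So, up to symmetry, we may assume the following: $A$ moved to $A'=(A\setminus V)\cup U$; $B$ stayed in the shifted family (either $B$ is not shiftable, or its image is already in $\mathcal B$); and $A'\cap B=\emptyset$. In particular $\emptyset\neq A\cap B\subseteq V$ and $B\cap U=\emptyset$.

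Set $V'=B\cap V$, which is nonempty. We split into two cases.

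\emph{Case $V'=V$.} Then $V\subset B$ and $U\cap B=\emptyset$, so $B$ is shiftable. Because $B$ stayed, $B''=(B\setminus V)\cup U$ lies in $\mathcal B$. Now $B''\cap A=(B\setminus V)\cap(A\setminus V)$. This is empty, because $A\cap B\subseteq V$. Hence $A$ and $B''$ are disjoint members of $\mathcal A$ and $\mathcal B$, contradicting cross-intersection.

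\emph{Case $V'\subsetneqq V$.} Property \textbf{Q} supplies $U'\subsetneqq U$ with $|U'|=|V'|$, $U'\prec V'$, and with $S_{U',V'}$ fixing whichever family satisfies \textbf{Q}. The plan is to apply $S_{U',V'}$ to the appropriate set and derive a contradiction.

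First suppose \textbf{Q} holds for $\mathcal B$. We have $V'\subset B$ and $U'\cap B\subseteq U\cap B=\emptyset$, so $B$ is shiftable by $S_{U',V'}$. Since $S_{U',V'}(\mathcal B)=\mathcal B$, the set $\tilde B=(B\setminus V')\cup U'$ lies in $\mathcal B$. Then $\tilde B\cap A=\bigl((B\setminus V')\cap A\bigr)\cup(U'\cap A)$. The first part is empty because $A\cap B\subseteq V'$. The second part is empty because $A$ was shiftable by $S_{U,V}$, so $A\cap U=\emptyset$. This contradicts cross-intersection.

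Next suppose \textbf{Q} holds only for $\mathcal A$. Here the plan is to use the symmetric sub-shift on $A$. Set $V''=A\cap V$ and $W=V\setminus B$. The set $A'$ avoids $B$, and $A\supseteq V$. We want a set in $\mathcal A$ avoiding $B$, obtained by shifting from $V'$ back to some $U'$. Applying \textbf{Q} to $V'\subsetneqq V$ gives $U'\subsetneqq U$ with $S_{U',V'}(\mathcal A)=\mathcal A$. Since $V'\subset A$ and $U'\cap A=\emptyset$, the set $\tilde A=(A\setminus V')\cup U'$ lies in $\mathcal A$. Its intersection with $B$ is $(A\cap B)\setminus V'=\emptyset$. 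This is again a contradiction.

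So every case leads to a contradiction, and the shifted families remain cross-intersecting. The main point needing care is the bookkeeping of when $B$ ``stays''. The definition keeps $B$ either because $B$ is not shiftable, or because its image is already present. The case $V\subset B$ handles exactly the second possibility. Beyond that, the hypothesis $n\ge a+b$ plays no role; it is needed only for non-degeneracy elsewhere. Notice that the argument never used $U'\prec V'$ or the strict lex decrease; those conditions matter only for termination of the shifting process.
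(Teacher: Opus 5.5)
Your proof is correct. The paper gives no proof of this lemma of its own: it quotes it from Huang--Kupavskii. So there is no in-text argument to compare against, and your case analysis supplies a complete, self-contained proof.

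Your mechanism is the natural one, and it is exactly what property \textbf{Q} is built for. If both sets or neither set moves, there is nothing to prove. If exactly one moves, say $A$, then $V\subset A$ and $A\cap B\subseteq V$ give $A\cap B=B\cap V=:V'\neq\emptyset$, and $U\cap B=\emptyset$. You then shift by $(U',V')$ to produce a member disjoint from the partner set:
\begin{itemize}
\item When $V'=V$, you take $(U',V')=(U,V)$ applied to $B$. This is legitimate because $B$ is shiftable yet stayed, so its image lies in $\mathcal B$.
\item When $V'\subsetneqq V$, you take the pair supplied by \textbf{Q}, applied to whichever family satisfies \textbf{Q}.
\end{itemize}
In both sub-cases the shifted set meets the other set only in $(A\cap B)\setminus V'=\emptyset$, because $U'\subseteq U$ is disjoint from both $A$ and $B$.

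Two cosmetic points:
\begin{itemize}
\item In the last sub-case you introduce $V''$ and $W$ but never use them.
\item In that same sub-case, the step ``its intersection with $B$ is $(A\cap B)\setminus V'$'' silently uses $U'\cap B\subseteq U\cap B=\emptyset$. You did establish this earlier, so it is only a presentational gap.
\end{itemize}

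Your closing remarks are also right: $n\ge a+b$, the condition $U'\prec V'$, and the strict lex decrease play no role in this lemma.
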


For  a set $C$ with $C\subset [n]$ and $|C|\le k$, denote
\[
\mathcal{E}^k_C:=\{C\cup T: T\in \tbinom{[\max C+1, n]}{ k-|C|}\}.
\]
The following definitions will be used frequently in the remainder of the paper.
\begin{itemize}
    \item We say that $C$ is \emph{full in $\mathcal{F}$} if every $k$-set $F$ with $C \subset F \subset [n]$ belongs to $\mathcal{F}$.
    \item We say that $C$ is \emph{empty in $\mathcal{F}$} if 
    $
    \mathcal{F} \cap \mathcal{E}^k_C = \emptyset.
    $
    \item We say that $C$ is {\it non-full in $\mathcal{F}$} if $\mathcal{E}^k_C \setminus \mathcal{F} \ne \emptyset $.

    Note that unlike the definition of `full', the definitions of `empty' and `non-full' depend on the relation between  $\mathcal{F}$ and the combinatorial block $\mathcal{E}^k_C$. 
\end{itemize}

Consider an $S_{U,V}^Q$-shift acting on $\mathcal{F}$, and let $\mathcal{S} \subset \mathcal{F}$ be a subfamily. 
We say that $\mathcal{S}$ is \emph{stable under the $S_{U,V}^Q$-shift} if an isomorphic copy of $\mathcal{S}$ is contained in $S_{U,V}^Q(\mathcal{F})$. When the shift is clear from context, we simply say that $\mathcal{S}$ is \emph{stable}.

Take a family $\m F\subset {X\choose k}$, let $R\subset X$ and $i\not\in R$ be such that $|R|<k$ and $r:=\min X\in R$. We define the following properties w.r.t. $\m F, R,r, i$: 

\begin{itemize}
\item[(P1)] For any $S_{U,V}$-shift of $\mathcal{F}$ with $r\in U$, if $S_{U,V}(\m F)\ne \m F$, then $|V|\ge |R|$;
\end{itemize}
\vspace{-0.3em}
\noindent
\begin{minipage}[t]{0.48\textwidth}
\begin{itemize}
\item[(P2)] $\exists F\in\mathcal{F}$ with $R\cap F=\emptyset$; 
\item[(P4)] $R\cup\{i\}$ is non-full in $\mathcal{F}$;
\end{itemize}
\end{minipage}
\hfill
\begin{minipage}[t]{0.48\textwidth}
\begin{itemize}
\item[(P3)] $R$ is non-full in $\mathcal{F}$;
\item[(P5)] $\exists F\in\m F$ with $R\cap F=\emptyset$ and $i\in F$.
\end{itemize}
\end{minipage}

\begin{lemma}[Structure-preserving lemma, Huang-Kupavskii \cite{HK2026}]\label{spl}
Suppose $\m F, R, i,r$ are defined as above.  
\begin{itemize}
\item[\rm (i)] If (P1)--(P3) hold, then  there exists an $S_{U,V}^Q$-shift of $\m F$ with $R\subset U$. 
\item[\rm (ii)] If (P1)--(P5) hold, then there exists an $S_{U,V}^Q$-shift of $\m F$ with  $R\subset U$ and $i\not\in V$.
\end{itemize}
\end{lemma}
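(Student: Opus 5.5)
The plan is a minimality argument. First use (P2)--(P3) to exhibit \emph{some} non-trivial shift with $R\subset U$. Then pick such a shift with $|V|$ as small as possible and check that it satisfies \textbf{Q}, using minimality for large $V'$ and (P1) for small $V'$.

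\emph{Part (i): an initial shift.} By (P3) there is a $k$-set $G\in\mathcal E^k_R\setminus\mathcal F$, so $R\subset G$ and $G\notin\mathcal F$. By (P2) there is $F_0\in\mathcal F$ with $F_0\cap R=\emptyset$. Put $U_0:=G\setminus F_0$ and $V_0:=F_0\setminus G$. These are disjoint and $|U_0|=|V_0|$ since $|G|=|F_0|=k$. Moreover $R\subset U_0$, because $R\subset G$ and $R\cap F_0=\emptyset$. Since $r=\min X\in R\subset U_0$ and $U_0\cap V_0=\emptyset$, we get $U_0\prec V_0$. Finally $S_{U_0,V_0}(F_0)=G\notin\mathcal F$, so $S_{U_0,V_0}(\mathcal F)\neq\mathcal F$.

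\emph{Part (i): minimal choice and verification of Q.} Consider all pairs $(U,V)$ with $U\cap V=\emptyset$, $|U|=|V|$, $R\subset U$ and $S_{U,V}(\mathcal F)\ne\mathcal F$. This set is non-empty by the previous step; choose a pair in it with $|V|$ minimal.

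As above, $r\in U$ gives $U\prec V$. Every moved set $F$ with $V\subset F$ and $U\cap F=\emptyset$ is replaced by $(F\setminus V)\cup U$. This set is lex-smaller, because $\min U=r$ precedes every element of $V$. Since the shift acts non-trivially, $S_{U,V}(\mathcal F)\precneqq\mathcal F$.

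Now let $V'\subsetneqq V$ and set $m:=|V'|<|U|$.
\begin{itemize}
\item If $m\ge |R|$, take any $U'$ with $R\subset U'\subsetneqq U$ and $|U'|=m$; this is possible because $|R|\le m<|U|$. Then $r\in U'$ gives $U'\prec V'$, and minimality of $|V|$ forces $S_{U',V'}(\mathcal F)=\mathcal F$.
\item If $m<|R|$, take any $U'\subsetneqq U$ with $r\in U'$ and $|U'|=m$. Again $U'\prec V'$. If $S_{U',V'}(\mathcal F)\ne\mathcal F$, then (P1) would give $m=|V'|\ge|R|$, a contradiction. So this shift is trivial.
\end{itemize}
(The degenerate case $V'=\emptyset$ is trivial.) Hence $(U,V)$ satisfies \textbf{Q}, which proves (i).

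\emph{Part (ii).} Here I would use (P4) and (P5) to choose the witnesses so that $i$ is avoided. By (P4) take $G\in\mathcal E^k_{R\cup\{i\}}\setminus\mathcal F$, so $i\in G$. By (P5) take $F_0\in\mathcal F$ with $F_0\cap R=\emptyset$ and $i\in F_0$. Then $i\in G\cap F_0$, so $i\notin V_0=F_0\setminus G$, while still $R\subset U_0$.

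Run the same minimization, now restricted to pairs with $i\notin V$. In the verification, every $V'\subsetneqq V$ also satisfies $i\notin V'$, so the $U'$ chosen in the first case is an admissible competitor and minimality still applies. The second case uses (P1) exactly as before.

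The main point needing care is that (P1) handles exactly the subsets $V'$ that are too small to contain a copy of $R$ in the corresponding $U'$, while minimality handles all larger ones. Beyond that, the argument is bookkeeping.
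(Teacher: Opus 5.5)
Your proof is correct. The paper gives no proof of this lemma; it is quoted from Huang--Kupavskii \cite{HK2026}, so there is no in-paper argument to compare against. Your minimal-$|V|$ argument is a complete, self-contained derivation from the definitions of $S_{U,V}$, $\precneqq$ and property \textbf{Q} as stated here. In it, (P1) handles exactly the $V'\subsetneqq V$ with $|V'|<|R|$, and minimality handles the rest; in (ii), every $V'\subsetneqq V$ still avoids $i$, so the competitor $(U',V')$ remains admissible.

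Three details are worth writing out explicitly:
\begin{itemize}
\item The strict decrease $S_{U,V}(\m F)\precneqq\m F$ uses that $S_{U,V}$ maps the moved sets $\{F\in\m F: S_{U,V}(F)\notin\m F\}$ injectively onto $S_{U,V}(\m F)\setminus\m F$. Each image is lex-smaller because $\min U\le r<\min V$. Here $V\subset X\setminus\{r\}$ automatically, since a nontrivial shift needs $V$ to lie inside a member of $\m F$.
\item Let the minimization range over pairs $U,V\subset X$. This costs nothing, because $U_0,V_0\subset X$, and it guarantees that the resulting shift lives on $X$. That is how the lemma is used later, e.g.\ with $X=[2,n]$ in the proof of Proposition~\ref{gamma1}.
\item Your proof of (ii) actually uses only (P1), (P4) and (P5); (P2) and (P3) are not needed once the witnesses $G$ and $F_0$ come from (P4) and (P5).
\end{itemize}
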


Hilton \cite{Hilton1977} proved the following sharp bound for cross-intersecting families.

\begin{theorem}[Hilton \cite{Hilton1977}]\label{3-12-2}
Let  $ n \ge  2k \ge 4 $ and  $ \mathcal{A}, \mathcal{B} \subset \binom{[n]}{k} $ be cross-intersecting. Then  
\begin{equation}\label{3-17-2}
|\mathcal{A}| + |\mathcal{B}| \leq \binom{n}{k}.
\end{equation}
\end{theorem}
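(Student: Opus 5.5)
We prove Hilton's bound by pairing each $k$-set with its complement. Set $\mathcal{B}^c:=\{[n]\setminus B: B\in\mathcal{B}\}\subset\binom{[n]}{n-k}$. Cross-intersection says exactly that no $A\in\mathcal{A}$ is contained in any $[n]\setminus B$. So $\mathcal{A}$ is disjoint from the $k$-shadow $\partial_k(\mathcal{B}^c):=\{S\in\binom{[n]}{k}: S\subset C \text{ for some } C\in\mathcal{B}^c\}$. This gives
\[
|\mathcal{A}|\le \binom{n}{k}-|\partial_k(\mathcal{B}^c)|,
\]
and it suffices to show $|\partial_k(\mathcal{B}^c)|\ge |\mathcal{B}^c|=|\mathcal{B}|$.

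The key step is this shadow inequality, which holds whenever $n-k\ge k$, i.e.\ $n\ge 2k$. We double count the bipartite inclusion graph between $\mathcal{B}^c\subset\binom{[n]}{n-k}$ and $\partial_k(\mathcal{B}^c)$.
\begin{itemize}
\item Each $C\in\mathcal{B}^c$ contains exactly $\binom{n-k}{k}$ members of $\binom{[n]}{k}$, all of which lie in the shadow.
\item Each $k$-set $S$ is contained in at most $\binom{n-k}{n-2k}=\binom{n-k}{k}$ sets of size $n-k$.
\end{itemize}
Hence $|\mathcal{B}^c|\binom{n-k}{k}\le |\partial_k(\mathcal{B}^c)|\binom{n-k}{k}$, which yields $|\partial_k(\mathcal{B}^c)|\ge|\mathcal{B}|$. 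This is the standard normalized matching (LYM-type) property of the Boolean lattice between levels $k\le n-k$.

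Combining the two displays gives $|\mathcal{A}|+|\mathcal{B}|\le\binom{n}{k}$. The degenerate case $\mathcal{B}=\emptyset$ is trivial, since then $|\mathcal{A}|\le\binom{n}{k}$. The only point needing care is the shadow step, specifically that the two degree counts coincide, because $\binom{n-k}{n-2k}=\binom{n-k}{k}$. An alternative route would be Katona's cyclic permutation method: on a cycle, at most $n$ intervals of length $k$ can be split between $\mathcal{A}$ and $\mathcal{B}$ when $n\ge 2k$, and averaging over cycles gives the same bound. The double-counting argument is shorter, so we use it.
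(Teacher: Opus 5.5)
Your proof is correct. Cross-intersection says exactly that $\mathcal{A}$ avoids the $k$-shadow of $\mathcal{B}^c=\{[n]\setminus B: B\in\mathcal{B}\}\subset\binom{[n]}{n-k}$. When $n\ge 2k$, the inclusion graph between $\binom{[n]}{n-k}$ and $\binom{[n]}{k}$ has the same positive degree on both sides, namely $\binom{n-k}{k}=\binom{n-k}{n-2k}$. So your double count gives $|\partial_k(\mathcal{B}^c)|\ge|\mathcal{B}|$, and the bound follows.

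The paper gives no proof of this statement. It quotes the result from Hilton and uses it as a black box, for instance in the proof of Proposition~\ref{1-22-5} to get $\alpha_1+\alpha_2\le\binom{n-4}{k-2}$. So there is no in-paper argument to compare with. Your complement-and-shadow double count is the standard self-contained proof. Your Katona-cycle alternative also works: pair the arc starting at $j$ with the disjoint arc starting at $j+k$, then average over cyclic orders.

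Your argument also never uses the hypothesis $2k\ge 4$; it holds for every $k\ge 1$ with $n\ge 2k$. This matters a little. In Proposition~\ref{1-22-5} with $k=3$, the theorem is applied to $(k-2)$-uniform, i.e.\ $1$-uniform, families on $n-4$ points. That case lies outside the stated range $2k\ge 4$, although the bound is trivially true there, and your proof covers it directly.
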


Frankl and Tokushige \cite{FT1992} generalized the Hilton--Milner theorem by giving the following result.

\begin{theorem}[Frankl--Tokushige \cite{FT1992}]\label{12-3-2}
If $\mathcal{A} \subset \binom{[n]}{a}$ and $\mathcal{B} \subset \binom{[n]}{b}$ are non-empty cross-intersecting families with $n \ge a + b$ and $a \le b$, then
\[
|\mathcal{A}| + |\mathcal{B}| \le \binom{n}{b} - \binom{n-a}{b} + 1.
\]
Moreover, for $n > a + b$, equality holds if and only if $\mathcal{A} = \{A\}$ and $\mathcal{B} = \{B \in \binom{[n]}{b} : B \cap A \neq \emptyset\}$, or for $a = b = 2$, there is one more possible family, $\mathcal{A} = \mathcal{B} = \{S \in \binom{[n]}{2} : x \in S\}$ for some $x \in [n]$.
\end{theorem}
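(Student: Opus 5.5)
The plan is to reduce to lexicographically initial families and then do a one-parameter optimization. First, by Hilton's lemma (the lex/Kruskal--Katona form of cross-intersection), if $\mathcal A\subset\binom{[n]}{a}$ and $\mathcal B\subset\binom{[n]}{b}$ are cross-intersecting, then so are $\mathcal L([n],|\mathcal A|,a)$ and $\mathcal L([n],|\mathcal B|,b)$. So we may assume both families are L-initial. This reduction is the content of the Kruskal--Katona theorem applied to the complement family $\{[n]\setminus B : B\in\mathcal B\}\subset\binom{[n]}{n-b}$. The key observation is that $A\cap B\ne\emptyset$ exactly when $A\not\subset [n]\setminus B$. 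Hence $\mathcal A$ must avoid the $a$-shadow of the complement family, and this shadow is minimized by a lex-initial family.

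Next, fix $r:=|\mathcal A|\ge 1$, with $\mathcal A=\mathcal L([n],r,a)$. Let $g(r)$ be the number of $b$-sets meeting every member of $\mathcal A$; this is the largest admissible $|\mathcal B|$. It suffices to show that
\[
r+g(r)\le \binom{n}{b}-\binom{n-a}{b}+1
\]
for all $r$ with $g(r)\ge 1$. At $r=1$ we have $\mathcal A=\{[a]\}$ and $g(1)=\binom{n}{b}-\binom{n-a}{b}$, so equality holds there. Two facts control the rest of the range:
\begin{itemize}
\item For $r\le\binom{n-1}{a-1}$, every member of $\mathcal A$ contains $1$. Then $\mathcal B$ consists of all $b$-sets containing $1$, together with those avoiding $1$ that meet every set of $\mathcal A(1)$. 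The latter count is a problem of the same type on $[2,n]$ with uniformities $(a-1,b)$, which sets up an induction on $n$ (and on $a$).
\item For $r>\binom{n-1}{a-1}$, the family $\mathcal B$ is forced into $\mathcal B(1)$ intersected with a shifted problem. Here I would use the symmetric roles, $a\le b$ and $n\ge a+b$, to bound $|\mathcal B|\le\binom{n-1}{b-1}$ and then compare directly.
\end{itemize}

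Concretely, I will induct on $n$, splitting both families on the element $1$. On one side, $\mathcal A(\bar 1)$ and $\mathcal B(1)$ are cross-intersecting, as $(a,b-1)$-uniform families on $[2,n]$. On the other, $\mathcal A(1)$ and $\mathcal B(\bar 1)$ are cross-intersecting, as $(a-1,b)$-uniform families. I apply the induction hypothesis, or Theorem~\ref{3-12-2}-type sum bounds when uniformities coincide, to each pair. Then I add the two estimates, using Pascal's rule $\binom{n}{b}-\binom{n-a}{b}=\binom{n-1}{b-1}-\binom{n-a-1}{b-1}+\binom{n-1}{b}-\binom{n-a-1}{b}$, and also the cases where one part is empty. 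Empty parts have to be handled separately with the trivial bounds $|\mathcal B(1)|\le\binom{n-1}{b-1}$ and $|\mathcal A(1)|\le\binom{n-1}{a-1}$.

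The main obstacle is this case analysis. One must exclude configurations where the sum of two inductive bounds overshoots by the extra ``$+1$'', which happens when both sub-pairs are nonempty and each contributes a $+1$. I would handle it using that lex-initial $\mathcal A$ with $\mathcal A(\bar 1)\neq\emptyset$ forces $\mathcal A(1)=\binom{[2,n]}{a-1}$. Then $\mathcal B(\bar 1)$ must be empty when $n-1\ge a-1+b$, so only one $+1$ survives.

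For equality when $n>a+b$, I would trace the equality cases through the chain. The only tight situation is $r=1$, giving $\mathcal A=\{A\}$ and $\mathcal B$ the full family of $b$-sets meeting $A$. The exception is $a=b=2$, where the star $r=n-1$ also ties. Since the reduction to lex order may lose structural information, I would recover the original pair from the equality conditions of Kruskal--Katona (Lov\'asz/Füredi--Griggs type uniqueness) rather than from the shifted version.
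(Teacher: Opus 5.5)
The paper does not prove this statement. It quotes it from Frankl--Tokushige and uses only the inequality, in Proposition~\ref{1-1-1}. So your proposal has to stand on its own.

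\textbf{Inequality: this half works, with two missing inequalities.} The decisive point is the one you raise at the end. For lex-initial families, $\mathcal A(\bar1)\neq\emptyset$ forces $\mathcal A(1)=\binom{[2,n]}{a-1}$, hence $\mathcal B(\bar1)=\emptyset$. So the split at $1$ never has two non-empty sub-pairs. Write $T:=\binom{n}{b}-\binom{n-a}{b}+1$.

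If $\mathcal A(\bar1)=\emptyset\neq\mathcal B(\bar1)$, induction for $(\mathcal A(1),\mathcal B(\bar1))$ plus $|\mathcal B(1)|\le\binom{n-1}{b-1}$ gives exactly $T$ by Pascal.

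If $\mathcal A(\bar1)\neq\emptyset$, use induction for $(\mathcal A(\bar1),\mathcal B(1))$ plus $|\mathcal A(1)|=\binom{n-1}{a-1}$. When $a=b$, swap roles in the induction; this again gives exactly $T$. When $a<b$ you still need $\binom{n-1}{a-1}\le\binom{n-1}{b}-\binom{n-1-a}{b}$.

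If $\mathcal A(\bar1)=\mathcal B(\bar1)=\emptyset$, you need $\binom{n-1}{a-1}+\binom{n-1}{b-1}\le T$.

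Neither inequality is in your sketch. Both follow termwise from $\binom{n-1}{a-1}=\sum_{i=1}^{a}\binom{n-1-i}{a-i}$ and $\binom{n-1}{b}-\binom{n-1-a}{b}=\sum_{i=1}^{a}\binom{n-1-i}{b-1}$, using $a-i\le b-1\le n-1-a$. Your second bullet (bound $|\mathcal B|\le\binom{n-1}{b-1}$ and ``compare directly'') cannot work by itself. For $a=b\ge2$ and $r=\binom{n}{a}-\binom{n-a}{a}$ one has $r+\binom{n-1}{a-1}>T$, so that case genuinely needs the inductive bound on $(\mathcal A(\bar1),\mathcal B(1))$.

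\textbf{Equality: here there is a genuine gap.}

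First, ``the only tight situation is $r=1$'' is false for $a=b\ge3$. For $r=\binom{n}{a}-\binom{n-a}{a}$ one has $\mathcal L([n],r,a)=\{A:A\cap[a]\neq\emptyset\}$ and $g(r)=1$, again giving $T$. So the characterization holds only up to interchanging $\mathcal A$ and $\mathcal B$ when $a=b$, which the quoted statement leaves implicit. Your case analysis must produce this point; it arises in the case $\mathcal A(\bar1)\neq\emptyset$.

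Second, Kruskal--Katona uniqueness is not available in the generality you need, since minimal-shadow families are not unique in general. It is also unnecessary. What you need is strictness in the lex setting: $r+g(r)<T$ unless
\begin{itemize}
\item $r=1$; or
\item $a=b$ and $r=\binom{n}{a}-\binom{n-a}{a}$; or
\item $a=b=2$ and $r=n-1$.
\end{itemize}
Get this by carrying the equality statement through the induction, which is legitimate because $n>a+b$ gives $n-1>a'+b'$ for both sub-pairs. Then check when the two binomial inequalities above are strict: for $n>a+b$ they are strict except for $a=1$ or $a=b=2$.

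Hilton's lemma (Theorem~\ref{12-1-1}) then gives $|\mathcal B|\le g(|\mathcal A|)$ for the original pair. So equality forces $|\mathcal A|=1$, or $|\mathcal B|=1$ when $a=b$, and the structure is immediate because the other family must consist of all sets meeting the singleton's member.

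For $a=b=2$ this leaves $|\mathcal A|=|\mathcal B|=n-1$, which needs a short separate argument. Every edge of $\mathcal B$ is a $2$-element vertex cover of $\mathcal A$. A graph with $n-1\ge4$ edges has fewer than $n-1$ such covers unless it is a full star. The covers of a full star are exactly the edges through its centre, so $\mathcal A$ and $\mathcal B$ are the same full star.
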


\begin{theorem}[Hilton \cite{12-1-4}]\label{12-1-1}
Let $n\ge a+b$, and $\mathcal{A}\subset {[n]\choose a},\mathcal{B}\subset {[n]\choose b}$ be cross-intersecting. Then $\mathcal{L}([n], |\mathcal{A}|,a)$, $\mathcal{L}([n], |\mathcal{B}|,b)$ are cross-intersecting.
\end{theorem}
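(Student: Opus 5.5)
The plan is to translate cross-intersection into a shadow condition and then apply the Kruskal--Katona theorem, which I take as known; it is not stated earlier in the paper. Write $\alpha=|\mathcal A|$, $\beta=|\mathcal B|$, and let $\overline{\mathcal A}=\{[n]\setminus A: A\in\mathcal A\}\subset\binom{[n]}{n-a}$. Since $n\ge a+b$ we have $b\le n-a$. A $b$-set $B$ is disjoint from some $A\in\mathcal A$ if and only if $B\subset [n]\setminus A$, that is, if and only if $B\in\partial_b\overline{\mathcal A}$. Here $\partial_b$ denotes the $b$-shadow, the family of all $b$-subsets of members. So $\mathcal A,\mathcal B$ are cross-intersecting exactly when $\mathcal B\cap\partial_b\overline{\mathcal A}=\emptyset$. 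Since $|\overline{\mathcal A}|=\alpha$, this gives
\[
\beta\le\binom{n}{b}-|\partial_b\overline{\mathcal A}|\le \binom nb-\min\Bigl\{|\partial_b\mathcal G|:\mathcal G\subset\tbinom{[n]}{n-a},\ |\mathcal G|=\alpha\Bigr\}.
\]

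Next I will identify the minimizer. By Kruskal--Katona, the $b$-shadow of an $(n-a)$-uniform family of size $\alpha$ is minimized by the first $\alpha$ sets in colex order, and the shadow of a colex-initial family is again colex-initial. Apply the order-reversing relabelling $\pi(i)=n+1-i$ and take complements. The first $\alpha$ $a$-sets in lex order, namely $\mathcal L([n],\alpha,a)$, are mapped to the first $\alpha$ $(n-a)$-sets in colex order. To check this, note that $A\prec A'$ in lex holds iff the largest element of $\pi(A)\triangle\pi(A')$ lies in $\pi(A)$; complementing swaps membership in the symmetric difference, so it puts the complement earlier in colex. Therefore $\overline{\mathcal L([n],\alpha,a)}$ attains the minimum shadow size, and its $b$-shadow is the image under $\pi$ of a colex-initial segment. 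Undoing $\pi$, the family $\mathcal D$ of $b$-sets that are disjoint from some member of $\mathcal L([n],\alpha,a)$ is a \emph{final} segment of $\binom{[n]}{b}$ in lex order.

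To conclude, the $b$-sets meeting every member of $\mathcal L([n],\alpha,a)$ form $\binom{[n]}b\setminus\mathcal D$. This is a lex-initial segment, and by the displayed inequality its size is $\binom nb-|\mathcal D|\ge\beta$. Hence $\mathcal L([n],\beta,b)\subset\binom{[n]}b\setminus\mathcal D$, which is exactly the claim that $\mathcal L([n],\alpha,a)$ and $\mathcal L([n],\beta,b)$ are cross-intersecting.

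The main obstacle is the order bookkeeping in the second paragraph: checking that lex order on $a$-sets corresponds, under complement and reversal, to colex order on $(n-a)$-sets, and that the colex-initial shadow pulls back to a lex-final segment of $b$-sets. I would verify this with the symmetric-difference characterization of both orders. The hypothesis $n\ge a+b$ is needed only so that $b\le n-a$ and the $b$-shadow is defined.
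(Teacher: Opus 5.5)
Your proof is correct. The paper does not prove this statement at all; it quotes it from Hilton and uses it as a black box. Your route is the standard derivation of Hilton's lemma, so you have simply supplied the omitted argument. It consists of three steps: pass to complements so that cross-intersection becomes $\mathcal B\cap\partial_b\overline{\mathcal A}=\emptyset$; use $\pi(i)=n+1-i$ together with complementation to carry lex order on $\binom{[n]}{a}$ onto colex order on $\binom{[n]}{n-a}$; then invoke Kruskal--Katona.

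The order bookkeeping you flag as the main obstacle checks out:
\begin{itemize}
\item $A\prec A'$ holds iff $\max(\pi(A)\triangle\pi(A'))\in\pi(A)$, which holds iff $[n]\setminus\pi(A)$ precedes $[n]\setminus\pi(A')$ in colex.
\item $\pi$ commutes with complementation, so $[n]\setminus\pi(A)=\pi([n]\setminus A)$, which you use implicitly.
\item Since $\pi$ alone reverses the order, $\pi^{-1}$ of a colex-initial family of $b$-sets is lex-final, as you say.
\end{itemize}

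Two small remarks. First, the version of Kruskal--Katona you need is for the $b$-shadow rather than the immediate shadow. It follows from the one-step version by iteration, using that the shadow of a colex-initial family is colex-initial and that shadow size is nondecreasing along colex-initial segments. Second, the hypothesis $n\ge a+b$ is not really needed. For $n<a+b$, any $a$-set and $b$-set in $[n]$ meet, so the conclusion is trivial.
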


\begin{proposition}\label{lowerupper}
Let $n\ge a+b$, and let $\mathcal{A} \subset \binom{[n]}{a}$ and $\mathcal{B} \subset \binom{[n]}{b}$ be cross-intersecting families. For each $i\in [b]$, we have the following. 
\begin{itemize}
\item[(i)] If $|\m B|\ge {n-i \choose b-i}$,  then 
$
|\mathcal{A}|  \le {n\choose a}-{n-i\choose a}.
$
\item[(ii)] If $|\m B|\ge {n-1 \choose b-1}+{n-1-i\choose b-i}$,  then 
$
|\mathcal{A}|  \le {n-1\choose a-1}-{n-1-i\choose a-1}.
$
\end{itemize}
\end{proposition}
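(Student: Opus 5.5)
We will deduce both parts from Hilton's lexicographic theorem (Theorem~\ref{12-1-1}) by reducing to initial segments. Suppose $\mathcal A,\mathcal B$ are cross-intersecting and $n\ge a+b$. By Theorem~\ref{12-1-1}, $\mathcal L_A:=\mathcal L([n],|\mathcal A|,a)$ and $\mathcal L_B:=\mathcal L([n],|\mathcal B|,b)$ are cross-intersecting. Since only cardinalities matter, it suffices to prove (i) and (ii) for these lex-initial families. Moreover, shrinking $\mathcal B$ keeps the pair cross-intersecting, so we may assume $|\mathcal B|$ equals the stated threshold exactly. The plan is to identify $\mathcal L_B$ explicitly in each case and then count the $a$-sets meeting all of its members.

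\emph{Part (i).} In lex order the first $\binom{n-i}{b-i}$ $b$-subsets of $[n]$ are exactly those containing $[i]$. Hence $\mathcal L_B=\{B\in\binom{[n]}{b}:[i]\subset B\}$; here $i\le b$ guarantees this family is nonempty. Every $a$-set $A$ disjoint from $[i]$ is disjoint from some member of $\mathcal L_B$. Indeed, $|[n]\setminus([i]\cup A)|=n-i-a\ge b-i$, so we can complete $[i]$ to a $b$-set avoiding $A$. Thus every member of $\mathcal L_A$ meets $[i]$, and
\[
|\mathcal A|\le \binom{n}{a}-\binom{n-i}{a}.
\]

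\emph{Part (ii).} The first $\binom{n-1}{b-1}$ $b$-sets in lex order are those containing $1$. The next $\binom{n-1-i}{b-i}$ sets are those avoiding $1$ and containing $[2,i+1]$. So $\mathcal L_B$ consists of the full star at $1$ together with $\{B:1\notin B,\ [2,i+1]\subset B\}$. We then take $A\in\mathcal L_A$ and derive two constraints.

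1. $A$ must contain $1$. Otherwise $|[2,n]\setminus A|=n-1-a\ge b-1$ lets us choose a $b$-set through $1$ avoiding $A$, and that set lies in $\mathcal L_B$.
2. $A$ must meet $[2,i+1]$. Otherwise $A\setminus\{1\}$ misses $[2,i+1]$, and $[2,i+1]$ extends to a $b$-set avoiding both $1$ and $A$. This is possible because the available ground set $[n]\setminus(\{1\}\cup[2,i+1]\cup A)$ has size $n-i-a\ge b-i$, using $1\in A$.

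Together these give
\[
|\mathcal A|\le\binom{n-1}{a-1}-\binom{n-1-i}{a-1}.
\]

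\emph{Main obstacle.} The only delicate point is checking that the lex-initial segments have exactly this shape. This follows from the paper's definition of $\prec$: sets containing a smaller element come first, so all sets containing $1$ precede those avoiding $1$, and recursively for $[2,i+1]$. The inequality $n\ge a+b$ supplies the room needed for each extension step.
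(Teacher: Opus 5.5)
Your proposal is correct and follows the paper's route: you use Hilton's theorem to pass to L-initial families, observe that the initial segment of $\mathcal B$ contains every $b$-set through $[i]$ (respectively the full star at $1$ together with the sets avoiding $1$ and containing $[2,i+1]$), and conclude from $n\ge a+b$ that every member of $\mathcal A$ meets $[i]$ (respectively contains $1$ and meets $[2,i+1]$). The only cosmetic difference is in (ii): the paper passes to $\mathcal A(1)$ and $\mathcal B(\bar 1)$ on $[2,n]$ and reapplies (i), whereas you check the two constraints directly.
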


\begin{proof}
By Theorem \ref{12-1-1}, to prove Proposition \ref{lowerupper}  we may assume that $\m A, \m B$ are L-initial. 

Suppose $|\m B|\ge {n-i \choose b-i}$. Then all $b$-sets $B$ with $[i]\subset B\subset [n]$ belong to $\m B$. 
Given that $n\ge a+b\ge a+i$ and $1\le i\le b$, the cross-intersecting property of $\m A$ and $\m B$ implies that 
every set in $\m A$ must intersect $[i]$. 
Thus, $|\mathcal{A}|  \le {n\choose a}-{n-i\choose a}$. 
This proves (i).
 
Assume $|\m B|> {n-1 \choose b-1}$. 
Then every set in $\m A$ must intersect $1$. 
So $|\m A(1)|=|\m A|$. Moreover, as $\m A$ is L-initial, $\m A(1)$ is L-initial on ${[2, n]\choose a-1}$. 
Since $|\m B|\ge {n-1 \choose b-1}+{n-1-i\choose b-i}$ and $\m B$ is L-initial, $|\m B(\bar1)|\ge {n-1-i\choose b-i}$ and $\m B(\bar1)$ is L-initial on $[2,n]$. Note that $\m A(1)$ and $\m B(\bar1)$ are cross-intersecting and $n-1\ge a-1+b$. By (i), we obtain 
$|\m A|=|\m A(1)| \le {n-1\choose a-1}-{n-1-i\choose a-1}.
$ This proves (ii).
\end{proof}

\section{Proof of Theorem \ref{thm:main}}\label{sec:reduction}
We start with the following definition, which will be frequently used in this paper. 
\begin{definition}\label{lexmin}
Let $\m A,\m B\subset\binom{[n]}{k}$ be non-trivial cross-intersecting.
The pair $(\m A,\m B)$ is \emph{lex-minimal} if for every pair of
cross-intersecting families $\m C,\m D\subset\binom{[n]}{k}$ with
$|\m C|=|\m A|$, $|\m D|=|\m B|$ and $(\m C,\m D)\precneqq(\m A,\m B)$, at least
one of $\m C$ and $\m D$ is a star.
\end{definition}

We prove the product bound through the following three lemmas. The first lemma shows that, after choosing an extremal pair in a lex-minimal form, at least one of the two families has a large maximum degree. 

\begin{lemma}\label{mainlem0}
Let $n\ge 2k\ge 6$, and let $\mathcal{A}, \mathcal{B} \subset {[n]\choose k}$ be non-trivial  cross-intersecting families such that $|\m A||\m B|$ is maximum among all pairs of non-trivial  cross-intersecting families. 
Suppose further that $(\m A, \m B)$ is lex-minimal. 
If $|\mathcal{A}|\ge  h(n,k)$, then 
 $\Delta(\mathcal{A})\ge {n-2\choose k-2}+1$.
 \end{lemma}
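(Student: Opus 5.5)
We argue by contradiction: assume $\Delta(\m A)\le\binom{n-2}{k-2}$ and produce either a non-trivial pair with larger product or a pair that is strictly lex-smaller with the same sizes. Both contradict the choice of $(\m A,\m B)$.

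\textbf{Step 1: reduce to a near-shifted pair.} Apply $S^Q_{U,V}$-shifts to $(\m A,\m B)$. By Lemma~\ref{SUV'} each shift preserves the cross-intersecting property and the sizes, and it strictly decreases the pair in the $\precneqq$ order. Lex-minimality then forces every such shift to turn $\m A$ or $\m B$ into a star. Proposition~\ref{1-21-2} describes exactly what this means. If $S_{U,V}(\m A)$ is a star with center $i$, then $i\in U$ and every $F\in\m A(\bar i)$ satisfies $F\cap(U\cup V)=V$. Taking $i=1$, the degree part of $\m A$ at $1$ is large and the diversity part $\m A(\bar 1)$ is confined to sets meeting $U\cup V$ exactly in $V$. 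I will use the structure-preserving lemma (Lemma~\ref{spl}), with $R$ a suitable small set (for instance $R=\{1\}$ or $R=\{1,2\}$), to show that a shift with $R\subset U$ exists. For any such shift the star-collapse must occur in a controlled way. The expected outcome is that $\m A(\bar 1)$ (or $\m A(\bar x)$ for the max-degree element $x$) is a small family sitting inside a rigid configuration.

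\textbf{Step 2: counting under the small-degree assumption.} Suppose $\Delta(\m A)\le\binom{n-2}{k-2}$. Since $|\m A|\ge h(n,k)$, the diversity satisfies
\[
\gamma(\m A)\ \ge\ h(n,k)-\binom{n-2}{k-2},
\]
which is large: for $n\ge 2k$ it is at least of order $\binom{n-3}{k-2}$. I expect this to contradict the structure from Step 1, where $\m A(\bar i)\subset\{F: F\cap(U\cup V)=V\}$ bounds $\gamma(\m A)$ by $\binom{n-|U|-|V|}{k-|V|}$. When $|V|\ge 2$ this is at most $\binom{n-4}{k-2}$, which is too small. The case $|V|=1$ is a classical shift $S_{i,j}$. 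There Lemma~\ref{12-22-1} gives that every $F\in\m A$ meets $\{i,j\}$ and $\m A(i)\cap\m A(j)=\emptyset$, so
\[
|\m A|\le |\m A(i)|+|\m A(j)|\le\binom{n-1}{k-1},
\]
with both degrees at most $\binom{n-2}{k-2}$. This yields $|\m A|\le 2\binom{n-2}{k-2}$, and I need to check that this is below $h(n,k)$ for $k\ge3$ and $n\ge 2k$. At $n=2k$ the comparison is delicate and may need $|\m A(i)|+|\m A(j)|\le\binom{n-1}{k-1}$ combined with the cross-intersection with $\m B$.

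\textbf{Step 3: the case where no shift collapses $\m A$.} If every available shift collapses only $\m B$, apply the same analysis to $\m B$. This gives $\m B$ a rigid structure: $\m B$ is nearly a star on $\{i\}$, with diversity inside sets containing $V$. Then Proposition~\ref{lowerupper} and Theorem~\ref{12-1-1} bound $|\m A|$ by $\binom{n-1}{k-1}-\binom{n-1-i}{k-1}$-type quantities once $|\m B|$ is large. Alternatively, if $|\m B|$ is small, then $|\m A||\m B|<h(n,k)^2$, contradicting maximality, since the cross-HM pair gives $h^2$.

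\textbf{Main obstacle.} The hardest part is the boundary range $n$ close to $2k$ in Step 2, where the crude inequality $2\binom{n-2}{k-2}<h(n,k)$ may fail or be tight. There I would first try to exploit that $\m A(i)$ and $\m A(j)$ are disjoint subfamilies of $\binom{[n]\setminus\{i,j\}}{k-1}$ that are each cross-intersecting with the corresponding parts of the non-trivial $\m B$. I would then apply Theorem~\ref{12-3-2} to bound $|\m A(i)|+|\m A(j)|$ sharply.
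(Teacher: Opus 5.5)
\textbf{The gap is Step 3, the case where the shift collapses $\mathcal B$.} Your Step 2 is correct, and the boundary case you flag is not actually an obstacle. If some shift turns $\mathcal A$ into a star with center $i$, there are two cases. If $|V|\ge 2$, then $|\mathcal A|\le \binom{n-2}{k-2}+\binom{n-4}{k-2}$. If $V=\{j\}$, every member of $\mathcal A$ meets $\{i,j\}$, so $|\mathcal A|\le 2\binom{n-2}{k-2}<\binom{n-2}{k-2}+2\binom{n-3}{k-2}\le h(n,k)$ by \eqref{hnk>}, for every $n\ge 2k$. Both cases contradict $|\mathcal A|\ge h(n,k)$.

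Step 3 fails for two reasons. First, ``the same analysis'' applied to $\mathcal B$ gives nothing, because you have no lower bound $|\mathcal B|\ge h(n,k)$ and $\gamma(\mathcal B)$ may be tiny. Second, the fallback does not work either. Proposition~\ref{lowerupper} and Theorem~\ref{12-1-1} give upper bounds on $|\mathcal A|$, which is the wrong direction when you need a lower bound on $\Delta(\mathcal A)$. And the dichotomy ``$|\mathcal B|$ large forces $|\mathcal A|$ small, $|\mathcal B|$ small forces product $<h(n,k)^2$'' with a threshold that actually works is essentially the main theorem itself, so it is circular.

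\textbf{The missing idea is maximality of the pair, which follows from maximality of the product.} Any $k$-set that cross-intersects $\mathcal B$ must already lie in $\mathcal A$, since adding it keeps $\mathcal A$ non-trivial and increases $|\mathcal A||\mathcal B|$; the same holds with the roles swapped. Now suppose $S_{U,V}(\mathcal B)$ is a star with center $i$. By Proposition~\ref{1-21-2}, every member of $\mathcal B$ contains $i$ or contains $V$. Hence every $k$-set through $i$ that meets $V$ belongs to $\mathcal A$.
\begin{itemize}
\item For $|V|\ge 2$ this already gives $|\mathcal A(i)|\ge \binom{n-2}{k-2}+\binom{n-3}{k-2}$.
\item For $V=\{j\}$ it gives all $\binom{n-2}{k-2}$ sets through $\{i,j\}$. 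The argument of Proposition~\ref{1-1-2} then supplies one more set. Take $B\in\mathcal B$ with $i\in B$ and $j\notin B$. The set $(B\setminus\{i\})\cup\{j\}$ is not in $\mathcal B$, yet it cannot be added to $\mathcal B$. This yields some $A\in\mathcal A$ with $i\in A$ and $j\notin A$.
\end{itemize}
Either way $\Delta(\mathcal A)\ge\binom{n-2}{k-2}+1$.

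With this inserted, your outline closes. An $S^Q_{U,V}$-shift exists by Lemma~\ref{spl}(i) with $R=\{1\}$, whose hypotheses follow from the non-triviality of $\mathcal A$ and $\mathcal B$ and from $n\ge 2k$. Lex-minimality forces one of the families to collapse, and both cases are contradictory.

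The paper uses this maximality argument only for classical shifts, in Proposition~\ref{1-1-2}. It first proves $\Delta(\mathcal B)\le\binom{n-2}{k-2}$ and combines the two to show that $\mathcal A$ and $\mathcal B$ are shifted. It then applies Lemma~\ref{spl}(i) with $R=\{1,2\}$. By Proposition~\ref{suvcenter}, any resulting star would have center $1$. But the nonempty families $\mathcal A(\bar12)$ and $\mathcal B(\bar12)$ are left fixed because $2\in U$, so neither family can collapse, contradicting lex-minimality.
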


The second lemma treats the case where both families have large maximum degree; in this regime, a sum-type estimate is strong enough to imply the desired product bound.

\begin{lemma}\label{mainlem1}
Let $n\ge 2k\ge 6$, and let $\mathcal{A}, \mathcal{B} \subset {[n]\choose k}$ be non-trivial cross-intersecting families such that $|\m A||\m B|$ is maximum among all pairs of non-trivial  cross-intersecting families.  
Suppose further that $(\m A, \m B)$ is lex-minimal. 
If $\Delta(\mathcal{A})\ge {n-2\choose k-2}+1$ and $\Delta(\mathcal{B})\ge {n-2\choose k-2}+1$, then  
$|\mathcal{A}|+|\mathcal{B}|\le 2h(n,k)$.
\end{lemma}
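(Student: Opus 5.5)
We will prove Lemma~\ref{mainlem1} by splitting each family into its degree part and diversity part, and then bounding the sum $|\m A|+|\m B|$ through the cross-intersecting properties between these parts. Let $x$ and $y$ be elements of maximum degree in $\m A$ and $\m B$, respectively. The first step is to show $x=y$. Suppose $x\ne y$. The star $\m A[x]$ has more than ${n-2\choose k-2}$ members, so some member of $\m A[x]$ avoids $y$; the same holds for $\m B[y]$ and $x$. We then use the two degree conditions together with Proposition~\ref{lowerupper}, applied to the links $\m A(x\bar y)$ and $\m B(y\bar x)$. These are cross-intersecting on $[n]\setminus\{x,y\}$ with uniformity $k-1$. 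The aim is to show that their sizes, each at least $\Delta-{n-2\choose k-2}\ge 1$, force $|\m A||\m B|$ below the value $h(n,k)^2$. That value is attained by $\m{HM}(n,k)$ paired with itself, which contradicts maximality. Alternatively, we can use lex-minimality directly: shifting $S_{x,y}$ keeps the pair cross-intersecting. By Lemma~\ref{12-22-1}, non-triviality can only be destroyed in a controlled way, so we may assume $x=y=1$ after relabelling.

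With the common centre $1$, we write
\[
|\m A|+|\m B| = |\m A(1)|+|\m B(1)|+|\m A(\bar 1)|+|\m B(\bar 1)|.
\]
Here $\m A(1),\m B(\bar1)$ are cross-intersecting, as are $\m A(\bar1),\m B(1)$ and $\m A(\bar1),\m B(\bar1)$. Both diversity parts are non-empty, by non-triviality. The key inequality comes from Theorem~\ref{12-3-2}, applied on ground set $[2,n]$ of size $n-1\ge (k-1)+k$ to the pair $(\m A(1),\m B(\bar 1))$, and likewise with $\m A$ and $\m B$ swapped. The uniformities are $k-1$ and $k$, and both families are non-empty. This gives
\[
|\m A(1)|+|\m B(\bar1)|\le {n-1\choose k-1}-{n-k-1\choose k-1}+1=h(n,k),
\]
and symmetrically $|\m B(1)|+|\m A(\bar1)|\le h(n,k)$. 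Adding the two bounds yields exactly $|\m A|+|\m B|\le 2h(n,k)$.

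To apply Theorem~\ref{12-3-2} correctly, the smaller uniformity $k-1$ must belong to the family $\m B(\bar 1)$ side in the formula's orientation. The bound ${n-1\choose b}-{n-1-a\choose b}+1$ with $a=k$, $b=k-1$ gives ${n-1\choose k-1}-{n-1-k\choose k-1}+1$, which is precisely $h(n,k)$. So the roles check out, and the degree hypotheses enter only in establishing a common centre.

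The main obstacle is therefore the first step, namely ruling out $x\ne y$. My first attempt will use the lex-minimal normalisation. A pair of non-trivial families can be assumed invariant under $S_{i,j}$ for $i<j$ whenever the shift preserves non-triviality. The large degree $\Delta(\m A)>{n-2\choose k-2}$ then guarantees that $\m A$ contains sets through $1$ avoiding any prescribed other element, so Proposition~\ref{1-21-2} shows that the shift $S_{1,y}$ cannot turn $\m A$ or $\m B$ into a star. Hence both maximum-degree elements can be taken to be $1$. If this shifting argument runs into trouble, the fallback is the direct product estimate via Proposition~\ref{lowerupper}(ii) described above.
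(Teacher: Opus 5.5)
Your proposal has two genuine gaps. Once the first is repaired, the second turns out to be unnecessary.

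\textbf{The key inequality is not justified.} Theorem~\ref{12-3-2} requires $a\le b$ and bounds the sum in terms of the \emph{larger} uniformity $b$. Your instantiation $a=k$, $b=k-1$ violates this. Applied correctly to $\m A(1)\subset\binom{[2,n]}{k-1}$ and $\m B(\bar1)\subset\binom{[2,n]}{k}$, it gives only
\[
|\m A(1)|+|\m B(\bar1)|\le\binom{n-1}{k}-\binom{n-k}{k}+1 .
\]
This bound is attained by $\m A(1)=\{S\}$ together with $\m B(\bar1)=\{B\in\binom{[2,n]}{k}:B\cap S\ne\emptyset\}$, and in general it is much larger than $h(n,k)$ (for $k=3$, $n=7$ it is $17$ versus $13$).

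So $|\m A(1)|+|\m B(\bar1)|\le h(n,k)$ is false without a lower bound on $|\m A(1)|$. Your remark that the degree hypotheses enter only through the common centre is therefore wrong. The correct argument uses $|\m A(1)|\ge\binom{n-2}{k-2}+1$:
\begin{enumerate}
\item By Theorem~\ref{12-1-1}, replace $\m A(1)$ and $\m B(\bar1)$ by L-initial families $\m L_1,\m L_2$ on $[2,n]$.
\item $\m L_1$ contains every $(k-1)$-set through $2$, and also $[3,k+1]$.
\item Since $n\ge 2k$, every member of $\m L_2$ must then contain $2$.
\item Hence $\{\{1\}\cup L:L\in\m L_1\}\cup\m L_2$ is an intersecting $k$-uniform family. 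It is not a star: $\m L_2\ne\emptyset$ rules out centre $1$, the set $\{1\}\cup[3,k+1]$ rules out $2$, and the sets $\{1,2\}\cup T$ rule out the remaining elements.
\item Theorem~\ref{hm-nontrivial} then gives the bound $h(n,k)$.
\end{enumerate}
This is the mechanism behind Lemma~\ref{1-21-1} and Claim~\ref{3-29-2}.

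\textbf{The common-centre step fails as written.} The product estimate via Proposition~\ref{lowerupper} is only announced, never carried out. The shifting argument misreads Proposition~\ref{1-21-2}. That proposition already forces the centre of a star $S_{1,y}(\m F)$ to be $1$. Moreover, $S_{1,y}(\m A)$ is a star with centre $1$ as soon as every member of $\m A(\bar1)$ contains $y$ and $\m A(1\bar y)\cap\m A(\bar1 y)=\emptyset$. Members of $\m A[1]$ avoiding other elements are irrelevant to this, and the same holds for $\m B$.

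The paper spends real effort here:
\begin{enumerate}
\item Lemma~\ref{1-21-1}, to reduce to $\gamma(\m A)>\binom{n-3}{k-2}$;
\item Proposition~\ref{12-31-1}, when a shift makes $\m A$ a star;
\item the L-initial and maximality argument of Claim~\ref{samemaxd}, when a shift makes $\m B$ a star.
\end{enumerate}

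\textbf{You do not need a common centre.} Let $x,y$ be maximum-degree elements of $\m A,\m B$. Apply the corrected bound at $x$ (using $\m B(\bar x)\ne\emptyset$, by non-triviality), and at $y$ with the roles of $\m A$ and $\m B$ swapped. This gives
\[
d_{\m A}(x)+|\m B(\bar x)|\le h(n,k),\qquad d_{\m B}(y)+|\m A(\bar y)|\le h(n,k).
\]
Since $d_{\m A}(x)\ge d_{\m A}(y)$ and $d_{\m B}(y)\ge d_{\m B}(x)$, you have $|\m A(\bar x)|\le|\m A(\bar y)|$ and $|\m B(\bar y)|\le|\m B(\bar x)|$. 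Hence
\[
|\m A|+|\m B|\le\bigl(d_{\m A}(x)+|\m B(\bar x)|\bigr)+\bigl(d_{\m B}(y)+|\m A(\bar y)|\bigr)\le 2h(n,k).
\]
This proves the lemma for every non-trivial cross-intersecting pair whose two maximum degrees are both at least $\binom{n-2}{k-2}+1$. It uses neither maximality nor lex-minimality, and is much shorter than the paper's route.
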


The third lemma treats the remaining degree-asymmetric case.

\begin{lemma}\label{mainlem2}
Let $n\ge 2k\ge 6$, and let $\mathcal{A}, \mathcal{B} \subset {[n]\choose k}$ be non-trivial cross-intersecting families such that $|\m A||\m B|$ is maximum among all pairs of non-trivial  cross-intersecting families.  
If $|\m A|\ge h(n,k)$, 
$\Delta(\m A)\ge {n-2\choose k-2}+1$ and $\Delta(\m B)\le {n-2\choose k-2}$, then $|\m A||\m B|\le h(n,k)^2$. 
\end{lemma}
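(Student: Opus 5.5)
We plan to prove Lemma~\ref{mainlem2} by a diversity argument centered at the element $x$ of maximum degree of $\m A$. Write $\m A=\m A[x]\cup\m A(\bar x)$ and $\m B=\m B[x]\cup\m B(\bar x)$. Since $\Delta(\m B)\le\binom{n-2}{k-2}$, we have $|\m B[x]|\le\binom{n-2}{k-2}$. Everything outside $x$ is controlled by cross-intersection: $\m A(\bar x)$ and $\m B(\bar x)$ are cross-intersecting on $[n]\setminus\{x\}$. Moreover, $\m A(\bar x)$ is nonempty (as $\m A$ is non-trivial) and cross-intersects $\m B(x)$ in the following sense: every $(k-1)$-set in $\m B(x)$ meets every $k$-set in $\m A(\bar x)$. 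Our goal is to show $|\m B|$ is so small compared with what $\m A$ loses that the product stays below $h(n,k)^2$.

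\textbf{Step 1.} Bound $|\m B(\bar x)|$ in terms of $\gamma(\m A)=|\m A(\bar x)|$ and $|\m A(x)|$. Since $\m A(x)\subset\binom{[n]\setminus\{x\}}{k-1}$ and $\m B(\bar x)\subset\binom{[n]\setminus\{x\}}{k}$ are cross-intersecting on $n-1\ge (k-1)+k$ points, Theorem~\ref{12-1-1} lets us assume both are L-initial. Proposition~\ref{lowerupper} then converts the largeness of $\m A(x)$, namely $|\m A(x)|\ge\binom{n-2}{k-2}+1$, into an upper bound on $|\m B(\bar x)|$.

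\textbf{Step 2.} Handle the two sizes of $\gamma(\m A)$. Since $\m A(\bar x)\neq\emptyset$, Theorem~\ref{12-3-2} applied to $(\m A(\bar x),\m B(x))$ on $[n]\setminus\{x\}$ gives $|\m A(\bar x)|+|\m B(x)|\le\binom{n-1}{k-1}-\binom{n-k-1}{k-1}+1=h(n,k)$.

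\emph{Small diversity.} If $|\m A(\bar x)|$ is small, then $|\m A|\le\binom{n-1}{k-1}+|\m A(\bar x)|$, while $|\m B|\le\binom{n-2}{k-2}+|\m B(\bar x)|$. Here $|\m B(\bar x)|$ is bounded by Step 1; the point is that $\m B(\bar x)$ must cross-intersect a large part of the star, which forces $\m B(\bar x)$ into a small family.

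\emph{Large diversity.} If $|\m A(\bar x)|$ is large, we use $|\m A|\le h(n,k)+|\m A(x)|-|\m B(x)|$ from the inequality above, together with Hilton's bound (Theorem~\ref{3-12-2}) for $\m A(\bar x),\m B(\bar x)$. The latter gives $|\m A(\bar x)|+|\m B(\bar x)|\le\binom{n-1}{k}$, although only when $n-1\ge2k$. The case $n=2k$ needs separate treatment via complements.

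\textbf{Step 3.} Combine the estimates into a one-variable inequality in the parameter $t=|\m A(\bar x)|$ (or $|\m B(x)|$). Then show that the resulting function is maximized at an endpoint, where it equals at most $h(n,k)^2$. This is where the product rather than the sum matters: we use $|\m A|\ge h(n,k)$ to force $|\m B|\le h(n,k)$, and then show that the loss in $|\m B|$ dominates the gain in $|\m A|$ relative to $h(n,k)$.

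\textbf{Main obstacle.} We expect the hardest part to be the middle range of $\gamma(\m A)$ and the boundary case $n=2k$ (and small $k$ such as $k=3$), where the crude bounds for $|\m B[x]|$ and $|\m B(\bar x)|$ are not simultaneously tight. There we would first try a finer use of Proposition~\ref{lowerupper}(ii), with $i$ chosen according to $|\m A(x)|$. Failing that, we would use maximality of $|\m A||\m B|$ to assume both families are maximal cross-intersecting, and apply the structure-preserving shifts (Lemma~\ref{spl}, Lemma~\ref{SUV'}) to reduce to L-initial-like configurations where exact computation is possible.
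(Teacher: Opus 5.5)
\textbf{Step~2 rests on a false inequality.} On the $(n-1)$-set $[n]\setminus\{x\}$, the family of \emph{smaller} uniformity in the pair $(\mathcal{A}(\bar x),\mathcal{B}(x))$ is $\mathcal{B}(x)$, of uniformity $k-1$. So Theorem~\ref{12-3-2} only gives
$|\mathcal{A}(\bar x)|+|\mathcal{B}(x)|\le\binom{n-1}{k}-\binom{n-k}{k}+1$, and this equals $h(n,k)$ only when $n=2k$.

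The bound $|\mathcal{A}(\bar x)|+|\mathcal{B}(x)|\le h(n,k)$ that you use is false inside the regime of the lemma. Take $(\mathcal{A},\mathcal{B})=(\mathcal{C}_2,\mathcal{C}_1)$ from Construction~\ref{2-27-1} and $x=1$. Then $|\mathcal{A}|\ge h(n,k)$, $\Delta(\mathcal{A})\ge\binom{n-2}{k-2}+1$ and $\Delta(\mathcal{B})=\binom{n-2}{k-2}$. Yet
$|\mathcal{A}(\bar 1)|+|\mathcal{B}(1)|=\binom{n-1}{k-1}-\binom{n-k-2}{k-1}=h(n,k)+\binom{n-k-2}{k-2}-1$, which exceeds $h(n,k)$ for $n>2k$. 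Hence your large-diversity estimate $|\mathcal{A}|\le h(n,k)+|\mathcal{A}(x)|-|\mathcal{B}(x)|$ has no basis.

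\textbf{The same example breaks the overall strategy.} It satisfies $|\mathcal{C}_1|+|\mathcal{C}_2|=2h(n,k)+\binom{n-k-2}{k-2}-1>2h(n,k)$. So in this degree-asymmetric regime the loss in $|\mathcal{B}|$ does not dominate the gain in $|\mathcal{A}|$ additively. The product stays below $h(n,k)^2$ only because of the imbalance (Proposition~\ref{3-11-2}), and the true trade-off is the one traced by the pairs $(\mathcal{H}_A,\mathcal{H}_B)$ of Construction~\ref{H_AH_B}.

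The tools you list (Proposition~\ref{lowerupper}, Theorems~\ref{3-12-2}, \ref{12-1-1}, \ref{12-3-2}) yield size-only inequalities that come from L-initial replacements. In this regime, replacing $\mathcal{B}(x)$ and $\mathcal{B}(\bar x)$ by L-initial families makes $\mathcal{B}$ a star centred at $\min([n]\setminus\{x\})$, and the degree conditions are lost. Such inequalities therefore cannot see the non-triviality of $\mathcal{B}$ beyond $\mathcal{B}(\bar x)\neq\emptyset$, and that non-triviality is exactly what keeps the product down. Step~3 asserts a one-variable inequality that you never derive and that these estimates cannot deliver; all the real work sits in your final fallback sentence.

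\textbf{What the paper uses instead.} Your main line never uses the extremality hypothesis; the paper's proof relies on it throughout.
It takes a lex-minimal maximizer, so every $S^Q_{U,V}$-shift makes one of the two families a star (Observation~\ref{12-25-2}).
Lemma~\ref{1-21-1} and Proposition~\ref{12-31-1} dispose of the cases where a sum bound does hold. What remains is Assumption~\ref{asm}: every shift makes $\mathcal{B}$ a star.
If $\tau(\mathcal{B}_\Delta)=1$ or the pair is shifted, then $\gamma(\mathcal{B})=1$ (Propositions~\ref{covernumber1}, \ref{covernumber2-shifted}). This case is handled by comparison with $(\mathcal{H}_{A^\ast},\mathcal{H}_{B^\ast})$ via the $k$-partner Lemma~\ref{3-12-1} (Proposition~\ref{gamma1}).
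Otherwise, a detailed structural analysis of $\mathcal{B}$, combined with the Frankl--Wang lower bound~\eqref{minAB}, gives a contradiction except for the $k=4$ configuration $(\mathcal{D}_1,\mathcal{D}_2)$, whose product is checked directly.
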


We now assemble Lemmas \ref{mainlem0}, \ref{mainlem1}, and \ref{mainlem2} into a proof of the main theorem.

\begin{proof}[Proof of Theorem \ref{thm:main}]
Denote 
\[
\Pi:=\max\{\,|\m A||\m B| : \m A,\m B\subset \tbinom{[n]}{k}
\text{ are non-trivial cross-intersecting}\,\}.
\]
The Hilton--Milner family $\m{HM}(n,k)$ is non-trivial and intersecting,
so $\m{HM}(n,k)$ and $\m{HM}(n,k)$ are non-trivial cross-intersecting families with product of sizes $h(n,k)^2$. The pair above gives
\begin{equation}\label{lowerbd}
\Pi\ge h(n,k)^2.
\end{equation}
It remains to prove $\Pi\le h(n,k)^2$.
Recall the weight $w(\m F):=\sum_{S\in\m F}
i_{\mathrm{lex}}(S)$ and the relation $\precneqq$ on pairs. By definition,
$(\m C,\m D)\precneqq(\m A,\m B)$ entails $w(\m C)\le w(\m A)$,
$w(\m D)\le w(\m B)$ with at least one strict inequality, and hence
\begin{equation}\label{weightdrop}
 (\m C,\m D)\precneqq(\m A,\m B) \text{ implies }
w(\m C)+w(\m D)<w(\m A)+w(\m B).
\end{equation}
Among all non-trivial cross-intersecting pairs $(\m A,\m B)$ with
$|\m A||\m B|=\Pi$, fix one minimizing $w(\m A)+w(\m B)$. 
We claim that $(\m A,\m B)$ is lex-minimal. Indeed, let
$(\m C,\m D)$ be a cross-intersecting pair with
$|\m C|=|\m A|$, $|\m D|=|\m B|$ and
$(\m C,\m D)\precneqq(\m A,\m B)$. If both $\m C$ and $\m D$
were non-trivial, then $|\m C||\m D|=\Pi$ and, by~\eqref{weightdrop},
the total weight would be smaller, contradicting the choice of
$(\m A,\m B)$.

By~\eqref{lowerbd}, $|\m A||\m B|=\Pi\ge h(n,k)^2$, so
$\max\{|\m A|,|\m B|\}\ge h(n,k)$. 
By symmetry, we may assume $|\m A|\ge h(n,k).$
Lemma \ref{mainlem0} now yields $\Delta(\m A)\ge \binom{n-2}{k-2}+1.$
We split into two cases according to $\Delta(\m B)$.
If $\Delta(\m B)\ge \binom{n-2}{k-2}+1$, then $\Delta(\m A),\Delta(\m B)\ge \binom{n-2}{k-2}+1$, so
Lemma \ref{mainlem1} gives $|\m A|+|\m B|\le 2h(n,k)$, and hence
$\Pi=|\m A||\m B|\le\Big(\tfrac{|\m A|+|\m B|}{2}\Big)^2\le h(n,k)^2.$
If $\Delta(\m B)\le \binom{n-2}{k-2}$, then  
Lemma \ref{mainlem2} gives directly $\Pi=|\m A||\m B|\le h(n,k)^2.$ 
In either case $\Pi\le h(n,k)^2$. Together with \eqref{lowerbd}, this gives
$\Pi=h(n,k)^2$, and the value is attained by $\m A=\m B=\m{HM}(n,k)$. This
completes the proof of Theorem \ref{thm:main}.
\end{proof}

\section{Proof of Lemma \ref{mainlem0}}\label{sec:large-degree}
The purpose of this section is to prove Lemma \ref{mainlem0}. Starting from the
lex-minimal setting, we show that one of the two families must have
large maximum degree.
We begin with two observations. 

\begin{obser}\label{notlinitial}
Let $n\ge a+b$, and let $\m A\subset {[n]\choose a}$ and $\m B\subset {[n]\choose b}$ be non-trivial cross-intersecting families. Then $\m A$ and $\m B$ are not L-initial.
\end{obser}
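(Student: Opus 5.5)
We argue by contradiction. Suppose one of the two families, say $\m A$, is L-initial; we show that then $\m A$ or $\m B$ is a star. The key fact is that L-initial families are initial segments of the lex order on $\binom{[n]}{a}$, and the first $\binom{n-1}{a-1}$ sets in this order are exactly the $a$-sets containing $1$.

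\textbf{Step 1: $\m A$ is not a star.} Since $\m A$ is non-trivial, it is not contained in the full star of $1$. An initial lex segment therefore has $|\m A|>\binom{n-1}{a-1}$. So $\m A$ contains every $a$-set through $1$, together with $[2,a+1]$, the first $a$-set avoiding $1$.

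\textbf{Step 2: $\m B$ is a star with center $1$.} Take any $B\in\m B$ and suppose $1\notin B$. Then $B\subset[2,n]$ and $|B|=b$. The complement $[2,n]\setminus B$ has size $n-1-b\ge a-1$, using $n\ge a+b$. Choose $a-1$ elements $T$ of this complement. The set $\{1\}\cup T$ is an $a$-set containing $1$, so it lies in $\m A$. It is also disjoint from $B$, which contradicts the cross-intersecting property. Hence every $B\in\m B$ contains $1$, so $\m B$ is a star, contradicting non-triviality.

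\textbf{Step 3: the case where $\m B$ is L-initial.} The roles of the families are symmetric: the condition $n\ge a+b$ is symmetric in $a$ and $b$. Swapping $\m A$ and $\m B$ in Steps 1 and 2 shows that $\m B$ is not L-initial either. This proves that neither family is L-initial.

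The only point needing care is the counting in Step 2. We need $n-1-b\ge a-1$ to find the disjoint $a$-set through $1$, and this is exactly the hypothesis $n\ge a+b$. Beyond that there is no real obstacle.
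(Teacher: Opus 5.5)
Your proof is correct and is essentially the paper's argument. In both, L-initiality plus non-triviality forces $\mathcal A$ to contain every $a$-set through $1$, and $n\ge a+b$ then forces every member of $\mathcal B$ to contain $1$; the other family is handled by the same w.l.o.g.\ symmetry you use in Step 3.
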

\begin{proof}
Assume w.l.o.g. $\m A$ is L-initial.
Since $\m A$ is non-trivial, all $a$-sets $A$ with $1\in A\subset [n]$ belong to $\m A$.
Since $n\ge a+b$, the cross-intersecting property of $\m A$ and $\m B$ forces  every set in $\m B$ to contain $1$, contradicting the non-triviality of $\m B$.
\end{proof}

Recall that, by definition, an $S_{U,V}^Q$-shift of
$(\m A,\m B)$ acts non-trivially on $(\m A,\m B)$.
Moreover, it preserves the sizes of the two families, and, 
by Lemma \ref{SUV'}, it preserves the
cross-intersecting property. 
We therefore have the following observation.
\begin{obser}\label{12-25-2}
Let $n\ge 2k$ and $\mathcal{A}, \mathcal{B} \subset{[n]\choose k}$ be non-trivial cross-intersecting families. Suppose further that $(\m A, \m B)$ is lex-minimal (Definition \ref{lexmin}). Then, for any $S_{U,V}^Q$-shift of $(\m A, \m B)$, at least one of $S_{U,V}^Q(\mathcal{A})$ and $S_{U,V}^Q(\mathcal{B})$ is a star. 
\end{obser}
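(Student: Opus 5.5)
The plan is to argue by contradiction from the definition of lex-minimality. Fix an $S_{U,V}^Q$-shift of $(\m A,\m B)$, so $(U,V)$ satisfies {\bf Q} for at least one of $\m A,\m B$. Write $\m C:=S_{U,V}^Q(\m A)$ and $\m D:=S_{U,V}^Q(\m B)$, and suppose that neither $\m C$ nor $\m D$ is a star. We will exhibit a non-trivial cross-intersecting pair with the same sizes that lies strictly below $(\m A,\m B)$ in $\precneqq$, which Definition~\ref{lexmin} forbids.

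First, the sizes are preserved. By construction of $S_{U,V}(\m F)$, a set $F$ is replaced by $S_{U,V}(F)$ only when the image is not already in $\m F$. Otherwise $F$ is kept. The map is therefore injective on $\m F$, so $|\m C|=|\m A|$ and $|\m D|=|\m B|$. Second, since $n\ge 2k$, Lemma~\ref{SUV'} applies with $a=b=k$ and gives that $\m C$ and $\m D$ are cross-intersecting.

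Third, we check the order. Each replaced set $F$ satisfies $V\subset F$ and $U\cap F=\emptyset$, and is mapped to $(F\setminus V)\cup U$. Because $U\prec V$ and $U,V$ are disjoint of equal size, $\min(U\setminus V)<\min(V\setminus U)$. Hence the new set precedes $F$ in lex order, so $i_{\rm lex}$ strictly drops on every moved set. It follows that $S_{U,V}(\m F)\prec \m F$ for every family $\m F$, with equality exactly when nothing moves. By property {\bf Q}, $S_{U,V}(\m F)\precneqq\m F$ for at least one of $\m F\in\{\m A,\m B\}$, so $(\m C,\m D)\precneqq(\m A,\m B)$.

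Finally, $\m C,\m D$ are cross-intersecting, have the same sizes as $\m A,\m B$, and satisfy $(\m C,\m D)\precneqq(\m A,\m B)$. Lex-minimality therefore forces at least one of them to be a star, contradicting our assumption. There is no real obstacle here; the only points needing care are the injectivity and the strict decrease of the lex weight, both of which follow directly from the definitions.
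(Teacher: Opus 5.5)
Your proof is correct and is essentially the paper's argument. The paper notes that an $S_{U,V}^Q$-shift acts non-trivially, preserves the sizes of both families, and preserves cross-intersection by Lemma~\ref{SUV'}, so lex-minimality forces a star; you have simply made explicit the routine details it leaves implicit, namely injectivity of the shift and the strict drop of $i_{\mathrm{lex}}$ on every moved set.
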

\begin{proposition}\label{1-1-2}
Let $n\ge 2k\ge 6$, and let $\mathcal{A}, \mathcal{B}\subset {[n]\choose k}$ be maximal non-trivial cross-intersecting families. Suppose that $S_{i,j}(\mathcal{B})$ is a star for some $1\le i<j \le n$. Then
all $k$-sets $A$ with $\{i,j\}\subset A \subset [n]$ belong to $\mathcal{A}$. Furthermore, $\mathcal{A}(i\bar{j})\ne \emptyset$, $\mathcal{A}(\bar{i}j)\ne \emptyset$ and $\Delta(\m A)\ge {n-2\choose k-2}+1$. 
\end{proposition}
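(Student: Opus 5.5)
By Lemma \ref{12-22-1} applied to $\m B$, every $B\in\m B$ meets $\{i,j\}$, and $\m B(i)\cap\m B(j)=\emptyset$. The plan is to derive each claim from this structure together with the maximality of $(\m A,\m B)$.

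\textbf{Step 1: the full block $\{i,j\}\subset A$.} Let $A$ be any $k$-set containing $\{i,j\}$. Since every $B\in\m B$ meets $\{i,j\}\subset A$, the family $\m A\cup\{A\}$ is still cross-intersecting with $\m B$. Adding a set to the non-trivial family $\m A$ keeps it non-trivial, and $\m B$ is unchanged. So $(\m A\cup\{A\},\m B)$ is a non-trivial cross-intersecting pair containing $(\m A,\m B)$, and maximality gives $A\in\m A$.

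\textbf{Step 2: $\m A(i\bar j)\ne\emptyset$ and $\m A(\bar i j)\ne\emptyset$.} Because $\m B$ is not a star, some $B_1\in\m B$ misses $i$; it must then contain $j$, so $B_1\setminus\{j\}\in\m B(\bar i j)$. Likewise some $B_2\in\m B$ contains $i$ but not $j$. Suppose, for contradiction, that $\m A(i\bar j)=\emptyset$, so every $A\in\m A$ containing $i$ also contains $j$.
\begin{itemize}
\item If moreover $\m A(\bar i j)=\emptyset$, then every member of $\m A$ either contains both $i,j$ or misses both.
\item Otherwise, every member of $\m A$ contains $j$ or misses $i$.
\end{itemize}
Now try to add to $\m B$ the $k$-sets $B$ with $i\in B$, $j\notin B$ that are not yet present. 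Such a $B$ meets every $A\in\m A$ that contains $i$ (and hence $j$). The danger comes from members of $\m A$ that miss $i$, so this route alone is not clean.

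A better route uses the disjointness $\m B(i)\cap\m B(j)=\emptyset$. I would prove that $\m A$ contains a set $A$ with $i\in A$, $j\notin A$ as follows. Take $T\in\m B(i\bar j)$, i.e.\ $T\cup\{i\}\in\m B$ with $j\notin T$. Then $T\cup\{j\}\notin\m B$, and by maximality this set conflicts with some $A\in\m A$: that is, $A\cap(T\cup\{j\})=\emptyset$. Such an $A$ misses $j$, yet it must meet $T\cup\{i\}\in\m B$, so $i\in A$. This gives $A\in\m A$ with $i\in A$, $j\notin A$. The one case to check is that adding $T\cup\{j\}$ keeps $\m B$ non-trivial; it does, because enlarging a non-star family cannot turn it into a star. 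Hence $\m A(i\bar j)\ne\emptyset$, and the symmetric argument starting from $\m B(\bar i j)\ne\emptyset$ gives $\m A(\bar i j)\ne\emptyset$.

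\textbf{Step 3: the degree bound.} By Step 1, $\m A$ contains all $\binom{n-2}{k-2}$ sets containing $\{i,j\}$. By Step 2, it also contains at least one set containing $i$ but not $j$. Therefore
\[
d_{\m A}(i)\ge\binom{n-2}{k-2}+1,
\]
and so $\Delta(\m A)\ge\binom{n-2}{k-2}+1$.

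The main obstacle is Step 2: one must pick the right set to add, namely swapping $i$ for $j$ in a member of $\m B$, so that maximality forces a witness in $\m A(i\bar j)$, while also confirming that the enlarged pair stays non-trivial.
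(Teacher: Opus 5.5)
Your proposal is correct and follows essentially the same route as the paper: Lemma~\ref{12-22-1} plus maximality forces $\mathcal{A}$ to contain every $k$-set through $\{i,j\}$. For $T\in\mathcal{B}(i\bar{j})$, the set $T\cup\{j\}\notin\mathcal{B}$ must be blocked by some $A\in\mathcal{A}$ disjoint from it, which forces $A\in\mathcal{A}(i\bar{j})$ (symmetrically for $\mathcal{A}(\bar{i}j)$), and counting $d_{\mathcal{A}}(i)$ gives the degree bound. The abandoned first attempt in your Step 2 is unnecessary and can simply be deleted.
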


\begin{proof}
Since $S_{i,j}(\mathcal{B})$ is a star, by Lemma \ref{12-22-1}, $B\cap \{i, j\}\ne \emptyset$ for all $B\in \mathcal{B}$, and $\mathcal{B}(i)\cap \mathcal{B}(j)=\emptyset$. 
By the  maximality of $(\mathcal{A}, \mathcal{B})$, all $k$-sets $A$ with $\{i,j\}\subset A \subset [n]$ belong to $\mathcal{A}$, and thus $|\m A(\{i,j\})|={n-2\choose k-2}$.
Since $\mathcal{B}$ is non-trivial, $\mathcal{B}(i\bar{j})\ne \emptyset$ and $\mathcal{B}(\bar{i}j)\ne \emptyset$. Take $B\in \mathcal{B}(i\bar{j})$, and write $B':=B\cup \{i\}$, $B'':=B\cup \{j\}$. Then $B'\in \m B$ and $B''\not\in \m B$ since 
$\mathcal{B}(i)\cap \mathcal{B}(j)=\emptyset$. 
The maximality of $(\mathcal{A}, \mathcal{B})$ implies that there exists $A\in \m A$ such that $A\cap B''=\emptyset$ and $A\cap B'\ne \emptyset$. 
This forces $i\in A$ and $j\not\in A$, so $A\in \mathcal{A}(i\bar{j})$, and hence $\mathcal{A}(i\bar{j})\ne \emptyset$. 
A symmetry argument gives $\mathcal{A}(\bar{i}j)\ne \emptyset$. 
Combining $|\m A(\{i,j\})|={n-2\choose k-2}$ with $\mathcal{A}(i\bar{j})\ne \emptyset$, we have $\Delta(\m A)\ge |\m A(i)|\ge {n-2\choose k-2}+1$. 
\end{proof}

Let $\m F\subset {X\choose k}$ and $i\in X$. We say that $\m F$ is {\it $i$-shifted} if for any $j>i$, $S_{i,j}$-shift acts trivially on $\m F$.

\begin{proposition}\label{suvcenter}
Let $i=\min X$ and $\m F\subset {X\choose k}$ be $i$-shifted.  If $S_{U,V}(\m F)$ is a star and $i\in U$, then $i$ is a center of $S_{U,V}(\m F)$.
\end{proposition}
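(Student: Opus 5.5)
The plan is to argue by contradiction. Suppose $i$ is not a center of $S_{U,V}(\m F)$. Then some set $G\in S_{U,V}(\m F)$ has $i\notin G$. We will use the $i$-shiftedness of $\m F$ to produce another member of $S_{U,V}(\m F)$ that avoids the star's center.

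First, locate $G$ inside $\m F$. By definition, every member of $S_{U,V}(\m F)$ is either an image $S_{U,V}(F)$ with $F\in\m F$, or a set $F\in\m F$ with $S_{U,V}(F)\in\m F$.
\begin{itemize}
\item If $G=S_{U,V}(F)$ with $S_{U,V}(F)\ne F$, then $G=(F\setminus V)\cup U\ni i$, since $i\in U$. This contradicts $i\notin G$.
\item Otherwise $G=S_{U,V}(F)=F$, or $G$ is a retained set. In both cases $G\in\m F$.
\end{itemize}
So we may take $G\in\m F$ with $i\notin G$.

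Next, let $c$ be a center of the star $S_{U,V}(\m F)$. Since $i\notin G$ and $c\in G$, we have $c\ne i$, and hence $c>i$ because $i=\min X$. As $\m F$ is $i$-shifted, $S_{i,c}$ acts trivially on $\m F$. Applying this to $G$, which contains $c$ but not $i$, gives $G':=(G\setminus\{c\})\cup\{i\}\in\m F$.

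Finally, since $i\in G'\cap U$, the shift fixes $G'$, that is, $S_{U,V}(G')=G'$. Hence $G'=S_{U,V}(G')\in S_{U,V}(\m F)$. But $c\notin G'$, contradicting that $c$ is a center.

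Therefore every member of $S_{U,V}(\m F)$ contains $i$, so $i$ is a center. There is no real obstacle; the only care needed is the first step, which pins down that a set avoiding $i$ in the shifted family must already lie in $\m F$. This uses $i\in U$.
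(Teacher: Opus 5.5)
Your proposal is correct and follows the paper's proof essentially step for step. The paper also takes a center $c>i$ and a member avoiding $i$, notes that this member lies in $\mathcal F$ because $i\in U$, and uses $i$-shiftedness to swap $c$ for $i$. The swapped set meets $U$ at $i$, so $S_{U,V}$ fixes it, it lies in $S_{U,V}(\mathcal F)$, and it avoids $c$. Your case analysis of the definition of $S_{U,V}(\mathcal F)$ simply spells out the step the paper states in one line.
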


\begin{proof}
Suppose for contradiction that $i$ is not a center of $S_{U,V}(\m F)$. 
Since $S_{U,V}(\m F)$ is a star, there is $j\in X$ such that $j$ is a center of $S_{U,V}(\m F)$. Then $j>i$ since $i=\min X$, and there exists $F\in S_{U,V}(\m F)$ such that $j\in F$ and $i\not\in F$. 
Let $F':=(F\setminus \{j\})\cup \{i\}$. Then $F'\not\in S_{U,V}(\m F)$. 
Since $i\in U$ and $i\not\in F$, $F\in S_{U,V}(\m F) \cap \m F$. As $\m F$ is $i$-shifted, $F'\in \m F$. 
Since $i\in F'\cap U$ and $V\cap U=\emptyset$, $S_{U,V}(F')=F'$. Hence $F'\in S_{U,V}(\m F)$, a contradiction.
\end{proof}

\subsection{Proof of Lemma \ref{mainlem0}}
\begin{proof}[Proof of Lemma \ref{mainlem0}]
Assume for contradiction that $\Delta(\mathcal{A})\le {n-2\choose k-2}$. 
As $|\m A(1)|\le \Delta(\m A)\le {n-2\choose k-2}$ and $|\m A|\ge h(n,k)$, 
we have 
\begin{equation}\label{abar1>}
    |\m A(\bar{1})|=|\m A|-|\m A(1)|\ge h(n,k)-{n-2\choose k-2}= {n-2\choose k-1}-{n-1-k\choose k-1}+1.
\end{equation}

\begin{claim}
    $\Delta(\m B)\le {n-2\choose k-2}$.
\end{claim}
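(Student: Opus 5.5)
We argue by contradiction. Suppose $\Delta(\m B)\ge \binom{n-2}{k-2}+1$, and let $x$ be an element of maximum degree in $\m B$, so that $|\m B(x)|\ge \binom{n-2}{k-2}+1$. The plan is to play the large link $\m B(x)$ against the large diversity part $\m A(\bar x)$, using Proposition~\ref{lowerupper}(ii) on the ground set $[n]\setminus\{x\}$.

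First, the estimate \eqref{abar1>} does not depend on the element $1$; it uses only $\Delta(\m A)\le\binom{n-2}{k-2}$ and $|\m A|\ge h(n,k)$. Indeed, $|\m A(x)|\le \Delta(\m A)\le \binom{n-2}{k-2}$ for every $x$, so the same computation gives
\[
|\m A(\bar x)|\ \ge\ h(n,k)-\binom{n-2}{k-2}\ =\ \binom{n-2}{k-1}-\binom{n-k-1}{k-1}+1 .
\]

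Next, observe that $\m A(\bar x)\subset\binom{[n]\setminus\{x\}}{k}$ and $\m B(x)\subset\binom{[n]\setminus\{x\}}{k-1}$ are cross-intersecting. This holds because a set of $\m A$ avoiding $x$ must meet $B\setminus\{x\}$ for every $B\in\m B$ with $x\in B$. The ground set has $n-1$ elements, and $n-1\ge 2k-1=a+b$ with $a=k$, $b=k-1$.

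Now apply Proposition~\ref{lowerupper}(ii) with ground set of size $n-1$, $a=k$, $b=k-1$, and $i=b=k-1$. Its hypothesis reads
\[
|\m B(x)|\ \ge\ \binom{n-2}{k-2}+\binom{n-2-(k-1)}{0}=\binom{n-2}{k-2}+1,
\]
which holds by our assumption. The proposition then gives
\[
|\m A(\bar x)|\ \le\ \binom{n-2}{k-1}-\binom{n-2-(k-1)}{k-1}=\binom{n-2}{k-1}-\binom{n-k-1}{k-1}.
\]
This is exactly one less than the lower bound obtained above, a contradiction. Hence $\Delta(\m B)\le\binom{n-2}{k-2}$.

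The main point to verify is the bookkeeping: the range condition $i\le b$ (here $i=k-1=b$), and the fact that the ground-set size $n-1\ge a+b$ is what Proposition~\ref{lowerupper} requires. Proposition~\ref{lowerupper} itself rests on Theorem~\ref{12-1-1}, which only needs $n-1\ge a+b$. Lex-minimality and maximality of the product are not needed for this claim.
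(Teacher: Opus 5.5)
Your proof is correct and is essentially the paper's argument. The paper also applies Proposition~\ref{lowerupper}(ii) on the $(n-1)$-element ground set with $(a,b,i)=(k,k-1,k-1)$ to the cross-intersecting pair $\m A(\bar x)$, $\m B(x)$, and derives a contradiction with \eqref{abar1>}; the only difference is that the paper relabels so the maximum-degree element of $\m B$ is $1$, while you keep a general $x$ and note that \eqref{abar1>} holds for every element, which is exactly what justifies that relabeling.
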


\begin{proof}
We w.l.o.g. assume $\m B_{\Delta}=\m B(1)$. 
Observe that $\m A(\bar{1})\subset {[2,n]\choose k}$ and $\m B(1)\subset {[2,n]\choose k-1}$ are cross-intersecting. Since $n\ge 2k$, 
applying Proposition \ref{lowerupper}(ii) (with $[2,n], k, k-1, k-1$ in the roles of $[n], a, b, i$, respectively) together with~\eqref{abar1>}, we obtain 
$\Delta(\m B)=|\m B(1)|\le {n-2\choose k-2}.$ 
\end{proof}

\begin{claim}\label{abshifted}
    $\mathcal{A}$ and $\mathcal{B}$ are shifted.
\end{claim}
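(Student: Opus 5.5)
The plan is to argue by contradiction, using only the lex-minimality of $(\m A,\m B)$, the maximality of the product, and Proposition~\ref{1-1-2}. Suppose that for some $1\le i<j\le n$ the shift $S_{i,j}$ acts non-trivially on $\m A$ or on $\m B$.

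First, I check that this is an $S^Q_{\{i\},\{j\}}$-shift of the pair $(\m A,\m B)$, as remarked after the definition of property $\bm{Q}$. Since $i<j$, replacing $j$ by $i$ in a set strictly lowers its lex position. So a non-trivial action gives $S_{i,j}(\m F)\precneqq\m F$ for the family $\m F$ it acts on. The condition on proper subsets $V'\subsetneqq\{j\}$ is vacuous: only $V'=U'=\emptyset$ occurs, and that shift is the identity. Hence Lemma~\ref{SUV'} (using $n\ge 2k$) shows that $S_{i,j}(\m A)$ and $S_{i,j}(\m B)$ are cross-intersecting, with the same sizes as $\m A$ and $\m B$. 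Observation~\ref{12-25-2} then says that at least one of $S_{i,j}(\m A)$ and $S_{i,j}(\m B)$ is a star.

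Second, I check that $(\m A,\m B)$ is maximal non-trivial cross-intersecting, so that Proposition~\ref{1-1-2} applies. Suppose we could add a set to $\m A$, or to $\m B$, while keeping the pair cross-intersecting. The enlarged family contains the non-star $\m A$ (or $\m B$), so it is still non-trivial. Both families are non-empty, so the product strictly increases. This contradicts the maximality of $|\m A||\m B|$.

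Finally, I split into two cases.
\begin{itemize}
\item If $S_{i,j}(\m B)$ is a star, Proposition~\ref{1-1-2} gives $\Delta(\m A)\ge\binom{n-2}{k-2}+1$. This contradicts the standing assumption $\Delta(\m A)\le\binom{n-2}{k-2}$.
\item If $S_{i,j}(\m A)$ is a star, Proposition~\ref{1-1-2} with the roles of $\m A$ and $\m B$ swapped gives $\Delta(\m B)\ge\binom{n-2}{k-2}+1$. This contradicts the previous claim that $\Delta(\m B)\le\binom{n-2}{k-2}$.
\end{itemize}
Hence every $S_{i,j}$ acts trivially on both families, so $\m A$ and $\m B$ are shifted.

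The only delicate point is confirming that a single classical shift meets the hypotheses of Lemma~\ref{SUV'} and Observation~\ref{12-25-2}, namely property $\bm{Q}$ for at least one of the two families. This reduces to the strict lex decrease noted above, so I expect no real obstacle.
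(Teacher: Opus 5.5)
Your proof is correct and follows the paper's argument step for step. A non-trivial $S_{i,j}$ is an $S^Q_{\{i\},\{j\}}$-shift, so Observation~\ref{12-25-2} makes one of the shifted families a star; Proposition~\ref{1-1-2}, applied to the maximal pair, then contradicts either $\Delta(\m A)\le\binom{n-2}{k-2}$ or the preceding claim $\Delta(\m B)\le\binom{n-2}{k-2}$. Your checks of property $\bm{Q}$ and of maximality spell out what the paper asserts in one line each.
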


\begin{proof}
Assume for contradiction that there exist $1\le i <j \le n$ such that $(S_{i,j}(\mathcal{A}), S_{i,j}(\mathcal{B}))\precneqq(\mathcal{A}, \mathcal{B})$. 
Since $(\m A, \m B)$ is lex-minimal, Observation \ref{12-25-2} implies that $S_{i,j}(\mathcal{A})$ or $S_{i,j}(\mathcal{B})$ is a star. 
By the maximality of $|\m A||\m B|$, $\m A$ and $\m B$ are maximal non-trivial cross-intersecting. 
 Proposition \ref{1-1-2} gives  
$\Delta(\mathcal{A})\ge {n-2\choose k-2}+1$ or $\Delta(\m B)\ge {n-2\choose k-2}+1$, which is a contradiction to $\Delta(\mathcal{A})\le {n-2\choose k-2}$ and $\Delta(\m B)\le {n-2\choose k-2}$. Thus $\mathcal{A}$ and $\mathcal{B}$ are shifted.
\end{proof} 

\begin{claim}\label{12nonfull}
    \begin{itemize}
    \item[(i)] 
    $\m A(\bar1\bar2)\ne \emptyset$, $\m A(1\bar2)\ne \emptyset$, and $\m A(\bar1 2)\ne \emptyset$, and the same holds for $\m B$;
    \item[(ii)] 
    $\{1,2\}$ is non-full in both $\m A$ and $\m B$.
    \end{itemize}
\end{claim}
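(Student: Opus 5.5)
We work inside the proof of Lemma~\ref{mainlem0}: $\Delta(\m A)\le\binom{n-2}{k-2}$, $\Delta(\m B)\le\binom{n-2}{k-2}$ (previous claim), both families are shifted (Claim~\ref{abshifted}), and both are non-trivial, so $\tau(\m A),\tau(\m B)\ge 2$.

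For (i), we use non-triviality together with shiftedness. Since $\m A$ is not a star with center $1$, some $A\in\m A$ has $1\notin A$. Because $\m A$ is shifted, the sets of $\m A$ containing $1$ are exactly the shifts of such sets. We prove the three statements separately.

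\emph{$\m A(\bar1\bar2)\ne\emptyset$.} Suppose instead that every set of $\m A$ meets $\{1,2\}$. Then $|\m A|\le \binom{n}{k}-\binom{n-2}{k}$. This alone is not a contradiction, so we use the degree bound: $|\m A(1)|,|\m A(2)|\le\binom{n-2}{k-2}$. Inclusion--exclusion then gives $|\m A|\le 2\binom{n-2}{k-2}$, and also $|\m A(1\bar2)|\le \binom{n-2}{k-2}-|\m A(\{1,2\})|$. I would compare this with $h(n,k)=\binom{n-1}{k-1}-\binom{n-k-1}{k-1}+1$, which for $k\ge3$, $n\ge 2k$ exceeds $2\binom{n-2}{k-2}$ in most cases. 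That comparison is the main obstacle, because it may fail for small parameters. In that case I would argue instead through cross-intersection. If every set of $\m A$ meets $\{1,2\}$ and $\m A$ is shifted and non-trivial, then $\m A(\bar1 2)$ contains a set. Shiftedness then gives $[2,k+1]\in\m A$ and $\{1\}\cup[3,k+1]\in\m A$, so every $B\in\m B$ must meet both of these sets. We then bound $|\m B|$ and obtain a contradiction with $|\m A||\m B|\ge h(n,k)^2$, using Proposition~\ref{lowerupper} on the pair $(\m A(\bar 1),\m B(1))$ as in the previous claim.

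\emph{$\m A(\bar12)\ne\emptyset$.} Take $A\in\m A$ with $1\notin A$. If $2\in A$ we are done. Otherwise $2\notin A$, and $S_{2,\min A}(A)\in\m A$ by shiftedness, which gives a set containing $2$ but not $1$.

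\emph{$\m A(1\bar2)\ne\emptyset$.} Apply $S_{1,2}$ to a set of $\m A(\bar12)$; by shiftedness the result lies in $\m A$, contains $1$ and avoids $2$.

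The same argument applies to $\m B$. The only asymmetric input is $|\m A|\ge h(n,k)$; for $\m B$ we instead use $|\m B|\ge h(n,k)^2/|\m A|$ together with the bound $|\m A|\le\binom{n}{k}-\ldots$ (Hilton, Theorem~\ref{3-12-2}), or we run the $\bar1\bar2$ argument directly through cross-intersection with $\m A$.

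For (ii), suppose $\{1,2\}$ is full in $\m A$, so $\m A$ contains every $k$-set through $\{1,2\}$. Since $n\ge 2k$, every $B\in\m B$ must then meet $\{1,2\}$: if $B\cap\{1,2\}=\emptyset$, there is a $(k-2)$-set in $[3,n]\setminus B$, because $n-2-k\ge k-2$, and adding it to $\{1,2\}$ gives a set of $\m A$ disjoint from $B$. This contradicts $\m B(\bar1\bar2)\ne\emptyset$ from (i). The symmetric argument handles $\m B$.
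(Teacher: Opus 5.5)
\textbf{Verdict: genuine gap, in the clause $\m B(\bar1\bar2)\ne\emptyset$.} Everything else works.

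Your treatment of $\m A$ is the paper's argument, and it needs no fallback. If $\m A(\bar1\bar2)=\emptyset$, then $|\m A|\le|\m A(1)|+|\m A(2)|\le 2\binom{n-2}{k-2}$. But $h(n,k)>2\binom{n-2}{k-2}$ holds for \emph{every} $n\ge 2k$, $k\ge 3$. To see this, combine $h(n,k)\ge\binom{n-2}{k-2}+2\binom{n-3}{k-2}$ with $\binom{n-3}{k-2}>\binom{n-3}{k-3}$, which holds for $n>2k-2$. This is exactly the inequality $\binom{n-2}{k-1}-\binom{n-1-k}{k-1}+1>\binom{n-2}{k-2}$ used in the paper. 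There is no small-parameter exception, so the detour through $[2,k+1]$ and Proposition~\ref{lowerupper} can be dropped. Your derivations of $\m F(\bar12)\ne\emptyset$ and $\m F(1\bar2)\ne\emptyset$ from non-triviality and shiftedness are fine for both families.

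The gap is $\m B(\bar1\bar2)\ne\emptyset$. The ``same argument'' needs $|\m B|>2\binom{n-2}{k-2}$, and no such lower bound on $|\m B|$ is available. Your patch via $|\m B|\ge h(n,k)^2/|\m A|$ cannot work by counting alone. Hilton's bound only gives $|\m A|\le\binom nk-|\m B|$. Even the degree bound $|\m A|\le\frac nk\binom{n-2}{k-2}$ is too weak for large $n$. For instance, with $k=3$, $n=20$ we have $h=52$. The bounds $|\m B|\le 36=2\binom{18}{1}$ and $|\m A|\le 120$ leave room for $|\m A||\m B|\ge 52^2$, since $120\cdot 36=4320$.

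The missing idea is maximality of the pair. Since $|\m A||\m B|$ is maximum, suppose every member of $\m B$ met $\{1,2\}$. Then every $k$-set containing $\{1,2\}$ could be added to $\m A$ without destroying cross-intersection or non-triviality, so all of them already lie in $\m A$. Together with $\m A(1\bar2)\ne\emptyset$, this gives $|\m A(1)|\ge\binom{n-2}{k-2}+1$, contradicting $\Delta(\m A)\le\binom{n-2}{k-2}$. This is how the paper argues.

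Your proof of (ii) for $\m A$ inherits the gap, since it uses $\m B(\bar1\bar2)\ne\emptyset$. You can avoid this: if $\{1,2\}$ were full in $\m A$, then together with $\m A(1\bar2)\ne\emptyset$ it would directly violate the degree bound.
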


\begin{proof}
By Claim \ref{abshifted}, $\Delta(\mathcal{A})=|\mathcal{A}(1)|$, $\Delta(\mathcal{B})=|\mathcal{B}(1)|$ and $|\mathcal{A}(1)|\ge |\mathcal{A}(2)|\ge \dots \ge |\mathcal{A}(n)|$. 
Since $n\ge 2k$, ${n-2\choose k-1}-{n-1-k\choose k-1}+1>{n-2\choose k-2}$.
By~\eqref{abar1>}, we have  
$|\mathcal{A}(\bar{1})|>{n-2\choose k-2}$.
Since $\Delta(\m A)=|\mathcal{A}(1)|\le {n-2\choose k-2}$, $\mathcal{A}(\bar{1}\bar{2})\ne \emptyset$, otherwise $|\m A(2)|\ge |\m A(\bar12)|=|\m A(\bar1)|>{n-2\choose k-2}\ge |\mathcal{A}(1)|$, a contradiction. 
Since $\m A$ is shifted,  
$\mathcal{A}(1\bar{2})\ne \emptyset$ and $\mathcal{A}(\bar{1}2)\ne \emptyset$.  
For $\m B$, suppose that $\mathcal{B}(\bar{1}\bar{2})= \emptyset$, then by the maximality of $(\mathcal{A}, \mathcal{B})$, all $k$-sets $A$ containing $\{1, 2\}$ belong to $\mathcal{A}$, together with $\mathcal{A}(1\bar{2})\ne \emptyset$, this would imply $\Delta(\mathcal{A})>{n-2\choose k-2}$, a contradiction. Thus $\mathcal{B}(\bar{1}\bar{2})\ne \emptyset$. Since $\m B$ is shifted,  
$\mathcal{B}(1\bar{2})\ne \emptyset$ and $\mathcal{B}(\bar{1}2)\ne \emptyset$.      
This proves (i). Since $n\ge 2k$, (ii) follows by (i). 
\end{proof}

Note that $(\m A, \m B)$ is not L-initial (see Observation \ref{notlinitial}). 
We apply Lemma \ref{spl}(i) with $r=1$, $R=\{1,2\}$ and  $\m F=\m A$. 
The required conditions are verified as follows: Since $\m A$ is shifted, every $S_{U,V}$-shift of $\m A$
with $1\in U$ and $|V|=1$ acts trivially.
Thus any non-trivial $S_{U,V}$-shift with $1\in U$
satisfies $|V|\ge 2=|R|$, so (P1) holds. (P2) and (P3) hold by Claim \ref{12nonfull}. 
Hence there exists an $S^{Q}_{U,V}$-shift of $\m A$
(and hence of $(\m A,\m B)$) such that $\{1,2\}\subset U$.
By Observation \ref{12-25-2}, at least one of $S^{Q}_{U,V}(\m A)$ and $S^{Q}_{U,V}(\m B)$
is a star. 
By Proposition \ref{suvcenter}, the center of this star must be $1$.
However, since $2\in U$,
every set in $\mathcal{A}(\bar{1}2)$ and
$\mathcal{B}(\bar{1}2)$ remains unchanged under this $S_{U,V}^Q$-shift.
By Claim \ref{12nonfull},
neither $S^{Q}_{U,V}(\m A)$ nor
$S^{Q}_{U,V}(\m B)$ is a star, a contradiction.
This proves Lemma \ref{mainlem0}.
\end{proof}

\section{The large-degree case: Proof of Lemma \ref{mainlem1}}\label{sec:both-large}
In this section we prove Lemma \ref{mainlem1}, which handles the case where both families have large maximum degree. The goal is to replace the original families by suitable `local'- L-initial families without losing non-trivial cross-intersecting property. 
This reduction lets us estimate the sizes from the simpler structure.
We start with the following lower bound on $h(n,k)$.
 Since $n\ge 2k$,
\begin{equation}\label{hnk>}
    h(n,k)\ge {n-2\choose k-2}+2{n-3\choose k-2}.
\end{equation}

\begin{proposition}\label{1-20-1}
Let $n > k$, let $\m F\subset {[n]\choose k}$ be a non-trivial family such that $\m F_{\Delta}=\m F(1)$. Define the family $\m F'$ by  setting $\m F'(1)=\m L([2,n], |\m F(1)|, k-1)$ and $\m F'(\bar{1})=\m L([2,n], |\m F(\bar{1})|, k)$. 
If $\Delta(\m F)\ge {n-2\choose k-2}+\gamma(\m F)$, then $\m F'$ is non-trivial and $\m F'_{\Delta}=\m F'(1)$.
\end{proposition}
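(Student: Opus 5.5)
The plan is to verify both conclusions directly from the sizes $|\m F'(1)|=\Delta(\m F)$ and $|\m F'(\bar 1)|=\gamma(\m F)$, using only the explicit shape of L-initial families. Since $\m F_\Delta=\m F(1)$, we have $|\m F(1)|=\Delta(\m F)=:\Delta$ and $|\m F(\bar 1)|=\gamma(\m F)=:\gamma$. Because $\m F$ is non-trivial, $\gamma\ge 1$. The hypothesis then gives
\[
\Delta\ge \tbinom{n-2}{k-2}+\gamma\ge \tbinom{n-2}{k-2}+1 .
\]
The construction of $\m F'$ preserves these two numbers, so everything reduces to the structure of $\m L([2,n],\Delta,k-1)$ and $\m L([2,n],\gamma,k)$.

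\emph{Non-triviality.} I would first record the key structural fact. In the lex order on $\binom{[2,n]}{k-1}$, the first $\binom{n-2}{k-2}$ sets are exactly those containing $2$. Since $\Delta\ge\binom{n-2}{k-2}+1$, the family $\m F'(1)$ therefore contains every $(k-1)$-subset of $[2,n]$ through $2$, and also at least one set avoiding $2$.

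With this in hand, I would rule out every possible center. The element $1$ is not a center, because $\m F'(\bar1)\ne\emptyset$ (as $\gamma\ge1$). The element $2$ is not a center, because some member of $\m F'(1)$ avoids $2$, giving a set of $\m F'$ without $2$. Finally, take any $j\ge3$. Since $n\ge k+1$, there is a $(k-1)$-subset of $[2,n]$ that contains $2$ but avoids $j$ (we need $k-2\le n-3$). This set lies in $\m F'(1)$, so $j$ is not a center either. Hence $\m F'$ is non-trivial.

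\emph{Maximum degree.} Here I would show that $1$ attains the maximum degree of $\m F'$. We have $d_{\m F'}(1)=\Delta$. For any $j\ge2$, I would split its degree into the part with $1$ and the part without $1$:
\[
d_{\m F'}(j)=|\{G\in\m F'(1):j\in G\}|+|\{G\in \m F'(\bar1):j\in G\}|\le \tbinom{n-2}{k-2}+\gamma\le \Delta .
\]
The first term is at most $\binom{n-2}{k-2}$, since there are only that many $(k-1)$-subsets of $[2,n]$ containing $j$. The second term is at most $|\m F'(\bar1)|=\gamma$. The final inequality is exactly the hypothesis. So $1$ is an element of maximum degree, and by the convention that ties may be broken arbitrarily we may take $\m F'_\Delta=\m F'(1)$.

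There is no real obstacle in this argument. The only points needing care are the lex-order fact (that the $(k-1)$-sets through $2$ come first) and the condition $n>k$, which guarantees that a set containing $2$ but avoiding $j$ exists.
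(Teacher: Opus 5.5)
Your proof is correct and follows essentially the same route as the paper: non-triviality comes from $\m F'(1)$ containing all $(k-1)$-subsets of $[2,n]$ through $2$ plus one set avoiding $2$, together with $\m F'(\bar1)\ne\emptyset$, and maximality of the degree of $1$ comes from $d_{\m F'}(j)\le\binom{n-2}{k-2}+\gamma\le\Delta$. The only difference is in the degree step: the paper first uses $\gamma\le\binom{n-2}{k-1}$ to show every member of $\m F'(\bar1)$ contains $2$, so that the bound is attained at $j=2$, whereas your direct count shows this detour is unnecessary.
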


\begin{proof}
Since $\m F_{\Delta}=\m F(1)$ and $\Delta(\m F)\ge {n-2\choose k-2}+\gamma(\m F)$, we have $|\m F(\bar{1})|=\gamma(\m F)>0$ and
$|\m F'(1)|=|\m F(1)|\ge {n-2\choose k-2}+|\m F(\bar{1})|>{n-2\choose k-2}$. 
Given that $n>k$ and $\m F'(1)=\m L([2,n], |\m F(1)|, k-1)$, for any element $i\in [2,n]$, there exists $F\in \m F'$ such that $i\not\in F$. 
Since $\m F$ is non-trivial, $\m F'(\bar{1})=\m L([2,n], |\m F(\bar{1})|, k)\ne \emptyset$. 
Thus,  $\m F'$ is non-trivial. 
As ${n-1\choose k-1}\ge |\m F'(1)|=|\m F(1)|\ge {n-2\choose k-2}+|\m F(\bar{1})|={n-2\choose k-2}+|\m F'(\bar{1})|$, 
we deduce that $|\m F'(\bar{1})|\le {n-2\choose k-1}$. This implies that
for every set $F'\in \m F'(\bar{1})$,  we have $2\in F'$. Consequently, 
$\max_{i\in [2,n]}|\m F'(i)|\le |\m F'(2)|={n-2\choose k-2}+|\m F'(\bar{1})|\le |\m F'(1)|$. Thus, $\m F'_{\Delta}=\m F'(1)$.
\end{proof}

\begin{lemma}\label{1-21-1}
Let $n\ge2k\ge 6$, and let $\m A, \m B \subset {[n]\choose k}$ be non-trivial cross-intersecting families. If $|\m A|\ge h(n,k)$ and $\gamma(\m A)\le {n-3\choose k-2}$, then
$|\m A|+|\m B|\le 2h(n,k)$. 
\end{lemma}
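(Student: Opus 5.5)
Let $1$ be an element of maximum degree of $\m A$, so that $\m A_\Delta=\m A(1)$. Write $\alpha:=|\m A(1)|$ and $g:=|\m A(\bar1)|=\gamma(\m A)$. Since $\m A$ is non-trivial and $g\le\binom{n-3}{k-2}$ by hypothesis, we have $1\le g\le \binom{n-3}{k-2}$. Split
\[
|\m A|+|\m B|=\bigl(\alpha+|\m B(\bar1)|\bigr)+\bigl(g+|\m B(1)|\bigr).
\]
On the ground set $[2,n]$ (of size $n-1\ge 2k-1$) there are two cross-intersecting pairs: $\m A(1)\subset\binom{[2,n]}{k-1}$ with $\m B(\bar1)\subset\binom{[2,n]}{k}$, and $\m A(\bar1)\subset\binom{[2,n]}{k}$ with $\m B(1)\subset\binom{[2,n]}{k-1}$. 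In both pairs the uniformities sum to $2k-1\le n-1$, so Theorem~\ref{12-1-1} lets us replace each family by the L-initial family of the same size. Hence
\[
|\m B(\bar1)|\le m_1(\alpha),\qquad |\m B(1)|\le m_2(g),
\]
where $m_1(\alpha)$ is the number of $k$-subsets of $[2,n]$ meeting every member of $\m L([2,n],\alpha,k-1)$, and $m_2(g)$ is the number of $(k-1)$-subsets of $[2,n]$ meeting every member of $\m L([2,n],g,k)$. It therefore suffices to show
\[
\bigl(\alpha+m_1(\alpha)\bigr)+\bigl(g+m_2(g)\bigr)\le 2h(n,k)
\]
for all admissible $\alpha,g$. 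Here $\alpha\ge h(n,k)-\binom{n-3}{k-2}\ge\binom{n-2}{k-2}+\binom{n-3}{k-2}$ by \eqref{hnk>}, and $\alpha\le\binom{n-1}{k-1}$. We may also assume $\m B(\bar1)\neq\emptyset$, since $\m B$ is not a star.

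Next, understand both brackets explicitly. Since $g\le\binom{n-3}{k-2}$, every member of $\m L([2,n],g,k)$ contains $\{2,3\}$, and the first member is $[2,k+1]$. At $g=1$ this gives $g+m_2(g)=1+\binom{n-1}{k-1}-\binom{n-k-1}{k-1}=h(n,k)$. As $g$ grows through the lex blocks $\{2,3\}\cup\cdots$, each new set forbids further $(k-1)$-sets. I would check that $g+m_2(g)$ is non-increasing on $[1,\binom{n-3}{k-2}]$, losing at least $(n-k-2)$-type amounts per block, so that $g+m_2(g)\le h(n,k)-\phi(g)$ for an explicit $\phi(g)\ge 0$ with $\phi(1)=0$. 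Similarly, $\alpha+m_1(\alpha)$ is governed by how deep $\m L([2,n],\alpha,k-1)$ goes. Because $\alpha\ge\binom{n-2}{k-2}+\binom{n-3}{k-2}$, every set containing $2$ or $3$ lies in $\m A(1)$, which forces $\{2,3\}\subset B$ for all $B\in\m B(\bar1)$. Writing $\alpha=\binom{n-1}{k-1}-\beta$, the count $m_1(\alpha)$ is controlled by the complementary structure, in the spirit of Proposition~\ref{lowerupper}(ii). At the Hilton--Milner point ($\m B(\bar1)=\{[2,k+1]\}$, i.e.\ $\alpha=h(n,k)-1$) we get $\alpha+m_1(\alpha)=h(n,k)-1+1=h(n,k)$. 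The claim to verify is $\alpha+m_1(\alpha)\le h(n,k)+\psi(\alpha)$, where $\psi(\alpha)$ measures how far $\alpha$ exceeds $h(n,k)-1$. The constraint $|\m A|=\alpha+g\ge h(n,k)$ then couples the two: a larger $g$ allows a smaller $\alpha$, and we must show $\psi(\alpha)\le\phi(g)$ whenever $\alpha+g\ge h(n,k)$.

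The main obstacle is this comparison $\psi\le\phi$. The functions $x\mapsto x+m(x)$ for L-initial cross-intersecting pairs are piecewise linear with slopes $\pm1$-type behaviour on lex blocks. I would use the standard convexity principle: within each block the sum is linear in the size, so its maximum over the admissible region occurs at block endpoints. That reduces the problem to finitely many endpoint configurations, namely $g\in\{1\}\cup\{\binom{n-3}{k-2}\text{-block boundaries}\}$ and $\alpha$ at the matching lex boundaries. Each endpoint can be checked by a binomial computation, using $n\ge 2k$ and \eqref{hnk>}; the extreme case $g=\binom{n-3}{k-2}$ is where \eqref{hnk>} should be exactly what is needed. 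If this endpoint analysis becomes messy, a fallback is to bound $g+m_2(g)\le h(n,k)$ crudely, via Theorem~\ref{12-3-2} applied on $[2,n]$ together with the $\{2,3\}$-structure. One would then separately show that $\alpha+m_1(\alpha)\le h(n,k)$ unless $g$ is correspondingly large, with any excess absorbed by the loss in the $g$-bracket.
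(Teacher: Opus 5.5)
Your splitting $|\mathcal A|+|\mathcal B|=(\alpha+|\mathcal B(\bar1)|)+(g+|\mathcal B(1)|)$ and the L-initial reduction via Theorem~\ref{12-1-1} match the paper's starting point. However, the proposal never proves a bound on either bracket: both are left as ``I would check'' or ``the claim to verify''. The tools you suggest for them also do not work as stated. The claimed monotonicity of $g+m_2(g)$ is false. For $k=4$, $n=9$, the values of $g+m_2(g)$ for $g=1,\dots,5$ (tails $45,46,47,48,49$ in $[4,9]$) are $53,51,50,50,51$. So it is neither non-increasing nor linear on that lex block, and the ``convexity/endpoint'' reduction has no basis. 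For $k=3$ the $g$-bracket even returns to $h(n,3)$ at $g=n-3$, since \eqref{hnk>} is then an equality.

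More importantly, the obstacle you single out, the coupling $\psi\le\phi$ through $\alpha+g\ge h(n,k)$, does not exist. Since $\mathcal B(\bar1)\neq\emptyset$, the L-initial replacement of $\mathcal B(\bar1)$ contains $[2,k+1]$. Hence every member of $\mathcal L([2,n],\alpha,k-1)$ meets $[2,k+1]$, which forces $\alpha\le h(n,k)-1$, so the first bracket never exceeds $h(n,k)$.

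The missing idea is that \emph{each bracket separately} is the size of a non-trivial intersecting family. Let $\mathcal M_1,\mathcal M_2$ be the families counted by $m_1(\alpha),m_2(g)$.
Because $\alpha\ge\binom{n-2}{k-2}+\binom{n-3}{k-2}$ and $n\ge 2k$, every member of $\mathcal M_1$ contains $\{2,3\}$. Because $g\le\binom{n-3}{k-2}$, every member of $\mathcal L([2,n],g,k)$ contains $\{2,3\}$.
Therefore $\{\{1\}\cup F:F\in\mathcal L([2,n],\alpha,k-1)\}\cup\mathcal M_1$ and $\{\{1\}\cup E:E\in\mathcal M_2\}\cup\mathcal L([2,n],g,k)$ are intersecting families.
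They are also non-trivial. Their parts avoiding $1$ are non-empty. Their parts through $1$ have more than $\binom{n-2}{k-2}$ members, so they cannot lie in the star of any $x\in[2,n]$.
Theorem~\ref{hm-nontrivial} then gives $\alpha+m_1(\alpha)\le h(n,k)$ and $g+m_2(g)\le h(n,k)$ at once, with no case analysis. This is exactly the paper's argument, carried out with the families $\mathcal A_1$ and $\mathcal B_2$.
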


\begin{proof}
We  may assume w.l.o.g. that $\m A_{\Delta}=\mathcal{A}(1)$. 
Using $|\m A|\ge h(n,k)$ together with \eqref{hnk>}, we have $|\m A|\ge {n-2\choose k-2}+2{n-3\choose k-2}$. Since $|\m A(\bar1)|=\gamma(\m A)\le {n-3\choose k-2}$,
\begin{equation}\label{a1>gamma}
    |\m A(1)|\ge{n-2\choose k-2}+{n-3\choose k-2} \ge{n-2\choose k-2}+\gamma(\m A).
\end{equation}

Since $\m A$ and $\m B $ are cross-intersecting, $\m A(1)$ and $\m B(\bar1)$ are cross-intersecting as well.
Note that $\m A(1)\subset {[2,n]\choose k-1}$ and $\m B(\bar1) \subset {[2,n]\choose k}$.
Since 
$|\m A(1)|\ge {n-2\choose k-2}+{n-3\choose k-2}$ and $n\ge 2k$, applying Proposition \ref{lowerupper}(ii) (with $n-1, k, k-1$ in the roles of $n, a,b$ and taking $i=1$),  
we have 
$$\gamma(\m B)\le |\m B(\bar1)|\le {n-3\choose k-2}.$$

Define $\m A_1, \m B_1$ as follows: $\mathcal{A}_1(1)=\mathcal{L}([2,n], |\mathcal{A}(1)|, k-1)$,  $\mathcal{A}_1(\bar{1})=\mathcal{L}([2,n], |\mathcal{A}(\bar{1})|, k)$, and similarly for $\m B_1$. 
By Theorem \ref{12-1-1}, $\mathcal{A}_1$ and $\mathcal{B}_1$ are cross-intersecting. 
By Proposition \ref{1-20-1} and (\ref{a1>gamma}), $\m A_1$ is non-trivial and ${\m A_1}_{\Delta}=\m A_1(1)$. 

Define $\m B_2$ as follows: $\m B_2(1)$ and $\m B_2(\bar1)$ are L-initial on $[2,n]$, 
$\m B_2(1)$ is maximal cross-intersecting with ${\m A_1}(\bar{1})$, and $\m B_2(\bar{1})=\m B_1(\bar{1})$. Then 
$\m A_1, \m B_2$ are cross-intersecting and $\m B_1\subset \m B_2$. 

\begin{claim}\label{b2nontrivial}
Family $\m B_2$ is non-trivial with ${\m B_2}_{\Delta}=\m B_2(1)$ and $\gamma(\m B_2)=|\m B_2(\bar1)|\le {n-3\choose k-2}$.
\end{claim}
\begin{proof}
Since $|\m A_1(\bar{1})|=|\m A(\bar{1})|\le {n-3\choose k-2}$, 
all sets in $\m A_1(\bar{1})$ contain both 2 and 3.
By the maximal choice of $\m B_2(1)$, $|\m B_2(1)|\ge {n-2\choose k-2}+{n-3\choose k-2}$.
As $|\m A_1(1)|=|\m A(1)|\ge {n-2\choose k-2}+{n-3\choose k-2}$, by the cross-intersecting property
of $\m A_1(1)$ and $\m B_2(\bar{1})$ and since $n\ge 2k$,
every set in $\m B_2(\bar{1})$ contains $\{2,3\}$, thus 
$\gamma(\m B_2)\le |\m B_2(\bar{1})|\le {n-3\choose k-2}$.
Then $|\m B_2(1)|\ge {n-2\choose k-2}+{n-3\choose k-2}$ gives
$|\m B_2(1)|\ge {n-2\choose k-2}+\gamma(\m B_2)$ and ${\m B_2}_{\Delta}=\m B_2(1)$. 
Since $\m B$ is non-trivial, $\m B(\bar 1)\ne \emptyset$, 
and hence $\m B_2(\bar 1)=\m B_1(\bar 1)\ne \emptyset$.
Thus, $\m B_2$ is non-trivial.
\end{proof}

Since $\m A_1(\bar1)$ and $\m B_2(\bar1)$ are L-initial on $[2,n]$, combining Claim \ref{b2nontrivial} and $|\m A_1(\bar 1)|=|\m A(\bar1)|\le {n-3\choose k-2}$, we conclude that every set in $\m A_1(\bar1)\cup \m B_2(\bar1)$
contains element 2. 
Let $\m A'=\{A\in \m A_1: 1\in A\}$ and $\m B'=\{B\in \m B_2: 1\in B\}$.
Then both $\m A' \cup (\m B_2(\bar 1))$ and $\m B' \cup (\m A_1(\bar 1))$ are $k$-uniform non-trivial intersecting families. 
By a result of Hilton and Milner (Theorem \ref{hm-nontrivial}), we have 
\begin{align*}
    &|\m A'|+ |\m B_2(\bar 1)|=|\m A' \cup (\m B_2(\bar 1))|\le h(n,k),\\
    &|\m B'|+ |\m A_1(\bar 1)|=|\m B' \cup (\m A_1(\bar 1))|\le h(n,k).
\end{align*}
Thus, $|\m A_1|+|\m B_2|=|\m A'|+ |\m B_2(\bar 1)|+|\m B'|+ |\m A_1(\bar 1)|\le 2h(n,k)$. 
Note that $|\m A|=|\m A_1|$ and $|\m B_2|\ge |\m B_1|=|\m B|$. 
We have $|\m A|+|\m B|\le 2h(n,k)$.
\end{proof}

\begin{proposition}\label{1-1-1}
Let $n\ge 2k\ge 6$, and let $\mathcal{A}, \mathcal{B} \subset {[n]\choose k}$ be non-trivial cross-intersecting families. 
Suppose that there exist distinct elements $i,j\in [n]$ such that $\mathcal{A}(\bar{i}\bar{j})=\mathcal{B}(\bar{i}\bar{j})=\emptyset$.
Then $|\m A|+|\m B|\le 2h(n,k)$.
\end{proposition}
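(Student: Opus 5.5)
Without loss of generality take $\{i,j\}=\{1,2\}$, so every member of $\m A$ and of $\m B$ meets $\{1,2\}$. The plan is to split each family according to its trace on $\{1,2\}$ and then bound the resulting pieces in pairs. For each family this gives three parts:
\[
|\m A|=|\m A(\{1,2\})|+|\m A(1\bar 2)|+|\m A(\bar 1 2)|,\qquad
|\m B|=|\m B(\{1,2\})|+|\m B(1\bar 2)|+|\m B(\bar 1 2)|.
\]
Here $\m A(1\bar2),\m A(\bar12),\m B(1\bar2),\m B(\bar12)$ are all $(k-1)$-uniform families on the ground set $[3,n]$, which has $n-2$ elements. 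The parts containing both $1$ and $2$ are bounded trivially: $|\m A(\{1,2\})|,|\m B(\{1,2\})|\le\binom{n-2}{k-2}$.

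Next I pair the remaining parts. A set in $\m A$ containing $1$ but not $2$ and a set in $\m B$ containing $2$ but not $1$ can only intersect inside $[3,n]$. Hence $\m A(1\bar2)$ and $\m B(\bar12)$ are cross-intersecting as families on $[3,n]$, and symmetrically $\m A(\bar12)$ and $\m B(1\bar2)$ are cross-intersecting.

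Non-triviality makes all four of these families non-empty. Indeed, $\m A$ is not a star with center $1$, so some member avoids $1$. Since every member meets $\{1,2\}$, that member contains $2$, giving $\m A(\bar12)\neq\emptyset$. Likewise, since $\m A$ is not a star with center $2$, we get $\m A(1\bar2)\ne\emptyset$. The same argument applies to $\m B$.

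Now apply Theorem~\ref{12-3-2} (Frankl--Tokushige) to each non-empty cross-intersecting pair, with $a=b=k-1$ on $n-2$ points. This is legitimate because $n-2\ge 2k-2=a+b$. Only the inequality is needed, not the equality cases, so the exceptional case $a=b=2$ when $k=3$ causes no trouble. It gives
\[
|\m A(1\bar2)|+|\m B(\bar12)|\le \binom{n-2}{k-1}-\binom{n-k-1}{k-1}+1,
\]
and the same bound for $|\m A(\bar12)|+|\m B(1\bar2)|$.

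Summing the four estimates and using Pascal's identity $\binom{n-2}{k-2}+\binom{n-2}{k-1}=\binom{n-1}{k-1}$ yields
\[
|\m A|+|\m B|\le 2\binom{n-2}{k-2}+2\Bigl(\binom{n-2}{k-1}-\binom{n-k-1}{k-1}+1\Bigr)=2h(n,k).
\]
I do not expect a real obstacle in this argument. The only points to check are that the non-emptiness hypothesis of Theorem~\ref{12-3-2} follows from non-triviality, which holds precisely because every set meets $\{1,2\}$, and that the condition $n-2\ge 2(k-1)$ holds, which follows from $n\ge 2k$.
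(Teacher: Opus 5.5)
Your proposal is correct and follows the paper's proof essentially step for step. You split both families by their trace on $\{i,j\}$ and bound the $\{i,j\}$-parts trivially by $\binom{n-2}{k-2}$. You then apply Frankl--Tokushige (Theorem~\ref{12-3-2}) with $a=b=k-1$ on $n-2$ points to the two non-empty cross-intersecting pairs $(\mathcal A(i\bar j),\mathcal B(\bar i j))$ and $(\mathcal A(\bar i j),\mathcal B(i\bar j))$, and sum via Pascal's identity.
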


\begin{proof}
Since $\mathcal{A}(\bar{i}\bar{j})=\mathcal{B}(\bar{i}\bar{j})=\emptyset$, every set in $\m A \cup \m B$ intersects $\{i,j\}$. 
Since $\m A$ and $\m B$ are non-trivial, $\mathcal{A}(i\bar{j}), \mathcal{A}(\bar{i}j), \mathcal{B}(i\bar{j}), \mathcal{B}(\bar{i}j)$ are non-empty families.
Moreover, $\mathcal{A}(i\bar{j}), \mathcal{B}(\bar{i}j)\subset {[n]\setminus \{i,j\}\choose k-1}$ are non-empty cross-intersecting. Since $n\ge 2k$, 
by Theorem \ref{12-3-2}, we get
\[
|\mathcal{A}(i\bar{j})|+|\mathcal{B}(\bar{i}j)|\le {n-2\choose k-1}-{n-1-k\choose k-1}+1.
\]
Similarly, 
\[
 |\mathcal{A}(\bar{i}j)|+|\mathcal{B}(i\bar{j})|\le {n-2\choose k-1}-{n-1-k\choose k-1}+1.
 \]
Combining these with $|\m A(\{i,j\})|,|\m B(\{i,j\})|\le{n-2\choose k-2}$, we obtain
\[
|\mathcal{A}|+|\mathcal{B}|\le 2{n-2\choose k-2}+2{n-2\choose k-1}-2{n-1-k\choose k-1}+2=2h(n,k).
\]
\end{proof}

\begin{proposition}\label{shifta}
Let $n\ge 2k\ge 6$, and let $\mathcal{A}, \mathcal{B} \subset {[n]\choose k}$ be non-trivial cross-intersecting families. 
If $S_{U,V}(\m A)$ is a star and $\gamma(\m A)>{n-3\choose k-2}$, then $|U|=|V|=1$.
\end{proposition}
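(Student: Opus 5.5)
Let $i$ denote the center of the star $S_{U,V}(\m A)$. Since $\m A$ is non-trivial, Proposition~\ref{1-21-2} gives $i\in U$, and every $F\in\m A(\bar i)$ satisfies $F\cap(U\cup V)=V$. In particular every set of $\m A$ not containing $i$ contains all of $V$ and avoids $U$. We then count: $\m A(\bar i)\subset\{F\in\binom{[n]}{k}: V\subset F,\ F\cap U=\emptyset\}$, so
\[
|\m A(\bar i)|\le \binom{n-2|V|}{k-|V|}.
\]
The diversity is at most the number of sets avoiding $i$, because the maximum degree is at least the degree of $i$. Hence $\gamma(\m A)\le|\m A(\bar i)|\le\binom{n-2|V|}{k-|V|}$.

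Suppose now that $|U|=|V|=t\ge 2$. Then $\gamma(\m A)\le \binom{n-2t}{k-t}$. I would show that $\binom{n-2t}{k-t}\le\binom{n-4}{k-2}\le\binom{n-3}{k-2}$ for $2\le t\le k$. The map $t\mapsto\binom{n-2t}{k-t}$ is non-increasing in $t$: passing from $t$ to $t+1$ replaces $\binom{m}{r}$ by $\binom{m-2}{r-1}$, and
\[
\binom{m-2}{r-1}\le\binom{m-1}{r-1}\le\binom{m}{r}.
\]
So the maximum over $t\ge2$ occurs at $t=2$, which gives $\binom{n-4}{k-2}<\binom{n-3}{k-2}$. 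This contradicts $\gamma(\m A)>\binom{n-3}{k-2}$. The case $t>k$ cannot arise, since then $V\subset F$ is impossible for a $k$-set, forcing $\m A(\bar i)=\emptyset$ and contradicting non-triviality. Also $t\ge1$ holds because $i\in U$. Therefore $|U|=|V|=1$.

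The only delicate point is justifying $\gamma(\m A)\le|\m A(\bar i)|$. This holds because $\Delta(\m A)\ge|\m A(i)|$, so $\gamma(\m A)=|\m A|-\Delta(\m A)\le |\m A|-|\m A(i)|=|\m A(\bar i)|$. Everything else is the binomial monotonicity check above. The hypothesis that $\m B$ is cross-intersecting with $\m A$ is not needed for this argument.
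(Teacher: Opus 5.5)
Your argument is correct and is essentially the paper's: both invoke Proposition~\ref{1-21-2} and compare the at most $\binom{n-2t}{k-t}\le\binom{n-4}{k-2}$ sets that avoid the center $i$ and can be affected by the shift against $|\m A(\bar i)|\ge\gamma(\m A)>\binom{n-3}{k-2}$. You use the full conclusion of Proposition~\ref{1-21-2} to bound $|\m A(\bar i)|$ directly, whereas the paper uses only $i\in U$ and argues that some set of $\m A(\bar i)$ survives the shift; you also supply the monotonicity in $t$ that the paper leaves implicit.
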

\begin{proof}
Let $i$ be a center of $S_{U,V}(\mathcal{A})$. 
Then $S_{U,V}(\mathcal{A})(\bar{i})=\emptyset$.
Note that $|\mathcal{A}(\bar{i})|\ge \gamma(\mathcal{A})> {n-3\choose k-2}$. 
If $|U|=|V|\ge 2$, then since $i\in U$ (by Proposition \ref{1-21-2}), we move at most ${n-|U|-|V|\choose k-|U|}\le {n-4\choose k-2}$ sets to obtain $S_{U,V}(\mathcal{A})$ from $\m A(\bar i)$, which yields  
$|S_{U,V}(\mathcal{A})(\bar{i})|>{n-3\choose k-2}-{n-4\choose k-2}>0$, contradicting $S_{U,V}(\mathcal{A})(\bar{i})=\emptyset$.
\end{proof}

\begin{proposition}\label{12-31-1}
Let $n\ge 2k\ge 6$, and let $\mathcal{A}, \mathcal{B} \subset {[n]\choose k}$ be maximal non-trivial cross-intersecting families. 
Suppose further that $(\m A, \m B)$ is lex-minimal.
If $S_{U,V}^Q(\m A)$ is a star and $\gamma(\m A)>{n-3\choose k-2}$, 
then $|\m A|+|\m B|\le 2h(n,k)$.
\end{proposition}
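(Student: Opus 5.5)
By Proposition~\ref{shifta}, the hypothesis $\gamma(\m A)>{n-3\choose k-2}$ forces $|U|=|V|=1$, so the shift $S^Q_{U,V}$ is a classical shift $S_{i,j}$ with $i<j$. Then $S_{i,j}(\m A)$ is a star, and by Lemma~\ref{12-22-1} every $A\in\m A$ meets $\{i,j\}$; that is, $\m A(\bar i\bar j)=\emptyset$. The plan is to show that $\m B(\bar i\bar j)=\emptyset$ as well. Once this holds, Proposition~\ref{1-1-1} gives $|\m A|+|\m B|\le 2h(n,k)$ directly.

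\textbf{Step 1: information from the star.}
\begin{itemize}
\item By Proposition~\ref{1-21-2}, the center of $S_{i,j}(\m A)$ is $i$.
\item By Lemma~\ref{12-22-1}, $\m A(i)\cap\m A(j)=\emptyset$; here $\m A(i)$ and $\m A(j)$ are viewed on ground sets avoiding $j$ and $i$ respectively, so the sets in $\m A$ containing both $i$ and $j$ contribute nothing to $\m A(i\bar j)\cap\m A(\bar i j)$.
\item Apply Proposition~\ref{1-1-2} with the roles of $\m A$ and $\m B$ swapped. Since $(\m A,\m B)$ is maximal non-trivial cross-intersecting, all $k$-sets containing $\{i,j\}$ belong to $\m B$, and moreover $\m B(i\bar j)\neq\emptyset$ and $\m B(\bar i j)\neq\emptyset$.
\end{itemize}

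\textbf{Step 2: show $\m B(\bar i\bar j)=\emptyset$.} Suppose instead that some $B\in\m B$ avoids both $i$ and $j$. Every $A\in\m A$ must meet $B$. Since $\m A(i\bar j)$ and $\m A(\bar ij)$ are disjoint as families of $(k-1)$-sets on $[n]\setminus\{i,j\}$, a counting argument should exploit that $|\m A(\bar i)|\ge\gamma(\m A)>{n-3\choose k-2}$, and hence $|\m A(\bar i j)|>{n-3\choose k-2}$.

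I would attack this in one of two ways.
\begin{itemize}
\item \emph{Using maximality.} Consider $A=\{i\}\cup C$ with $C\cap B=\emptyset$. Each such $A$ must lie outside $\m A$, since it misses $B$. Then $\m A(i\bar j)$ and $\m A(\bar ij)$ both lie inside the family of $(k-1)$-subsets of $[n]\setminus\{i,j\}$ meeting $B$. Combined with their disjointness, this bounds $|\m A|$ too tightly against $|\m A|\ge h(n,k)$ or against the maximality of the product.
\item \emph{Using lex-minimality (the approach I would try first).} Observation~\ref{12-25-2} constrains every other $S^Q$-shift. I would look for a further $S_{U',V'}^Q$-shift with $\{i\}\subset U'$, obtained from Lemma~\ref{spl}, that makes neither family a star. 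The family $\m A$ cannot become a star because $\gamma(\m A)$ is large, by Proposition~\ref{shifta} again. The family $\m B$ cannot become a star because $\m B(\bar i j)\ne\emptyset$ stays fixed. This contradicts Observation~\ref{12-25-2}, unless $\m B(\bar i\bar j)=\emptyset$.
\end{itemize}

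\textbf{Main obstacle.} Step 2 is the hard part, namely converting the existence of $B\in\m B(\bar i\bar j)$ into a contradiction. Verifying (P1)--(P5) for a suitable $R$ needs care, since $\m A$ need not be shifted. If neither route works, I would fall back on a direct sum estimate. Here $\m A(\bar i j)$ is a large family of $(k-1)$-sets on $[n]\setminus\{i,j\}$, cross-intersecting with $\m B(i\bar j)$ and with $\m B(\bar i\bar j)$. Theorem~\ref{12-3-2} and Proposition~\ref{lowerupper} would then bound the pieces of $\m B$, mirroring the computation in Proposition~\ref{1-1-1} with an extra term that is shown to be non-positive.
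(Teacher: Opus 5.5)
Your skeleton matches the paper's proof. Proposition~\ref{shifta} reduces to a star $S_{1,2}(\mathcal A)$ (after relabeling $\{i,j\}=\{1,2\}$). Lemma~\ref{12-22-1} and Proposition~\ref{1-1-2} then give $\mathcal A(\bar1\bar2)=\emptyset$, fullness of $\{1,2\}$ in $\mathcal B$, and $\mathcal B(1\bar2),\mathcal B(\bar12)\ne\emptyset$. Proposition~\ref{1-1-1} finishes once $\mathcal B(\bar1\bar2)=\emptyset$. But that claim is the whole content of the proposition, and your lex-minimality sketch of it fails as stated.

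\begin{itemize}
\item A shift with merely $1\in U'$ may be $S_{1,2}$ itself, which by hypothesis \emph{does} make $\mathcal A$ a star. Proposition~\ref{shifta} only says a star-producing shift has the form $S_{1,y}$; it does not exclude one.
\item $\mathcal B(\bar12)$ is not automatically fixed. If $2\in V'$, a set $\{2\}\cup T$ may be replaced by a set through $1$ that avoids $2$.
\item Your sketch never uses the assumption $\mathcal B(\bar1\bar2)\ne\emptyset$ that it is meant to contradict.
\end{itemize}

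The missing idea is to force $2\notin V'$. Apply Lemma~\ref{spl}(ii) with $r=1$, $R=\{1\}$ and $i=2$ in the lemma's notation.
\begin{itemize}
\item (P1) is vacuous because $|R|=1$, so shiftedness of $\mathcal A$ plays no role.
\item (P2) and (P5) follow from $\mathcal A(\bar12)\ne\emptyset$.
\item (P3) and (P4) need $\{1,2\}$ to be non-full in $\mathcal A$. This is exactly where a set $B\in\mathcal B(\bar1\bar2)$ and $n\ge 2k$ enter.
\end{itemize}
With $2\notin V'$, a star $S^Q_{U',V'}(\mathcal A)$ would have to be $S_{1,y}(\mathcal A)$ with $y\ge 3$ and center $1$. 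Every member of $\mathcal A(\bar1)$ contains $2$, so at most $\binom{n-3}{k-2}<\gamma(\mathcal A)\le|\mathcal A(\bar1)|$ of them move, and one avoiding $1$ survives. In $S^Q_{U',V'}(\mathcal B)$, all sets through $1$ are fixed. In particular, the $k$-sets containing $\{1,2\}$ (all in $\mathcal B$) pin any center to $\{1,2\}$, and $\mathcal B(1\bar2)\ne\emptyset$ excludes $2$. A member $\{2\}\cup T$ of $\mathcal B$ avoiding $1$ survives: if moved, its image contains $\{1,2\}$ and so already lies in $\mathcal B$. This excludes $1$, and Observation~\ref{12-25-2} is contradicted.

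Your counting route cannot prove the statement as given. Neither $|\mathcal A|\ge h(n,k)$ nor maximality of the product is a hypothesis; only inclusion-maximality and lex-minimality are. It is, however, sharper than you suggest if you also restrict the sets through $\{1,2\}$. Every member of $\mathcal A$ meets both $\{1,2\}$ and $B$, so $|\mathcal A(\{1,2\})|\le\binom{n-2}{k-2}-\binom{n-k-2}{k-2}$. The disjoint families $\mathcal A(1\bar2)$ and $\mathcal A(\bar12)$ together have at most $\binom{n-2}{k-1}-\binom{n-k-2}{k-1}$ members. Hence $|\mathcal A|\le\binom{n-1}{k-1}-\binom{n-k-1}{k-1}=h(n,k)-1$. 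Under the extra hypothesis $|\mathcal A|\ge h(n,k)$, which holds wherever the paper applies this proposition, this count alone gives $\mathcal B(\bar1\bar2)=\emptyset$ with no need for lex-minimality or Lemma~\ref{spl}. With only the trivial bound $\binom{n-2}{k-2}$ on $\mathcal A(\{1,2\})$, you would get $\binom{n-1}{k-1}-\binom{n-k-2}{k-1}\ge h(n,k)$ and no contradiction.
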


\begin{proof}
Since $S_{U,V}^Q(\mathcal{A})$ is a star, by Proposition \ref{shifta}, $|U|=|V|=1$, and hence $S_{i,j}(\m A)$ is a star for some $1\le i<j \le n$. We may w.l.o.g. assume $\{i,j\}=\{1,2\}$.
Then, by Lemma \ref{12-22-1}, $\mathcal{A}(\bar{1}\bar{2})=\emptyset$ and $\mathcal{A}(1)\cap \mathcal{A}(2)=\emptyset$. Furthermore, since $\m A$ is non-trivial, $\mathcal{A}(1\bar{2})\ne \emptyset$ and $\mathcal{A}(2\bar{1})\ne \emptyset$.
By Proposition \ref{1-1-2}, all $k$-sets containing $\{1,2\}$ belong to $\mathcal{B}$, and both $\mathcal{B}(1\bar{2})\ne \emptyset$ and $\mathcal{B}(2\bar{1})\ne \emptyset$.

\begin{claim}\label{b12empty}
    $\mathcal{B}(\bar{1}\bar{2})=\emptyset$.
\end{claim}
\begin{proof}
Assume for contradiction that $\mathcal{B}(\bar{1}\bar{2})\ne \emptyset$. 
Since $n\ge 2k$ and $\m A$ and $\m B$ are cross-intersecting, $\{1,2\}$ is non-full in $\m A$, i.e., 
there exist $k$-sets $A\not\in \m A$ with $\{1,2\}\subset A \subset [n]$. 
Applying Lemma \ref{spl}(ii) with $r=1, R=\{1\}, i=2$, there exist a pair $(X, Y)$ and $S^{Q}_{X,Y}$-shift of $(\m A, \m B)$ such that $1\in X$ and $2\not\in Y$. 
We show that $S^{Q}_{X,Y}(\mathcal{A})$ and $S^{Q}_{X,Y}(\mathcal{B})$ are non-trivial, which contradicts Observation \ref{12-25-2}. 

Consider $S^{Q}_{X,Y}(\mathcal{A})$. 
Assume for contradiction that $S^{Q}_{X,Y}(\mathcal{A})$ is a star. 
By Proposition \ref{shifta}, $|X|=|Y|=1$, $X=\{1\}$, and $1$ is the unique center (by Proposition \ref{1-21-2}).  Since $2\not\in Y$, $S_{1,i}(\m A)$ is a star with center 1 and some $i\ge 3$.
Note that $\mathcal{A}(\bar1 \bar 2)=\emptyset$. 
We move at most ${n-3\choose k-2}$ sets to obtain $S_{1,i}(\m A)$ from $\mathcal{A}(2\bar{1})$. 
However, $|\mathcal{A}(2\bar{1})|\ge \gamma(\mathcal{A})> {n-3\choose k-2}$, this implies that $S_{1,i}(\mathcal{A})$ contains sets that do not contain 1, a contradiction. 
Thus, $S^{Q}_{X,Y}(\mathcal{A})$ is non-trivial. 

Consider $S^{Q}_{X,Y}(\m B)$. If $S^{Q}_{X,Y}(\m B)=\m B$, then we are done. Assume $S^{Q}_{X,Y}(\m B)\ne \m B$.
Since all $k$-sets containing $\{1,2\}$ are in $\mathcal{B}$ and $1\in X$, 
if $S^{Q}_{X,Y}(\mathcal{B})$ is a star, then its center must be $1$ or $2$. 
Note that $\m B(\bar12)\ne \emptyset$ and $\m B(1\bar 2)\ne \emptyset$.
Since $1\in X$ and $\m B(1\bar 2)\ne \emptyset$, $S^{Q}_{X,Y}(\mathcal{B})$ is not a star with center 2.
Note that $\{1,2\}$ is full in $\m B$ and $\m B(\bar12)\ne \emptyset$. 
Let $T\in \m B(\bar12)$, and $B=\{2\}\cup T$. Then $B\in \m B(\bar 1)$.
Since $1\in X$ and $2\not\in Y$, $\{1,2\}$ is full in $\m B$ implies that 
$S_{X,Y}(B)=B\in S^{Q}_{X,Y}(\mathcal{B})$ and $1\not\in B$. Thus
$S^{Q}_{X,Y}(\mathcal{B})$ is not a star with center 1. 
Thus $S^{Q}_{X,Y}(\mathcal{B})$ is non-trivial. 
This proves $\mathcal{B}(\bar{1}\bar{2})=\emptyset$.
\end{proof}

By Claim \ref{b12empty} and applying Proposition \ref{1-1-1} with $\{i,j\}=\{1,2\}$ yields $|\m A|+|\m B|\le 2h(n,k)$. 
\end{proof}

\subsection{Proof of Lemma \ref{mainlem1}}

\begin{proof}[Proof of Lemma \ref{mainlem1}]
We prove Lemma \ref{mainlem1} by arguing indirectly. Suppose that
Lemma \ref{mainlem1} is false. Among all counterexamples of
Lemma \ref{mainlem1}, choose $(\m A,\m B)$ maximizing
$|\m A|+|\m B|$.
Then 
$|\mathcal{A}|+|\mathcal{B}|\ge 2h(n,k).$
By symmetry, we may assume
$|\mathcal{A}|\ge h(n,k).$
By Lemma \ref{1-21-1}, we may assume
\begin{equation*}\label{12-19-2}
  \gamma(\mathcal{A})> {n-3\choose k-2}.
\end{equation*}
By Proposition \ref{12-31-1}, if there exists an $S_{U,V}$-shift of $(\m A, \m B)$ such that $S_{U,V}(\m A)$ is a star, then we are done. Thus, we may further assume that
\begin{equation}\label{notstar}
\text{For any $S_{U,V}$-shift of $(\m A, \m B)$, $S_{U,V}(\m A)$ is not a star.}    
\end{equation}
We may w.l.o.g. assume 
$$\m A_{\Delta}=\m A(1).$$

\begin{claim}\label{samemaxd}
    $\m B_{\Delta}=\m B(1).$
\end{claim}

\begin{proof}
Assume for contradiction that $\m B_{\Delta}=\m B(2)$, $|\m B(1)|<|\m B(2)|$ and $|\m A(1)|>|\m A(2)|$.
So $(\mathcal{A}, \mathcal{B})$ is not shifted; otherwise,  $1$ is the element of maximum degree in both families. Moreover, 
$(S_{1,2}(\mathcal{A}), S_{1,2}(\mathcal{B}))\precneqq(\mathcal{A},\mathcal{B})$. 
By Observation \ref{12-25-2} together with (\ref{notstar}), we conclude that $S_{1,2}(\mathcal{B})$ is a star. 
By Lemma \ref{12-22-1}, every set of $\mathcal{B}$ intersects  $\{1,2\}$, and hence $\m B(\bar 1\bar 2)=\emptyset$. 
Since $\mathcal{B}$ is non-trivial, $\mathcal{B}(1\bar{2})\ne \emptyset$ and $\mathcal{B}(2\bar{1})\ne \emptyset$.

Let $\m A_1, \m B_1$ be such that 
$\m A_1(\{1,2\})=\m L([3,n],|\m A(\{1,2\})|,k-2)$, 
$\m A_1(1\bar2)=\m L([3,n],|\m A(1\bar2)|,k-1)$,
$\m A_1(\bar12)=\m L([3,n],|\m A(\bar12)|,k-1)$ and
$\m A_1(\bar1\bar2)=\m L([3,n],|\m A(\bar1\bar2)|,k)$;
and similarly for $\m B_1$. 

By Theorem \ref{12-1-1}, $\m A_1, \m B_1$ are cross-intersecting. We show that both of them are non-trivial. 
Since $S_{1,2}(\m B)$ is a star, by Proposition \ref{1-1-2}, all $k$-sets containing $\{1,2\}$ belong to $\m A$, 
$\mathcal{A}(1\bar{2})\ne \emptyset$ and $\mathcal{A}(2\bar{1})\ne \emptyset$. 
Hence, all $k$-sets containing $\{1,2\}$ belong to $\m A_1$,  
$\mathcal{A}_1(1\bar{2})\ne \emptyset$ and $\mathcal{A}_1(2\bar{1})\ne \emptyset$. 
This implies that $\m A_1$ is not a star.
Suppose for contradiction that $\m B_1$ is a star with center $i$. 
Since $\mathcal{B}(1\bar{2})\ne \emptyset$ and $\mathcal{B}(\bar{1}2)\ne \emptyset$, 
$\mathcal{B}_1(1\bar{2})\ne \emptyset$ and $\mathcal{B}_1(\bar{1}2)\ne \emptyset$. Thus, $i\not\in \{1,2\}$. 
Thus $|\m B_1(2)|=|\m B_1(\{1,2,i\})|+|\m B_1(\{2,i\}\overline{\{1\}})|\le {n-3\choose k-3}+{n-3\choose k-2}={n-2\choose k-2}$. However, $|\m B_1(2)|=|\m B(2)|=\Delta(\m B)\ge {n-2\choose k-2}+1$, a contradiction. Thus, $\m A_1, \m B_1$ are non-trivial cross-intersecting families. 
Note that $|\m A_1||\m B_1|=|\m A||\m B|$. Then by the maximality of $|\m A||\m B|$, $\m A_1$ and $\m B_1$ are maximal non-trivial cross-intersecting families. 

Note that $|\m B(1)|<|\m B(2)|$ and $|\m A(1)|>|\m A(2)|$.
Thus, $|\m B_1(1)|<|\m B_1(2)|$ and $|\m A_1(1)|>|\m A_1(2)|$. 
Since $\m A_1(1\bar2), \m A_1(\bar12), \m B_1(1\bar2), \m B_1(\bar12)$ are L-initial on $[3,n]$, 
$\m A_1(\bar12) \subsetneqq \m A_1(1\bar2)$ and $\m B_1(1\bar2) \subsetneqq \m B_1(\bar12).$
Then there exists $B\in \m B_1(\bar12)$ such that $B\cup \{2\}\in \m B_1$ and $B\cup \{1\}\not\in \m B_1$. 
Since $\m A_1, \m B_1$ are maximal non-trivial cross-intersecting families, there exists $A\in \m A_1$ such that $A\cap (B\cup \{1\})=\emptyset$ and $A\cap (B\cup \{2\})\ne \emptyset$. This forces $1\not\in A$ and $2\in A$. 
Notice that $\m A_1(\bar12) \subsetneqq \m A_1(1\bar2)$, and hence $(A\setminus \{2\})\cup \{1\}\in \m A_1$.
This implies $((A\setminus \{2\})\cup \{1\})\cap(B\cup \{2\})=\emptyset$, a contradiction. 
\end{proof}

Let $\m A', \m B'$ be such that $\mathcal{A}'(1)=\mathcal{L}([2,n], |\mathcal{A}(1)|, k-1)$, $\mathcal{A}'(\bar{1})=\mathcal{L}([2,n], |\mathcal{A}(\bar{1})|, k)$, and similarly for $\mathcal{B}$.
From the definition, we conclude that
\begin{claim}\label{3-29-2}
$\m A', \m B'$ are non-trivial cross-intersecting families, and $\m A'(\bar 1 \bar 2)=\m B'(\bar 1 \bar 2)=\emptyset$.
\end{claim}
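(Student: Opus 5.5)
The plan is to check the three properties of the pair $(\m A',\m B')$ one at a time, and to obtain the emptiness of $\m A'(\bar1\bar2)$ and $\m B'(\bar1\bar2)$ before cross-intersection. The reason for this order is that it avoids applying Theorem \ref{12-1-1} to $\m A(\bar1),\m B(\bar1)\subset\binom{[2,n]}{k}$. That application would require $n-1\ge 2k$, which fails when $n=2k$.

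The inputs are the following. $\m A_\Delta=\m A(1)$ by assumption and $\m B_\Delta=\m B(1)$ by Claim \ref{samemaxd}. Together with the hypotheses of Lemma \ref{mainlem1}, this gives $|\m A(1)|,|\m B(1)|\ge\binom{n-2}{k-2}+1$. Since $\m A'(1)$ is L-initial in $\binom{[2,n]}{k-1}$ and has size at least $\binom{n-2}{k-2}+1$, it contains every $(k-1)$-subset of $[2,n]$ containing $2$. It also contains the next set in lex order, namely $\{3,4,\dots,k+1\}$. The same holds for $\m B'(1)$.

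\textbf{Step 1 (sets of $\m A'(\bar1)$ and $\m B'(\bar1)$ contain $2$).} The families $\m A(1)\subset\binom{[2,n]}{k-1}$ and $\m B(\bar1)\subset\binom{[2,n]}{k}$ are cross-intersecting. Since $n-1\ge (k-1)+k$, Theorem \ref{12-1-1} shows that $\m A'(1)$ and $\m B'(\bar1)$ are cross-intersecting. Now take $B\in\m B'(\bar1)$ and suppose $2\notin B$. Then $|[3,n]\setminus B|=n-2-k\ge k-2$, so there is a $(k-1)$-set $\{2\}\cup T$ with $T\subset[3,n]\setminus B$. This set lies in $\m A'(1)$ and is disjoint from $B$, a contradiction. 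Hence $\m B'(\bar1\bar2)=\emptyset$. The symmetric argument, applied to $\m B(1)$ and $\m A(\bar1)$, gives $\m A'(\bar1\bar2)=\emptyset$.

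\textbf{Step 2 (cross-intersection).} We check the four parts separately. Sets of $\m A'$ and $\m B'$ that both contain $1$ meet at $1$. By Step 1, any two sets lying in $\m A'(\bar1)$ and $\m B'(\bar1)$ meet at $2$. The mixed pairs, $\m A'(1)$ against $\m B'(\bar1)$ and $\m B'(1)$ against $\m A'(\bar1)$, are cross-intersecting by Theorem \ref{12-1-1}, exactly as in Step 1.

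\textbf{Step 3 (non-triviality).} We show that no element of $[n]$ is a center of $\m A'$; the argument for $\m B'$ is identical.
\begin{itemize}
\item The element $1$ is not a center: $\m A$ is non-trivial, so $\m A(\bar1)\neq\emptyset$, and hence $\m A'(\bar1)\ne\emptyset$.
\item The element $2$ is not a center: $\{1,3,\dots,k+1\}\in\m A'$.
\item An element $i\ge3$ is not a center: choose a $(k-1)$-set in $[2,n]\setminus\{i\}$ that contains $2$. It lies in $\m A'(1)$, so adding $1$ to it gives a member of $\m A'$ avoiding $i$.
\end{itemize}

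The only genuinely delicate point is the one handled in Step 1. For the pair $\m A(\bar1),\m B(\bar1)$ one cannot use L-initiality through Theorem \ref{12-1-1} when $n=2k$. The fix is to derive the emptiness of $\m A'(\bar1\bar2)$ and $\m B'(\bar1\bar2)$ from the large degree $|\m A'(1)|,|\m B'(1)|>\binom{n-2}{k-2}$, and only then conclude cross-intersection.
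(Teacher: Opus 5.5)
Your proof is correct and is essentially the paper's argument, reordered: Hilton's theorem (Theorem~\ref{12-1-1}) on the mixed sections $\m A'(1),\m B'(\bar1)$ and $\m B'(1),\m A'(\bar1)$; the large degrees $|\m A'(1)|,|\m B'(1)|\ge\binom{n-2}{k-2}+1$ forcing every member of $\m A'(\bar1)$ and $\m B'(\bar1)$ to contain $2$; and the same non-triviality check through the sets containing $\{1,2\}$ and the set $\{1,3,\dots,k+1\}$. The reordering is not needed, though, because the $n=2k$ issue you flag is not real: any two $k$-subsets of the $(2k-1)$-set $[2,n]$ intersect automatically, so the paper's direct appeal to Theorem~\ref{12-1-1} for cross-intersection is also valid in that case.
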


\begin{proof}
By Theorem \ref{12-1-1}, $\mathcal{A}'$ and $\mathcal{B}'$ are cross-intersecting. 
Note that $\m B_{\Delta}=\m B(1)$ (see Claim \ref{samemaxd}). Then
\begin{equation}\label{a'b'}
    |\mathcal{A}'(1)|=|\mathcal{A}(1)|=\Delta(\mathcal{A})\ge {n-2\choose k-2}+1 \,\,\text{and}\,\, |\mathcal{B}'(1)|=|\mathcal{B}(1)|=\Delta(\mathcal{B})\ge {n-2\choose k-2}+1.
\end{equation}
Since $n\ge 2k$, 
combining (\ref{a'b'}) with the cross-intersecting property of $\m A'$ and $\m B'$ yields 
$\m A'(\bar1 \bar 2)=\m B'(\bar1 \bar 2)=\emptyset$, as desired.
Since $\mathcal{A}'(1)$ and $\mathcal{B}'(1)$ are L-initial on $[2,n]$, it follows from (\ref{a'b'}) that 
\begin{itemize}
    \item[(i)] All $k$-sets containing $\{1,2\}$ belong to $\m A' \cap \m B'$;
    \item[(ii)] $\m A'(1\bar 2)\ne \emptyset$ and $\m B'(1\bar 2)\ne \emptyset$. 
\end{itemize}
As $\m A, \m B$ are non-trivial, $|\m A'(\bar 1)|=|\m A(\bar1)|\ge 1$ and $|\m B'(\bar 1)|=|\m B(\bar1)|\ge 1$.
By (i), if $\m A'$ or $\m B'$ were a star, then its center must belong to $\{1,2\}$.
Combining this with (i) and (ii), we conclude that $\m A'$ and $\m B'$ are non-trivial. 
\end{proof}

Combining Claim \ref{3-29-2} and Proposition \ref{1-1-1}, we get $|\mathcal{A}|+|\mathcal{B}|=|\mathcal{A}'|+|\mathcal{B}'|\le 2h(n,k)$.
We complete the proof of Lemma \ref{mainlem1}.
\end{proof}

\section{Degree-asymmetric case: Proof of Lemma \ref{mainlem2}}\label{sec:asymmetric}
We now turn to Lemma~\ref{mainlem2}, the asymmetric maximum-degree case. The
proof requires a finer analysis of L-initial families. We begin
by recalling the partner construction and a result of Huang--Peng, which gives a precise way to pair two
L-initial families so that they are cross-intersecting, and it
also gives maximality on one side \cite{k_1+k_3}. 

Let $F, H \subset [n]$ satisfy that $\max F=\max H=q$, $|F|=f$, $|H|=h$, $F\cap H=\{q\}$ and $F\cup H=[q]$. Let $k$ be such that $k\leq n-f$. 
We define the {\it $k$-partner} $K$ of $F$ such that $|K|=k$, $K\prec H$, and there is no other $k$-set $K'$ satisfying $K\precneqq K'\prec H$. By the definition, we see that $K=H$ if $k=h$; $K= H\cup [n-k+h+1, n]$ if $k>h$. 
Let $F\in {[n]\choose k}$. We write $\m L([n], F, k):=\{A\in {[n]\choose k}: A\prec F\}$.

\begin{lemma}[Huang--Peng \cite{k_1+k_3}]\label{3-12-1}
Let $n\ge a+b$, $A\subset [n]$, $|A|=a$, and let $B$ be the $b$-partner of $A$. Then
$\mathcal{L}([n], B, b)$ and $\mathcal{L}([n], A, a)$ are cross-intersecting. Moreover, $\mathcal{L}([n], B, b)$ is maximal cross-intersecting with $\mathcal{L}([n], A, a)$.
\end{lemma}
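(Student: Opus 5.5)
The plan is to argue directly from the lexicographic order, comparing every member of $\mathcal{L}([n],A,a)$ with $A$ and every member of $\mathcal{L}([n],B,b)$ with $H$. Here $q=\max A$, $H=([q]\setminus A)\cup\{q\}$, so that $A\cap H=\{q\}$ and $A\cup H=[q]$, and $B$ is the $b$-partner of $A$. The basic observation is that every element of $[q]$ lies in exactly one of $A\setminus\{q\}$, $H\setminus\{q\}$, $\{q\}$. So scanning $1,2,\dots,q$, the set $A$ and its ``complementary'' set $H$ split $[q]$ between them and share only $q$.

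\emph{Cross-intersection.} Let $C\prec A$ with $|C|=a$, and $D\prec B$ with $|D|=b$. First I will check that $D\prec H$ in the sense relevant on $[q]$. That is, either $D\cap[q]=H$, or there is a first position $p\le q$ where $D$ and $H$ differ, and there $p\in D\setminus H$, so $p\in A\setminus\{q\}$. This uses that $B=H$ when $b=h$, and $B=H\cup[n-b+h+1,n]$ when $b>h$. In the second case, any $D\prec B$ either first deviates from $B$ inside $[q]$, and so deviates from $H$ there by gaining an element, or agrees with $H$ on all of $[q]$. Similarly, either $C\cap[q]=A$, or at the first position $p'\le q$ where $C$ and $A$ differ we have $p'\in C\setminus A$, so $p'\in H\setminus\{q\}$.

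Now suppose $C\cap D=\emptyset$ and let $m$ be the smallest of these deviation positions. Below $m$, $C$ agrees with $A$ and $D$ agrees with $H$.
\begin{itemize}
\item If $m$ is a deviation of $C$, then $m\in C$ and $m\in H$. Either $D$ agrees with $H$ at $m$, giving $m\in C\cap D$, or $D$ also deviates at $m$, forcing $m\in A$, which contradicts $m\notin A$.
\item The case where $m$ is a deviation of $D$ is symmetric.
\item If neither deviates within $[q]$, then $q\in C\cap D$.
\end{itemize}
In every case we reach a contradiction, so the two families are cross-intersecting.

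\emph{Maximality.} Take any $b$-set $D$ with $D\notin\mathcal{L}([n],B,b)$, i.e.\ $B\precneqq D$. By the defining property of the partner (no $b$-set strictly between $B$ and $H$), we also have $H\precneqq D$. Let $p$ be the first position where $D$ and $H$ differ; it must satisfy $p\in H\setminus D$, with $D\cap[p-1]=H\cap[p-1]$.
\begin{itemize}
\item If $p=q$, then $D\cap[q]=H\setminus\{q\}$, so $D\cap A=\emptyset$, and $A$ itself is the required witness.
\item If $p<q$, then $p\notin A$. Put
\[
C:=(A\cap[p-1])\cup\{p\}\cup T,
\]
where $T$ is any set of $a-|A\cap[p-1]|-1$ elements of $[p+1,n]\setminus D$. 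Since $p\in C\setminus A$ is the first difference, $C\prec A$. Since $D\cap[p]=H\cap[p-1]$ is disjoint from $A\cap[p-1]\cup\{p\}$, we get $C\cap D=\emptyset$.
\end{itemize}
This exhibits a member of $\mathcal{L}([n],A,a)$ disjoint from $D$, which proves maximality.

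The step I expect to be the main obstacle is the counting that guarantees $T$ exists. This needs
\[
n-p-|D\cap[p+1,n]|\;\ge\; a-|A\cap[p-1]|-1 .
\]
Using $|D\cap[p]|=|H\cap[p-1]|$, $|A\cap[p-1]|+|H\cap[p-1]|=p-1$ and $n\ge a+b$, the inequality reduces to $n\ge a+b$. I would verify this carefully, together with the edge cases of the lex order on sets of different sizes (the convention $A\prec B$ when $B\subset A$) in the transitivity step $D\prec B\Rightarrow D\prec H$. The well-definedness of the partner, which needs $b\le n-a$ and hence $h\le b$ in the cases used, also needs to be checked.
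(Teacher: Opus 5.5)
The paper gives no proof of this lemma (it is quoted from Huang--Peng), so your argument has to stand on its own. It is correct when $b\ge h$ but misses the case $b<h$, because of one false claim. You write that $b\le n-a$ forces $h\le b$. That is not so: $h=q-a+1$ with $q=\max A$ can be as large as $n-a+1>b$.

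Your key step needs this. It says every $D\prec B$ either satisfies $D\cap[q]=H$ or first leaves $H$ by gaining an element of $A\setminus\{q\}$. You verify it only through the explicit forms $B=H$ and $B=H\cup[n-b+h+1,n]$, and these exist only for $b\ge h$. When $b<h$ the partner does not contain $H$, and this case is never treated.

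The case $b<h$ is the one the paper actually uses. In Construction~\ref{H_AH_B} and Proposition~\ref{1-22-5}, take the given $k$-set $\{1,2,n-k+3,\dots,n\}$. Its companion set is $[3,n-k+2]\cup\{n\}$, of size $n-k+1>k$. Its $k$-partner is $\{2,n-k+2,\dots,n\}$, which is neither of your two forms. So as written, the cross-intersection half proves only part of the statement.

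The repair is short and removes the need for any formula for $B$. Use the first-difference order: $X\prec Y$ iff $X=Y$ or $\min(X\,\triangle\,Y)\in X$. This is the lexicographic order on characteristic vectors, i.e.\ the paper's definition with $\min\emptyset=+\infty$, and it is a total order on $2^{[n]}$.

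Then $D\prec B\prec H$ gives $D\prec H$ by transitivity. When $b<h$, the alternative $D\cap[q]=H$ is impossible, so the first element of $D\,\triangle\,H$ is some $p<q$ lying in $D\setminus H\subset A\setminus\{q\}$. That is exactly what your three-case disjointness argument needs.

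Your maximality half already uses only totality and the defining property of the partner. Your count for $T$ reduces to $n\ge a+b$ for every $p<q$, whatever the relation between $b$ and $h$. With this change the proof is complete.

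You must state the convention explicitly, because it matters precisely in the case you skipped. Under the paper's literal convention $\min\emptyset=0$, every proper subset of $H$ satisfies $K\prec H$, and the lemma fails for $b<h$. For example, take $n=4$, $a=b=2$, $A=\{1,4\}$, $H=\{2,3,4\}$. Every $2$-set would then precede $H$, giving $B=\{3,4\}$, which misses $\{1,2\}\prec A$. Under the first-difference order, $B=\{1,4\}$ and the lemma holds.

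Finally, for $b<h$ the partner need not exist at all (e.g.\ $a=1$). Its existence is an implicit hypothesis of the lemma, not something you can verify from $n\ge a+b$.
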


We shall use the following result of Frankl and Wang, which gives a lower bound on the size of each family in an extremal product pair.

\begin{proposition}[Frankl--Wang \cite{11-6-1}]\label{1-29-1}
Let $n\ge 2k\ge 6$, and let $\mathcal{A}, \mathcal{B} \subset {[n]\choose k}$ be non-trivial cross-intersecting families such that $|\m A||\m B|$ is maximum among all pairs of non-trivial  cross-intersecting families.  Then 
\begin{equation}\label{minAB}
\min \{|\m A|, |\m B|\}>{n-3\choose k-3}+{n-4\choose k-3}.    
\end{equation}
\end{proposition}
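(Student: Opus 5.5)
The bound follows by comparing the extremal pair with a concrete competitor built from a single $k$-set and the family of all sets meeting it. Let $(\m A,\m B)$ be extremal. Since $\m A$ and $\m B$ are both non-trivial and $|\m A||\m B|=\Pi\ge h(n,k)^2$, it suffices to show that neither family can be very small. I will assume $|\m B|\le {n-3\choose k-3}+{n-4\choose k-3}$ and build a non-trivial cross-intersecting pair with strictly larger product.

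\emph{Step 1: the competitor.} Fix $A_0\in\m A$ and let $\m B^*=\{B\in\binom{[n]}{k}: B\cap A_0\ne\emptyset\}$. Then any family $\m A^*\ni A_0$ that is cross-intersecting with $\m B^*$ must consist of sets meeting every $k$-set that meets $A_0$, which forces $\m A^*=\{A_0\}$ when $n>2k$. This trivial-type pair is useless: a single set is a star.

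So instead I will use the Hilton--Milner-type pair $\m A^*=\m B^*=\m{HM}(n,k)$. This gives only $\Pi\ge h^2$, which we already know. The goal is therefore to show that $|\m B|$ small forces $|\m A||\m B|<h^2$.

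\emph{Step 2: bounding $|\m A|$ when $\m B$ is small.} Since $\m B$ is non-trivial, it contains two sets $B_1,B_2$ with $B_1\cap B_2$ avoiding the center of any star. More precisely, I would use $\tau(\m B)\ge 2$: every $A\in\m A$ must meet every $B\in\m B$, and $\m B$ has no common element.

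Take $x$ of maximum degree in $\m B$ and some $B_0\in\m B(\bar x)$. Every $A\in\m A$ meets $B_0$. Split $\m A$ as $\m A(\bar x)\cup\m A[x]$:
\begin{itemize}
\item[(a)] $|\m A[x]|\le{n-1\choose k-1}-{n-k-1\choose k-1}$, since these sets contain $x$ and meet $B_0$;
\item[(b)] $\m A(\bar x)$ must meet $B_0$ and every set of $\m B[x]$ off $x$.
\end{itemize}
A crude bound is $|\m A|\le\binom{n}{k}-\binom{n-k}{k}$ (all sets meeting $B_0$). Then I would check that
\[
\Bigl(\tbinom{n}{k}-\tbinom{n-k}{k}\Bigr)\Bigl(\tbinom{n-3}{k-3}+\tbinom{n-4}{k-3}\Bigr)<h(n,k)^2 .
\]
The rough magnitudes are $h\approx k\binom{n-2}{k-2}$ and $\binom{n}{k}-\binom{n-k}{k}\le k\binom{n-1}{k-1}$. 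The left side is then about $2k\binom{n-1}{k-1}\binom{n-3}{k-3}$, while the right side is about $k^2\binom{n-2}{k-2}^2$. The ratio is roughly $\frac{2(n-1)(k-2)}{k(k-1)(n-2)}\approx \frac{2}{k}<1$, so the inequality holds comfortably for $k\ge 4$.

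\emph{Step 3: exact verification and the case $k=3$.} With both bounds in hand, I would verify the inequality exactly over the whole range $n\ge 2k$, which contradicts maximality. The main obstacle is $k=3$ and the boundary $n=2k$, where the ratio estimate is not small.

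For $k=3$ the threshold is $1+1=2$, so $|\m B|\le 2$. A non-trivial $\m B$ of size $2$ consists of two disjoint $3$-sets, and $\m A$ must meet both, giving $|\m A|\le 9(n-6)+\dots$. This beats $h(n,3)^2=(3n-8)^2$ directly.

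For $n=2k$ I would use Hilton's sum bound (Theorem~\ref{3-12-2}), $|\m A|\le\binom{2k}{k}-|\m B|$, together with $h(2k,k)=\frac12\binom{2k}{k}$. The crude bound $|\m A|\le\binom{n}{k}-\binom{n-k}{k}$ from Step 2 is too weak for small $n$ and small $k$. If Step 2 fails there, I would refine it by using two sets of $\m B$ with no common element, so that $\m A\subset\{A: A\cap B_1\ne\emptyset,\ A\cap B_2\ne\emptyset\}$.
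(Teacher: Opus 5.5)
The paper gives no proof of this proposition. It quotes it from Frankl--Wang and uses it as a black box, so your argument has to stand on its own. Your reduction is correct and in fact sufficient. Suppose $|\mathcal B|\le s:=\binom{n-3}{k-3}+\binom{n-4}{k-3}$. Every member of $\mathcal A$ meets a fixed $B_0\in\mathcal B$, so $|\mathcal A|\le N:=\binom{n}{k}-\binom{n-k}{k}$. The pair $(\mathcal{HM}(n,k),\mathcal{HM}(n,k))$ shows the maximum product is at least $h(n,k)^2$, so $Ns<h(n,k)^2$ would contradict maximality; the same argument applies to $\mathcal A$. Step~1 and items (a), (b) play no role and can be dropped.

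\textbf{The gap.} You never prove $Ns<h(n,k)^2$.
\begin{itemize}
\item Your ratio computation rests on $h(n,k)\approx k\binom{n-2}{k-2}$ and $N\le k\binom{n-1}{k-1}$. Both are accurate only when $n\gg k^2$. For $n\le k(k-1)$ one even has $k\binom{n-2}{k-2}>\binom{n-1}{k-1}\ge h(n,k)$. So ``holds comfortably for $k\ge 4$'' is unsupported for $2k<n\lesssim k^2$.
\item The rigorous form of your estimate actually fails: at $(n,k)=(8,4)$, $k\binom{n-1}{k-1}s=4\cdot 35\cdot 9=1260>1225=h(8,4)^2$.
\item Step~3 handles only $k=3$ and $n=2k$, and both cases go through. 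For $k=3$, $2(9n-36)<(3n-8)^2$. For $n=2k$, Hilton's bound gives $|\mathcal A||\mathcal B|\le|\mathcal B|\bigl(\binom{2k}{k}-|\mathcal B|\bigr)<\frac14\binom{2k}{k}^2$ whenever $|\mathcal B|<\frac12\binom{2k}{k}$. This leaves $k\ge4$, $n>2k$ open.
\item Your worry that $|\mathcal A|\le N$ is itself too weak is unfounded; only your estimate of $N$ is.
\end{itemize}

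\textbf{The fix.} What is missing is an exact term-by-term comparison with
\[
g:=h(n,k)-1=\binom{n-1}{k-1}-\binom{n-k-1}{k-1}=\sum_{i=1}^{k}\binom{n-1-i}{k-2}.
\]
Classify sets by their smallest element in $B_0$. This gives $N=\sum_{i=1}^{k}\binom{n-i}{k-1}=\sum_{i=1}^{k}\frac{n-i}{k-1}\binom{n-1-i}{k-2}\le\frac{n-1}{k-1}\,g$. On the other side,
\[
g\ \ge\ \binom{n-2}{k-2}+\binom{n-3}{k-2}=\frac{n-2}{k-2}\binom{n-3}{k-3}+\frac{n-3}{k-2}\binom{n-4}{k-3}\ >\ \frac{n-3}{k-2}\,s\ \ge\ \frac{n-1}{k-1}\,s .
\]
The last inequality is equivalent to $(n-3)(k-1)-(n-1)(k-2)=n-2k+1\ge 0$. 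Hence $Ns\le\frac{n-1}{k-1}s\cdot g<g^2<h(n,k)^2$ for all $n\ge 2k\ge 6$. This completes your argument uniformly, with no need to treat $k=3$ or $n=2k$ separately.
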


We then present several auxiliary constructions and compute the products of their sizes, which will be used to exclude near-extremal configurations.

\begin{construction}\label{2-27-1}
 Let $k\ge 3$ and $n\ge 2k$ be positive integers. We define 
    \begin{align*}
        &\m C_1=\left\{C\in {[n]\choose k}: \{1,2\}\subset C\right\} \cup \left\{[3,k+2]\right\},\\
        &\m C_2=\left\{C\in {[n]\choose k}: \{i,j\}\subset C, i\in [2],j\in [3,k+2]\right\}.
    \end{align*}
\end{construction}

\begin{proposition}\label{3-11-2}
Let $n\ge 2k$ and $k\ge 3$. Then $ |\m C_1||\m C_2|<h(n,k)^2$.
\end{proposition}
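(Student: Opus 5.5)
The plan is to compute both sizes exactly, write them in terms of $h(n,k)$, and reduce the claim to one elementary binomial inequality. First, $|\m C_1|={n-2\choose k-2}+1$. Next, split $\m C_2$ according to whether $1\in C$. The sets containing $1$ and meeting $[3,k+2]$ number ${n-1\choose k-1}-{n-k-1\choose k-1}=h(n,k)-1$. The sets containing $2$ but not $1$ and meeting $[3,k+2]$ number ${n-2\choose k-1}-{n-k-2\choose k-1}$.

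To organise the computation, set
\[
x:={n-2\choose k-2},\qquad E:={n-2\choose k-1}-{n-k-1\choose k-1},\qquad c:={n-k-2\choose k-2}.
\]
Pascal's rule gives the three identities
\[
h(n,k)=x+E+1,\qquad |\m C_1|=x+1,\qquad |\m C_2|=x+2E+c.
\]
Expanding $(x+E+1)^2$ and $(x+1)(x+2E+c)$, the terms $(x+1)^2+2E(x+1)$ cancel. The statement is therefore equivalent to
\[
(x+1)(c-1)<E^2 .
\]

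I will prove this inequality from two estimates. The first is $E>c$. This holds because
\[
E=\sum_{j=1}^{k}{n-2-j\choose k-2},
\]
and this sum contains the term $j=k$, which equals $c$, together with at least one further positive term (indeed $k\ge 3$). The second estimate is $E\ge x+1$. Given both, $E^2\ge (x+1)E>(x+1)(c-1)$, which is exactly what is needed.

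The main obstacle is proving $E\ge x+1$, that is,
\[
{n-2\choose k-1}-{n-2\choose k-2}\ge {n-k-1\choose k-1}+1 .
\]
At $n=2k$ the left side is the ballot number $\tfrac1k{2k-2\choose k-1}\ge 2$ and the right side is $2$, so the bound holds, with equality at $k=3$. For general $n$ I would try induction on $n$: increasing $n$ by one adds ${n-2\choose k-2}-{n-2\choose k-3}$ to the left side and ${n-k-1\choose k-2}$ to the right side.

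If this increment comparison fails for some small $n$ relative to $k$, I will fall back on a weaker bound $E\ge\theta(x+1)$ for a suitable $\theta<1$. This would be combined with the sharper estimate $E\ge c+{n-3\choose k-2}$, enough to give $E^2\ge\theta(x+1)\cdot E$ with $E$ well above $c/\theta$. I would also check the boundary cases $k=3$ and $n=2k$ by direct computation.
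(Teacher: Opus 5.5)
There is a genuine gap: your reduction is correct, but the one inequality it reduces to is never proved.

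\textbf{What is right.} With your $x,E,c$ one has $h(n,k)^2-|\mathcal C_1||\mathcal C_2|=E^2-(x+1)(c-1)$. This is essentially the paper's route in tidier form. The paper's auxiliary quantities are exactly your $E$ (its $x$) and $E+c-1$ (its $y$). Its only non-trivial input is $|\mathcal C_1|\le E$, which is your $E\ge x+1$.

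\textbf{The gap.} You never establish $E\ge x+1$. You verify $n=2k$ and write down the increments for an induction on $n$. But you do not show $\binom{n-2}{k-2}-\binom{n-2}{k-3}\ge\binom{n-k-1}{k-2}$. You then hedge with a fallback in which $\theta$ is never chosen and $E\ge\theta(x+1)$ is never shown. As written, the proposal proves the reduction but not the proposition.

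\textbf{How to close it.} A termwise comparison suffices, with no induction needed.
By the hockey-stick identity, $x=\binom{n-2}{k-2}=\sum_{i=1}^{k-1}\binom{n-2-i}{k-1-i}$, whose last term is $\binom{n-k-1}{0}=1$. On the other hand, $E=\sum_{i=1}^{k-1}\binom{n-2-i}{k-2}$.
For every $i$ we have $\binom{n-2-i}{k-1-i}\le\binom{n-2-i}{k-2}$, since $k-1-i\le k-2$ and $(k-1-i)+(k-2)\le n-2-i$ whenever $n\ge 2k-1$.
At $i=k-1$ the right-hand term is $\binom{n-k-1}{k-2}\ge n-k-1\ge 2$, using $1\le k-2\le n-k-2$. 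This absorbs the extra $+1$, so $E\ge x+1$.
Your induction step also holds, by an analogous termwise comparison, so the fallback is unnecessary.

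\textbf{A smaller error.} Your expansion $E=\sum_{j=1}^{k}\binom{n-2-j}{k-2}$ is off by one. The sum stops at $j=k-1$, so $c$ is not a summand of $E$; your sum actually equals $E+c$. The estimate you need still holds: $E\ge\binom{n-3}{k-2}>\binom{n-k-2}{k-2}=c$. In fact only $E\ge c$ is used in the step $(x+1)E>(x+1)(c-1)$.
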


\begin{proof}
A direct calculation gives
\begin{equation*}
|\m C_1|={n-2\choose k-2}+1,\,\,|\m C_2|={n-2\choose k-2}+2{n-3\choose k-2}+\dots +2{n-k-1\choose k-2}+{n-k-2\choose k-2}. 
\end{equation*}
Let 
\begin{align*}
    &x:={n-3\choose k-2}+{n-4\choose k-2}+\dots +{n-k-1\choose k-2},\\
    &y:={n-3\choose k-2}+{n-4\choose k-2}+\dots +{n-k-1\choose k-2}+{n-k-2\choose k-2}-1.
\end{align*}
Then $|\m C_1|+x>y$. 
Since $n\ge 2k$ and $k\ge 3$, 
$
|\m C_1|={n-2\choose k-2}+1=\sum_{i=1}^{k-1}{n-2-i\choose k-1-i}+1\le \sum_{i=1}^{k-1}{n-2-i\choose k-2}=x.
$
Therefore, $|\m C_1|/x\le 1<(|\m C_1|+x)/y$. 
Note that $h(n,k)=|\m C_1|+x=|\m C_2|-y$.
Hence
$
\frac{|\m C_2|}{y}
=
\frac{|\m C_1|+x+y}{y}
=
\frac{|\m C_1|+x}{y}+1
=
\frac{|\m C_1|+x}{y}+\frac{x}{x}
>
\frac{|\m C_1|+x}{x}.
$
So $|\m C_2|x>(|\m C_1|+x)y$. This implies $h(n,k)^2=(|\m C_1|+x)(|\m C_2|-y)>|\m C_1||\m C_2|$. 
\end{proof}

\begin{construction}\label{H_AH_B}
Let $B\in {[n]\choose k}$ and $\{1,2,4,n-k+4, \dots, n\}\prec B \prec \{1,2, n-k+3, \dots,n\}$, and let $A$ be the $k$-partner of $B$. Define 
\begin{align*}
    &\m H_A=\{H\in \tbinom{[n]}{k}: H\cap \{1,2\}\ne \emptyset, H\cap [3,k+2]\ne \emptyset \}\cup \m L([3,n], A, k),\\
    &\m H_B=\m L([n], B, k) \cup \{[3,k+2]\},
\end{align*}
where if $B=\{1,2, n-k+3,\dots, n\}$, then $A=\{2,n-k+2,\dots,n\}$, and in this case we define
$\m L([3,n],\{2,n-k+2,\dots,n\},k)$ to be the empty family.
\end{construction}

Note that for $B=\{1,2, n-k+3,\dots, n\}$, we have $\m H_A=\m C_1$ and $\m H_B=\m C_2$. Proposition \ref{3-11-2} states that $|\m C_1||\m C_2|<h(n,k)^2$. Furthermore, we have the following proposition. 
\begin{proposition}\label{1-22-5}
Let $n\ge 2k\ge 6$, $B\in {[n]\choose k}$ and $\{1,2,4,n-k+4, \dots, n\}\prec B \prec \{1,2, n-k+3, \dots,n\}$, and let $A$ be the $k$-partner of $B$. Then 
$|\m H_A||\m H_B|<h(n,k)^2.$
\end{proposition}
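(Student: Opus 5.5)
The plan is to parametrize the pair $(\m H_A,\m H_B)$ by one integer and reduce the claim to a one-variable inequality, whose endpoint is already settled by Proposition~\ref{3-11-2}. Every $B$ with $\{1,2,4,n-k+4,\dots,n\}\prec B\prec\{1,2,n-k+3,\dots,n\}$ contains $\{1,2\}$ and avoids $3$. Hence every $k$-set lex-before $B$ contains $\{1,2\}$, and
\[
|\m H_B|=\tbinom{n-3}{k-3}+m+1,
\]
where $m=m(B)$ is the number of $(k-2)$-sets $T\subset[4,n]$ with $\{1,2\}\cup T\prec B$.

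On the other side, the first part of $\m H_A$ has size $c_0:=|\{H: H\cap[2]\ne\emptyset,\ H\cap[3,k+2]\ne\emptyset\}|$, independent of $B$. The second part is $\m L([3,n],A,k)$. By Lemma~\ref{3-12-1} (maximality of the partner), it is the largest L-initial family in $\binom{[3,n]}{k}$ cross-intersecting the L-initial family $\{T\}$ of $(k-2)$-sets on $[4,n]$ counted by $m$. Write $a(m):=|\m L([3,n],A,k)|$, so that
\[
|\m H_A|=c_0+a(m),
\]
and $a$ is non-increasing in $m$. At the top endpoint $B=\{1,2,n-k+3,\dots,n\}$ we have $(\m H_A,\m H_B)=(\m C_1,\m C_2)$, and Proposition~\ref{3-11-2} gives the strict bound there. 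So it remains to control
\[
f(m):=\Bigl(\tbinom{n-3}{k-3}+m+1\Bigr)\bigl(c_0+a(m)\bigr)
\]
over the admissible range of $m$.

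Next I would obtain an explicit description of $a(m)$ by splitting $B\setminus\{1,2\}$ according to its first element, i.e.\ block by block as in the cascade of the lex order. When $B\setminus\{1,2\}$ runs through the sets whose smallest element is $s$, the partner $A$ runs through sets containing $[3,s-1]\cup\{s\}$-type prefixes. In each such block, increasing $m$ by one forces $a$ to drop by a block binomial coefficient, of the form $\binom{n-j}{k-j'}$ for suitable $j,j'$. Since $|\m H_A|\le h(n,k)$ while $|\m H_B|$ is small over most of the range, I would compare the marginal changes directly. One step of $m$ changes the product by roughly $|\m H_A|-|\m H_B|\cdot\Delta a$, and this is negative whenever $\Delta a\ge |\m H_A|/|\m H_B|$. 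Together with Proposition~\ref{3-11-2}, this would show that $f$ is maximized at the right endpoint.

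A cleaner route I would try first is an AM--GM/exchange argument in the spirit of the proof of Proposition~\ref{3-11-2}. Write $|\m H_A|=h(n,k)-y'$ and $|\m H_B|=h(n,k)+x'$, or the reverse, according to which side of $h$ each family lies. It then suffices to show $y'(h+x')>x'h$, i.e.\ $y'/x'>h/(h+x')$. This follows if the loss $y'$ in $\m H_A$ is at least the gain $x'$ in $\m H_B$, i.e.\ if $|\m H_A|+|\m H_B|\le 2h(n,k)$ with the smaller family below $h$. I expect the sum inequality to be provable by pairing each $T$ counted in $m$ with a $k$-set of $[3,n]$ removed from $\m L([3,n],A,k)$. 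This would be an injection built from Lemma~\ref{3-12-1}, since every removed set $S$ is disjoint from some $\{1,2\}\cup T$ that was added, namely $T\subset[3,n]\setminus S$. Near $m=\binom{n-4}{k-3}$, where both families can exceed $h$ in different ways, I would evaluate directly.

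The main obstacle is this injection and the comparison of rates between consecutive lex blocks. The product must be shown smaller than $h^2$ uniformly, not just at the two ends. I anticipate needing the hypotheses $n\ge 2k$ and $k\ge3$ precisely there, through inequalities like $\binom{n-2-i}{k-1-i}\le\binom{n-2-i}{k-2}$ as used in Proposition~\ref{3-11-2}.
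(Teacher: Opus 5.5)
\emph{Verdict: there is a genuine gap. Neither route you outline controls $f(m)$, and the inequality your cleaner route targets is false in part of the range.}

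\medskip

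Your parametrization is fine: $|\m H_B|=\binom{n-3}{k-3}+m+1$, and the first part of $\m H_A$ is exactly $\m C_2$, so $|\m H_A|=|\m C_2|+a(m)$. The problems come after that.

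\textbf{The marginal plan is only a heuristic.} The function $a(m)$ is constant on long stretches, because adding one more trace $T$ often removes no $k$-subset of $[3,n]$. On such stretches $f$ strictly increases. So no step-by-step comparison can locate the maximum; a global comparison is needed.

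\textbf{The cleaner route aims at a false inequality.} Put $\alpha_2:=m-\binom{n-4}{k-3}$ and $\alpha_1:=a(m)$. Every member of $\m L([3,n],A,k)$ contains $\{3,4\}$, while the $\alpha_2$ sets $\{1,2\}\cup T$ with $T\subset[5,n]$ avoid $\{3,4\}$. Hence the traces on $[5,n]$ are cross-intersecting $(k-2)$-uniform families. What your injection would prove is Hilton's bound $\alpha_1+\alpha_2\le\binom{n-4}{k-2}$ (Theorem~\ref{3-12-2}), which in terms of the families reads
\[
|\m H_A|+|\m H_B|\le |\m C_1|+|\m C_2|=2h(n,k)+\tbinom{n-k-2}{k-2}-1 .
\]
This falls short of $2h(n,k)$ by $\binom{n-k-2}{k-2}-1>0$ whenever $n>2k$. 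Moreover, the bound $|\m H_A|+|\m H_B|\le 2h(n,k)$ is false in places:
\begin{itemize}
\item At $B=\{1,2,4,n-k+4,\dots,n\}$ the partner family is all $k$-subsets of $[3,n]$ containing $\{3,4\}$, and the sum equals $|\m C_1|+|\m C_2|$.
\item It also fails for some $B$ just below the top, where $\m L([3,n],A,k)=\emptyset$. For example, with $k=3$, $n=8$, $B=\{1,2,7\}$ one gets $|\m H_A|=27$ and $|\m H_B|=6$, so the sum is $33>32=2h(8,3)$. This case is covered neither by Proposition~\ref{3-11-2} nor by your plan to evaluate directly near $m=\binom{n-4}{k-3}$.
\end{itemize}
Also, for every admissible $B$ we have $|\m H_B|\le|\m C_1|<h(n,k)<|\m C_2|\le|\m H_A|$, so it is never the case that both families exceed $h(n,k)$.

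\textbf{The missing idea is the benchmark.} Compare with the top pair rather than with $(h,h)$. Set $\ell:=\binom{n-4}{k-2}-\alpha_2\ge 0$. Then $|\m H_A|=|\m C_2|+\alpha_1$ and $|\m H_B|=|\m C_1|-\ell$. Hilton's bound, which is all your injection gives, says exactly $\alpha_1\le\ell$: the larger family gains at most what the smaller one loses. Hence
\[
|\m C_1||\m C_2|-|\m H_A||\m H_B|=\ell|\m C_2|-\alpha_1|\m H_B|\ge \ell\bigl(|\m C_2|-|\m H_B|\bigr)\ge 0,
\]
and Proposition~\ref{3-11-2} finishes the proof. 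This is the paper's argument.

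If you want to keep the sum route, you need three pieces:
\begin{itemize}
\item For $\alpha_1,\alpha_2\ge 1$, the stronger Frankl--Tokushige bound (Theorem~\ref{12-3-2}) applied to the traces. It supplies exactly the missing $\binom{n-k-2}{k-2}-1$.
\item For $\alpha_1=0$, monotonicity: $|\m H_A|=|\m C_2|$ and $|\m H_B|\le|\m C_1|$.
\item For $\alpha_2=0$, a direct computation.
\end{itemize}
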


\begin{proof}
Since $\{1,2,4,n-k+4, \dots, n\}\prec B \prec \{1,2, n-k+3, \dots,n\}$, $3\in A$. 
By Lemma \ref{3-12-1}, $\m L([n], B, k)$ and $\m L([3,n], A, k)$ are cross-intersecting. Clearly, the other part $\m H_A\setminus \m L([3,n], A, k)$ is cross-intersecting with $\m H_B$. 
Thus, $\m H_A$ and $\m H_B$ are non-trivial cross-intersecting families in the above constructions.

 We define
\begin{align*}
&\m H'_B:=\{H: |H|=k, \{1,2\}\subset H\subset [n], \{3,4\}\cap H\ne \emptyset\},\\
&\m H''_B:=\m H_B\setminus (\m H'_B \cup \{[3,k+2]\}).
\end{align*}
Since  $\{1,2,4,n-k+4, \dots, n\}\prec B$ and $\m L([n], B, k)\subset \m H_B$,
$\m H'_B\subset \m H_B$.
Note that $\{1,2\}\subset B$, $\{3,4\}\subset A$ and for any $B'\in \m H''_B$, we have  $B'\cap \{3,4\}=\emptyset$. 
Thus, 
\begin{align*}
&|\m L([3,n], A, k)|=|\m L([5,n], |\m L([3,n], A, k)|, k-2)|=:\alpha_1,\\
&|\m H''_B|=|\m H''_B([2])|=|\m L([5,n], |\m H''_B|, k-2)|=:\alpha_2.
\end{align*}

We claim 
\begin{equation}\label{3-12-3}
\alpha_1 \le {n-4\choose k-2}-\alpha_2.
\end{equation}
Indeed, by the cross-intersecting property of $\m L([n], B, k)$ and $\m L([3,n], A, k)$,  $\m L([5,n], |\m H''_B|, k-2)$ and 
$\m L([5,n], |\m L([3,n], A, k)|, k-2)$ are also cross-intersecting. Then by Hilton's result Theorem \ref{3-12-2}, we have $|\m L([5,n], |\m L([3,n], A, k)|, k-2)|+|\m L([5,n], |\m H''_B|, k-2)|=\alpha_1+\alpha_2 \le {n-4\choose k-2}$. So $\alpha_1 \le {n-4\choose k-2}-\alpha_2$, as claimed. 

We write 
\begin{align*}
    &\beta_1={n-1\choose k-1}+{n-2\choose k-1}-{n-1-k\choose k-1}-{n-2-k\choose k-1},\\
    &\beta_2=1+{n-3\choose k-3}+{n-4\choose k-3}.
\end{align*}
So $|\m H_A|=\beta_1+\alpha_1$ and $|\m H_B|=1+|\m H'_B|+|\m L([5,n], |\m H''_B|, k-2)|=\beta_2+\alpha_2$. 
Recall that $|\m C_1||\m C_2|<h(n,k)^2$. To prove Proposition \ref{1-22-5}, it is sufficient to show $|\m H_A||\m H_B|\le |\m C_1||\m C_2|$. 
Clearly, $|\m C_1|=\beta_2+{n-4\choose k-2}$, $|\m C_2|=\beta_1$ and 
\begin{equation}\label{alpha2}
  |\m C_1|-(\tbinom{n-4}{k-2}-\alpha_2)<|\m C_2|.
\end{equation}
 Then
\begin{align*}
|\m C_1||\m C_2|-|\m H_A||\m H_B|
&=|\m C_1||\m C_2|-(\alpha_1+\beta_1)(\alpha_2+\beta_2)\\
&=|\m C_1||\m C_2|-(\alpha_1+|\m C_2|)(\alpha_2+|\m C_1|-\tbinom{n-4}{k-2})\\
&=|\m C_2|(\tbinom{n-4}{k-2}-\alpha_2)-\alpha_1(\alpha_2+|\m C_1|-\tbinom{n-4}{k-2})\\
&\overset{(\ref{3-12-3}),(\ref{alpha2})}{>}0.
\end{align*}
\end{proof}

\begin{construction}\label{2-27-2}
Let $n\ge 8$. We define
    \begin{align*}
    &\m D_1=\left\{D\in {[n]\choose 4}: \{i,j\}\subset D, i\in \{2,3,4\},j\in \{1,5,6\}\right\},\\
    &\m D_2=\left\{D\in {[n]\choose 4}: \{1,5,6\}\subset D\right\} \cup \left\{D\in {[n]\choose 4}: \{2,3,4\}\subset D\right\}.
    \end{align*}
\end{construction}
By a direct computation, we have
$|\m D_2|=2(n-3)$ and 
$|\m D_1|={n-2\choose 2}+2{n-3\choose 2}+3{n-4\choose 2}+2{n-5\choose 2}+{n-6\choose 2}$.
Then
$|\m D_1||\m D_2|=(2n-6)\frac{9n^2-81n+192}{2}$. 
Note that $h(n,4)=2n^2-16n+35$.
Using $n\ge8$, we obtain
\begin{equation}\label{2-27-4}
    |\m D_1||\m D_2|<h(n,4)^2.
\end{equation}

\subsection{Proof of Lemma \ref{mainlem2}}

We begin by introducing the following assumption, which somewhat simplifies the proof of Lemma~\ref{mainlem2}.
\begin{assumption}\label{asm}
Let $n\ge 2k\ge 6$, and let $\mathcal{A}, \mathcal{B} \subset {[n]\choose k}$ be non-trivial cross-intersecting families such that $|\m A||\m B|$ is maximum among all pairs of non-trivial cross-intersecting families with  
$\Delta(\m A)\ge {n-2\choose k-2}+1$ and 
$\Delta(\m B)\le {n-2\choose k-2}$. Moreover, 
\begin{itemize}
\item $|\m A|\ge h(n,k)$, $\gamma(\m A)>{n-3\choose k-2}$;
\item Any $S^{Q}_{U,V}$-shift of $(\m A, \m B)$ makes $S_{U,V}^Q(\m B)$ a star, and keeps $S_{U,V}^Q(\m A)$ non-trivial.
\end{itemize}
\end{assumption}

\begin{proposition}\label{covernumber1}
Under Assumption \ref{asm}, if, in addition, $\tau(\m B_{\Delta})=1$, then $\gamma(\m B)=1$.
\end{proposition}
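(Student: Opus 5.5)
Let $x$ be the element of maximum degree of $\m B$, so $\m B_\Delta=\m B(x)$. Since $\tau(\m B_\Delta)=1$, there is $y\ne x$ with $y\in F$ for every $F\in\m B(x)$; equivalently $\m B(x\bar y)=\emptyset$. The plan is to show that $\m B(\bar x)$ consists of a single set.

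\textbf{Step 1: make $x,y$ full in $\m A$.} Every $B\in\m B$ with $x\in B$ also contains $y$. Hence every $B\in\m B$ either contains $\{x,y\}$ or misses $x$. By the maximality of $|\m A||\m B|$ (extremality in Assumption~\ref{asm}), the pair is maximal non-trivial cross-intersecting. So every $k$-set $A$ that meets every member of $\m B(\bar x)$ lies in $\m A$ whenever adding it keeps non-triviality; such sets cannot destroy non-triviality of $\m A$.

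\textbf{Step 2: choose the shift.} Next I would exploit the last bullet of Assumption~\ref{asm}: any $S^Q_{U,V}$-shift turns $\m B$ into a star while keeping $\m A$ non-trivial. I would apply Lemma~\ref{spl} to $\m B$ with a suitable $R$ (after relabelling so that $x=1$ and $y=2$) to produce an $S^Q_{U,V}$-shift. First try $R=\{1\}$ with $i=2$. By Proposition~\ref{1-21-2}, if $S_{U,V}(\m B)$ is a star with center $i_0$, then $i_0\in U$ and every $F\in\m B(\bar i_0)$ satisfies $F\cap(U\cup V)=V$.
\begin{itemize}
\item If $i_0\ne x$: the center must be $y$. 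But then $\m B(\bar x)$ is contained in the star at $y$ after shifting, so both $\m B(xy)$ and $\m B(\bar x)$ are almost entirely in the $y$-star. This makes $\Delta(\m B)=d(y)\ge d(x)$. One can then argue that $y$ is also a maximum-degree element with $\m B(y\bar x)\ne\emptyset$, and switch roles.
\item If $i_0=x$: every set of $\m B(\bar x)$ contains the fixed set $V$ and avoids $U$, and these sets are shifted into $x$-containing sets. If $|\m B(\bar x)|\ge 2$, the sets of $\m B(\bar x)$ share $V$ with $|V|\ge 1$.
\end{itemize}

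\textbf{Step 3: the product comparison.} In the second subcase I would bound $|\m A|$ and $|\m B|$ directly and derive a contradiction. The sets of $\m B(\bar x)$ all contain a common $v\in V$, and the sets of $\m B(x)$ all contain $y$. The cross-intersection requirement then allows $\m A$ to contain all sets through $\{v,y\}$-type covers, while $\m B\subset\{B: \{x,y\}\subset B\}\cup\{B:v\in B,\,x\notin B\}$. I would compare with the construction obtained by replacing $\m B(\bar x)$ by a single set $B_0$ and enlarging $\m A$ to all sets meeting $B_0$ and meeting $\{x,y\}$-covers appropriately. Removing $|\m B(\bar x)|-1$ sets from $\m B$ costs little because $|\m B|\ge{n-2\choose k-2}$-scale, while $\m A$ gains a whole family of sets avoiding $v$ but meeting $B_0$. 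Using $\gamma(\m A)>{n-3\choose k-2}$ and Proposition~\ref{1-29-1}, the gain outweighs the loss, which contradicts the maximality of the product in Assumption~\ref{asm}. This also requires checking that the new pair still satisfies the degree constraints.

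\textbf{Main obstacle.} I expect the main obstacle to be this quantitative exchange in Step 3, specifically verifying that the product strictly increases. It also requires preserving $\Delta(\m B)\le{n-2\choose k-2}$ and non-triviality. Secondarily, the case analysis on the center in Step 2 (ruling out center $y$) may need a separate application of Lemma~\ref{spl}(ii) with $i=y$ to force $y\notin V$.
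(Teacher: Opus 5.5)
There is a genuine gap: Steps 2 and 3 do not prove the statement. The shift you extract from Lemma~\ref{spl} only tells you, via Proposition~\ref{1-21-2}, that all members of $\m B(\bar x)$ contain a common $V$ and avoid $U$. That gives no bound on $|\m B(\bar x)|$. The case ``center $y$'' is dismissed with ``switch roles'' and no argument. The product exchange in Step 3 is only asserted. It would need a competitor that still meets the restricted extremality of Assumption~\ref{asm} (in particular $\Delta\le\binom{n-2}{k-2}$ on the $\m B$-side) and strictly beats $|\m A||\m B|$, and you verify neither. Step 1 is never used.

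The missing idea is that no product comparison is needed. You should choose the shift yourself rather than take the one Lemma~\ref{spl} provides. Relabel so that $x=1$ and $y=2$.
\begin{itemize}
\item Since $\m B(1\bar2)=\emptyset$, you have $d_{\m B}(2)=\Delta(\m B)+|\m B(\bar1 2)|$. Because $d_{\m B}(2)\le\Delta(\m B)$, this forces $\m B(\bar12)=\emptyset$, i.e.\ $\m B(\bar1)\subset\binom{[3,n]}{k}$.
\item Suppose $B_1\ne B_2$ both lie in $\m B(\bar1)$, and take $j\in B_1\setminus B_2$. The set $(B_1\setminus\{j\})\cup\{1\}$ contains $1$ but not $2$, so it is not in $\m B$.
\item Hence $S_{1,j}$ acts non-trivially on $\m B$, so it is an $S^Q_{\{1\},\{j\}}$-shift. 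By Assumption~\ref{asm}, $S_{1,j}(\m B)$ is a star, and by Proposition~\ref{1-21-2} its center is $1$.
\item But $B_2$ is fixed by $S_{1,j}$ and does not contain $1$, a contradiction.
\end{itemize}
This is the paper's whole argument.
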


\begin{proof}
We may w.l.o.g. assume $\mathcal B_\Delta=\mathcal B(1).$
Since $\tau(\mathcal B_\Delta)=1$, we may, furthermore, assume that $2$ is a common
element of all members of $\mathcal B(1)$. Thus
$\mathcal B(1\bar2)=\emptyset$ and
$|\mathcal B(1,2)|=|\mathcal B(1)|=\Delta(\mathcal B)$.
Since $|\mathcal B(2)|\le \Delta(\mathcal B)$, it follows that
$\mathcal B(\bar1 2)=\emptyset$. Hence
$\mathcal B_\gamma=\mathcal B(\bar1)=\mathcal B(\bar1\bar2)
\subset {[3,n]\choose k}.$

Note that any $S^{Q}_{U,V}$-shift of $(\m A, \m B)$ makes $S_{U,V}^Q(\m B)$ a star. 
Thus, for any $i$, if $i\in B$ for some $B\in \m B(\bar1\bar2)$, then $S_{1,i}(\m B)$ is a star, and $1$ is a center of  $S_{1,i}(\m B)$ (by  Proposition \ref{1-21-2}). 
Suppose $\gamma(\m B)\ge 2$. Then there are two distinct sets $B_1$ and $B_2$ in $\m B(\bar 1 \bar 2)$.
Choose $j\in B_1\setminus B_2$. 
Clearly, $B_2\in S_{1,j}(\m B)$ and $1\not\in B_2$,  
contradicting the fact that $S_{1,j}(\mathcal B)$ is a star with center $1$. Thus, $\gamma(\m B)=1$. 
\end{proof}

\begin{proposition}\label{covernumber2-shifted}
Under Assumption \ref{asm}, if, in addition,  $(\mathcal A,\mathcal B)$ is shifted, then $\gamma(\m B)=1$.
\end{proposition}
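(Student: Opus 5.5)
The plan is to combine shiftedness with the second part of the structure-preserving lemma (Lemma~\ref{spl}(ii)). The aim is an $S^Q_{U,V}$-shift with $1\in U$ and $2\notin V$. By Assumption~\ref{asm} this shift makes $S^Q_{U,V}(\m B)$ a star, and that star structure should pin $\m B(\bar 1)$ down to a single set.

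\textbf{Setting up.} Since $(\m A,\m B)$ is shifted, $\m B_\Delta=\m B(1)$ and $\m B_\gamma=\m B(\bar1)$. The subfamily $\m B(\bar 1)\subset\binom{[2,n]}{k}$ is itself shifted. Suppose for contradiction that $\gamma(\m B)\ge 2$. Shiftedness then gives $[2,k+1]\in\m B(\bar1)$ and $[2,k]\cup\{k+2\}\in\m B(\bar1)$.

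\textbf{Applying Lemma~\ref{spl}(ii).} I would take $\m F=\m B$, $r=1$, $R=\{1\}$, $i=2$, and check (P1)--(P5).
\begin{itemize}
\item (P1) is automatic, since $|R|=1$.
\item (P2) holds because $\m B$ is non-trivial.
\item (P3) holds because $|\m B(1)|=\Delta(\m B)\le\binom{n-2}{k-2}<\binom{n-1}{k-1}$.
\item (P5) holds because $[2,k+1]\in\m B$.
\item (P4), that $\{1,2\}$ is non-full in $\m B$, needs an argument. If $\{1,2\}$ were full in $\m B$, then by $n\ge 2k$ every member of $\m A$ would meet $\{1,2\}$. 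Shiftedness of $\m A$, together with $\gamma(\m A)>\binom{n-3}{k-2}$, should contradict this; failing that, I would use the maximality of $|\m A||\m B|$ within Assumption~\ref{asm}.
\end{itemize}
The lemma then gives an $S^Q_{U,V}$-shift of $\m B$ with $1\in U$ and $2\notin V$. By Assumption~\ref{asm}, $S^Q_{U,V}(\m B)$ is a star. By Proposition~\ref{suvcenter}, whose hypothesis holds because $\m B$ is $1$-shifted, its center is $1$. Proposition~\ref{1-21-2} then gives $F\cap(U\cup V)=V$ for every $F\in\m B(\bar1)$.

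\textbf{Deriving the contradiction.} Both $[2,k+1]$ and $[2,k]\cup\{k+2\}$ contain $V$, so $V\subset[3,k]$, using $2\notin V$. Each of them also avoids $U$, and $2$ lies in both, so $2\notin U$. Since $U\prec V$ and $|U|=|V|$, the set $U\setminus\{1\}$ has $|V|-1$ elements. Moreover $U$ avoids $[2,k+2]\supset V$, so every element of $U\setminus\{1\}$ is at least $k+3$. I would now use the defining property of {\bf Q}: for every $V'\subsetneqq V$ there is $U'\subsetneqq U$ with $U'\prec V'$, $|U'|=|V'|$ and $S_{U',V'}(\m B)=\m B$.
\begin{itemize}
\item If $|V|\ge 2$, take $V'=\{v\}$ with $v\in V$. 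Then $U'=\{u'\}$ with $u'<v\le k$, and $u'\in U$ forces $u'=1$. Thus $S_{1,v}(\m B)=\m B$. But $[2,k+1]\in\m B(\bar 1)$ contains $v$, so the shifted set $[1,k+1]\setminus\{v\}$ must lie in $\m B$. I would try to combine this with $S^Q_{U,V}(\m B)$ being a star centred at $1$, perhaps through the choice of $V'$ and the sizes of the two families, to show that $V$ must be a singleton. This step is not established.
\item If $U=\{1\}$ and $V=\{v\}$ with $3\le v\le k$, then $S_{1,v}(\m B)$ is a star with center $1$. Lemma~\ref{12-22-1} then gives $\m B(1)\cap\m B(v)=\emptyset$, so $(F\setminus\{v\})\cup\{1\}\notin\m B$ for every $F\in\m B(\bar1)$. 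For $F=[2,k+1]$ this says $[1,k+1]\setminus\{v\}\notin\m B$. But $[1,k+1]\setminus\{v\}$ is obtained from $[2,k+1]$ by the shift $S_{1,v}$, so shiftedness of $\m B$ puts it in $\m B$, a contradiction.
\end{itemize}

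The main obstacle is twofold. First, (P4) must be verified cleanly. Second, the case $|V|\ge2$ must be either ruled out or handled; the same closing contradiction needs to come from the {\bf Q}-condition together with the star property of $S^Q_{U,V}(\m B)$. If (P4) cannot be verified directly, I would instead apply Lemma~\ref{spl} to $\m A$ with $R=\{1\}$, using the large diversity $\gamma(\m A)$ to verify its hypotheses. The conclusion would then be transferred to $\m B$ by the same star-center argument.
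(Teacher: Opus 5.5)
Your proposal has a genuine gap. The case $|V|\ge 2$ is left open, and it is the only case that can occur. An $S^Q_{U,V}$-shift of $\mathcal B$ acts non-trivially, while every $S_{1,v}$ fixes the shifted family $\mathcal B$, so your second bullet never arises.

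For $k\ge 4$, the constraint $V\subset M\setminus\{2\}$, with $M=\bigcap_{B\in\mathcal B(\bar1)}B=[2,m]$ and $m\le k$, does not rule out $|V|\ge2$. (For $k=3$ your argument does close, since then $|V|\le m-2\le 1$.) Nothing you derive gives a contradiction in this case:
\begin{itemize}
\item Proposition~\ref{1-21-2} makes every member of $\mathcal B(\bar1)$ contain $V$ and avoid $U$. That is exactly the shape of a shift collapsing $\mathcal B$ onto a star centred at $1$, so the star conclusion is self-consistent.
\item The facts $S_{1,v}(\mathcal B)=\mathcal B$ that you extract from \textbf{Q} already follow from shiftedness.
\item Your fallback, Lemma~\ref{spl} applied to $\mathcal A$ with $R=\{1\}$, has the same defect: nothing forces $U$ to meet a member of $\mathcal B(\bar1)$.
\end{itemize}

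The missing idea is to put into $R$ an element of $[2,k+1]\setminus M$, namely $m+1$. The paper applies Lemma~\ref{spl}(i) to $\mathcal A$ with $R=\{1,m+1\}$ and checks the hypotheses as follows.
\begin{itemize}
\item (P1) holds because $|R|=2$ and $\mathcal A$ is shifted.
\item For (P3), some $B\in\mathcal B(\bar1)$ misses $m+1$. Hence some set of $\mathcal E^k_{\{1,m+1\}}$ is disjoint from $B$ and so lies outside $\mathcal A$.
\item For (P2), suppose every member of $\mathcal A$ met $\{1,m+1\}$. Maximality would then make $\{1,m+1\}$ full in $\mathcal B$. Since $|\mathcal B(1)|\le\binom{n-2}{k-2}$, every member of $\mathcal B(1)$ would contain $m+1$. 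This contradicts $\tau(\mathcal B_\Delta)\ge 2$, which Proposition~\ref{covernumber1} supplies once $\gamma(\mathcal B)\ge 2$; your proposal never uses this fact.
\end{itemize}
The resulting shift has $1\in U$, so it fixes $\mathcal B[1]$. Hence the star $S^Q_{U,V}(\mathcal B)$ given by Assumption~\ref{asm} must be centred at $1$. It also has $m+1\in U\cap[2,k+1]$, so it fixes $[2,k+1]\in\mathcal B$, which does not contain $1$. This is the contradiction.

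Separately, your (P4) concern is easy to settle. If $\{1,2\}$ were full in $\mathcal B$, then $|\mathcal B(1)|\le\binom{n-2}{k-2}$ forces $\mathcal B(1\bar2)=\emptyset$. But shiftedness sends $[2,k+1]\in\mathcal B$ to $[1,k+1]\setminus\{2\}\in\mathcal B$, i.e.\ $[3,k+1]\in\mathcal B(1\bar2)$.
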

\begin{proof}
Suppose, for contradiction, that $\gamma(\m B)\ge 2$. 
By Proposition \ref{covernumber1}, we have $\tau(\m B_{\Delta})\ge 2$. 
Write $M:=\cap_{B\in \m B(\bar{1})}B.$ 
Since $\m A, \m B$ are shifted, $\m A_{\Delta}=\m A(1)$, $\m B_{\Delta}=\m B(1)$ and $M=[2,m]$ for some $m$, allowing $m=1$ when $M=\emptyset$. Since $|\m B(\bar1)|=\gamma(\m B)\ge 2$, $m\le k$.
By the maximality of $|\m A||\m B|$, $\m A$ and $\m B$ are maximal cross-intersecting families.
Then for each $i\in [2,m]$, $\{1,i\}$ is full in $\m A$. 

We show that $\{1,m+1\}$ is non-full in $\m A$. 
Since $m+1\not\in M$, there exists $B\in \m B(\bar1)$ such that $B\cap \{1,m+1\}=\emptyset$. 
Then $B\setminus M \subset [m+2,n]$ and $|B\setminus M|=k-m+1$.
As $n\ge 2k$, $|[m+2,n]\setminus (B\setminus M) |=n-k-2\ge k-2$. 
Then there exists a $(k-2)$-set $T \subset [m+2,n]\setminus (B\setminus M)$. 
Then $\{1,m+1\}\cup T \in \mathcal E^k_{\{1,m+1\}}$. 
By the cross-intersecting property of $\m A$ and $\m B$, $\{1,m+1\}\cup T \not\in \m A$. 
Thus, $\{1,m+1\}$ is non-full in $\m A$.

As $\m A$ and $\m B$ are maximal cross-intersecting, if every set in $\m A(\bar 1)$ contains $m+1$, then all $k$-sets containing $\{1,m+1\}$ belong to $\m B$, i.e., $|\m B(\{1,m+1\})|={n-2\choose k-2}$, together with $|\m B(1)|=\Delta(\m B)\le {n-2\choose k-2}$, we see that every set in $\m B(1)$ contains $m+1$, contradicting $\tau(\m B_{\Delta})\ge 2$. 
Thus, there exists $A'\in \m A$ such that $A'\cap \{1,m+1\}=\emptyset$.

Apply Lemma \ref{spl}(i) to $\m A$ with $X=[n]$, $r=1$ and $R=\{1, m+1\}$.
The required conditions are verified as follows: (P1) holds since $|R|=2$ and $\m A$ is shifted; (P2) holds by the existence of $A'$; (P3) holds since  $\{1,m+1\}$ is non-full in $\m A$. 
Thus, by Lemma \ref{spl}(i),
there exists an $S_{U,V}^Q$-shift of $\m A$ (and hence of $(\m A, \m B)$) such that $\{1,m+1\}\subset U$. 
By Assumption \ref{asm},  $S_{U,V}^Q(\m B)$ is a star. Moreover, by Proposition \ref{1-21-2}, $S_{U,V}^Q(\m B)$ is a star with center $c\in U$. Since $1\in U$, $\m B[1]\subset S_{U,V}^Q(\m B)$.
We claim $c=1$, since if not, then every set in $\m B[1]$ contains $c$, contradicting $\tau(\m B(1))=\tau(\m B_{\Delta})\ge 2$. 
However, since $\m B$ is shifted, $[2,k+1]\in \m B$. 
As $m\le k$, $m+1\in [2,k+1]$. Then $m+1\in U$ implies that $[2,k+1]$ is stable and $[2,k+1]\in S_{U,V}^Q(\m B)$, which contradicts the fact that $S_{U,V}^Q(\m B)$ is a star with center $1$.
\end{proof}

The following two propositions constitute the key ingredients
in the proof of Lemma~\ref{mainlem2}. We defer their proofs
to the next two subsections.

\begin{proposition}\label{gamma1}
Let $n\ge 2k\ge6$, $\m A, \m B\subset {[n]\choose k}$ be non-trivial cross-intersecting with $\Delta(\m B)\le {n-2\choose k-2}$, $\gamma(\m B)=1$ and $|\m B|>{n-3\choose k-3}+{n-4\choose k-3}$. Then the following hold.
\begin{itemize}
    \item[(i)] If $\tau(\m B_{\Delta})=1$, then $|\m A||\m B|<h(n,k)^2$.  
    \item[(ii)] If $(\mathcal A,\mathcal B)$ is shifted, then $|\m A||\m B|<h(n,k)^2$. 
\end{itemize} 
\end{proposition}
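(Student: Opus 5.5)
Since $\gamma(\m B)=1$, we write $\m B=\m B[x]\cup\{B_0\}$ with $x\notin B_0$, where $x$ is the element of maximum degree. Then $\m A\subset\{A: A\cap B_0\neq\emptyset\}$. We decompose $\m A=\m A[x]\cup\m A(\bar x)$. The sets of $\m A(\bar x)$ must meet $B_0$ and also cross-intersect the link $\m B(x)\subset\binom{[n]\setminus\{x\}}{k-1}$. The sets of $\m A[x]$ are constrained only by $B_0$, so
\[
|\m A(x)|\le \binom{n-1}{k-1}-\binom{n-k-1}{k-1}.
\]
The whole problem therefore reduces to bounding $|\m A(\bar x)|$ against $|\m B(x)|=|\m B|-1\le\binom{n-2}{k-2}$. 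The product to control is
\[
\bigl(\tbinom{n-1}{k-1}-\tbinom{n-k-1}{k-1}+|\m A(\bar x)|\bigr)\bigl(|\m B(x)|+1\bigr),
\]
compared with $h(n,k)^2$, where $h(n,k)=\binom{n-1}{k-1}-\binom{n-k-1}{k-1}+1$. The lower bound $|\m B|>\binom{n-3}{k-3}+\binom{n-4}{k-3}$ forces $\m B(x)$ to be fairly large. In turn, this should force $|\m A(\bar x)|$ to be small via Theorem~\ref{12-1-1} and Proposition~\ref{lowerupper} applied on the ground set $[n]\setminus\{x\}$, where $\m A(\bar x)\subset\binom{[n]\setminus\{x\}}{k}$ and $\m B(x)\subset\binom{[n]\setminus\{x\}}{k-1}$ are cross-intersecting with $n-1\ge 2k-1$.

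For case (i), $\tau(\m B_\Delta)=1$, so all sets of $\m B(x)$ contain a common $y$, and $|\m B(x)|\le\binom{n-2}{k-2}$. Then $\m A(\bar x)$ consists of sets meeting $B_0$ that contain $y$, together with sets avoiding $y$ that cross-intersect $\m B(\{x,y\})$. I would first treat the subcase $\m B(x)$ full, i.e.\ $\m B(\{x,y\})=\binom{[n]\setminus\{x,y\}}{k-2}$. Then every set of $\m A(\bar x)$ contains $y$, and $\m A\subset\m C$-type configurations: essentially $\m B$ is $\m C_1$ of Construction~\ref{2-27-1} (or its variant with $y\in B_0$), and $\m A\subset\m C_2$, giving the bound by Proposition~\ref{3-11-2}. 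When $y\in B_0$ instead, we get $\m A$ contained in the union of the star of $x$ meeting $B_0$ and the star of $y$ avoiding $x$, and a direct computation should give a product $\binom{n-2}{k-2}+1$ times something at most $|\m C_2|$. Otherwise $\m B(\{x,y\})$ is a proper subfamily. Shifting within $[n]\setminus\{x,y\}$ and using Theorem~\ref{12-1-1}, we may take $\m B(\{x,y\})$ and the $y$-avoiding part of $\m A(\bar x)$ to be L-initial, with the latter maximal, via the $k$-partner of Lemma~\ref{3-12-1}. This yields exactly the configurations $(\m H_A,\m H_B)$ of Construction~\ref{H_AH_B} (after renaming $x,y\mapsto 1,2$ and $B_0\mapsto[3,k+2]$), so Proposition~\ref{1-22-5} applies. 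The lower bound on $|\m B|$ is what guarantees $B\succ\{1,2,4,n-k+4,\dots,n\}$, since smaller $B$ would give $|\m H_B|\le\binom{n-3}{k-3}+\binom{n-4}{k-3}+1$, excluded by hypothesis.

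For case (ii), shiftedness gives $x=1$ and $B_0=[2,k+1]$ (shifting moves the single diversity set to the lex-first one). Since $\m B$ is shifted with $\Delta(\m B)\le\binom{n-2}{k-2}$, $\m B(1)$ is a shifted $(k-1)$-family on $[2,n]$. If $\tau(\m B(1))=1$ we are in (i). Otherwise $\m B(1)$ contains sets avoiding $2$, and $\m A(\bar 1)$ must cross-intersect a shifted family with cover number at least $2$. I would bound $|\m A(\bar1)|$ by comparing with L-initial replacements: $\m A(\bar1)$ cross-intersects both $\m B(1)$ and $\{[2,k+1]\}$, and the extremal competitor is again the $\m H_A,\m H_B$ family or, when $k=4$, the $\m D_1,\m D_2$ pattern of Construction~\ref{2-27-2} with the bound~\eqref{2-27-4}. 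The main obstacle is this last step. I must show that when $\tau(\m B(1))\ge 2$ the loss in $|\m B|$ (at most $\binom{n-2}{k-2}$ still) is not compensated by the gain in $|\m A(\bar1)|$. I would attempt it by a Hilton-type sum inequality (Theorem~\ref{3-12-2} or Theorem~\ref{12-3-2}) on $[3,n]$ applied to $\m A(\bar1\bar2)$ and $\m B(1\bar2)$, combined with the monotonicity argument used in Proposition~\ref{1-22-5}: the product $(\beta_1+\alpha_1)(\beta_2+\alpha_2)$ is maximized at an endpoint when $\alpha_1+\alpha_2$ is bounded.
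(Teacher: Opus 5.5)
Part (i) is correct and follows the paper. You replace $\m B(\{x,y\})$ and the $y$-avoiding part of $\m A(\bar x)$ by L-initial families on $[n]\setminus\{x,y\}$ (Theorem~\ref{12-1-1}), bound the latter by the $k$-partner family of Lemma~\ref{3-12-1}, and compare with $(\m H_{A^\ast},\m H_{B^\ast})$ via Proposition~\ref{1-22-5}. Two remarks. The subcase $y\in B_0$ cannot occur, since then $y$ lies in every member of $\m B$. The ``full'' subcase is just the endpoint $B^\ast=\{1,2,n-k+3,\dots,n\}$, so it needs no separate treatment.

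Part (ii), however, is not proved, and the missing step is all of it. With $(\m A,\m B)$ shifted and $\gamma(\m B)=1$ you get $B_0=[2,k+1]$. If $\tau(\m B(1))=1$, shiftedness of $\m B(1)$ on $[2,n]$ forces $2$ into every member of $\m B(1)$; since $2\in B_0$, $\m B$ would be a star. So the subcase you send to (i) is empty, and everything rests on the step you yourself call the main obstacle.

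The tools you suggest do not close it:
\begin{itemize}
\item L-initializing $\m B(1)$ on $[2,n]$ puts it inside the star of $2\in B_0$ and destroys non-triviality. More generally, Theorem~\ref{12-1-1} treats only the uniform pair $(\m A(\bar1),\m B(1))$. It does not keep the extra requirement that every member of $\m A(\bar1)$ meets the single $k$-set $B_0$.
\item A sum inequality for $\m A(\bar1\bar2)$ and $\m B(1\bar2)$ on $[3,n]$ involves uniformities $k$ and $k-1$, so Theorem~\ref{3-12-2} does not apply. Theorem~\ref{12-3-2} gives $|\m A(\bar1\bar2)|+|\m B(1\bar2)|\le\binom{n-2}{k}-\binom{n-k-1}{k}+1$. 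For fixed $k$ this is of order $n^{k-1}$, whereas the target $|\m H_{A^\ast}(\bar1)|$ is of order $n^{k-2}$, so it is far too weak for the sharp comparison.
\item The endpoint monotonicity of Proposition~\ref{1-22-5} only helps once $|\m A|\le|\m H_{A^\ast}|$ is known for the given $|\m B|$, which is exactly what is missing.
\item $(\m D_1,\m D_2)$ is irrelevant here, since $\gamma(\m D_2)=n-3\ne1$.
\end{itemize}

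The missing idea is to reduce (ii) to (i) rather than estimate directly. The paper transposes $2$ and $k+2$ so that $B_0=[3,k+2]$. It then shifts only $\m A'(\bar1)$ and $\m B'(1)$, keeping $B_0$ and the full part $\m A'[1]$ intact, in three stages:
\begin{itemize}
\item first $S_{2,j}$ with $j\in B_0$;
\item then $S^Q_{U,V}$-shifts with $\{2,i\}\subset U$ from Lemma~\ref{spl}(i), where $i$ is the least element of $B_0$ with $\{2,i\}$ non-full;
\item then ones with $2\in U$, $i\notin V$ from Lemma~\ref{spl}(ii).
\end{itemize}
Cross-intersection is preserved by Lemma~\ref{SUV'}. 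With $B_0$ it is preserved by Claim~\ref{6-17-4}: every member of $\m A'(\bar1\bar2)$ meets $B_0$ in at least two elements, so it still meets $B_0$ after one of them is moved to $2$. In the last stage it is preserved by the choice $i\notin V$. Non-triviality is preserved because the only possible centre is $2$ (Propositions~\ref{1-21-2} and~\ref{suvcenter}), and $2$ misses $B_0$ and is not a centre of $\m A'[1]$. The process stops when $\m A'(\bar1\bar2)=\emptyset$, $\m B'(1\bar2)=\emptyset$, or $\{2,i\}$ is full in $\m A'$ for every $i\in B_0$. Each of these forces $\tau(\m B'(1))=1$, and (i) then gives the bound.
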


The following proposition deals with the case in which $\tau(\mathcal B_\Delta)\ge2$, and $(\m A, \m B)$ is not shifted. In this case we
shall use one additional normalization: the elements $2,\ldots,n$ are
ordered by their degrees in $\mathcal B(\bar1)$. This loses no generality
because in the proof of Lemma \ref{mainlem2} we may relabel these elements
before applying the proposition.

\begin{proposition}\label{cover2-nonshifted}
Under Assumption \ref{asm}, suppose, in addition, that
$\tau(\m B_{\Delta})\ge 2$, and $(\mathcal A,\mathcal B)$ is not shifted.
Suppose further that the elements $2,3,\ldots,n$ are labeled so that
\begin{equation}\label{degree-normalization}
d_{\mathcal B(\bar1)}(2)\ge d_{\mathcal B(\bar1)}(3)
\ge \cdots \ge d_{\mathcal B(\bar1)}(n).
\end{equation}
Then
$|\m A||\m B|< h(n,k)^2$.
\end{proposition}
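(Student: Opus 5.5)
The plan is to show that, under the extra hypotheses, the structure of $(\m A,\m B)$ is pinned down enough to force $\gamma(\m B)=1$ or to land in one of the auxiliary configurations already shown to be suboptimal. Once $\gamma(\m B)=1$ is reached, we reduce to Proposition~\ref{gamma1}.

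\textbf{Normalization.} We may assume $\m B_\Delta=\m B(1)$, and since $\tau(\m B_\Delta)\ge 2$ we work with $\m B(\bar1)$. Write $M:=\bigcap_{B\in\m B(\bar1)}B$; under \eqref{degree-normalization} it is an initial segment $[2,m]$ of $[2,n]$. If $\gamma(\m B)=1$, then Proposition~\ref{gamma1} cannot be applied directly, because $(\m A,\m B)$ is not shifted and $\tau(\m B_\Delta)\ge2$. In that case we will instead shift within the degree part. The single set $B_0\in\m B(\bar1)$ forces $\m A$ to have a specific form, and one of the operations $S_{i,j}$ with $i,j\ne 1$ lex-decreases the pair while keeping both families non-trivial, because $B_0$ survives. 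This contradicts Assumption~\ref{asm}, since every $S^Q$-shift must make $\m B$ a star. So the main case is $\gamma(\m B)\ge 2$, and we aim for a contradiction or for $|\m A||\m B|<h(n,k)^2$.

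\textbf{Using that every shift makes $\m B$ a star.} By Proposition~\ref{1-21-2}, any $S^Q_{U,V}$-shift of $(\m A,\m B)$ produces a star $S_{U,V}(\m B)$ whose center $c$ lies in $U$, and every set of $\m B(\bar c)$ meets $U\cup V$ exactly in $V$. Since $\tau(\m B(1))\ge 2$, whenever $1\in U$ and $\m B[1]$ is stable, the center must be $1$. Repeating the argument of Proposition~\ref{covernumber2-shifted}, we apply Lemma~\ref{spl} with $R=\{1,m+1\}$ (or $R=\{1\}$ together with a suitable $i$), which produces $U\supset\{1,m+1\}$. Some member of $\m B(\bar1)$ then contains $m+1$ but not all of $V$ and is fixed, which is a contradiction. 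Since the pair is not shifted, (P1) may fail. To handle this, we first consider the classical shifts $S_{1,j}$. Each non-trivial one must turn $\m B$ into a star centered at $1$ or $j$, and Lemma~\ref{12-22-1} then gives $\m B(\bar1\bar j)=\emptyset$ and $\m B(1)\cap\m B(j)=\emptyset$. From $\gamma(\m B)\ge2$ and $\Delta(\m B)\le\binom{n-2}{k-2}$ we get strong constraints: all of $\m B$ meets $\{1,j\}$, and by maximality (Proposition~\ref{1-1-2}) $\{1,j\}$ is full in $\m A$. Several such $j$ would give $\m A\supset\{A:1\in A,\ A\cap J\ne\emptyset\}$, and then $\m B(\bar1)$ must contain $J$.

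\textbf{Reduction to the constructions.} These constraints should leave only configurations comparable to Constructions~\ref{2-27-1}, \ref{H_AH_B} and \ref{2-27-2}. Concretely, $\m B$ either consists of sets through a pair $\{1,2\}$ plus few extras, or has a two-block structure $\{2,3,4\},\{1,5,6\}$ when $k=4$. Using Theorem~\ref{12-1-1} together with Lemma~\ref{3-12-1}, we replace $\m A$ by the maximal partner of an L-initial $\m B$-part, and then bound $|\m A||\m B|$ by $|\m H_A||\m H_B|$, $|\m C_1||\m C_2|$ or $|\m D_1||\m D_2|$. Propositions~\ref{3-11-2} and \ref{1-22-5} and \eqref{2-27-4} then give the strict inequality. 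Proposition~\ref{1-29-1} is used to exclude very small $\m B$.

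\textbf{Main obstacle.} The hardest step will be the case analysis turning "every $S^Q$-shift makes $\m B$ a star" into an explicit structure without shiftedness. In particular, condition (P1) must be verified for Lemma~\ref{spl} when only partial shiftedness is available. My first attempt would be to show that $\m A$ is $1$-shifted, by checking that every $S_{1,j}$ acts trivially on $\m A$ (otherwise $\gamma(\m A)>\binom{n-3}{k-2}$ contradicts the star structure, as in Proposition~\ref{shifta}). I would then run the argument on $R=\{1,2\}$, using the degree normalization \eqref{degree-normalization}.
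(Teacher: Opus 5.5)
There is a genuine gap at the central step. The endgame you borrow from Proposition~\ref{covernumber2-shifted} is sound when $\gamma(\mathcal B)\ge2$. An $S^Q_{U,V}$-shift with $\{1,m+1\}\subset U$ fixes $\mathcal B[1]$, so $\tau(\mathcal B(1))\ge2$ forces the star centre to be $1$, while some member of $\mathcal B(\bar1)$ through $m+1$ is fixed. But to obtain that shift from Lemma~\ref{spl}(i) you need (P1), and your route to (P1), namely that $\mathcal A$ is $1$-shifted, is neither justified nor true in general. Assumption~\ref{asm} does not forbid a non-trivial $S_{1,j}$ on $\mathcal A$; it only forces $S_{1,j}(\mathcal B)$ to be a star. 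Proposition~\ref{shifta} is about $S_{U,V}(\mathcal A)$ being a star, which never happens under Assumption~\ref{asm}, so it yields nothing here. Concretely, consider $(\mathcal D_1,\mathcal D_2)$ of Construction~\ref{2-27-2}, which is exactly the configuration surviving the paper's structural analysis for $k=4$. There $S_{1,2}$ sends $\{2,5,7,8\}\in\mathcal D_1$ to $\{1,5,7,8\}\notin\mathcal D_1$, turns $\mathcal D_2$ into a star centred at $1$, and leaves $S_{1,2}(\mathcal D_1)$ non-trivial. Running Lemma~\ref{spl} on $\mathcal B$ instead also fails, because here some $S_{1,a}$ necessarily acts non-trivially on $\mathcal B$.

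Your fallback, that ``these constraints should leave only configurations comparable to Constructions~\ref{2-27-1}, \ref{H_AH_B}, \ref{2-27-2}'', is precisely what needs proof. It also aims at the wrong targets. In $\mathcal C_1$ and $\mathcal H_B$ every member of the degree part passes through $2$, i.e.\ $\tau(\mathcal B_\Delta)=1$, which is excluded here; those configurations belong to Proposition~\ref{gamma1}.

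Your treatment of $\gamma(\mathcal B)=1$ is likewise unsupported. That $B_0$ survives does not prevent $S_{i,j}(\mathcal B)$ from being a star centred at some $i\in B_0$: any pair $\{i,j\}$ meeting every member of $\mathcal B$ with $\mathcal B(i)\cap\mathcal B(j)=\emptyset$ does this. Moreover, $B_0$ need not survive at all if $j\in B_0\not\ni i$.

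The missing idea is quantitative, driven by \eqref{minAB}. Besides the minimal $a$ with $S_{1,a}(\mathcal B)$ a star, the paper chooses lexicographically minimal pairs $(b,c)$ and $(d,e)$ such that $S_{b,c}(\mathcal B)$ and $S_{d,e}(\mathcal B)$ are stars. Here $\{b,c\}$ avoids $\{1,a\}$ and $\{d,e\}$ avoids $\{b,c\}$. Lemma~\ref{12-22-1} applied to both shifts gives $|\mathcal B(1)|\le\binom{n-3}{k-3}+\binom{n-5}{k-3}$. Against \eqref{minAB} this kills $\gamma(\mathcal B)=1$: it forces $k=3$, and a direct count finishes.

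For $\gamma(\mathcal B)\ge2$, the paper combines the L-initial sections of Claim~\ref{B-sections-lex}, the decomposition $\mathcal B(1)=\mathcal B_1\cup\mathcal B_2\cup\mathcal B_3$, and repeated use of \eqref{minAB}. This pins down $M=\{a,b,d\}$, $c\notin M$, $x=d$, $y=e$. What remains is $k=4$ with $(\mathcal A,\mathcal B)\cong(\mathcal D_1,\mathcal D_2)$, excluded by \eqref{2-27-4}, while $k\ge5$ gives a contradiction. None of this structure is present in your proposal, so as it stands it does not prove the proposition.
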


The proof of Proposition \ref{cover2-nonshifted} is the most technical part
of the argument. We postpone it to the next subsection. Assuming
Proposition \ref{cover2-nonshifted}, we now complete the proof of
Lemma \ref{mainlem2}.

\begin{proof}[Proof of Lemma \ref{mainlem2}]
Suppose, for contradiction, that $|\mathcal A||\mathcal B|\ge h(n,k)^2.$ 
Moreover, we may take $(\m A, \m B)$ to be the lex-minimal pair, and w.l.o.g. assume $\mathcal B_\Delta=\mathcal B(1).$
If $\gamma(\m A)\le {n-3\choose k-2}$, then since $|\m A|\ge h(n,k)$, by Lemma \ref{1-21-1}, $|\m A|+|\m B|\le 2h(n,k)$, and hence $|\mathcal A||\mathcal B|\le h(n,k)^2$, as required. 
Next, we may assume $\gamma(\m A)> {n-3\choose k-2}$. 
Since $(\m A, \m B)$ is lex-minimal, by Observation \ref{12-25-2}, for any $S_{U,V}^Q$-shift of $(\m A, \m B)$, at least one of $S_{U,V}^Q(\m A)$ and $S_{U,V}^Q(\m B)$ is a star. If $S_{U,V}^Q(\m A)$ is a star for some $S_{U,V}^Q$-shift, then by Proposition \ref{12-31-1}, we have $|\m A|+|\m B|\le 2h(n,k)$, and hence $|\mathcal A||\mathcal B|\le h(n,k)^2$, as required. Thus, we may assume that for any $S_{U,V}^Q$-shift of $(\m A, \m B)$, $S_{U,V}^Q(\m B)$ is a star, and $S_{U,V}^Q(\m A)$ keeps non-trivial. 
Consequently, we may assume that
$(\mathcal A,\mathcal B)$ satisfies Assumption \ref{asm}. 
If $\tau(\m B_{\Delta})=1$ or $(\m A, \m B)$ is shifted, then combining Propositions (\ref{covernumber1}) and (\ref{covernumber2-shifted}), yields $\gamma(\m B)=1$. 
By (\ref{minAB}), we have $|\m B|>{n-3\choose k-3}+{n-4\choose k-3}$. Together with Proposition \ref{gamma1}, this yields $|\m A||\m B|<h(n,k)^2$, a contradiction. Thus $\tau(\m B_{\Delta})\ge 2$, and $(\mathcal A,\mathcal B)$ is not shifted. 
In what follows, we reorder the elements of $[2,n]$ in decreasing order of degrees in $\m B(\bar 1)$, and denote by $(\m A', \m B')$ the resulting pair of families. So 
\begin{equation}\label{6-9-1}
d_{\mathcal B'(\bar1)}(2)\ge d_{\mathcal B'(\bar1)}(3)
\ge \cdots \ge d_{\mathcal B'(\bar1)}(n).
\end{equation}

We show that we can reduce $(\mathcal A',\mathcal B')$ to satisfy the condition of Proposition \ref{cover2-nonshifted}. 
Clearly, $|\mathcal A'|=|\mathcal A|$, $|\mathcal B'|=|\mathcal B|$, $|\mathcal B'(1)|=|\mathcal B(1)|$
and $(\mathcal A',\mathcal B')$ is still a pair of non-trivial
cross-intersecting $k$-uniform families. Moreover, $\mathcal B'_\Delta=\mathcal B'(1)$, $\Delta(\mathcal B')=|\mathcal B'(1)|
=\Delta(\mathcal B)\le {n-2\choose k-2}$, and $\gamma(\mathcal B')=\gamma(\mathcal B)$, $\tau((\mathcal B')_\Delta)=\tau(\mathcal B_\Delta)\ge2$.
Assume on the contrary that $(\mathcal A',\mathcal B')$ does not satisfy the condition of Proposition \ref{cover2-nonshifted}. If $(\mathcal A',\mathcal B')$ satisfies Assumption \ref{asm}, then $(\mathcal A',\mathcal B')$ is shifted, and hence $|\m A||\m B|=|\m A'||\m B'|<h(n,k)^2$ by Proposition \ref{gamma1}, a contradiction. Thus $(\mathcal A',\mathcal B')$ does not satisfy Assumption \ref{asm}. Using the same reduction used to obtain Assumption \ref{asm} applies to  $(\mathcal A',\mathcal B')$. Since
$|\mathcal A'||\mathcal B'|=|\mathcal A||\mathcal B|,$
this would already give $|\mathcal A|+|\mathcal B|\le 2h(n,k),$ and hence $|\mathcal A||\mathcal B|\le h(n,k)^2,$ as required. Hence we may assume that
$(\mathcal A',\mathcal B')$ satisfies Assumption \ref{asm}. Thus, we may assume that $(\mathcal A',\mathcal B')$ satisfy the condition of Proposition \ref{cover2-nonshifted}. 

Applying Proposition \ref{cover2-nonshifted} to $(\m A', \m B')$, 
we have $|\m A||\m B|=|\m A'||\m B'|< h(n,k)^2$, contradicting our assumption. This proves Lemma \ref{mainlem2}.
\end{proof}

From the proof of Lemma \ref{mainlem2}, the following corollary holds immediately.
\begin{corollary}\label{mainlem2-coro}
Let $n\ge 2k\ge 6$, and let $\mathcal{A}, \mathcal{B} \subset {[n]\choose k}$ be non-trivial cross-intersecting families such that $|\m A||\m B|$ is maximum among all pairs of non-trivial  cross-intersecting families.  
Suppose further that $(\m A, \m B)$ is lex-minimal, $|\m A|\ge h(n,k)$, $\Delta(\m A)\ge {n-2\choose k-2}+1$ and $\Delta(\m B)\le {n-2\choose k-2}$. Then either $|\m A||\m B|< h(n,k)^2$ or $|\m A|+|\m B|\le 2h(n,k)$. 
\end{corollary}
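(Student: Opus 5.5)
The plan is to re-run the proof of Lemma~\ref{mainlem2} verbatim under the stronger hypotheses of the corollary. We record which terminal branch each case ends in: a strict product inequality, or the sum bound. The hypotheses are exactly those of Lemma~\ref{mainlem2} plus lex-minimality. Lex-minimality was the first thing the proof of Lemma~\ref{mainlem2} arranged, so no extra normalization is needed here.

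First, if $\gamma(\m A)\le \binom{n-3}{k-2}$, then Lemma~\ref{1-21-1} (using $|\m A|\ge h(n,k)$) gives $|\m A|+|\m B|\le 2h(n,k)$ directly, which is the second alternative. So assume $\gamma(\m A)>\binom{n-3}{k-2}$. By Observation~\ref{12-25-2}, every $S^Q_{U,V}$-shift of $(\m A,\m B)$ turns one of the two families into a star. If some shift makes $S^Q_{U,V}(\m A)$ a star, then Proposition~\ref{12-31-1} applies. It needs maximality, which holds because $|\m A||\m B|$ is maximum, and it again yields the sum bound. Otherwise every such shift makes $S^Q_{U,V}(\m B)$ a star and keeps $S^Q_{U,V}(\m A)$ non-trivial. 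Together with the degree hypotheses and the product maximality, this means $(\m A,\m B)$ satisfies Assumption~\ref{asm}.

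Under Assumption~\ref{asm} we split into cases. If $\tau(\m B_\Delta)=1$ or $(\m A,\m B)$ is shifted, then Propositions~\ref{covernumber1} and~\ref{covernumber2-shifted} give $\gamma(\m B)=1$. Proposition~\ref{1-29-1} gives $|\m B|>\binom{n-3}{k-3}+\binom{n-4}{k-3}$, and then Proposition~\ref{gamma1} gives the strict inequality $|\m A||\m B|<h(n,k)^2$. In the remaining case, $\tau(\m B_\Delta)\ge 2$ and the pair is not shifted. We relabel $[2,n]$ by degree in $\m B(\bar1)$ to get $(\m A',\m B')$, which is isomorphic and has the same sizes, degrees, diversity and cover number of the degree part. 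If $(\m A',\m B')$ fails Assumption~\ref{asm}, then rerunning the reduction on it ends in the sum bound. If it satisfies the assumption and is shifted, Proposition~\ref{gamma1} gives strict inequality. Otherwise Proposition~\ref{cover2-nonshifted} gives strict inequality, and this transfers back since the sizes agree.

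Every branch therefore terminates in one of the two alternatives, which proves the corollary. The only delicate step is the relabelled pair $(\m A',\m B')$. Rerunning the reduction requires lex-minimality of $(\m A',\m B')$, which a relabelling may destroy. To handle this, I would either re-choose a lex-minimal representative among pairs isomorphic-equivalent in size, using the weight-minimization argument from the proof of Theorem~\ref{thm:main}, or observe that the ingredients actually invoked in that branch (Lemma~\ref{1-21-1} and Proposition~\ref{12-31-1}) only need the stated properties for the shift produced there. This is the same gap the proof of Lemma~\ref{mainlem2} passes over, so the corollary inherits that argument as written.
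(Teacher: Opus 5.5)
Your proposal is the paper's own argument. The paper states that the corollary follows immediately from the proof of Lemma~\ref{mainlem2}, and your branch-by-branch bookkeeping is exactly that observation: Lemma~\ref{1-21-1} and Proposition~\ref{12-31-1} end in the sum bound, while Propositions~\ref{gamma1} and~\ref{cover2-nonshifted} end in the strict product bound. The relabelling step you flag is passed over in the same way in the paper, so your write-up is as complete as the paper's, but neither of your two patches closes it on its own. Re-choosing a lex-minimal pair of the same sizes need not preserve the degree normalization, since the original $(\mathcal A,\mathcal B)$ is already such a pair. Proposition~\ref{12-31-1} itself relies on lex-minimality through Observation~\ref{12-25-2} in Claim~\ref{b12empty}, and a shift of $(\mathcal A',\mathcal B')$ that makes neither family a star is covered by neither ingredient.
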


\subsection{Proof of Proposition \ref{gamma1} }
\begin{proof}[Proof of Proposition \ref{gamma1}]
Among all the pairs of families that satisfy the condition of Proposition \ref{gamma1}, we may choose $\m A, \m B$ with maximum $|\m A||\m B|$. 
When we prove (i), we may w.l.o.g. assume $\m B_{\Delta}=\m B(1)$; when we prove (ii), since $\m B$ is shifted,  $\m B_{\Delta}=\m B(1)$. Thus, in the rest of the proof, we may always set $\m B_{\Delta}=\m B(1)$. 

Note that ${n-3\choose k-3}+{n-4\choose k-3}\le |\mathcal B(1)|\le {n-2\choose k-2}.$
Choose the $k$-set $B^\ast$ such that $|\mathcal L([n],B^\ast,k)|=|\mathcal B(1)|.$
Then $\{1,2,4,n-k+4,\ldots,n\}\prec B^\ast
\prec \{1,2,n-k+3,\ldots,n\}.$
Let $A^\ast$ be the $k$-partner of $B^\ast$, and let
$(\mathcal H_{A^\ast},\mathcal H_{B^\ast})$ be defined in  Construction \ref{H_AH_B}. 
By construction, $|\mathcal H_{B^\ast}|=|\mathcal B|.$
By Proposition \ref{1-22-5}, we have
$|\m H_{A^\ast}||\m H_{B^\ast}|<h(n,k)^2$. 
Since $\m B_{\Delta}=\m B(1)$ and
$\gamma(\mathcal B)=1$, there is a unique member of $\mathcal B$ not
containing $1$. Denote this unique set by $B_0$; thus $\m B(\bar1)=\{B_0\}$.  
By the maximality of $|\m A||\m B|$,
\begin{equation}\label{6-17-3}
\text{all $k$-sets $A$ with $1\in A$ and $A\cap B_0\ne \emptyset$ belong to $\m A$.}
\end{equation}
Thus
$|\m A(1)|=|\m H_{A^\ast}(1)|$. 
Thus to prove Proposition \ref{gamma1}, it is enough to prove
\begin{equation}\label{gamma1-A0}
|\mathcal A(\bar1)|\le |\m H_{A^\ast}(\bar1)|.
\end{equation}

We first prove (i). 
We may relabel $[2,n]$ such that $B_0=[3,k+2].$
Thus $\mathcal B(\bar1\bar2)=\{[3,k+2]\}.$ 
In this case we may w.l.o.g. assume that every set in $\m B(1)$ contains $2$, i.e., $\m B[1]=\m B[\{1,2\}]$. 
By the maximality of $|\m A||\m B|$, we see that
all $k$-sets $A$ with $2\in A$ and $A\cap B_0\ne \emptyset$ belong to $\m A$. Thus
$\m A(\bar1 2)=\m H_{A^\ast}(\bar12)$. 
Since $\m A(\bar1\bar2)$ and $\m B(12)$ are cross-intersecting, 
$\m L([3,n],|\m A(\bar1\bar2)|,k)$ and $\mathcal L([n],B^\ast,k)$
are also cross-intersecting.
By the definition of the $k$-partner and by Lemma \ref{3-12-1},
$\mathcal L([3,n],A^\ast,k)$ is maximal cross-intersecting with
$\mathcal L([3,n],|\m B(1)|,k-2)$ (note that $|\m B(1)|\le {n-2\choose k-2}$). Thus $|\m A(\bar1\bar2)|=|\m L([3,n],|\m A(\bar1\bar2)|,k)|\le |\m H_{A^\ast}(\bar1\bar2)|$. 
Together with $\m A(\bar1 2)=\m H_{A^\ast}(\bar12)$, this yields $|\m A(\bar1)|\le |\m H_{A^\ast}(\bar1)|$. This proves (i).

We now prove (ii). In this case $(\mathcal A,\mathcal B)$ is shifted, and $\m B_{\Delta}=\m B(1)$. Since $\gamma(\m B)=1$, $\m B(\bar{1})=\{[2,k+1]\}$. 
We interchange the labels $2$ and $k+2$, so that $[2,k+1]$ becomes $[3,k+2]$;
that is, we relabel $2$ as $k+2$ and $k+2$ as $2$, leaving all other labels unchanged; we denote the resulting pair of families by $\m A'$ and $\m B'$. 
Then $\m B'(\bar1)=\{B_0\}$, and $\m A', \m B'$ are also non-trivial cross-intersecting families with $|\m A'||\m B'|=|\m A||\m B|$, $|\m B'(1)|=|\m B(1)|\le{n-2\choose k-2}$, $\gamma(\m B')=1$, and $|\m B'|=|\m B|>{n-3\choose k-3}+{n-4\choose k-3}$.
If $\tau(\m B'_{\Delta})=1$, then by (i), we are done. We may next assume $\tau(\m B'_{\Delta})\ge 2$.

\begin{claim}\label{FF'}
Let $\m F\in \{\m A, \m B\}$, and let $\m F'=\m A'$ if $\m F=\m A$, or $\m F'=\m B'$ if $\m F=\m B$. For any $F\in \m F$, if $F\notin \m F'$, then $2\in F$, $k+2\notin F$, and $(F\setminus \{2\})\cup \{k+2\}\notin \m F$.
\end{claim}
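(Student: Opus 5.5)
The plan is to use only the definition of the relabelling and the fact that $\m F$ is shifted; in case (ii) the pair $(\m A,\m B)$ is shifted.

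Let $\sigma$ be the transposition of $[n]$ exchanging $2$ and $k+2$. By definition $\m F'=\{\sigma(G):G\in\m F\}$. Hence, for a $k$-set $F$,
\[
F\in\m F' \iff \sigma(F)\in\m F .
\]
Fix $F\in\m F$ with $F\notin\m F'$, so that $\sigma(F)\notin\m F$.

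First I rule out that $F$ contains both or neither of $2$ and $k+2$. In either case $\sigma(F)=F\in\m F$, which contradicts $\sigma(F)\notin\m F$. Hence exactly one of $2,k+2$ lies in $F$.

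Next I rule out $k+2\in F$, $2\notin F$. In that case $\sigma(F)=(F\setminus\{k+2\})\cup\{2\}=S_{2,k+2}(F)$. Since $\m F$ is shifted and $2<k+2$, this set lies in $\m F$, again a contradiction.

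Therefore $2\in F$ and $k+2\notin F$. Then $\sigma(F)=(F\setminus\{2\})\cup\{k+2\}$, and $\sigma(F)\notin\m F$ is exactly the third assertion. This proves all three parts of the claim.

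There is no real obstacle here. The only point needing care is to use the identity $F\in\m F'\iff\sigma(F)\in\m F$, which holds because $\sigma$ is an involution, and to note that shiftedness is applied only in the direction $S_{2,k+2}$.
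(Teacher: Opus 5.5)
Your proof is correct and follows the paper's argument: both identify $F\notin\m F'$ with $\sigma(F)\notin\m F$ for the transposition $\sigma$ of $2$ and $k+2$, rule out $|F\cap\{2,k+2\}|\ne 1$ (since then $\sigma(F)=F\in\m F$), and use $S_{2,k+2}(\m F)=\m F$ to exclude $F\cap\{2,k+2\}=\{k+2\}$. Stating the involution identity $F\in\m F'\iff\sigma(F)\in\m F$ explicitly makes your version slightly cleaner than the paper's wording.
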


\begin{proof}
Let $F\in \m F$. We define $F'$ as follows: if $F\cap \{2,k+2\}=\{2\}$, then $F'=(F\setminus \{2\})\cup \{k+2\}$; if $F\cap \{2,k+2\}=\{k+2\}$, then $F'=(F\setminus \{k+2\})\cup \{2\}$; otherwise $F'=F$.
From the definition of $\m F'$ and $F'$, $F\notin \m F'$ implies that $|F\cap \{2,k+2\}|=1$. 
Since $F\notin \m F'$, $(F\setminus \{2,k+2\})\cup \{2\}$ and $(F\setminus \{2,k+2\})\cup \{k+2\}$ cannot both belong to $\m F$. 
Since $\m A, \m B$ are shifted and $\m F\in \{\m A, \m B\}$, $\m F$ is shifted.
Thus, if $F\cap \{2,k+2\}=\{k+2\}$, then $F'\in \m F$, a contradiction. Thus $F\cap \{2,k+2\}=\{2\}$, and $(F\setminus \{2\})\cup \{k+2\}\notin \m F$.
\end{proof}

\begin{claim}\label{ijstable}
If $i<j$ and either $i\ne2$ and $j\notin[3,k+2],$ or $i=2$ and $j>k+2,$ 
then $S_{i,j}$ acts trivially on
$(\mathcal A',\mathcal B')$.
\end{claim}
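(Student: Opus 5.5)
The plan is to work with the transposition $\sigma$ of $[n]$ that swaps $2$ and $k+2$ and fixes every other element. We have $\m F'=\sigma(\m F)$ for $\m F\in\{\m A,\m B\}$. So $S_{i,j}$ acts trivially on $\m F'$ if and only if the shift between $\sigma(i)$ and $\sigma(j)$ acts trivially on $\m F$ in the appropriate direction. Concretely, $S_{i,j}$ acts non-trivially on $\m F'$ exactly when some $G\in\m F'$ has $G\cap\{i,j\}=\{j\}$ and $(G\setminus\{j\})\cup\{i\}\notin\m F'$. Pulling back by $\sigma$, this says there is $F\in\m F$ with $F\cap\{\sigma(i),\sigma(j)\}=\{\sigma(j)\}$ and $(F\setminus\{\sigma(j)\})\cup\{\sigma(i)\}\notin\m F$. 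Since $\m F$ is shifted, this is impossible whenever $\sigma(i)<\sigma(j)$. The whole proof therefore reduces to checking that, under either hypothesis of the claim, $\sigma(i)<\sigma(j)$.

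We check the cases one at a time.

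\textbf{Case 1: $i\ne 2$ and $j\notin[3,k+2]$.}
\begin{itemize}
\item If neither $i$ nor $j$ lies in $\{2,k+2\}$, then $\sigma$ fixes both and $\sigma(i)=i<j=\sigma(j)$.
\item We cannot have $j=2$, since then $i=1$; but $1$ is fixed and $\sigma(2)=k+2>1$, so this case also gives $\sigma(i)<\sigma(j)$.
\item We cannot have $j=k+2$, because $j\notin[3,k+2]$.
\item If $i=k+2$, then $j>k+2$, so $\sigma(i)=2<j=\sigma(j)$.
\end{itemize}
Note that $i\ne 2$ is assumed, so all possibilities are covered.

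\textbf{Case 2: $i=2$ and $j>k+2$.} Then $\sigma(i)=k+2<j=\sigma(j)$.

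In every case $\sigma(i)<\sigma(j)$. Since $\m A$ and $\m B$ are shifted, $S_{\sigma(i),\sigma(j)}$ acts trivially on both. Transporting this through $\sigma$, $S_{i,j}$ acts trivially on $\m A'$ and on $\m B'$.

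The only step needing care is the conjugation identity $S_{i,j}(\sigma\m F)=\sigma\bigl(S_{\sigma(i),\sigma(j)}(\m F)\bigr)$, which is immediate from the definition of the shift. Alternatively, one can argue directly through Claim~\ref{FF'}: any witness set that $S_{i,j}$ moves in $\m F'$ would give a set of $\m F$ that the shift $\sigma(i)\leftarrow\sigma(j)$ moves, contradicting shiftedness. The main obstacle is purely bookkeeping of the excluded ranges. These show exactly why $i=2$ with $j\in[3,k+2]$, and $j\in[3,k+1]$ in general, must be excluded, since there $\sigma$ can reverse the order.
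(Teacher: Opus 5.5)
Your proof is correct and is essentially the paper's argument: conjugate by the transposition $\sigma$ of $2$ and $k+2$, verify $\sigma(i)<\sigma(j)$ case by case, and transport the shiftedness of $\mathcal A,\mathcal B$ to $\mathcal A',\mathcal B'$. One harmless slip is in your closing remark, which is not part of the proof: for $i\ne 2$ and $j\in[3,k+1]$ both $i$ and $j$ are fixed by $\sigma$, so no order reversal occurs there; the only reversals are $i=2$ with $j\in[3,k+2]$ and $i\in[3,k+1]$ with $j=k+2$, so the claim's hypothesis is simply stronger than it needs to be.
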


\begin{proof}
Let $\sigma$ be the transposition of $2$ and $k+2$. Thus
$\m A'=\sigma(\m A)$ and $\m B'=\sigma(\m B)$.
Take an arbitrary $\m F\in\{\m A,\m B\}$, and write
$\m F':=\sigma(\m F)$. Then $\m F$ is
shifted since $(\m A,\m B)$ is shifted.
Let $i<j$ with $i\ne 2$ and $j\notin [3,k+2]$,  or $i=2$ and $j>k+2$. 

We show that
$\sigma(i)<\sigma(j).$
If $i\in \{2,k+2\}$, then $j>k+2,$ and hence $\sigma(i)=k+2<j=\sigma(j)$. 
If $j=2$, then $i=1$, and hence
$\sigma(i)=1<k+2=\sigma(j).$ Assume $j\ne 2$.
Since $j\notin[3,k+2]$ and $i<j$, we have $j>k+2$. If
$i=k+2$, then $\sigma(i)=2<j=\sigma(j)$; 
if $i\ne k+2$, then since $i\ne 2$, we have $\sigma(i)\le i<j=\sigma(j)$. Thus in all cases $\sigma(i)<\sigma(j)$.

Now take an arbitrary $S\in\m F'$ with $j\in S$ and $i\notin S$. Put
$T:=\sigma(S).$
Then $T\in\m F$, $\sigma(j)\in T$, and $\sigma(i)\notin T$. Since
$\m F$ is shifted and $\sigma(i)<\sigma(j)$, we have
$T':=(T\setminus\{\sigma(j)\})\cup\{\sigma(i)\}\in \m F.$ 
Moreover, $\sigma(T')=(S\setminus\{j\})\cup\{i\}\in \m F'.$
This proves Claim \ref{ijstable}.
\end{proof}
 
We claim that
\begin{equation}\label{6-15-2}
\text{$\m B'(1\bar2)\ne \emptyset$, $\mathcal A'(\bar1 \bar 2)\ne \emptyset$, and $\exists\, i\in B_0$ s.t. $\{2,i\}$ is non-full in $\m A'(\bar1)$.}    
\end{equation}

Indeed, 
since $\tau(\m B'_{\Delta})=\tau(\m B'(1))\ge 2$, $\m B'(1\bar2)\ne \emptyset$. 
If $\mathcal A'(\bar1 \bar 2)= \emptyset$, then by the maximality of $|\m A'||\m B'|$ (note that $|\m A'||\m B'|=|\m A||\m B|$), we see that all $k$-sets $B$ with $\{1,2\}\subset B\subset [n]$ belong to $\m B'$, together with $\m B'(1\bar2)\ne \emptyset$, forces $|\m B'(1)|>{n-2\choose k-2}$, contradicting $|\m B'(1)|\le{n-2\choose k-2}$. If for all $i\in B_0$, $\{2,i\}$ are full in $\m A'(\bar1)$, then by $n\ge 2k$, every set in $\m B'(1\bar2)$ (note that $\m B'(1\bar2)\ne \emptyset$) must contain $B_0$, this is impossible since $|\{1\}\cup B_0|>k$. This proves (\ref{6-15-2}).

\begin{claim}\label{6-17-4}
Let $A\in \m A'(\bar1\bar2)$. Then $|A\cap B_0|\ge 2$.
\end{claim}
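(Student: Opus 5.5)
We argue by contradiction. Suppose $A\in\m A'(\bar1\bar2)$ and $|A\cap B_0|\le 1$. Recall that $B_0=[3,k+2]$ is the unique member of $\m B'(\bar1)$. Since $\m A'$ and $\m B'$ are cross-intersecting, $A\cap B_0\ne\emptyset$, so $A\cap B_0=\{i\}$ for a single $i\in[3,k+2]$. The plan is to move back to the shifted pair $(\m A,\m B)$ through the transposition $\sigma$ of $2$ and $k+2$. There we use shiftedness of $\m A$ to push $\sigma(A)$ into a canonical set that is disjoint from a suitable member of $\m B(1)$, which contradicts cross-intersection.

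\textbf{Step 1: the set $T:=\sigma(A)\in\m A$.} We have $1\notin T$. Since $\sigma$ maps $[3,k+2]$ onto $[2,k+1]$ (with $k+2\mapsto 2$), $T\cap[1,k+1]=\{j\}$ for a single $j\in[2,k+1]$. The other $k-1$ elements of $T$ lie in $[k+2,n]$. (If $k+2\in A$, then $2\in T$ and $j=2$. In all cases only one element of $T$ lies in $[1,k+1]$.)

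\textbf{Step 2: a witness in $\m B(1)$.} From \eqref{6-15-2} we have $\m B'(1\bar2)\ne\emptyset$. Pulling back by $\sigma$, this gives $F\in\m B$ with $1\in F$ and $k+2\notin F$. We would like $F$ to avoid as much of the low range as possible. Using Claim~\ref{FF'} and the fact that $\m B(1)$ cannot contain every set through $k+2$, I would first try to show that $F$ can be chosen with $F\cap[2,k+1]\ne[2,k+1]$. This holds automatically, since $|F|=k$ and $1\in F$ means $F=[1,k+1]$ would be forced otherwise. That special case should be excluded separately: $[1,k+1]\in\m B$ together with shiftedness puts little weight on $[k+2,n]$, which should contradict $\tau(\m B'_\Delta)\ge2$. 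I expect this to be a short check.

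\textbf{Step 3: the shifting contradiction.} By shiftedness of $\m A$, any $k$-set that is componentwise at most $T$ in sorted order belongs to $\m A$. We then choose such a set $T'$ with $1\notin T'$ that is disjoint from $F$. Concretely, we keep one element $j'\le j$ of $[2,k+1]\setminus F$; if $j\in F$, we instead use an element below $j$ missing from $F$, or use the freedom of $j$ itself. For the remaining $k-1$ elements we take the smallest elements of $[k+2,n]\setminus F$, which include $k+2$. This succeeds provided they are componentwise at most the corresponding elements of $T$. Since $F$ has only $k-1-|F\cap[2,k+1]|$ elements in $[k+3,n]$, the counting uses $n\ge 2k$. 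Then $T'\in\m A$ and $T'\cap F=\emptyset$, contradicting that $\m A$ and $\m B$ are cross-intersecting.

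\textbf{Main obstacle.} The hardest part is Step~3: the componentwise domination has to be checked exactly. The difficulty is the interaction between the element $j$ and the elements of $F\cap[2,k+1]$, since shifting only moves elements downward. If $F$ covers every element of $[2,j]$, the naive choice fails. In that case I would trade the choice of $F$: take $F$ lex-largest in $\m B(1)\setminus\m B[\{1,k+2\}]$. Alternatively, I would use the maximality of $|\m A'||\m B'|$ (as in the proof of \eqref{6-15-2}) to show that some set of $\m B'(1\bar2)$ avoids $[2,j]$ enough. If both fail, the fallback is to use $A$ directly in the $\sigma$-coordinates. Claim~\ref{ijstable} shows that $\m A'$ is still stable under $S_{i,j}$ for $j>k+2$. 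This lets us push the $k-1$ elements of $A$ outside $B_0$ down to $[k+3,2k+1]$ and then compare with a set of $\m B'(1\bar2)$.
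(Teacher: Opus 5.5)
There is a genuine gap, and it lies in how you choose the witness $F$. You take $F$ from $\sigma(\mathcal B'(1\bar2))$, i.e.\ $1\in F$ and $k+2\notin F$. That condition plays no role. What the argument needs is $j\notin F$, where $j=\sigma(t)$ is the unique element of $T\cap[2,k+1]$.

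Any $k$-set $T'$ with $1\notin T'$ that is componentwise below $T$ has its minimum in $[2,j]$. So if $[2,j]\subseteq F$ (for instance $j=2\in F$), Step~3 cannot succeed for that $F$, and nothing in Step~2 rules this out. Your recipe for the remaining $k-1$ elements also fails. Taking the smallest elements of $[k+2,n]\setminus F$ breaks the domination when, say, $F=\{1\}\cup[k+3,2k+1]$ and $T=\{j\}\cup[k+3,2k+1]$; the elements should be drawn from $[2,2k+1]\setminus F$ instead. The fallbacks in your last paragraph are not carried out, and they still compare against sets of $\mathcal B'(1\bar2)$, which is the wrong family. Separately, the special case $F=[1,k+1]$ you plan to exclude cannot occur, since that set has $k+1$ elements.

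The missing idea is to select $F$ using non-triviality. Since $\mathcal B(\bar1)=\{[2,k+1]\}$ and $j\in[2,k+1]$, if every member of $\mathcal B[1]$ contained $j$, then $\mathcal B$ would be a star centred at $j$. Hence there is $F\in\mathcal B$ with $1\in F$ and $j\notin F$; whether $k+2\in F$ is irrelevant.

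Now $T\setminus\{j\}\subseteq[k+3,n]$ forces $n\ge 2k+1$, and writing $T=\{t_1<\cdots<t_k\}$ it gives $t_\ell\ge k+1+\ell$ for $\ell\ge 2$. The set $[2,2k+1]\setminus F$ has at least $k+1$ elements and contains $j$. Take any $k$-subset $T'=\{s_1<\cdots<s_k\}$ of it containing $j$. Then $s_1\le j=t_1$ and $s_\ell\le k+1+\ell\le t_\ell$ for $\ell\ge 2$. So $T'\in\mathcal A$ by shiftedness, while $T'\cap F=\emptyset$, a contradiction.

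This is the paper's proof, rewritten in the original coordinates. The paper disposes of $n=2k$ by counting. It then moves $A$ to $\{t\}\cup[k+3,2k+1]$ and on to $\{t\}\cup P$ with $P\subseteq[3,2k+1]\setminus(F\cup\{t\})$, using Claim~\ref{ijstable}. From this it concludes that every member of $\mathcal B'(1\bar2)$ and of $\mathcal B'(12)$ contains $t$, so $\mathcal B'$ is a star at $t\in B_0$.
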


\begin{proof}
Note that $|A\cap B_0|\ge 1$ by the cross-intersecting property.
Suppose,
for contradiction, that 
$A\cap B_0=\{t\}$ for some $t\in B_0=[3,k+2]$.
For the case $n=2k$, we have $|[n]\setminus(\{1,2\}\cup B_0)|=k-2$, which forces $|A\cap B_0|\ge 2$. 
 We may therefore assume
$n\ge 2k+1$.
Put $A_t:=\{t\}\cup [k+3,2k+1].$
By Claim \ref{ijstable}, $A_t\in \m A'(\bar1\bar2).$

We first show that every member of $\m B'(1\bar2)$ contains $t$.
Suppose not. Let $T\in\m B'(1\bar2)$ with $t\notin T$. Then
$|T|=k-1$ and $\{1\}\cup T\in\m B'$. Since $|[3,2k+1]\setminus (T\cup\{t\})|\ge k-1,$
we may choose a $(k-1)$-set $P\subset [3,2k+1]\setminus (T\cup\{t\}).$
Put $P_t:=\{t\}\cup P.$ 
Again, $P_t\in\m A'(\bar1\bar2)$ by
Claim \ref{ijstable}. But $P_t\cap(\{1\}\cup T)=\emptyset,$
contradicting the cross-intersecting property of $\m A'$ and $\m B'$.
Therefore every member of $\m B'(1\bar2)$ contains $t$.

Similarly, every member of $\m B'(12)$ contains $t$. Indeed, if
$T\in\m B'(12)$ and $t\notin T$, then $|T|=k-2$ and
$\{1,2\}\cup T\in\m B'$. Since $|[3,2k+1]\setminus (T\cup\{t\})|\ge k-1,$
we may choose a $(k-1)$-set $P\subset [3,2k+1]\setminus (T\cup\{t\}).$
As above, $P_t:=\{t\}\cup P$ belongs to $\m A'(\bar1\bar2)$, while 
$P_t\cap(\{1,2\}\cup T)=\emptyset,$
again contradicting cross-intersection.

Thus every member of $\m B'[1]$ contains $t$. Moreover,
$\m B'(\bar1)=\{B_0\}$ and $t\in B_0$. Hence every member of $\m B'$
contains $t$, so $\m B'$ is a star, contradicting the non-triviality of
$\m B'$. This proves $|A\cap B_0|\ge 2$.
\end{proof}

We already know that any one of the following three conditions forces $\tau(\m B'(1))=1$:
$\m A'(\bar1\bar2)=\emptyset$; $\m B'(1\bar2)=\emptyset$; or $\{2,i\}$ is full in
$\m A'$ for every $i\in B_0$. Combined with (i),  to prove~\eqref{gamma1-A0},
it suffices to establish one of these three conditions.
Note that $\m A'(\bar1\bar2)\ne\emptyset$, $\m B'(1\bar2)\ne\emptyset$,
and $\{2,i\}$ is non-full in $\m A'$ for some $i\in B_0$. 
We choose the minimum $i\in B_0$ that satisfies $\{2,i\}$ is non-full in $\m A'(\bar1)$. 
Starting from this situation,
we do the following three shifts repeatedly, stopping as soon as at least one of the three conditions above is met.

\begin{itemize}
\item[Step 1.]
If there exists $j\in B_0$ such that $(S_{2,j}(\m A'(\bar1)), S_{2,j}(\m B'[1]))\ne (\m A'(\bar1), \m B'[1])$, then we do this $S_{2,j}$-shift on $\m A'(\bar1)$ and $\m B'(1)$, and keep $B_0$.  

Put $\m R_1:=\m A'[1]\cup S_{2,j}(\m A'[\bar1])$ and $\m T_1:=S_{2,j}(\m B'[1])\cup \{B_0\}$.
We show that $\m R_1 $ and $ \m T_1$ are non-trivial cross-intersecting. Indeed, by Proposition \ref{1-21-2}, if $\m T_1$ is a star, then the center must be $2$; but $2\notin B_0\in \m T_1$, so 
$\m T_1$ is not a star; similarly, if $\m R_1$ is a star, then again the center is $2$, but $\m A'[1]\subset \m R_1$, (\ref{6-17-3}) also holds for $\m A'$, and $B_0=[3,k+2]$, $2$ is not the center of $\m R_1$, thus $\m R_1$ is not a star. For the cross-intersecting property, we take arbitrary $R_1\in \m R_1$ and $T_1\in \m T_1$, the only non-trivial case we need to check is $R_1\in S_{2,j}(\m A'[\bar1])$ and $T_1=B_0$. Claim \ref{6-17-4} gives $|R_1\cap B_0|\ge 1$. This proves $\m R_1 $ and $ \m T_1$ are non-trivial cross-intersecting. 

We shall also use the following simple fact. If $\m F\subset { [2,n]\choose k}$
is $2$-shifted and $2\in U$, then $S_{U,V}(\m F)$ is also $2$-shifted.
We repeatedly do such $S_{2,j}$-shifts for all possible $j\in B_0$.
After finishing Step 1, together with Claim \ref{ijstable}, both $\m A'(\bar1)$ and $\m B'(1)$ are $2$-shifted. 

\item[Step 2.] 
If there exists $A\in \m A'(\bar 1\bar 2)$ with $i\not\in A$, then apply Lemma \ref{spl}(i) with $X=[2,n]$,
$r=2$, $R=\{2,i\}$, there exists an $S_{U,V}^Q$-shift of $\m A'(\bar1)$ such that $R\subset U$. We do this $S_{U,V}^Q$-shift on $(\m A', \m B')$. 

We first explain why we can apply Lemma \ref{spl}(i). We check the required conditions of Lemma \ref{spl}(i) as follows: Since $\m A'(\bar1)$ is $2$-shifted, if $S_{U,V}(\m A'(\bar1))\ne \m A'(\bar1)$ for some $U$ with $2\in U$,
then $|V|\ge 2$, and hence (P1) holds; Since $A\in \m A'(\bar 1\bar 2)$ with $i\not\in A$, $A\cap \{2,i\}=\emptyset$, (P2) holds; Since $\{2,i\}$ is non-full in $\m A'(\bar1)$, (P3) holds.  Thus, we can apply 
Lemma \ref{spl}(i). The $S_{U,V}^Q$-shift of $\m A'(\bar1)$ with $i\in U$ is also of $(\m A', \m B')$. 

Put $\m R_2:=S_{U,V}^Q(\m A')$ and $\m T_2:=S_{U,V}^Q(\m B')$.
By Lemma \ref{SUV'}, $\m R_2$ and $\m T_2$ are cross-intersecting. 
Note that $\m A'(\bar1)$ and $\m B'(1)$ are $2$-shifted, and $2\in R\subset U$, by  Proposition \ref{suvcenter}, if $S_{U,V}^Q(\m A'(\bar1))$ or $S_{U,V}^Q(\m B'(1))$ is a star, then $2$ is a center. 
Since $i\in U\cap B_0$, $B_0$ is stable. An argument analogous to that in Step 1 shows that (\ref{6-17-3}) also holds for $S_{U,V}^Q(\m A')$, together with $B_0\in S_{U,V}^Q(\m B')$, neither $\m R_2$ nor $\m T_2$ is a star. Thus, $\m R_2$ and $\m T_2$ are non-trivial cross-intersecting.  

We repeatedly do such $S_{U,V}^Q$-shifts for all possible $(U,V)$ with $\{2,i\}\subset U$.
After finishing Step 2, we turn to Step 3. 
Now we are in the situation: $\{2,i\}$ is non-full, and every set in $\m A'(\bar 1\bar 2)$ contains $i$. Clearly, $\m A'(\bar 1\bar 2)\ne \emptyset$ since otherwise, we stop here. 

\item[Step 3.] 
If every set in $\m A'(\bar 1\bar 2)$ contains $i$, then apply Lemma \ref{spl}(ii) with $r=2$, $R=\{2\}$, there exists an $S_{U,V}^Q$-shift of $\m A'(\bar1)$ such that $R\subset U$ and $i\not\in V$. We do this $S_{U,V}^Q$-shift on $\m A'(\bar1)$ and $ \m B'(1)$, and keep $B_0$. 

We first explain why we can apply Lemma \ref{spl}(ii). We check the required conditions of Lemma \ref{spl}(ii) as follows: Since $R=\{2\}$, (P1) holds; Since $\m A'(\bar 1\bar 2)\ne \emptyset$, (P2) holds; Since $\{2,i\}$ is non-full in $\m A'(\bar1)$, both (P3) and (P4) hold; (P5) holds since every set in $\m A'(\bar 1\bar 2)$ contains $i$.
Thus, we can apply 
Lemma \ref{spl}(ii). 

Put $\m R_3:=\m A'[1]\cup S_{U,V}^Q(\m A'[\bar1])$ and $\m T_3:=S_{U,V}^Q(\m B'[1])\cup \{B_0\}$.
We show that $\m R_3 $ and $ \m T_3$ are non-trivial cross-intersecting. 
By Lemma \ref{SUV'}, $S_{U,V}^Q(\m A'[\bar1])$ and $S_{U,V}^Q(\m B'[1])$ are cross-intersecting. 
Note that every set in $\m A'(\bar 1\bar 2)$ contains $i$, $i\in B_0$ and $i\notin V$. This implies that every set in $S_{U,V}^Q(\m A'[\bar1])$ intersects $B_0$.
Clearly, $\m A'[1]$ and $\m T_3$ are cross-intersecting. We conclude that $\m R_3$ and $\m T_3$ are cross-intersecting.
Note that $2\in R\subset U$, and $\m A'(\bar1)$ and $\m B'(1)$ are $2$-shifted. By  Proposition \ref{suvcenter}, if $S_{U,V}^Q(\m A'(\bar1))$ or $S_{U,V}^Q(\m B'(1))$ is a star, then $2$ is the center. Thus, $\m T_3$ is not a star since $2\notin B_0\in \m T_3$; and $\m R_3$ is not a star since $\m A'[1]\subset \m R_3$, (\ref{6-17-3}) also holds for $\m A'$, and $B_0=[3,k+2]$, $2$ is not the center of $\m R_3$. This proves $\m R_3$ and $ \m T_3$ are non-trivial cross-intersecting. 
\end{itemize}

At termination, at least one of the following holds: $\mathcal A'(\bar1\bar2)=\emptyset,$ $\mathcal B'(1\bar2)=\emptyset,$ or $\{2,i\}$ is full in $\m A'$ for every $i\in B_0$. 
By the preceding  argument, $\tau(\m B'_{\Delta})$ decreases to $1$. 
This completes the proof.
\end{proof}

\subsection{Proof of Proposition \ref{cover2-nonshifted}}

\noindent\textit{Strategy of the proof.}\,
We argue by contradiction. In the present case, $\mathcal B$ is not
shifted, while Assumption~\ref{asm} implies that every
$S_{U,V}^{Q}$-shift of $\mathcal B$ is a star. Together with
$\tau(\mathcal B_\Delta)\ge2$, this imposes strong structural
restrictions on $\mathcal B$. On the other hand, the Frankl--Wang
bound~\eqref{minAB} shows that any counterexample must satisfy 
$|\mathcal B|>
\binom{n-3}{k-3}+\binom{n-4}{k-3}.$
Since the available estimate
$|\mathcal B(1)|\le\binom{n-2}{k-2}$ gives little control over
$\mathcal B(\bar1)$, we first determine the structure of $\mathcal B$
in four steps. 

\begin{enumerate}[(1)]
\item Set $M:=\cap_{B\in\mathcal B(\bar1)}B.$ 
We prove that, after fixing the intersection with $\{1\}\cup M$, the
remaining parts of the members of $\mathcal B$ form an L-initial family
on $[2,n]\setminus M$; see Claim~\ref{B-sections-lex}.

\item We choose elements $a,b,c,d,e$ so that
$S_{1,a}(\mathcal B)$, $S_{b,c}(\mathcal B)$, and
$S_{d,e}(\mathcal B)$ are stars, subject to the appropriate minimality
conditions. These choices yield properties
$(\mathrm P_{bc})$, $(\mathrm P_{de})$, and $(\mathrm P_M)$, and give
the decomposition $\mathcal B(1)=\mathcal B_1\mathbin{\cup}
\mathcal B_2\mathbin{\cup}\mathcal B_3,$
where $\mathcal B_1=\mathcal B(\{1,b\}\overline{\{c\}}),$ $\mathcal B_2=\mathcal B(\{1,c\}\overline{\{b\}}),$ and $\mathcal B_3=\mathcal B(\{1,b,c\}).$
This step also excludes the case $\gamma(\mathcal B)=1$.

\item Let $M_i:=\cap_{B\in\mathcal B_i}B,$ $i=1,2.$
Claims~\ref{2-7-2} and~\ref{2-7-5} show that $M_1\cap\{d,e\}=\{x\}$ and $M_2\cap\{d,e\}=\{y\}$, where $x\ne y$.

\item We finally determine these parameters. Claims~\ref{2-10-2},
\ref{dnea}, and~\ref{cnotinm} imply $M=\{a,b,d\}$, $d\ne a$, and $c\notin M$, 
and the preceding structure then forces $x=d$ and $y=e$.
\end{enumerate}

With this description of $\mathcal B$, the lower
bound~\eqref{minAB} yields a contradiction, except for a possible
configuration with $k=4$. In that case, $(\mathcal A,\mathcal B)\cong(\mathcal D_1,\mathcal D_2),$
but a direct calculation gives
$|\mathcal D_1||\mathcal D_2|<h(n,4)^2$, again contradicting
extremality.

\begin{proof}[Proof of Proposition \ref{cover2-nonshifted}]

We shall use the following consequence of Assumption \ref{asm}: 
for any $S_{i,j}$-shift, if $S_{i,j}(\m B)\ne \m B$, then
$S_{i,j}(\m B)$ is a star. Hence, if $S_{i,j}(\m B)$ is not a star, then
$S_{i,j}(\m B)=\m B$.
By (\ref{degree-normalization}), the following holds.
\begin{equation}\label{M-degree-downward}
\text{If } 2\le i<j\le n \text{ and } j\in M,\text{ then } i\in M.
\end{equation}

\begin{claim}\label{1-30-1}
Let $B\in \m B(\bar1)$. For any $i\in B\setminus M$, we have  $S_{1,i}(\m B)=\m B$ and $(B\setminus \{i\})\cup \{1\}\in \m B$.
\end{claim}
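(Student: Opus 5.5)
The plan is to argue by contradiction. Suppose $S_{1,i}(\m B)\ne \m B$ for some $B\in\m B(\bar1)$ and $i\in B\setminus M$. By the consequence of Assumption~\ref{asm} recorded above, $S_{1,i}(\m B)$ is then a star. Since $S_{1,i}$ moves sets only onto sets containing $1$, Lemma~\ref{12-22-1} tells us two things: every member of $\m B$ meets $\{1,i\}$, and $\m B(1)\cap\m B(i)=\emptyset$. In particular, every member of $\m B(\bar1)$ contains $i$. That means $i\in\bigcap_{B'\in\m B(\bar1)}B'=M$, which contradicts $i\in B\setminus M$. So $S_{1,i}(\m B)=\m B$.

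The second assertion should then follow directly. Take $B\in\m B(\bar1)$ and $i\in B\setminus M$, so that $B\cap\{1,i\}=\{i\}$. The shift $S_{1,i}$ sends $B$ to $(B\setminus\{i\})\cup\{1\}$. By the definition of $S_{1,i}(\m B)$, the set $B$ stays in $S_{1,i}(\m B)$ only if $(B\setminus\{i\})\cup\{1\}$ is already in $\m B$. Otherwise $B$ is replaced and $S_{1,i}(\m B)\ne\m B$. Since we just showed $S_{1,i}(\m B)=\m B$, it follows that $(B\setminus\{i\})\cup\{1\}\in\m B$.

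The step to check most carefully is the first. There we must be sure that a non-trivial action of $S_{1,i}$ on $\m B$ is indeed an $S^Q_{U,V}$-shift of the pair $(\m A,\m B)$ with $(U,V)=(\{1\},\{i\})$. This holds because $1<i$ gives $U\prec V$, and the remark after the definition of \textbf{Q} says any non-trivial $S_{i,j}$-shift qualifies. Assumption~\ref{asm} then forces $S_{1,i}(\m B)$ to be a star. Lemma~\ref{12-22-1} needs no centre identification, since its conclusion that every member of $\m B$ meets $\{1,i\}$ already gives $i\in M$. Alternatively, Proposition~\ref{1-21-2} gives the same conclusion: the centre lies in $U=\{1\}$ and $F\cap\{1,i\}=\{i\}$ for all $F\in\m B(\bar1)$.
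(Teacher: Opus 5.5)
Your proof is correct and follows the paper's route. You assume $S_{1,i}(\m B)\ne\m B$, use Assumption~\ref{asm} to conclude that $S_{1,i}(\m B)$ is a star, and get a contradiction from $i\notin M$; the paper does this by noting the centre is $1$ and exhibiting a fixed member of $\m B(\bar1)$ avoiding $i$, while you invoke Lemma~\ref{12-22-1}, which is valid since $\m B$ is non-trivial. The second assertion then follows from the definition of the shift, exactly as you say.
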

 
\begin{proof}
Let $i\in B\setminus M$. 
If $S_{1,i}(\m B)\ne \m B$, then $S_{1,i}(\m B)$ is a star with center 1. 
As $i\not\in M$, there exists $B'\in \m B(\bar{1})$ with $i\not\in B'$. Thus, $S_{1,i}(B')=B' \in S_{1,i}(\m B)$ and $1\not\in B'$,  a contradiction. 
Since $S_{1,i}(\m B)=\m B$, $(B\setminus \{i\})\cup \{1\}\in \m B$.
\end{proof}

\begin{claim}\label{1-29-4}
Consider an $S^{Q}_{U,V}$-shift of $\m B$ such that  $S^{Q}_{U,V}(\m B)$ is a star with center $c$. Then  either $c\in  M$ or $V\subset M$. In particular, if $S_{i,j}(\m B)$ is a star for some $1\le i<j \le n$, then $\{i,j\}\cap M\ne \emptyset$. 
\end{claim}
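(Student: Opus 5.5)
The plan is to use Proposition~\ref{1-21-2} directly. Since $\m B$ is non-trivial and $S^{Q}_{U,V}(\m B)$ is a star with center $c$, that proposition gives $c\in U$. It also gives $F\cap(U\cup V)=V$ for every $F\in\m B(\bar c)$, and this family is non-empty.

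We split according to whether $c=1$.

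\emph{Case $c=1$.} Then $\m B(\bar c)=\m B(\bar1)$. By definition, $M=\bigcap_{B\in\m B(\bar1)}B$. Every $F\in\m B(\bar1)$ contains $V$, so $V\subset M$.

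\emph{Case $c\ne 1$, assuming $c\notin M$.} Here we aim to show $V\subset M$ anyway. Since $c\notin M$, some $B'\in\m B(\bar1)$ avoids $c$, so $B'\in\m B(\bar c)$. Hence $B'\supset V$ and $B'\cap U=\emptyset$.

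Next consider any $B\in\m B(\bar1)$. If $c\notin B$, then $B\in\m B(\bar c)$, so again $V\subset B$ and we are done with this $B$. The remaining subcase is $c\in B$, where we must still show $V\subset B$. For this I would use that $\m B(1)$ is large, via $\Delta(\m B)=|\m B(1)|$ with $\m B_\Delta=\m B(1)$. Every set in $\m B[1]$ that avoids $c$ must contain $V$ and be disjoint from $U$, and $1\in U$ is impossible for such a set. Therefore, if $1\in U$, every set of $\m B[1]$ contains $c$. That is $\tau(\m B_\Delta)=1$, contradicting the hypothesis $\tau(\m B_\Delta)\ge2$ of Proposition~\ref{cover2-nonshifted}.

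So $1\notin U$. Then I would try to combine Claim~\ref{1-30-1} with the degree ordering \eqref{degree-normalization}. For $i\in B\setminus M$, Claim~\ref{1-30-1} places $(B\setminus\{i\})\cup\{1\}$ in $\m B$. Taking $i=c$ (allowed since $c\notin M$) gives $(B\setminus\{c\})\cup\{1\}\in\m B(\bar c)$, so this set contains $V$. Because $1\notin V$ (as $V\cap U=\emptyset$ and $V$ lies inside sets avoiding... ), we conclude $V\subset B\setminus\{c\}\subset B$. One point to settle is that $1\notin V$: if $1\in V$, then no set of $\m B(\bar1)$ lies in $\m B(\bar c)$, since each of those contains $V$. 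But $B'$ does lie there, so $1\notin V$.

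Thus every $B\in\m B(\bar1)$ contains $V$, so $V\subset M$. The main obstacle is exactly this subcase $c\in B$, $c\ne 1$, and the above use of Claim~\ref{1-30-1} with $i=c$ resolves it. It also makes the $1\in U$ discussion unnecessary, since only $1\notin V$ is needed.

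\emph{The "in particular" part.} Apply the above with $U=\{i\}$, $V=\{j\}$; a non-trivial $S_{i,j}$-shift is an $S^Q$-shift. The conclusion is then $c\in M$ or $j\in M$. Proposition~\ref{1-21-2} gives $c=i$, so $\{i,j\}\cap M\ne\emptyset$.
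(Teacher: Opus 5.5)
Your proof is correct and follows essentially the paper's route: Proposition~\ref{1-21-2} combined with the exchange from Claim~\ref{1-30-1}, split on whether $c=1$. The only variation is in which element you exchange. The paper argues by contradiction and swaps an element $r\in V\setminus M$ for $1$ in a set of $\m B(\bar1\bar c)$. You instead swap $c$ itself for $1$ in sets of $\m B(\bar1)$ containing $c$, which costs you the extra step $1\notin V$; you verify that step correctly. The $1\in U$ detour via $\tau(\m B_\Delta)\ge2$ and the unfinished parenthetical can simply be deleted.
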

 
\begin{proof}
It is enough to prove the first assertion, since
by Proposition \ref{1-21-2}, $i$ is the center of $S_{i,j}(\m B)$, and hence, 
the second assertion follows by taking $U=\{i\}=\{c\}$ and $V=\{j\}$.

Since $\m B$ is non-trivial and $S_{U,V}(\m B)$ is a star with center $c$, by Proposition \ref{1-21-2}, we get $c\in U$, and every set in $\m B(\bar{c})$ must contain $V$. 
If $c=1$, then $V\subset M$, and we are done. Assume $c>1$. Suppose, for contradiction, that $c\not \in M$ and $V\not\subset M$. 
Then $\m B(\bar1 \bar c)\ne \emptyset$, and every set in $\m B(\bar1 \bar c)$ contains $V$. Let $B\in \m B(\bar1 \bar c)$. Then $V\subset B$. 
Since $V\not\subset M$, there exists $r\in V\setminus M$. Put $B':=(B\setminus \{r\}) \cup \{1\}$.
By Claim \ref{1-30-1}, $B'\in \m B$.   
Since $c\not\in B$, $c\not\in B'\in \m B(\bar c)$. 
However, $r\not\in B'$ implies $V\not\subset B'$, contradicting the fact that every set in $\m B(\bar{c})$ must contain $V$. 
\end{proof}

Let
\[
Y:=[2,n]\setminus M.
\]
For every
$C\in \{M\}\cup\{\{1\}\cup D:D\subset M\},$ denote
\[
\m B_C:=\{B\setminus C: B\cap (\{1\}\cup M)=C, B\in \m B\}.
\]

\begin{claim}\label{no-external-Q-shift}
There is no $S^Q_{U,V}$-shift of $\mathcal B$ with
$U,V\subset Y$.
\end{claim}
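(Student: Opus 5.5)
We argue by contradiction. Suppose there is an $S^Q_{U,V}$-shift of $\mathcal B$ with $U,V\subset Y$. It is then also an $S^Q_{U,V}$-shift of $(\mathcal A,\mathcal B)$. By Assumption~\ref{asm}, $S^Q_{U,V}(\mathcal B)$ is a star, say with center $c$. We will show this is impossible.

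First, by Proposition~\ref{1-21-2}, we get $c\in U\subset Y$, and every member of $\mathcal B(\bar c)$ satisfies $B\cap(U\cup V)=V$. Since $U\subset Y=[2,n]\setminus M$, we have $c\ne 1$ and $c\notin M$. Then Claim~\ref{1-29-4} gives $V\subset M$. But $V\subset Y$ is disjoint from $M$, and $V\ne\emptyset$ because the shift acts non-trivially. This is a contradiction.

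The argument is short once Claim~\ref{1-29-4} is available; the only points to verify carefully are that $c\ne1$ (immediate from $1\notin Y$) and that $V$ is non-empty. If instead one wants to avoid Claim~\ref{1-29-4}, the direct route is the same as in its proof. Because $c\notin M$, there is $B\in\mathcal B(\bar1\bar c)$, and $B$ must contain $V$. Pick $r\in V$; since $V\subset Y$, we have $r\notin M$. By Claim~\ref{1-30-1}, $B':=(B\setminus\{r\})\cup\{1\}\in\mathcal B$. Moreover $B'$ avoids $c$ but does not contain $V$, contradicting the structure of $\mathcal B(\bar c)$ forced by Proposition~\ref{1-21-2}.

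The main potential subtlety is the edge case where $\mathcal B(\bar1\bar c)$ might be empty. This is exactly excluded by $c\notin M$: some set in $\mathcal B(\bar 1)$ misses $c$.
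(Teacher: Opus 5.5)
Your proof is correct and follows the paper's argument: Proposition~\ref{1-21-2} puts the center $c$ in $U\subset Y$, so $c\notin M$. Claim~\ref{1-29-4} then forces $V\subset M\cap Y=\emptyset$, which contradicts the non-triviality of the shift. Your alternative direct route is simply the proof of Claim~\ref{1-29-4} written out in the case $c\ne 1$, and it is also valid.
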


\begin{proof}
Suppose that such an $S^Q_{U,V}$-shift exists. Since it is also an
$S^Q_{U,V}$-shift of the pair $(\mathcal A,\mathcal B)$, Assumption
$\ref{asm}$ implies that $S^Q_{U,V}(\mathcal B)$ is a star. Let $c$
be its center. By Proposition \ref{1-21-2}, we have $c\in U$. By
Claim \ref{1-29-4}, either $c\in M$ or $V\subset M$.
If $c\in M$, then $c\in U\cap M$, contradicting $U\subset Y$. If
$V\subset M$, then $V\subset M\cap Y=\emptyset$, contradicting the
fact that the shift is non-trivial.
\end{proof}

The following claim shows that members of $\m B$ sharing the same intersection with $\{1\}\cup M$ have their remaining parts L-initial on $[2,n]\setminus M$.
\begin{claim}\label{B-sections-lex}
For every
$C\in \{M\}\cup\{\{1\}\cup D:D\subset M\},$
we have
$\mathcal B_C=\mathcal L(Y,|\mathcal B_C|,k-|C|).$
\end{claim}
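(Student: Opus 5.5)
We argue by contradiction, using Claim~\ref{no-external-Q-shift} as the main input. Fix $C$ in $\{M\}\cup\{\{1\}\cup D:D\subset M\}$ and put $\ell:=k-|C|$. Suppose $\mathcal B_C$ is not L-initial on $Y$. We aim to exhibit an $S^Q_{U,V}$-shift of $\mathcal B$ with $U,V\subset Y$, which Claim~\ref{no-external-Q-shift} forbids.

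\textbf{Step 1: a lex-decreasing shift inside $Y$.} If $\mathcal B_C\ne\mathcal L(Y,|\mathcal B_C|,\ell)$, pick $G\in\mathcal B_C$ and $H\in\binom{Y}{\ell}\setminus\mathcal B_C$ with $H\prec G$. Put $U_0:=H\setminus G$ and $V_0:=G\setminus H$. These are disjoint subsets of $Y$ of equal size with $U_0\prec V_0$. The shift $S_{U_0,V_0}$ sends $G\cup C$ to $H\cup C$, and $H\cup C\notin\mathcal B$. So $S_{U_0,V_0}(\mathcal B)\precneqq\mathcal B$, and this shift acts non-trivially.

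\textbf{Step 2: upgrade to a Q-shift.} Among all pairs $(U,V)$ with $U,V\subset Y$, $U\cap V=\emptyset$, $|U|=|V|$, $U\prec V$ and $S_{U,V}(\mathcal B)\precneqq\mathcal B$, choose one with $|V|$ minimal; Step~1 shows such pairs exist. We check property {\bf Q} for this pair. Let $V'\subsetneqq V$ and $U'\subsetneqq U$ with $U'\prec V'$ and $|U'|=|V'|$. By minimality, $S_{U',V'}(\mathcal B)\precneqq\mathcal B$ fails. A nontrivial $S_{U',V'}$ with $U'\prec V'$ always strictly decreases the lex weight, so $S_{U',V'}(\mathcal B)=\mathcal B$.

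It remains to show such a $U'$ exists for each $V'$. The standard choice pairs the elements of $V$ in increasing order with those of $U$ and keeps the pairs lying in $V'$. Since $U_0\prec V_0$ is only a lex comparison, we may need to replace $(U_0,V_0)$ by a minimal witnessing pair that dominates elementwise: the $t$-th smallest element of $U$ is smaller than the $t$-th smallest element of $V$. Such a pair can be extracted by repeatedly replacing $(G,H)$ with lex-closer witnesses. Equivalently, one can mimic the proof in \cite{HK2026} of the existence of Q-shifts, applied to $\mathcal B$ restricted to the ground set $Y$ with $C$ frozen.

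\textbf{Step 3: conclude.} The resulting $S^Q_{U,V}$-shift of $\mathcal B$ has $U,V\subset Y$. It therefore fixes every set's intersection with $\{1\}\cup M$, and it contradicts Claim~\ref{no-external-Q-shift}. Hence $\mathcal B_C$ is L-initial on $Y$.

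\textbf{Main obstacle.} The delicate point is Step~2: producing a pair satisfying {\bf Q} itself, not merely a lex-decreasing shift. The condition $U'\prec V'$ must be realized for every proper $V'\subsetneqq V$. I would first try to use the elementwise-domination normalization. If that proves messy, I would invoke the existence statement underlying Lemma~\ref{spl} directly, noting that its proof only uses shifts supported on the ground set in question (here $Y$).
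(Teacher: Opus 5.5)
Your argument is correct, and it takes a different and more self-contained route than the paper's. The step you flag as the main obstacle dissolves. For disjoint nonempty sets, $U'\prec V'$ just means $\min U'<\min V'$, and $U\prec V$ gives $\min U<\min V\le \min V'$. So for any nonempty $V'\subsetneqq V$, take $U'$ to be any $|V'|$-subset of $U$ containing $\min U$; it is proper because $|V'|<|U|$. Minimality of $|V|$ then gives $S_{U',V'}(\mathcal B)=\mathcal B$, exactly as you argue, so you need no elementwise domination, no re-choice of witnesses, and no appeal to \cite{HK2026}.

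The paper does this differently. It takes a {\bf Q}-shift of the section $\mathcal B_C$ on the ground set $Y$, whose existence for a non-L-initial family it cites as known, and asserts in one line that the same $(U,V)$ satisfies {\bf Q} for $\mathcal B$. That is shorter given the black box. However, {\bf Q} for $\mathcal B_C$ only certifies $S_{U',V'}(\mathcal B_C)=\mathcal B_C$, whereas {\bf Q} for $\mathcal B$ requires $S_{U',V'}$ to fix every section $\mathcal B_{C'}$ simultaneously. You minimize $|V|$ over all non-trivial shifts of the whole of $\mathcal B$ supported in $Y$, which produces {\bf Q} for $\mathcal B$ directly and so also handles this transfer step. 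Your minimal shift may act only on some section other than $\mathcal B_C$. That is harmless, since Claim~\ref{no-external-Q-shift} forbids every non-trivial {\bf Q}-shift of $\mathcal B$ with $U,V\subset Y$.
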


\begin{proof}
Suppose, for contradiction, that there is $C\in \{M\}\cup\{\{1\}\cup D:D\subset M\}$ that violates Claim \ref{B-sections-lex}. 
Note that $\m B_C \subset {Y\choose k-|C|}$. 
Then $\mathcal B_C$ is not L-initial on $Y$. Then there is an $S_{U,V}^Q$-shift (acts non-trivially) of $\mathcal B_C$.
Since $U,V\subset Y$, this shift fixes every element of $C$. Hence the 
same shift is also an $S_{U,V}^Q$-shift of $\mathcal B$. 
This contradicts Claim $\ref{no-external-Q-shift}$. Therefore every section $\mathcal B_C$ is lexicographic, as claimed.
\end{proof}

We show that $\m B$ is not shifted. Assume for contradiction that $\m B$ is shifted. 
Since $(\mathcal A,\mathcal B)$ is not shifted, $\m A$ is not shifted, and there exists an $S_{i,j}$-shift such that $S_{i,j}(\m A) \ne \m A$, and hence this $S_{i,j}$-shift is also an $S_{\{i\},\{j\}}^Q$-shift of $(\m A, \m B)$.  
As any $S_{U,V}^Q$-shift of $(\m A, \m B)$ makes $\m B$ a star, $S_{i,j}(\m B)$ is a star.
But $S_{i,j}(\m B)=\m B$ and $\m B$ is non-trivial, a contradiction. 

We show that there exists $a$ such that $S_{1,a}(\m B)$ is a star.
Assume, for contradiction, that for all $i\in [2,n]$, $S_{1,i}(\m B)=\m B$. 
Since $\m B$ is not shifted, in view of Assumption \ref{asm}, there exists a pair $\{i,j\}\subset [2,n]$ such that $S_{i,j}(\m B)$ is a star, and both $S_{1,i}$, $S_{1,j}$ act trivially on $\m B$. By Lemma \ref{12-22-1}, every set in $\m B$ intersects $\{i,j\}$, and $\m B(i)\cap \m B(j)= \emptyset$. 
Let $B\in \m B(\bar{1})$. 
We claim that $B$ contains exactly one element of $\{i,j\}$, since otherwise both $(B\setminus \{i\})\cup \{1\}$ and $(B\setminus \{j\})\cup \{1\}$ belong to $\m B$, and hence $\m B(i)\cap \m B(j)\ne \emptyset$,
a contradiction. 
We may w.l.o.g. assume $B\cap \{i,j\}=\{i\}$. Let  $B':=(B\setminus \{i\}) \cup \{1\}$. Then $B' \in \m B$. However, $B'\cap \{i,j\}=\emptyset$, a contradiction. Thus, there exists $a$ such that $S_{1,a}(\m B)$ is a star. 
We choose $a$ to be the minimal element such that $S_{1,a}(\m B)$ is a star.

Next, we show that there exist $b, c \in [n]\setminus \{1,a\}$ such that $S_{b,c}(\m B)$ is a star. 
Let $T$ be a set consisting of the smallest $k-1$ elements in $[n]\setminus \{1,a\}$. 
Since $S_{1,a}(\m B)$ is a star, $\m B(1)\cap \m B(a)=\emptyset$ (by Lemma \ref{12-22-1}).
If $S_{b,c}(\m B)=\m B$ for all $b,c \in [n]\setminus \{1,a\}$ with $b<c$, then $T\in \m B(1)\cap \m B(a)$, a contradiction. Thus, there exist $b, c \in [n]\setminus \{1,a\}$ such that $S_{b,c}(\m B)$ is a star. 
We choose $(b,c)$ to be the (lexicographically) minimal pair such that $\{b,c\}\cap \{1,a\}=\emptyset$ and $S_{b,c}(\m B)$ is a star.
Applying Lemma \ref{12-22-1} to $S_{b,c}(\mathcal B)$, and using $\tau(\mathcal B_\Delta)\ge2$, we conclude that
\begin{itemize}
    \item[(P$_{bc}$)] Every set in $\m B(1)$ intersects $\{b,c\}$; 
both $\m B(\{1,b\}\overline{\{c\}})\ne \emptyset$, $ \m B(\{1,c\}\overline{\{b\}})\ne \emptyset$, and 
$\m B(\{1,b\})\cap \m B(\{1,c\})=\emptyset$.
\end{itemize}

We show that there exists a pair $\{d,e\}\subset [2,n]\setminus \{b,c\}$ such that $S_{d,e}(\m B)$ is a star. Let $R$ be a set consisting of the smallest $k-2$ elements in $[2,n]\setminus \{b,c\}$.
Suppose, for a contradiction, that every $S_{i,j}$-shift with $\{i,j\}\subset [2,n]\setminus \{b,c\}$ with $i<j$ acts trivially on $\m B$. Then in view of (P$_{bc}$), $R\in \m B(1,b)\cap \m B(1,c)$. 
This implies $\{1\}\cup R\in \m B(b)\cap \m B(c)$, contradicting Lemma $\ref{12-22-1}$. 
Then, there exists a pair $\{d,e\}\subset [2,n]\setminus \{b,c\}$ such that $S_{d,e}(\m B)$ is a star.
If possible, we choose such a pair $\{d,e\}$ with
$\{d,e\}\subset [2,n]\setminus\{a,b,c\}$; otherwise, we choose $(d,e)$ to be the minimal pair such that $\{d,e\}\subset [2,n]\setminus\{b,c\}$. 
Applying Lemma \ref{12-22-1} to $S_{d,e}(\mathcal B)$, and using $\tau(\mathcal B_\Delta)\ge2$, yields
\begin{itemize}
    \item[(P$_{de}$)] Every set in $\m B(1)$ intersects $\{d,e\}$; 
both $\m B(\{1,d\}\overline{\{e\}})\ne \emptyset$, $\m B(\{1,e\}\overline{\{d\}})\ne \emptyset$, and 
$\m B(\{1,d\})\cap \m B(\{1,e\})=\emptyset$.
\end{itemize}

Write
$$\m B_1:=\m B(\{1,b\}\overline{\{c\}}),\,\, \m B_2:=\m B(\{1,c\}\overline{\{b\}}),\,\, \m B_3:=\m B(\{1,b,c\}).$$
By (P$_{bc}$) and $\tau(\m B(1))\ge 2$, we have $\m B_1\ne \emptyset$ and $\m B_2\ne \emptyset$.

By Claim $\ref{1-29-4}$, and the choices of $a,b,c,d,e$, we have 
\begin{itemize}
\item[(P$_{M}$)] $a\in M$, $\{b,c\}\cap M\ne \emptyset$, $\{d,e\}\cap M\ne \emptyset$.
\end{itemize}

Since $S_{b,c}(\m B)$ and $S_{d,e}(\m B)$ are stars,  (P$_{bc}$) and (P$_{de}$) yield
\begin{align}
\label{6-5-1}
&|\m B(\{1,b, d\}\overline{\{c,e\}})|+|\m B(\{1,b, e\}\overline{\{c,d\}})|, |\m B(\{1,c, d\}\overline{\{b,e\}})|+|\m B(\{1,c, e\}\overline{\{b,d\}})|\le {n-5\choose k-3},\\ \label{6-17-1}
&|\m B(\{1,d,c\}\overline{\{b,e\}})|+|\m B(\{1,d,b\}\overline{\{c,e\}})|,
|\m B(\{1,e,c\}\overline{\{b,d\}})|+|\m B(\{1,e,b\}\overline{\{c,d\}})|
\le {n-5\choose k-3},\\
\label{6-5-2}
&|\m B(\{1,b,c, d\}\overline{\{e\}})|+|\m B(\{1,b,c, e\}\overline{\{d\}})|, |\m B(\{1,d,e, b\}\overline{\{c\}})|+|\m B(\{1,d,e, c\}\overline{\{b\}})|\le {n-5\choose k-4}.
\end{align}
 
Hence,
\begin{align}\label{2-7-3}
&|\m B_1|=|\m B(\{1,b, d,e\}\overline{\{c\}})|+|\m B(\{1,b, e\}\overline{\{c,d\}})|+|\m B(\{1,b, d\}\overline{\{c,e\}})|\le {n-4\choose k-3},\\ \label{2-7-4}
&|\m B_2|=|\m B(\{1,c, d,e\}\overline{\{b\}})|+|\m B(\{1,c, d\}\overline{\{b,e\}})|+|\m B(\{1,c, e\}\overline{\{b,d\}})|\le {n-4\choose k-3},\\ \label{2-7-1}
& |\m B_3|=|\m B(\{1,b,c, d,e\})|+|\m B(\{1,b,c, d\}\overline{\{e\}})|+|\m B(\{1,b,c, e\}\overline{\{d\}})|\le {n-4\choose k-4}.
\end{align}

Furthermore, combining (\ref{6-5-1}), (\ref{6-17-1}), and (\ref{6-5-2}), we have
\[
|\m B_1|+|\m B_2|\le 2{n-5\choose k-3}+{n-5\choose k-4}.
\]
Together with (\ref{2-7-1}), we have 
\begin{equation}\label{6-17-2}
|\m B(1)|=|\m B_1|+|\m B_2|+|\m B_3|\le {n-3\choose k-3}+{n-5\choose k-3}.
\end{equation}

We consider the case $\gamma(\m B)=1$. Then $|\m B(\bar1)|=1$.
Combining (\ref{6-17-2}) and (\ref{minAB}), $k=3$.
Write $\{d,e\}=\{x,y\}$, where we do not assume $x<y$.
Thus, $|\m B|=3$ and $\m B[1]=\{\{1,b,x\}, \{1,c,y\}\}$. 
We now estimate $|\m A|$: we have $|\m A(\bar1)|\le |\{A\in {[2,n]\choose k-3}: A\cap \{b,x\}\ne \emptyset, A\cap \{c,y\}\ne \emptyset\}|\le 4n-16$, and $|\m A(1)|\le 3n-9$ (note that every set in $\m A(1)$ intersects $M$), then $|\m A|\le 7n-25$. Thus $|\m A||\m B|\le 3(7n-25)<(3n-8)^2=h(n,3)^2$. A contradiction. 

Next, we may assume 
\[
\gamma(\m B)\ge 2.
\]

\begin{claim}\label{2-7-2}
    $\tau(\m B_1\cup \m B_2)= 2$.
\end{claim}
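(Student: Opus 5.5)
We need to show $\tau(\m B_1\cup\m B_2)=2$, i.e.\ that no single element lies in every member of $\m B_1\cup\m B_2$, and that some $2$-set meets all of them. I will argue the two inequalities separately. For the upper bound, (P$_{de}$) says every set of $\m B(1)$ meets $\{d,e\}$. Since $\m B_1,\m B_2\subset\m B(1)$ after removing $1$ together with $b$ or $c$, and $\{d,e\}\cap\{b,c\}=\emptyset$ by the choice of $(d,e)$, every member of $\m B_1\cup\m B_2$ still meets $\{d,e\}$. Hence $\tau(\m B_1\cup\m B_2)\le 2$.

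For the lower bound I argue by contradiction. Suppose some element $t$ lies in every member of $\m B_1\cup\m B_2$; note $t\notin\{1,b,c\}$. The plan is to show that then $t$ also lies in every member of $\m B_3=\m B(\{1,b,c\})$. Combined with (P$_{bc}$), which gives $\m B(1)=\m B_1\cup\m B_2\cup\m B_3$, this yields $\tau(\m B_\Delta)=1$, contradicting $\tau(\m B_\Delta)\ge2$. To handle $\m B_3$, suppose $T\in\m B_3$ with $t\notin T$. Then $\{1,b,c\}\cup T\in\m B$.

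The main tool is maximality of the pair together with cross-intersection with $\m A$. Since every set in $\m B(\{1,b\}\overline{\{c\}})$ contains $t$, consider the set $F=\{1,b\}\cup T'$, where $T'$ is obtained from $T\cup\{c\}$ by replacing $c$ with $t$ (adjusting sizes as needed). Two routes are available:
\begin{itemize}
\item[(a)] Show $F\in\m B$, contradicting (P$_{bc}$)'s disjointness $\m B(\{1,b\})\cap\m B(\{1,c\})=\emptyset$. This would go through Claim~\ref{B-sections-lex}, which makes the sections of $\m B$ L-initial on $Y$, so a lex-earlier replacement stays in $\m B$.
\item[(b)] Use that $S_{b,c}(\m B)$ is a star: Lemma~\ref{12-22-1} gives $\m B(b)\cap\m B(c)=\emptyset$, and I try to produce a common member.
\end{itemize}

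The concrete case split is on whether $t\in M$. If $t\in Y$, I use Claim~\ref{B-sections-lex} on the section $C=\{1\}\cup(D\cap M)$ containing $b$ or $c$. L-initiality forces the lex-first sets of $\m B_1$ (respectively $\m B_2$), which avoid $t$ whenever $t$ is not among the smallest elements of $Y$. If $t\in M$, then $t$ lies in $B_0$ for every $B_0\in\m B(\bar1)$. In that case $\m B\setminus\m B_3$ consists of sets containing $t$, and I use the bounds \eqref{2-7-3}--\eqref{2-7-1} together with $\gamma(\m B)\ge2$ and \eqref{minAB}. The aim is to show $|\m B|$ is then too small unless $\m B_3$ also lies in the star at $t$.

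The main obstacle is this last case, $t\in M$ with $\m B_3$ not contained in the star at $t$. I would first try route (b) to force disjointness. Failing that, I would fall back on a counting contradiction: with $t$ in all of $\m B_1\cup\m B_2$, the bounds \eqref{6-5-1}--\eqref{6-5-2} sharpen by an extra factor, because $t$ is now fixed. Then $|\m B(1)|+\gamma(\m B)$ drops below $\binom{n-3}{k-3}+\binom{n-4}{k-3}$, violating \eqref{minAB}.
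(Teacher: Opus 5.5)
\textbf{There is a genuine gap in the lower bound.} Your upper bound $\tau(\m B_1\cup\m B_2)\le 2$ is correct.

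\textbf{The main plan is never carried out.} You intend to force the common element $t$ into every member of $\m B_3$ and then contradict $\tau(\m B_\Delta)\ge2$. Neither of your routes delivers this.
\begin{itemize}
\item Route (b) cannot work as stated. $\m B(b)\cap\m B(c)=\emptyset$ only forbids pairs $S\cup\{b\},\,S\cup\{c\}\in\m B$ with $b,c\notin S$. It says nothing about a single member $\{1,b,c\}\cup T$.
\item Route (a) relies on Claim~\ref{B-sections-lex}, which only compares sets inside one section (fixed trace on $\{1\}\cup M$). When $t$ is among the first elements of $Y$, the lex-initial members of every section contain $t$ anyway, so no contradiction arises. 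You concede this yourself (``whenever $t$ is not among the smallest elements of $Y$''), so the case $t\in Y$ with $t$ small is left open.
\end{itemize}

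\textbf{The counting fallback is incomplete.} It is set up only for $t\in M$, where $\m B(\bar t)\subset\m B[\{1,b,c\}]$ controls $\gamma(\m B)$. For $t\notin M$ you give no bound on $\gamma(\m B)=|\m B(\bar1)|$. Without one, no estimate on $|\m B(1)|$ can contradict \eqref{minAB}. In addition, using \eqref{2-7-3}--\eqref{2-7-1} separately gives only $|\m B(1)|\le 2\binom{n-4}{k-3}+\binom{n-4}{k-4}$. That is too weak even when $t\in M$. The sharpening you need comes from $\m B_1\cap\m B_2=\emptyset$, which your sketch never isolates.

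\textbf{The missing idea is a uniform count.} It needs no case split on $t$ and no information about $\m B_3$.
\begin{enumerate}
\item By (P$_{bc}$), $\m B_1\cap\m B_2=\emptyset$. Both are families of $(k-2)$-subsets of $[n]\setminus\{1,b,c\}$, all containing $t$. Hence $|\m B_1|+|\m B_2|\le\binom{n-4}{k-3}$, and with \eqref{2-7-1}, $|\m B(1)|\le\binom{n-3}{k-3}$.
\item To bound $\gamma(\m B)$, use (P$_M$): $b\in M$ or $c\in M$. If $b\in M$, every member of $\m B(\bar b)$ contains $1$, hence contains $c$ by (P$_{bc}$), hence contains $t$. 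So $\gamma(\m B)\le|\m B(\bar b)|\le\binom{n-4}{k-3}$. The case $c\in M$ is symmetric.
\item Adding, $|\m B|\le\binom{n-3}{k-3}+\binom{n-4}{k-3}$, which contradicts \eqref{minAB}.
\end{enumerate}
This is exactly the paper's argument.
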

\begin{proof}
Since every set in $\m B_1\cup \m B_2$ intersects $\{d,e\}$ (by (P$_{de}$)), $\tau(\m B_1\cup \m B_2)\le 2$. 
Assume, for contradiction, that $\tau(\m B_1\cup \m B_2)=1$.
Let $x$ be a common element of all members of
$\mathcal B_1\cup\mathcal B_2$. 
By (P$_{bc}$), the families $\mathcal B_1$ and $\mathcal B_2$ are
disjoint. Since both are $(k-2)$-subfamilies of
${[n]\setminus\{1,b,c\}\choose k-2}$ and every member contains $x$, we
obtain $|\mathcal B_1|+|\mathcal B_2|
\le {n-4\choose k-3}.$
Together with \eqref{2-7-1}, this gives
$|\m B(1)|=|\m B_1|+|\m B_2|+|\m B_3|\le {n-3\choose k-3}.$
By (P$_M$), $b\in M$ or $c\in M$. 
If $b\in M$, then any set in $\m B(\bar{b})$ contains $\{1,c,x\}$, and hence $|\m B(\bar{b})|\le {n-4\choose k-3}$.
Similarly, if $c\in M$, then $|\m B(\bar{c})|\le {n-4\choose k-3}$. This implies that 
$|\m B(\bar{1})|=\gamma(\m B)=\min \{|\m B(\bar i)|: i\in [n]\}\le {n-4\choose k-3}.$
Thus, 
$|\m B|=|\m B(1)|+|\m B(\bar{1})|\le {n-3\choose k-3}+{n-4\choose k-3},$ 
contradicting (\ref{minAB}). 
\end{proof}

Write
$$M_1:=\cap_{B\in \m B_1}B, \,\, M_2:=\cap_{B\in \m B_2}B. $$
In what follows, we write $\{x,y\}$ for the pair $\{d,e\}$ when the order is irrelevant; that is, $\{x,y\}=\{d,e\}$ and we do not assume $x<y$, whereas $d<e$ throughout. 

\begin{claim}\label{2-7-5}
   For each $i\in \{1,2\}$, we have $|M_i\cap \{d,e\}|= 1$.
\end{claim}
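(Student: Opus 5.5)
The plan is to prove the two inequalities $|M_i\cap\{d,e\}|\le 1$ and $|M_i\cap\{d,e\}|\ge 1$ separately. By symmetry between $b$ and $c$ I would carry this out for $i=1$ only. The upper bound is a counting argument against \eqref{minAB}. The lower bound is structural and uses (P$_{de}$), Claim~\ref{B-sections-lex}, and the star property of $S_{d,e}(\m B)$.

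\emph{Upper bound.} Suppose $\{d,e\}\subset M_1$. Then every member of $\m B_1$ contains $\{1,b,d,e\}$ and misses $c$, so $|\m B_1|\le\binom{n-5}{k-4}$. Combining this with \eqref{2-7-4} and \eqref{2-7-1} gives
\[
|\m B(1)|\le \tbinom{n-5}{k-4}+\tbinom{n-4}{k-3}+\tbinom{n-4}{k-4}=\tbinom{n-5}{k-4}+\tbinom{n-3}{k-3}.
\]
Next I bound $\gamma(\m B)=|\m B(\bar1)|$ as in Claim~\ref{2-7-2}. By (P$_M$), $b\in M$ or $c\in M$.
\begin{itemize}
\item If $c\in M$, then $\m B(\bar c)$ consists of sets containing $1$, namely those coming from $\m B_1$. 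Hence $\gamma(\m B)\le|\m B(\bar c)|=|\m B_1|\le\binom{n-5}{k-4}$. This gives $|\m B|\le\binom{n-3}{k-3}+2\binom{n-5}{k-4}\le\binom{n-3}{k-3}+\binom{n-4}{k-3}$, using $\binom{n-5}{k-4}\le\binom{n-5}{k-3}$ for $n\ge 2k$. This contradicts \eqref{minAB}.
\item If $b\in M$, the crude estimate $\gamma(\m B)\le|\m B_2|$ is too weak. Here I would split $\m B_2$ according to $\{d,e\}$ using \eqref{6-5-1}--\eqref{6-5-2}.
\end{itemize}
In the case $b\in M$ I would also exploit that $\m B_1\cup\m B_2$ has cover number $2$ (Claim~\ref{2-7-2}) while $d,e\in M_1$. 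This forces the members of $\m B_2$ that avoid one of $d,e$ to be controlled by the $\binom{n-5}{k-3}$ terms. If that is still insufficient, I would fall back on estimating $|\m A|$ directly, as was done in the case $\gamma(\m B)=1$.

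\emph{Lower bound.} Suppose $M_1\cap\{d,e\}=\emptyset$. By (P$_{de}$), every set of $\m B_1$ meets $\{d,e\}$, and $\m B(\{1,d\})\cap\m B(\{1,e\})=\emptyset$. Therefore $\m B_1$ contains a set $P\ni d$ with $e\notin P$ and a set $P'\ni e$ with $d\notin P'$. I then use the lex structure of the sections $\m B_C$ from Claim~\ref{B-sections-lex}, together with the minimal choices of $(b,c)$ and $(d,e)$, to replace $P$ or $P'$ by an earlier set. The goal is a member $T$ for which both $\{1,b,d\}\cup T$ and $\{1,b,e\}\cup T$ lie in $\m B$ (up to swapping $d$ and $e$), which contradicts $\m B(\{1,d\})\cap\m B(\{1,e\})=\emptyset$. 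Concretely, the sections are L-initial on $Y$, and $d$ or $e$ lies in $M$ by (P$_M$). So the section with $d\in C$ and the section with $e\in C$ are initial segments of the same lex order on $Y$, and the shorter one is contained in the longer one. This yields the common $T$.

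The main obstacle I expect is the subcase $b\in M$ of the upper bound, where the simple counting does not immediately beat \eqref{minAB}. I would attack it first by the refined splitting \eqref{6-5-1}--\eqref{6-5-2}, and only then by bounding $|\m A|$.
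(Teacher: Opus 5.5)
\textbf{Gap in the lower bound.} The lower-bound half does not go through. Claim~\ref{B-sections-lex} only compares members of $\m B$ that have the \emph{same} trace $C$ on $\{1\}\cup M$. So ``the section with $e\in C$'' exists only when $e\in M$. But (P$_{M}$) guarantees only $\{d,e\}\cap M\ne\emptyset$. In fact the argument later forces $M=\{a,b,d\}$, so $e\notin M$ is the main case, not a degenerate one.

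When $e\in Y$, your $d$-sets and $e$-sets of $\m B_1$ sit in sections of different uniformities on $Y$, and L-initiality gives no containment between them. Even when $d,e\in M$, the sets $P,P'$ may have different traces on $M\setminus\{d,e\}$, so they land in incomparable sections. Hence the common $T$ is never produced.

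The missing idea is to use $\m B_2$ together with the cross pairs $\{b,y\}$ and $\{c,x\}$. From the upper bound and Claim~\ref{2-7-2}, pick $B_1\in\m B_1$ and $B_2\in\m B_2$ with $B_1\cap\{d,e\}=\{x\}$, $B_2\cap\{d,e\}=\{y\}$ and $x\ne y$. Put $R=\{1,b\}\cup B_1$ and $T=\{1,c\}\cup B_2$.
\begin{itemize}
\item The set $(R\setminus\{b\})\cup\{y\}$ misses $\{b,c\}$, and $(T\setminus\{y\})\cup\{b\}$ misses $\{d,e\}$. So neither lies in $\m B$.
\item Hence whichever of $S_{b,y}$, $S_{y,b}$ is defined acts non-trivially. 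By Assumption~\ref{asm} it produces a star.
\item Lemma~\ref{12-22-1} then says every member of $\m B$ meets $\{b,y\}$, so every member of $\m B_2$ contains $y$.
\item Symmetrically, $(R\setminus\{x\})\cup\{c\}$ and $(T\setminus\{c\})\cup\{x\}$ force every member of $\m B_1$ to contain $x$.
\end{itemize}
Having a witness on each side is exactly what removes any dependence on the relative order of $b,y$ (resp.\ $c,x$). The lex structure of the sections is not the right tool here.

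\textbf{Upper bound, subcase $b\in M$.} This subcase is left unfinished, and the diagnosis is off. The crude estimate $\gamma(\m B)\le|\m B(\bar b)|=|\m B_2|\le\binom{n-4}{k-3}$ is perfectly adequate. What is too weak is your bound on $|\m B(1)|$, because you bounded $|\m B_1|$ and $|\m B_2|$ separately.

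Under $\{d,e\}\subset M_1$ you have $\m B_1=\m B(\{1,b,d,e\}\overline{\{c\}})$. The second inequality in \eqref{6-5-2} then gives $|\m B_1|+|\m B(\{1,c,d,e\}\overline{\{b\}})|\le\binom{n-5}{k-4}$. The second inequality in \eqref{6-5-1} bounds the rest of $\m B_2$ by $\binom{n-5}{k-3}$. Hence $|\m B_1|+|\m B_2|\le\binom{n-4}{k-3}$, and with \eqref{2-7-1}, $|\m B(1)|\le\binom{n-3}{k-3}$.

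Now $\gamma(\m B)\le|\m B_2|$ if $b\in M$, or $\gamma(\m B)\le|\m B_1|$ if $c\in M$. Both are at most $\binom{n-4}{k-3}$ by \eqref{2-7-3}--\eqref{2-7-4}. This contradicts \eqref{minAB}, with no need for Claim~\ref{2-7-2} or any estimate on $|\m A|$. Your separate treatment of the subcase $c\in M$ is correct as written.
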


\begin{proof}
We first prove that $|M_i\cap \{d,e\}|\le 1$ holds for each $i\in \{1,2\}$. We prove only for $i=1$, since the proof for $i=2$ is analogous. 
Assume, for contradiction, that $\{d,e\}\subset M_1$. Then $|\m B_1|=|\m B(\{1,d,e, b\}\overline{\{c\}})|$. 
By (\ref{6-5-2}),
\[
|\m B_1|+|\m B(\{1,d,e, c\}\overline{\{b\}})|=|\m B(\{1,d,e, b\}\overline{\{c\}})|+|\m B(\{1,d,e, c\}\overline{\{b\}})|\le {n-5\choose k-4}.
\]
Combining this with (\ref{6-5-1}) and 
\[
|\m B_2|=|\m B(\{1,d,e, c\}\overline{\{b\}})|+|\m B(\{1, c, d\}\overline{\{b,e\}})|+|\m B(\{1, c,e\}\overline{\{b,d\}})|,
\]
yields
\[
|\m B_1|+|\m B_2|\le {n-5\choose k-4}+{n-5\choose k-3}={n-4\choose k-3}.
\]
Together with (\ref{2-7-1}), this gives 
$
|\m B(1)|=|\m B_1|+|\m B_2|+|\m B_3|\le {n-4\choose k-3}+{n-4\choose k-4}={n-3\choose k-3}.
$
Note that $b\in M$ or $c\in M$. 
If $b\in M$, then by (\ref{2-7-4}), $|\m B(\bar{b})|=|\m B_2|\le {n-4\choose k-3}$;
if $c\in M$, then by (\ref{2-7-3}), $|\m B(\bar{c})|=|\m B_1|\le {n-4\choose k-3}$. 
Thus,
$\gamma(\m B)=|\m B(\bar{1})|\le {n-4\choose k-3}$, and hence $|\m B|=|\m B(1)|+|\m B(\bar{1})|\le {n-3\choose k-3}+{n-4\choose k-3},$ contradicting (\ref{minAB}). 
This proves $|M_i\cap \{d,e\}|\le 1$ holds for each $i\in \{1,2\}$.

We show that there exist two sets $B_1\in \m B_1, B_2\in \m B_2$ such that  $B_1\cap \{d,e\}=\{x\}$ and $B_2\cap \{d,e\}=\{y\}$. 
We use the facts that every set in $\m B_1\cup \m B_2$ intersects $\{d,e\}$ (by (P$_{de}$)), and $|M_i\cap \{d,e\}|\le 1$ for all $i\in \{1,2\}$.
For the case $|M_1\cap \{d,e\}|=0$, we may assume $M_2\cap \{d,e\}=\{y\}$, then there is $B_2$ with $B_2\cap \{d,e\}=\{y\}$, and $|M_1\cap \{d,e\}|=0$ gives $B_1$ with  $B_1\cap \{d,e\}=\{x\}$. The case $|M_2\cap \{d,e\}|=0$ is analogous. 
We now only need to consider the case $|M_1\cap \{d,e\}|=|M_2\cap \{d,e\}|=1$. 
Then Claim \ref{2-7-2} guarantees the existence of $B_1$ and $B_2$. 

It remains to prove that $x\in M_1$ and $y\in M_2$. 
Take an arbitrary $B\in \m B[1]$ with $|B\cap \{b,c\}|=1$. 
To prove Claim \ref{2-7-5}, it suffices to show that $\{b, x\}\subset B$ or $\{c,y\} \subset B$.

Put $R:=\{1,b\}\cup B_1$. 
Then $R\in \m B$, $c\not\in R$, $y\not\in R$ and $x\in R$. 
Let $R':=(R\setminus \{b\})\cup \{y\}$ and  $R'':=(R\setminus \{x\})\cup \{c\}$.
Put $T=\{1,c\}\cup B_2$.
Then $T \in \m B$, $b\not\in T$, $x\not\in T$ and $y\in T$. 
Let $T':=(T\setminus \{y\})\cup \{b\}$ and $T'':=(T\setminus \{c\})\cup \{x\}$. 
Then $R'\cap \{b,c\}=T''\cap \{b,c\}=\emptyset$ and $R''\cap \{x,y\}=T'\cap \{x,y\}=\emptyset$. 
Hence, $R', T''\not\in \m B$ by (P$_{bc}$), and $R'', T' \not\in \m B$ by (P$_{de}$). 

Consider the case $B\cap \{b,c\}=\{c\}$. We need to show that $y\in B$. Assume $y\not\in B$. Then $B\cap \{b,y\}=\emptyset$.  
By Lemma \ref{12-22-1}, neither $S_{y,b}(\m B)$ (if $y<b$) nor $S_{b, y}(\m B)$ (if $b<y$) is a star. This implies that  $S_{y,b}(\m B)=\m B$ (if $y<b$) or $S_{b, y}(\m B)=\m B$ (if $b<y$). 
We claim $y<b$, since otherwise, combining $y>b$ with $T'\not\in \m B$, we get $S_{b, y}(\m B)\ne\m B$, a contradiction.
Then $S_{y,b}(\m B)=\m B$. However, combining $y<b$ and $R'\not\in \m B$, we get $S_{y,b}(\m B)\ne \m B$, again a contradiction.

Consider the case $B\cap \{b,c\}=\{b\}$. We need to show that $x\in B$. Assume $x\not\in B$. 
Then $B\cap \{c, x\}=\emptyset$.
By Lemma \ref{12-22-1}, neither $S_{x,c}(\m B)$ (if $x<c$) nor $S_{c, x}(\m B)$ (if $c<x$) is a star. This implies that  $S_{x,c}(\m B)=\m B$ (if $x<c$) or $S_{c,x}(\m B)=\m B$ (if $c<x$).  
Since $R''\not\in \m B$, we have $x<c$, since otherwise, $x>c$ and $S_{c,x}(\m B)\ne \m  B$, contradicting $S_{c, x}(\m B)=\m B$. Then $S_{x,c}(\m B)=\m B$. However, combining $x<c$ and $T''\not\in \m B$, we get $S_{x,c}(\m B)\ne\m B$, again a contradiction.
This proves $x\in B$.
\end{proof}

Note that $\{d,e\}=\{x,y\}$ in some order.
By Claims \ref{2-7-2} and \ref{2-7-5}, $M_1\cap \{d,e\}$ and $M_2\cap \{d,e\}$ are distinct singletons; write $M_1\cap \{d,e\}=\{x\}$ and $M_2\cap \{d,e\}=\{y\}$. 

\begin{claim}\label{bcstar}
There is $i\in \{b,c\}$ such that $S_{1,i}(\m B)$ is a star and $i\in M$.
\end{claim}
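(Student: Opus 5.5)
We want some $i\in\{b,c\}$ with $S_{1,i}(\m B)$ a star and $i\in M$. We already know from (P$_M$) that $\{b,c\}\cap M\ne\emptyset$. Moreover, Claim \ref{1-30-1} says that if $i\notin M$ then $S_{1,i}(\m B)=\m B$. So the plan is to show that for the element $i\in\{b,c\}\cap M$ (or one of them, if both lie in $M$), the shift $S_{1,i}$ acts non-trivially on $\m B$. Assumption \ref{asm} (via the consequence stated at the start of the proof: a non-trivial $S_{i,j}$-shift of $\m B$ produces a star) then gives that $S_{1,i}(\m B)$ is a star.

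Step 1. Suppose, for contradiction, that $S_{1,i}(\m B)=\m B$ for every $i\in\{b,c\}\cap M$. Together with Claim \ref{1-30-1} for the elements outside $M$, we get that both $S_{1,b}$ and $S_{1,c}$ act trivially on $\m B$. Then for every $B\in\m B(\bar 1)$ and each $i\in B\cap\{b,c\}$, the set $(B\setminus\{i\})\cup\{1\}$ lies in $\m B$.

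Step 2. Use Lemma \ref{12-22-1} applied to the star $S_{b,c}(\m B)$. It says that every set of $\m B$, in particular every $B\in\m B(\bar1)$, meets $\{b,c\}$, and that $\m B(b)\cap\m B(c)=\emptyset$.
\begin{itemize}
\item If some $B\in\m B(\bar1)$ contains both $b$ and $c$, the two sets $(B\setminus\{b\})\cup\{1\}$ and $(B\setminus\{c\})\cup\{1\}$ both lie in $\m B$. They give a common member of $\m B(b)$ and $\m B(c)$, a contradiction.
\item Otherwise $B$ contains exactly one of $b,c$, say $b$. Then $(B\setminus\{b\})\cup\{1\}\in\m B$ misses $\{b,c\}$, again contradicting Lemma \ref{12-22-1}.
\end{itemize}
This is exactly the argument used earlier to produce $a$. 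It requires $\m B(\bar1)\ne\emptyset$, which holds since $\gamma(\m B)\ge 2$.

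Step 3. By Step 2, some $i\in\{b,c\}$ has $S_{1,i}(\m B)\ne\m B$. By Claim \ref{1-30-1}, this $i$ must lie in $M$, since otherwise the shift would be trivial. Since the shift is non-trivial, Assumption \ref{asm} makes $S_{1,i}(\m B)$ a star.

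The main subtlety is to make sure Claim \ref{1-30-1} correctly covers the elements not in $M$. That claim is stated for $i\in B\setminus M$ with $B\in\m B(\bar1)$, so an element $i\notin M$ lying in no member of $\m B(\bar 1)$ is not covered. For such an $i$, however, $S_{1,i}$ cannot move any member of $\m B(\bar 1)$, since none of them contains $i$, so it acts trivially anyway. With this observation the argument closes.
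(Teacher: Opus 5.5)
Your proof is correct and is essentially the paper's argument. The paper splits cases according to whether one or both of $b,c$ lie in $M$; in each case it shows that trivial actions of $S_{1,b}$ and $S_{1,c}$ on a member of $\mathcal B(\bar 1)$ would contradict Lemma~\ref{12-22-1} (equivalently (P$_{bc}$)) for the star $S_{b,c}(\mathcal B)$. You organize the same two cases by $|B\cap\{b,c\}|$ and then obtain $i\in M$ from Claim~\ref{1-30-1}, which makes the appeal to (P$_M$) unnecessary.
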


\begin{proof}
By Claim $\ref{1-29-4}$, $\{b,c\}\cap M\ne \emptyset$. 
Consider $b\in M$.
If $c\not\in M$, then there exists $B\in \m B(\bar 1)$ such that $b\in B$ and $c\not\in B$, and hence $\{b,c\}\cap S_{1,b}(B)=\emptyset$, combining this with (P$_{bc}$), yields $S_{1,b}(B)\not\in \m B$, which implies that $S_{1,b}(\m B)$ is a star. 
If $c\in M$, and $S_{1,c}(\m B)$ is a star, then we are done.
Otherwise, $c\in M$, and $S_{1,c}(\m B)=\m B$. 
Let $B'\in \m B(\bar1)$. Then $B'':=(B'\setminus \{c\})\cup \{1\}\in \m B$.
In this case, if $S_{1,b}(\m B)=\m B$, then $B''':=(B'\setminus \{b\})\cup \{1\}\in \m B$. This implies $\m B(\{1,b\})\cap \m B(\{1,c\})\ne \emptyset$, contradicting (P$_{bc}$). This shows that if $b\in M$, then Claim \ref{bcstar} holds. Note that we never use the property $b<c$ in above analysis. Thus, by an analogous argument, we can show that the case $c\in M$ follows by interchanging $b$ and $c$.
\end{proof}

\begin{claim}\label{2-16-1}
Let $S\in \{\{c,x\}, \{b,y\}\}$, and let $i,j\in S$ and $i<j$. Then $S_{i,j}(\m B)$ is a star.
\end{claim}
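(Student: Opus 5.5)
The plan is to reduce the claim to showing that the relevant classical shift acts non-trivially on $\m B$. The paper recorded at the start of the proof of Proposition~\ref{cover2-nonshifted} that, by Assumption~\ref{asm}, $S_{i,j}(\m B)\ne\m B$ forces $S_{i,j}(\m B)$ to be a star. Indeed, a non-trivial $S_{i,j}$-shift of $\m B$ is an $S^Q_{\{i\},\{j\}}$-shift of $(\m A,\m B)$. So it suffices, for each $S\in\{\{c,x\},\{b,y\}\}$ with $S=\{i,j\}$ and $i<j$, to exhibit a member of $\m B$ that contains $j$ but not $i$ and whose image under $S_{i,j}$ is not in $\m B$.

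First I check that $S$ really consists of two distinct elements. Since $\{x,y\}=\{d,e\}\subset[2,n]\setminus\{b,c\}$, we have $x\ne c$ and $y\ne b$.

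The witnesses are the four sets built in the proof of Claim~\ref{2-7-5}. Take $B_1\in\m B_1$ with $B_1\cap\{d,e\}=\{x\}$ and $B_2\in\m B_2$ with $B_2\cap\{d,e\}=\{y\}$, and put $R:=\{1,b\}\cup B_1$ and $T:=\{1,c\}\cup B_2$. Then $R\in\m B$ contains $b$ and $x$ and avoids $c$ and $y$, while $T\in\m B$ contains $c$ and $y$ and avoids $b$ and $x$. The paper already proved that the following four sets are not in $\m B$:
\[
R':=(R\setminus\{b\})\cup\{y\},\quad R'':=(R\setminus\{x\})\cup\{c\},\quad T':=(T\setminus\{y\})\cup\{b\},\quad T'':=(T\setminus\{c\})\cup\{x\}.
\]
For $R'$ and $T''$ this follows from (P$_{bc}$), since they miss $\{b,c\}$; for $R''$ and $T'$ it follows from (P$_{de}$), since they miss $\{x,y\}$.

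Now I split by the order of the two elements.

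For $S=\{c,x\}$ with $x<c$, the set $T$ contains $c$ but not $x$, and $S_{x,c}(T)=T''\notin\m B$. With $c<x$, the set $R$ contains $x$ but not $c$, and $S_{c,x}(R)=R''\notin\m B$.

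For $S=\{b,y\}$ with $y<b$, the set $R$ contains $b$ but not $y$, and $S_{y,b}(R)=R'\notin\m B$. With $b<y$, the set $T$ contains $y$ but not $b$, and $S_{b,y}(T)=T'\notin\m B$.

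In every case $S_{i,j}(\m B)\ne\m B$, so $S_{i,j}(\m B)$ is a star. There is no real obstacle here; the only care needed is the distinctness check above and the correct pairing of each witness with each order.
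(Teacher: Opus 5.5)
Your proposal is correct and follows essentially the same argument as the paper's proof. You use the same two witnesses $\{1,b\}\cup B_1$ and $\{1,c\}\cup B_2$ and the same four shifted images excluded by (P$_{bc}$) and (P$_{de}$), with the same pairing of witness to order. You also write out the $\{b,y\}$ case and the distinctness check, both of which the paper leaves implicit.
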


\begin{proof}
Since $M_1\cap \{d,e\}=\{x\}$ and $M_2\cap \{d,e\}=\{y\}$, 
there exist $B_1\in \m B_1$ and $B_2\in \m B_2$ with $\{d,e\}\cap B_1=\{x\}$ and $\{d,e\}\cap B_2=\{y\}$. 
Put $T_1=\{1,b\}\cup B_1$ and $T_2=\{1,c\}\cup B_2$. 
Then $T_1, T_2 \in \m B$, $T_1\cap \{b,c,d,e\}=\{b,x\}$ and  $T_2\cap \{b,c,d,e\}=\{c,y\}$. 
Let $T'_1=(T_1\setminus \{b\})\cup \{y\}$, $T''_1=(T_1\setminus \{x\})\cup \{c\}$, $T'_2=(T_2\setminus \{c\})\cup \{x\}$, $T''_2=(T_2\setminus \{y\})\cup \{b\}$. 
By (P$_{bc}$) and (P$_{de}$), all $T'_1, T''_1, T'_2, T''_2$ are not in $\m B$. 
For the case $S=\{c,x\}$, if $c>x$, then since $T'_2\not\in \m B$, $S_{x,c}(\m B)\ne \m B$, and hence  $S_{x,c}(\m B)$ is a star; if $c<x$, then since $T''_1\not\in \m B$, $S_{c,x}(\m B)\ne \m B$, and hence  $S_{c,x}(\m B)$ is a star. The case $S=\{b,y\}$ follows by an analogous argument. 
\end{proof}

\begin{claim}\label{2-10-2}
Either $\{a,b,x\}\subset M$ or $\{a,c,y\}\subset M$.
\end{claim}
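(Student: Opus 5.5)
The plan is to combine Claim~\ref{bcstar} with the elementary fact that whenever $S_{1,i}(\m B)$ is a star, $i\in M$. Indeed, Lemma~\ref{12-22-1} then says every member of $\m B$ meets $\{1,i\}$, so every member of $\m B(\bar1)$ contains $i$. By Claim~\ref{bcstar} we may pick $i\in\{b,c\}\cap M$ with $S_{1,i}(\m B)$ a star, and by (P$_M$) we already have $a\in M$. The argument splits into the cases $i=b$ and $i=c$, which are symmetric under $b\leftrightarrow c$, $x\leftrightarrow y$, $\m B_1\leftrightarrow\m B_2$ (the proof of Claim~\ref{bcstar} never used $b<c$). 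So I only treat $i=b$ and aim to show $x\in M$, giving $\{a,b,x\}\subset M$.

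Assume $b\in M$ and $S_{1,b}(\m B)$ is a star, so $\m B(1)\cap\m B(b)=\emptyset$ by Lemma~\ref{12-22-1}. Suppose for contradiction that $x\notin M$, and take $B\in\m B(\bar1)$ with $x\notin B$; we have $b\in B$. Since every member of $\m B$ meets $\{b,c\}$ (by (P$_{bc}$) on $\m B(1)$, and $b\in M$ on $\m B(\bar1)$), we examine whether $c\in B$. Whenever $j\in B\setminus M$, Claim~\ref{1-30-1} puts $(B\setminus\{j\})\cup\{1\}$ in $\m B$. I would choose $j$ so that the resulting set lies in $\m B[1]$, has exactly one element of $\{b,c\}$ (namely $b$), and omits $x$. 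This contradicts Claim~\ref{2-7-5}, which says every member of $\m B[1]$ meeting $\{b,c\}$ only in $b$ contains $x$. If $c\notin B$, any $j\in B\setminus M$ with $j\neq b$ works, since $b\in M$ keeps $b$. If $c\in B$, take $j=c$ when $c\notin M$. The remaining subcase $c\in M$ gives $(B\setminus\{c\})\cup\{1\}\in\m B(1)\cap\m B(b)$ by Claim~\ref{1-30-1} if $S_{1,c}$ acts trivially, contradicting $S_{1,b}(\m B)$ being a star. Otherwise $S_{1,c}(\m B)$ is also a star, and I would then switch to the symmetric branch $i=c$ and use $y$.

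The main obstacle is the existence of $j\in B\setminus M$ distinct from $b,c$, together with the residual subcase where both $b,c\in M$ and both $S_{1,b},S_{1,c}$ are stars. For the first, note that $|M|<k$, because $\gamma(\m B)\ge2$ rules out $B\subset M$. Hence $B\setminus M\neq\emptyset$, and $B\setminus M$ avoids $b$ and $c$ whenever $b,c\in M$. For the residual subcase, both $\m B(1)\cap\m B(b)$ and $\m B(1)\cap\m B(c)$ are empty. I would then use Claim~\ref{2-16-1}: $S_{x,c}$ or $S_{c,x}$ is a star, so by Claim~\ref{1-29-4} either $x\in M$ or $c\in M$. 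Since $c\in M$ is already given, this alone does not decide the matter. I would then run the replacement argument with $j\in B\setminus M$ on a member $B\in\m B(\bar1)$ omitting $x$. This yields a set in $\m B[1]$ containing both $b$ and $c$, so Claim~\ref{2-7-5} does not apply directly. Instead I would contradict $\m B(1)\cap\m B(b)=\emptyset$ via $(B\setminus\{j\})\cup\{1\}$ and $(B\setminus\{b\})\cup\{1\}$. The step I am least sure of is producing the second of these sets; if that fails, I would fall back on the counting bounds \eqref{2-7-3}--\eqref{2-7-1} against \eqref{minAB}.
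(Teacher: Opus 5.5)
The case you carry out in full ($b\in M$, $c\notin M$) is exactly the paper's argument. Take $B\in\mathcal{B}(\bar 1)$ with $x\notin B$. Replace $c$ (if $c\in B$), or else any $j\in B\setminus M$ (nonempty since $\gamma(\mathcal{B})\ge 2$), by $1$ via Claim~\ref{1-30-1}. This gives a member of $\mathcal{B}_1$ avoiding $x$, contradicting $M_1\cap\{d,e\}=\{x\}$. The star property of $S_{1,b}(\mathcal{B})$ from Claim~\ref{bcstar} is never used there; only $b\in M$ and $c\notin M$ matter.

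\textbf{The gap.} The case $\{b,c\}\subset M$ is left open, and the repair you sketch cannot succeed.
\begin{itemize}
\item \emph{The needed set does not exist.} If $S_{1,b}(\mathcal{B})$ is a star, Lemma~\ref{12-22-1} gives $\mathcal{B}(1)\cap\mathcal{B}(b)=\emptyset$. Since $B\setminus\{b\}\in\mathcal{B}(b)$ for every $B\in\mathcal{B}(\bar 1)$, the set $(B\setminus\{b\})\cup\{1\}$ is never in $\mathcal{B}$. So the ``second set'' you hoped to produce is ruled out by the very disjointness you wanted to contradict.
\item \emph{Switching branches does not help.} Moving to the branch $i=c$ runs into the identical residual case.
\item \emph{A misattributed contradiction.} In your sub-case where $S_{1,c}$ acts trivially, $(B\setminus\{c\})\cup\{1\}\in\mathcal{B}$ comes from that triviality, not from Claim~\ref{1-30-1}. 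Together with $B$ it yields no common element of $\mathcal{B}(1)$ and $\mathcal{B}(b)$. The contradiction there is with $x\in M_1$, just as in the main case.
\end{itemize}

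More fundamentally, in this case you are trying to force $x\in M$, which the disjunctive claim does not require. The missing observation is that the case is immediate from (P$_M$). Since $\{d,e\}=\{x,y\}$ meets $M$, we have $x\in M$ or $y\in M$. Together with $a,b,c\in M$, this is exactly $\{a,b,x\}\subset M$ or $\{a,c,y\}\subset M$.

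With this, your argument reduces to the paper's. Dispose of $\{b,c\}\subset M$ by (P$_M$), and treat $M\cap\{b,c\}=\{b\}$ (symmetrically $\{c\}$) by the replacement argument above. Claim~\ref{bcstar} is not needed.
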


\begin{proof}
Note that $a\in M$, $M\cap \{b,c\}\ne \emptyset$, and $M\cap \{d,e\}\ne \emptyset$.
If $\{b,c\}\subset M$ or $\{d,e\}\subset M$, then we are done. 
Assume $M\cap \{b,c\}=\{b\}$, and suppose, for contradiction, that $x\not\in M$. Then there exists $B\in \m B(\bar{1})$ with $x\not\in B$. Define $B'$ as follows: if $c\in B$, then $B':=(B\setminus \{c\})\cup \{1\}$; if $c\not\in B$, then choose any element $i\in B\setminus M$ (note that $B\setminus M\ne \emptyset$, since $|\m B(\bar1)|=\gamma(\m B)\ge 2$), and let $B':=(B\setminus \{i\})\cup \{1\}$. Since $c\not\in M$ and $i\not\in M$, 
by Claim \ref{1-30-1}, we have $B'\in \m B$. Clearly, $x\not\in B'$ as $x\not\in B$. 
Thus, $\{1,b\}\subset B'$ (since $b\in M\subset B$) and $B'\cap \{c,x\}=\emptyset$. Then $x\not\in B'\setminus \{1,b\}\in \m B_1$. 
However, $M_1\cap \{d,e\}=\{x\}$ implies that every set in $\m B_1$ contains $x$, a contradiction. This proves that if $M\cap \{b,c\}=\{b\}$, then $\{a,b,x\}\subset M$. If $M\cap \{b,c\}=\{c\}$, then by an analogous argument we can show that $\{a,c,y\}\subset M$.
\end{proof}

By Claim \ref{2-10-2}, 
\begin{equation*}
\{a,b,d\}\subset M.
\end{equation*}

Indeed, if $\{a,b,x\}\subset M$, then either $x=d$, in which case
$\{a,b,d\}\subset M$ is immediate, or $x=e$, in which case $e\in M$ and
$d<e$, so $d\in M$ by \eqref{M-degree-downward}. Hence
$\{a,b,d\}\subset M$.
If $\{a,c,y\}\subset M$, then $c\in M$. Since $b<c$, we get
$b\in M$ by \eqref{M-degree-downward}. Similarly, we have $\{a,b,d\}\subset M$ in this case as well.

\begin{claim}\label{bcdestar} 
For each $P\in\{\{b,c\},\{d,e\}\}$, there exists
$p\in P\cap M$ such that $S_{1,p}(\mathcal B)$ is a star. 
\end{claim}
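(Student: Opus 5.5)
For $P=\{b,c\}$ the statement is exactly Claim~\ref{bcstar}, so only $P=\{d,e\}$ needs work. There I would repeat the proof of Claim~\ref{bcstar} verbatim, with (P$_{de}$) in place of (P$_{bc}$). That proof used only three ingredients, each of which has a $\{d,e\}$-analogue:
\begin{itemize}
\item[(a)] $P\cap M\ne\emptyset$, which for $\{d,e\}$ is given by (P$_M$) (in fact $d\in M$ is already known);
\item[(b)] every member of $\m B(1)$ meets $P$, and $\m B(\{1,p\})\cap\m B(\{1,q\})=\emptyset$ for $P=\{p,q\}$, which for $\{d,e\}$ is (P$_{de}$);
\item[(c)] the consequence of Assumption~\ref{asm} that any $S_{1,p}$ acting non-trivially on $\m B$ makes $S_{1,p}(\m B)$ a star.
\end{itemize}
Ingredient (c) holds because such a shift is an $S^Q_{\{1\},\{p\}}$-shift of $(\m A,\m B)$. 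The proof of Claim~\ref{bcstar} never used the order $b<c$, so the roles of $d$ and $e$ may be interchanged freely.

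\emph{Case $d\in M$, $e\notin M$.} Since $e\notin M$, there is $B\in\m B(\bar1)$ with $e\notin B$; as $d\in M$, we have $d\in B$. Then $S_{1,d}(B)=(B\setminus\{d\})\cup\{1\}$ contains $1$ but misses $\{d,e\}$, so $S_{1,d}(B)\notin\m B$ by (P$_{de}$). Hence $S_{1,d}$ acts non-trivially on $\m B$, and by (c) the family $S_{1,d}(\m B)$ is a star. Take $p=d$.

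\emph{Case $\{d,e\}\subset M$.} If $S_{1,d}(\m B)$ or $S_{1,e}(\m B)$ is a star, we are done. Otherwise, by (c), both $S_{1,d}$ and $S_{1,e}$ act trivially on $\m B$. Pick any $B'\in\m B(\bar1)$; it contains both $d$ and $e$. Then $(B'\setminus\{d\})\cup\{1\}$ and $(B'\setminus\{e\})\cup\{1\}$ both lie in $\m B$. The set $B'\setminus\{d,e\}$ therefore lies in $\m B(\{1,e\})\cap\m B(\{1,d\})$, contradicting (P$_{de}$).

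\emph{Case $e\in M$, $d\notin M$.} This cannot occur, since $d\in M$ was established just before the claim. In any case it would be handled symmetrically to the first case.

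The only point requiring care is the first case: one must check that the moved set genuinely leaves $\m B$. This rests solely on the covering part of (P$_{de}$), so I expect no real obstacle; the claim is essentially a relabelled copy of Claim~\ref{bcstar}.
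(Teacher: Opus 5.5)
Your proposal is correct and matches the paper's proof: the paper handles both pairs uniformly by splitting on whether $|M\cap P|=1$, where the covering property shows that $S_{1,p}$ moves a set out of $\mathcal B$, or $P\subset M$, where $\mathcal B(u)\cap\mathcal B(v)=\emptyset$ rules out both $S_{1,u}$ and $S_{1,v}$ acting trivially, which is exactly your case analysis. Your only deviations are citing Claim~\ref{bcstar} for $\{b,c\}$ and using the already-established $d\in M$ to discard the case $e\in M$, $d\notin M$, and both shortcuts are legitimate.
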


\begin{proof}
We prove the assertion for a fixed pair
$P=\{u,v\}\in\{\{b,c\},\{d,e\}\}$,
where $u<v$. By the choice of the pairs $(b,c)$ and $(d,e)$, the shift
$S_{u,v}(\mathcal B)$ is a star. Hence, by Lemma \ref{12-22-1}, every
member of $\mathcal B$ intersects $P$, and $\mathcal B(u)\cap\mathcal B(v)=\emptyset.$
Moreover, by (P$_M$), we have
$M\cap P\ne\emptyset$.

First suppose that $M\cap P=\{p\}$. Let $q$ be the other element of
$P$. Since $q\notin M$, there exists $B\in\mathcal B(\bar1)$ such that
$q\notin B$. As $p\in M$, we have $p\in B$. Put $B':=(B\setminus\{p\})\cup\{1\}.$
Then $B'\cap P=\emptyset$. Since every member of $\mathcal B$ intersects
$P$, we have $B'\notin\mathcal B$. Thus $S_{1,p}$-shift acts
non-trivially on $\mathcal B$, and hence $S_{1,p}(\mathcal B)$ is a star by Assumption \ref{asm}.

It remains to consider the case $M\cap P=P$. Take any
$B\in\mathcal B(\bar1)$. Then $u,v\in B$. We claim that at least one of
$S_{1,u}$-shift and $S_{1,v}$-shift acts non-trivially on $\mathcal B$. Indeed,
if both acted trivially, then $B_u:=(B\setminus\{u\})\cup\{1\}\in\mathcal B$ and $B_v:=(B\setminus\{v\})\cup\{1\}\in\mathcal B.$ 
But then $B_u\setminus\{v\}=B_v\setminus\{u\}=(B\setminus\{u,v\})\cup\{1\}\in \mathcal B(v)\cap\mathcal B(u),$
contradicting $\mathcal B(u)\cap\mathcal B(v)=\emptyset$. Hence at least one of
$S_{1,u}$ and $S_{1,v}$ acts non-trivially on $\mathcal B$. Let
$p\in\{u,v\}$ be such an element. Since $M\cap P=P$, we have $p\in M$.
Again by Assumption \ref{asm}, $S_{1,p}(\mathcal B)$ is a star.

Thus, in all cases, there exists $p\in P\cap M$ such that $S_{1,p}(\mathcal B)$ is a star. This proves the claim.
\end{proof}

\begin{claim}\label{dnea}
$d\ne a$.
\end{claim}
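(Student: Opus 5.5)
We argue by contradiction: assume $d=a$. Then $S_{1,a}(\m B)$ and $S_{a,e}(\m B)$ are both stars (recall $d<e$). By Proposition \ref{1-21-2}, the center of $S_{1,a}(\m B)$ is $1$. By Lemma \ref{12-22-1} we get $\m B(1)\cap\m B(a)=\emptyset$, and also that every member of $\m B$ meets $\{a,e\}$ with $\m B(a)\cap\m B(e)=\emptyset$. The choice rule for $(d,e)$ says that a pair avoiding $a$ is used whenever one exists. So $d=a$ means no $S_{i,j}$ with $\{i,j\}\subset[2,n]\setminus\{a,b,c\}$ makes $\m B$ a star, and hence, by Assumption \ref{asm}, every such $S_{i,j}$ acts trivially on $\m B$. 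The plan is to use this ``locally shifted off $\{a,b,c\}$'' property to build a member of $\m B(1)$ that violates one of the disjointness conditions.

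First I pin down where $a$ sits relative to $\m B_1,\m B_2$. We have $M_1\cap\{a,e\}=\{x\}$ and $M_2\cap\{a,e\}=\{y\}$, with $\{x,y\}=\{a,e\}$. Since $\m B(1)\cap\m B(a)=\emptyset$ and $a\in M$, take any $B\in\m B(\bar1)$. Then $a\in B$, and $(B\setminus\{a\})\cup\{1\}\notin\m B$. By Claim \ref{bcdestar} applied to $P=\{d,e\}=\{a,e\}$, some $p\in\{a,e\}\cap M$ has $S_{1,p}(\m B)$ a star. I expect $p=a$, consistent with the minimality of $a$.

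Next I take a set $T_1=\{1,b\}\cup B_1$ with $B_1\in\m B_1$ and $B_1\cap\{a,e\}=\{x\}$. I then shift its elements outside $\{1,a,b,c,x\}$ downward using trivial shifts on $[2,n]\setminus\{a,b,c\}$, and compare with $T_2=\{1,c\}\cup B_2$. The goal is a set in $\m B[1]$ that misses $\{a,e\}$, or two members witnessing $\m B(a)\cap\m B(e)\ne\emptyset$, or $\m B(\{1,b\})\cap\m B(\{1,c\})\ne\emptyset$. Concretely: if $x=a$, consider $T_1'=(T_1\setminus\{a\})\cup\{e\}$. Its membership in $\m B$ is forbidden by (P$_{de}$), yet it should be forced in by the trivial $S_{a,e}$ action, unless the star condition intervenes. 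To resolve this I combine Claim \ref{1-30-1}, which gives $(B\setminus\{i\})\cup\{1\}\in\m B$ for $i\in B\setminus M$, with $\{a,b\}\subset M$ and $\gamma(\m B)\ge 2$. This should produce a member of $\m B(1)$ containing $b$, missing $c$, and containing neither or both of $a,e$, contradicting (P$_{de}$) or the identification of $M_1$.

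The main obstacle is the case analysis on whether $x=a$ or $y=a$, and on the relative order of $e$ with $b$ and $c$. In each case one must find an element of $B\setminus M$ that can be exchanged without disturbing the structure. I would first try the case $x=a$, using $\gamma(\m B)\ge2$ to pick $B\in\m B(\bar1)$ with some $i\in B\setminus(M\cup\{c,e\})$. If that fails, I would fall back on the counting bounds (\ref{2-7-3})--(\ref{2-7-1}) together with $a\in M_1$ or $a\in M_2$. These should give $|\m B|\le\binom{n-3}{k-3}+\binom{n-4}{k-3}$, contradicting (\ref{minAB}).
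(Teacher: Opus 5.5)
Your opening step is correct and matches the paper. Because $(d,e)$ is chosen to avoid $a$ whenever possible, $d=a$ together with Assumption~\ref{asm} forces every $S_{i,j}$ with $\{i,j\}\subset[2,n]\setminus\{a,b,c\}$ to act trivially on $\m B$. \textbf{After that, however, the proposal contains no working argument, and there is a genuine gap.}

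The one concrete step fails. Nothing forces $T_1'=(T_1\setminus\{a\})\cup\{e\}$ into $\m B$, for three reasons:
\begin{enumerate}
\item $S_{a,e}=S_{d,e}$ is by definition a shift that makes $\m B$ a star, so it acts non-trivially.
\item The pair $\{a,e\}$ is not contained in $[2,n]\setminus\{a,b,c\}$, so the local shiftedness says nothing about it.
\item Even a trivial $S_{a,e}$ would replace $e$ by $a$, not $a$ by $e$.
\end{enumerate}
Your stated target is also partly void. A member of $\m B(1)$ that contains $b$, misses $c$, and contains \emph{both} $a$ and $e$ contradicts neither (P$_{de}$) nor $M_1\cap\{a,e\}=\{a\}$. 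The counting fallback does not close either. Combining (\ref{2-7-3})--(\ref{2-7-1}), (\ref{6-5-1})--(\ref{6-5-2}), $a\in M_1$, and $\gamma(\m B)\le|\m B(\bar b)|=|\m B_2|$ still leaves a surplus of $\binom{n-5}{k-3}$ over the threshold in (\ref{minAB}). The information $a\in M_1$ only kills a term whose sum with its companion is already bounded by $\binom{n-5}{k-3}$.

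The missing idea is the paper's mechanism. Its contradiction is with $\gamma(\m B)\ge 2$, not with (P$_{de}$) or (\ref{minAB}).
\begin{itemize}
\item \textbf{Case $c\notin M$, so $b\in M$.} Let $R$ be the $k-2$ smallest elements of $[2,n]\setminus\{a,b,c\}$. Local shiftedness gives $\{a,b\}\cup R\in\m B(\bar 1)$. Since $|M|\le k-1$ and $M$ is an initial segment of $[2,n]$ by \eqref{M-degree-downward}, Claim~\ref{1-30-1} with $i=\max(R\setminus M)$ gives $R'=\{1,a,b\}\cup(R\setminus\{i\})\in\m B$.
\item Because $\m B(b)\cap\m B(c)=\emptyset$, we have $(R'\setminus\{b\})\cup\{c\}\notin\m B$. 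Local shiftedness then forces $\m B[\{1,a,c\}\overline{\{b\}}]=\emptyset$. Hence $e\in M_2$ and $\m B_2=\m B(\{1,c,e\}\overline{\{a,b\}})$.
\item This family has exactly one member. Suppose $B\ne B'$ both lie in $\m B[\{1,c,e\}\overline{\{a,b\}}]$ and take $j\in B\setminus B'$.
  \begin{itemize}
  \item If $j<e$, local shiftedness gives $(B'\setminus\{e\})\cup\{j\}\in\m B$, contradicting $e\in M_2$.
  \item If $j>e>a$, then $(B\setminus\{j\})\cup\{a\}\notin\m B$, so $S_{a,j}$ acts non-trivially and is therefore a star. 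Yet $B'$ misses $\{a,j\}$, a contradiction.
  \end{itemize}
\item Thus $\gamma(\m B)\le|\m B(\bar b)|=|\m B_2|=1$, contradicting $\gamma(\m B)\ge 2$.
\item \textbf{Case $c\in M$.} The argument is analogous. Apply Claim~\ref{bcdestar} to the pair $\{b,c\}$, not $\{d,e\}$, to get $p$ with $S_{1,p}(\m B)$ a star. This kills $\m B[\{1,a,q\}\overline{\{p\}}]$ and yields $|\m B_1|=1$ or $|\m B_2|=1$.
\end{itemize}
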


\begin{proof}
Suppose, for contradiction, that $d=a$. We first observe that 
\begin{equation}\label{b-shifted-2-n}
\text{$\m B$ is shifted on $[2,n]\setminus \{a,b,c\}$. }
\end{equation}
Indeed, if there were $i,j\in [2,n]\setminus \{a,b,c\}$ with $i<j$ such that
$S_{i,j}(\mathcal B)\ne\mathcal B$, then $S_{i,j}(\mathcal B)$ would be a star (by Assumption \ref{asm}), which would contradict the choice of $(d,e)$ (since otherwise, we would choose $(d,e)$ such that $a\not\in \{d,e\}$, contradicting $a=d$).

Consider the case $c\not\in M$. Since $M\cap \{b,c\}\ne \emptyset$ (by (P$_M$)), $b\in M$.
By Claim \ref{bcdestar}, $S_{1,b}(\m B)$ is a star. 
Let $R$ be the set of the $k-2$ smallest elements of $[2,n]\setminus \{a,b,c\}$. Then $\{a,b\}\cup R\in \m B(\bar1)$ by \eqref{b-shifted-2-n}. 
Since $\gamma(\m B)=|\m B(\bar1)|\ge 2$, $|M|\le k-1$.
Since $k\ge 3$, $R\ne\emptyset$. Thus, $R\setminus M\ne \emptyset$. Let $i:=\max (R\setminus M)$, and $R':=\{1,a,b\}\cup (R\setminus \{i\})$.
Consequently, $R\setminus\{i\}$ is the set of the $k-3$ smallest elements of $[2,n]\setminus \{a,b,c\}$.
By Claim \ref{1-30-1}, $R'\in \m B$. Note that $c\not\in R'$. So $R'\in \m B[\{1,a,b\}\overline{\{c\}}]$. Let $R'':=(R'\setminus \{b\}) \cup \{c\}$. Since $S_{b,c}(\m B)$ is a star, $\m B(b)\cap \m B(c)=\emptyset$. Thus, $R''\not\in \m B$.
We claim $\m B[\{1,a,c\}\overline{\{b\}}]=\emptyset$. Indeed, if not, then (\ref{b-shifted-2-n}) gives $R''\in \m B[\{1,a,c\}\overline{\{b\}}]$, contradicting $R''\not\in \m B$.
Note that $\m B(\{1,c\}\overline{\{b\}})=\m B_2\ne \emptyset$ and $M_2\cap \{d,e\}\ne \emptyset$. Thus, $e\in M_2$ (note that $a=d$) and 
$\m B[\{1,c,e\}\overline{\{b\}}]\ne \emptyset$. Moreover,  
\begin{equation}\label{6-12-1}
\m B_2=\m B(\{1,c,e\}\overline{\{a, b\}}),\,\,\m B[\{1,c,e\}\overline{\{a,b\}}]\ne \emptyset, \,\,    \m B[\{1,a,c,e\}\overline{\{b\}}]= \emptyset.  
\end{equation}

We claim $|\m B[\{1,c,e\}\overline{\{a,b\}}]|=1$. 
Suppose, for contradiction, that there exist two distinct sets $B, B'\in \m B[\{1,c,e\}\overline{\{a,b\}}]$. Then there is $j\in B\setminus B'$.
Let $B'':=(B'\setminus \{e\})\cup \{j\}$. Then $B''\cap \{b,c\}=\{c\}$ and $e\not\in B''$. Therefore $B''\not\in \m B$ by $e\in M_2$. This implies that if $j<e$, then (\ref{b-shifted-2-n}). Together with $j\not\in \{a,b,c\}$, yields $S_{j,e}(\m B)=\m B$, and hence  $B''\in \m B$, a contradiction. Thus $j>e$. Since $e>d=a$, $j>a$. 
Let $B''':=(B\setminus \{j\})\cup \{a\}$. Then $\{1,a,c,e\}\subset B'''$ and $b\not\in B'''$. 
By $B[\{1,a,c,e\}\overline{\{b\}}]= \emptyset$ (see (\ref{6-12-1})), $B'''\not\in \m B$. Thus $S_{a,j}(\m B)$ is a star (by the second condition of Assumption \ref{asm}). 
This implies that every set in $\m B$ intersects $\{a,j\}$ (by Lemma \ref{12-22-1}). However, $j\not\in B'$ and $a\not\in B'$, a contradiction. Thus, $|\m B[\{1,c,e\}\overline{\{a,b\}}]|=1$.
Note that $|\m B(\bar b)|=|\m B_2|$, moreover (\ref{6-12-1}) gives $|\m B_2|=|\m B(\{1,c,e\}\overline{\{a, b\}})|$. This implies that $\gamma(\m B)\le  |\m B(\bar b)|=|\m B_2|=1$, contradicting $\gamma(\m B)\ge 2$. This rules out the case $c\notin M$.

Consider the case $c\in M$. Then $\{a,b,c\}\subset M$. If $k=3$, then $|\m B(\bar1)|=1$, and hence $\gamma(\m B)=1$, contradicting $\gamma(\m B)\ge 2$. Thus $k\ge 4$.
Let $T$ be the set of the smallest $k-3$ elements of $[2,n]\setminus \{a,b,c\}$, and let $T':=\{a,b,c\}\cup T$. In view of (\ref{b-shifted-2-n}), $T'\in \m B$. 
By Claim \ref{bcdestar}, there is $p\in \{b,c\}$ such that $S_{1,p}(\m B)$ is a star. Let $\{q\}=\{b,c\}\setminus \{p\}$, and  $T'':=(T'\setminus \{p\})\cup \{1\}$. 
Since $S_{1,p}(\m B)$ is a star, by Lemma \ref{12-22-1}, $\m B(1)\cap \m B(p)=\emptyset$. Thus $T''\not\in \m B$. 
Note that $\{1,q,a\}\subset T''$, $p\not\in T''$, and $T''\setminus \{1,a,q\}$ is the set of the smallest $k-3$ elements of $[2,n]\setminus \{a,b,c\}$. Together with (\ref{b-shifted-2-n}),  we conclude that $\m B[\{1, a,q\}\overline{\{p\}}]=\emptyset$, since otherwise, $T''\in \m B$, a contradiction. 
If $p=b$, then $\m B[\{1,a,c\}\overline{\{b\}}]=\emptyset$ and $d=a$ implies (\ref{6-12-1}). Hence, by the same argument, we will get $\gamma(\m B)=1$, a contradiction.
If $p=c$, then $\m B[\{1,a,b\}\overline{\{c\}}]=\emptyset$ and $d=a$ implies
\begin{equation}\label{6-12-2}
\m B_1=\m B(\{1,b,e\}\overline{\{a, c\}}),\,\,\m B[\{1,b,e\}\overline{\{a,c\}}]\ne \emptyset, \,\,    \m B[\{1,a,b,e\}\overline{\{c\}}]= \emptyset.  
\end{equation}

Using the same argument as in the proof that $|\m B[\{1,c,e\}\overline{\{a,b\}}]|=1$, with the roles of $b$ and $c$ interchanged, we can get $|\m B[\{1,b,e\}\overline{\{a,c\}}]|=1$. Thus $|\m B_1|=1$, moreover (\ref{6-12-2}) gives $|\m B_1|=|\m B(\{1,b,e\}\overline{\{a, c\}})|$. This implies that $\gamma(\m B)\le  |\m B(\bar c)|=|\m B_1|=1$, contradicting $\gamma(\m B)\ge 2$. This rules out the case $c\in M$. Hence, $d\ne a$.
\end{proof}

Denote
\begin{align*}
\mathcal{F}_1 &:= \{F\in\mathcal{B}(1):\{b,c,x,y\}\subset F\}, &
\mathcal{F}_2 &:= \{F\in\mathcal{B}(1):\{b,x\}\subset F,\ \{c,y\}\cap F=\emptyset\}, \\[1pt]
\mathcal{F}_3 &:= \{F\in\mathcal{B}(1):\{b,c,x\}\subset F,\ y\notin F\}, &
\mathcal{F}_4 &:= \{F\in\mathcal{B}(1):\{b,x,y\}\subset F,\ c\notin F\}, \\[1pt]
\mathcal{F}_5 &:= \{F\in\mathcal{B}(1):\{b,c,y\}\subset F,\ x\notin F\}, &
\mathcal{F}_6 &:= \{F\in\mathcal{B}(1):\{c,x,y\}\subset F,\ b\notin F\}, \\[1pt]
\mathcal{F}_7 &:= \{F\in\mathcal{B}(1):\{c,y\}\subset F,\ \{b,x\}\cap F=\emptyset\}.
\end{align*}

By Claim \ref{2-16-1}, we have 
\begin{equation}\label{2-22-2}
|\m F_3|+|\m F_6|, \, |\m F_4|+|\m F_5| \le {n-5\choose k-4}.
\end{equation}

Note that every set in $\m B_3(\bar{x})$ contains $y$, and every set in $\m B_1(y)$ contains $x$; every set in $\m B_3(\bar{y})$ contains $x$, and every set in $\m B_2(x)$ contains $y$. We obtain
\begin{align}\label{6-11-1}
&|\m B_3(\bar{x})|=|\m B_3(\bar{x}y)|=|\m F_5|,\,\, |\m B_1(y)|=|\m B_1(xy)|=|\m F_4|,\\ \label{6-11-2}
&|\m B_3(\bar{y})|=|\m B_3(\bar{y}x)|=|\m F_3|,\,\, |\m B_2(x)|=|\m B_2(xy)|=|\m F_6|.
\end{align}

Combining (\ref{2-22-2}), (\ref{6-11-1}) and (\ref{6-11-2}), we have
\begin{align}\label{6-7}
&|\m B_3(\bar{x})|+|\m B_1(y)|=|\m F_5|+|\m F_4|\le {n-5\choose k-4},\\ \label{6-11-3}
&|\m B_3(\bar{y})|+|\m B_2(x)|=|\m F_3|+|\m F_6|\le {n-5\choose k-4}.
\end{align}

\begin{claim}\label{cnotinm}
 $c\not\in M$.
\end{claim}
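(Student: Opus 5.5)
We argue by contradiction: assume $c\in M$, and aim to bound $|\mathcal B|$ so sharply that it violates the Frankl--Wang lower bound \eqref{minAB}. No further structural analysis should be needed, since the facts established just before the claim already force $M$ to be large.

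The first step is to collect four distinct elements of $M$. We already know that $\{a,b,d\}\subset M$, which was derived from Claim~\ref{2-10-2} and \eqref{M-degree-downward}. With the assumption $c\in M$, we get $\{a,b,c,d\}\subset M$. These four elements are pairwise distinct:
\begin{itemize}
\item $b,c\notin\{1,a\}$ by the choice of the pair $(b,c)$;
\item $d\notin\{b,c\}$ by the choice of the pair $(d,e)\subset[2,n]\setminus\{b,c\}$;
\item $d\ne a$ by Claim~\ref{dnea};
\item $b\ne c$ trivially.
\end{itemize}
Hence $|M|\ge 4$.

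The second step bounds the diversity part. Every member of $\mathcal B(\bar1)$ is a $k$-subset of $[2,n]$ containing the four fixed elements $a,b,c,d$. Therefore
\[
\gamma(\mathcal B)=|\mathcal B(\bar1)|\le \binom{n-5}{k-4}.
\]
If $k\le 4$, this already forces $|\mathcal B(\bar1)|\le1$, contradicting $\gamma(\mathcal B)\ge2$. So we may take $k\ge5$, although the estimate below does not even need this.

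The third step bounds the degree part and combines the two estimates. By \eqref{6-17-2} we have $|\mathcal B(1)|\le\binom{n-3}{k-3}+\binom{n-5}{k-3}$. Adding the bound on $\mathcal B(\bar1)$ and using Pascal's rule $\binom{n-5}{k-3}+\binom{n-5}{k-4}=\binom{n-4}{k-3}$ gives
\[
|\mathcal B|=|\mathcal B(1)|+|\mathcal B(\bar1)|\le \binom{n-3}{k-3}+\binom{n-4}{k-3}.
\]
This contradicts \eqref{minAB}, which states $|\mathcal B|>\binom{n-3}{k-3}+\binom{n-4}{k-3}$. Hence $c\notin M$.

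The only delicate point is the distinctness of $a,b,c,d$, in particular $d\ne a$; that is exactly what Claim~\ref{dnea} was proved for. If distinctness failed, $\mathcal B(\bar1)$ would be forced to contain only three fixed elements, giving the weaker bound $|\mathcal B(\bar1)|\le\binom{n-4}{k-3}$. In that case the sum would exceed the threshold in \eqref{minAB}, and the refined estimates \eqref{6-7}--\eqref{6-11-3} would have to be brought in instead.
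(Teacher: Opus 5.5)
Your proof is correct, and it takes a genuinely different, more economical route than the paper's. After assuming $c\in M$, the paper uses $\mathcal B(1)\cap\mathcal B(a)=\emptyset$ (coming from $S_{1,a}(\mathcal B)$ being a star) to couple $\mathcal B(\bar1)$ with $\mathcal B_3(d)$, obtaining $|\mathcal B(\bar1)|+|\mathcal B_3(d)|\le\binom{n-4}{k-4}$. It then splits into the cases $x=d$ and $y=d$. In each case it invokes \eqref{6-7} or \eqref{6-11-3}, which rest on the $M_1,M_2$ structure from Claims~\ref{2-7-5} and~\ref{2-16-1}, together with \eqref{2-7-3} or \eqref{2-7-4}. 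This yields $|\mathcal B|\le\binom{n-3}{k-3}+\binom{n-4}{k-3}$.

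You combine two simpler bounds instead: the trivial bound $|\mathcal B(\bar1)|\le\binom{n-5}{k-4}$ from the four distinct fixed elements $a,b,c,d$, and the general estimate \eqref{6-17-2} on $|\mathcal B(1)|$. That estimate was established before any of the $M_1,M_2$ analysis and before the case $\gamma(\mathcal B)=1$ was treated. Pascal's rule then gives exactly the same threshold. Your version needs neither the coupling through $S_{1,a}$, nor the case split on $x,y$, nor \eqref{6-7}--\eqref{6-11-3}. For this claim the paper's decomposition reaches the same bound with strictly more machinery.

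One correction to your closing remark. The paper's route depends on the distinctness of $a,b,c,d$ just as much as yours does: its coupling bound treats $\mathcal B(\bar1)(\{a,b,c,d\})$ as a family of $(k-4)$-sets. So the refined estimates would not rescue the argument if Claim~\ref{dnea} failed; both proofs rest on it equally.
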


\begin{proof}
Suppose, for contradiction, that $c\in M$. Then $\{a,b,c,d\}\subset M$. 
Since $S_{1,a}(\m B)$ is a star, $\m B(1)\cap \m B(a)=\emptyset$ (by Lemma \ref{12-22-1}), and hence 
\[
\{B\setminus \{1,b,c,d\}: B\in \m B, \{1,b,c,d\}\subset B\}\cap \{B\setminus \{a,b,c,d\}: B\in \m B(\bar1), \{a,b,c,d\}\subset B\}=\emptyset,
\]
which implies
\begin{equation}\label{6-6}
    |\m B(\bar{1})|+|\m B_3(d)|=|\m B(\bar{1})(\{a,b,c,d\})|+|\m B_3(d)| \le {n-4\choose k-4}.
\end{equation}

Note that $|\m B(1)|=|\m B_1|+|\m B_2|+|\m B_3|$, 
$|\m B_1(\bar y)|\le {n-5\choose k-3}$, $|\m B_2(\bar x)|\le {n-5\choose k-3}$, $|\m B_1|\le {n-4\choose k-3}$, and $|\m B_2|\le {n-4\choose k-3}$.
If $x=d$, then using (\ref{6-6})
together with (\ref{6-7}), this yields
\begin{align*}
    |\m B|&=|\m B(\bar{1})|+|\m B_3(d)|+|\m B_3(\bar{x})|+|\m B_1(y)|+|\m B_1(\bar{y})|+|\m B_2|\\
    &\le {n-4\choose k-4}+{n-5\choose k-4}+{n-5\choose k-3}+{n-4\choose k-3}\\
    &={n-3\choose k-3}+{n-4\choose k-3},
\end{align*}
contradicting (\ref{minAB}).
If $y=d$, then using (\ref{6-6}),
together with (\ref{6-11-3}), this yields 
\begin{align*}
    |\m B|&=|\m B(\bar{1})|+|\m B_3(d)|+|\m B_3(\bar{y})|+|\m B_2(x)|+|\m B_2(\bar{x})|+|\m B_1|\\
    &\le {n-4\choose k-4}+{n-5\choose k-4}+{n-5\choose k-3}+{n-4\choose k-3}\\
    &={n-3\choose k-3}+{n-4\choose k-3},
\end{align*}
contradicting (\ref{minAB}) again.
Thus $c\not\in M$. 
\end{proof}

Since $d\ne a$ (by Claim \ref{dnea}) and $\{a,b,d\}\subset M$, if $k= 3$, then $\m B(\bar1)=\{M\}$, and hence $\gamma(\m B)=1$, contradicting the assumption $\gamma(\m B)\ge 2$.
Then $k\ge 4$.
Put 
\begin{align*}
    &\m G_1:=\{G\in \m B(\bar{1}):c\in G, y\in G\},\,\,\,\,\,\,\,\,\,\,\,\,\,\,\,\,\,\,\m G_2:=\{G\in \m B(\bar{1}):c\in G, y\not\in G\},\\
&\m G_3:=\{G\in \m B(\bar{1}):c\not\in G, y\in G\},\,\,\,\,\,\,\,\,\,\,\,\,\,\,\,\,\,\,\m G_4:=\{G\in \m B(\bar{1}):c\not\in G, y\not\in G\}.
\end{align*}

By Claims \ref{bcdestar} and \ref{cnotinm}, $S_{1,b}(\m B)$ is a star. Then $a<b$ by the minimal choice of $a$. 

\begin{claim}
$e\ne a$. 
\end{claim}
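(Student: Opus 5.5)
The plan is to argue by contradiction: assume $e=a$, so that $\{d,e\}=\{d,a\}$ with $d<e=a<b$. I would then reproduce the mechanism of Claim~\ref{dnea}, with the roles of $d$ and $e$ interchanged, together with one extra tool. The extra tool is the minimality of $a$: it forbids $S_{1,d}(\mathcal B)$ from being a star, because $d<a$.

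\textbf{Step 1 (elementary consequences of $e=a$).} Since $d\in M$ and $d<a$, minimality of $a$ gives that $S_{1,d}(\mathcal B)$ is not a star. By Assumption~\ref{asm} this forces $S_{1,d}(\mathcal B)=\mathcal B$. Hence for every $B\in\mathcal B(\bar1)$ the set $(B\setminus\{d\})\cup\{1\}$ lies in $\mathcal B$.

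Moreover, $e=a\in\{d,e\}$ means no admissible pair avoids $a$. Exactly as in \eqref{b-shifted-2-n}, this yields that $\mathcal B$ is shifted on $[2,n]\setminus\{a,b,c\}$.

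Finally, Lemma~\ref{12-22-1} applied to the stars $S_{1,a}(\mathcal B)$ and $S_{d,a}(\mathcal B)$ gives
\[
\mathcal B(1)\cap\mathcal B(a)=\emptyset\quad\text{and}\quad\mathcal B(d)\cap\mathcal B(a)=\emptyset .
\]

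\textbf{Step 2 (a first contradiction attempt).} Fix $B_0\in\mathcal B(\bar1)$ and put $H:=B_0\setminus\{a,d\}$; this is possible since $a,d\in M$. Step 1 gives $H\cup\{1,a\}\in\mathcal B$, and $H\cup\{a,d\}=B_0\in\mathcal B$. So if $H\cup\{1,d\}\in\mathcal B$, then $H\cup\{d\}\in\mathcal B(1)\cap\mathcal B(a)$, a contradiction.

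Thus $H\cup\{1,d\}\notin\mathcal B$ for every such $H$. I would then use the shiftedness on $[2,n]\setminus\{a,b,c\}$, together with $c\notin M$ (Claim~\ref{cnotinm}) and Claim~\ref{1-30-1}, to produce a member of $\mathcal B(\{1,d\}\overline{\{a\}})$ that pushes down to such an $H\cup\{1,d\}$. Concretely: take $H$ minimal, and replace the non-$M$ elements by smaller ones via shifts inside $[2,n]\setminus\{a,b,c\}$. This would give the contradiction. The pair $(M_1,M_2)$ contains $x,y$ with $\{x,y\}=\{d,a\}$, which guarantees that $\mathcal B(\{1,d\}\overline{\{a\}})\ne\emptyset$.

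\textbf{Step 3 (fallback via the argument of Claim~\ref{dnea}).} If the direct route in Step 2 stalls, I would mirror Claim~\ref{dnea}. Split according to whether $x=d$ or $y=d$, i.e.\ whether $d\in M_1$ or $d\in M_2$. In each case show, using $\mathcal B(1)\cap\mathcal B(a)=\emptyset$ and shiftedness off $\{a,b,c\}$, that the relevant section $\mathcal B_1$ or $\mathcal B_2$ reduces to $\mathcal B(\{1,\ast,d\}\overline{\{a,\ast\}})$ and has exactly one member. For the latter, two distinct members would give some $j$ with either $S_{j,d}$ acting non-trivially off $\{a,b,c\}$ (impossible) or $S_{a,j}(\mathcal B)$ a star (contradicting that a member avoids $\{a,j\}$). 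This yields $\gamma(\mathcal B)\le|\mathcal B(\bar b)|$ or $\gamma(\mathcal B)\le|\mathcal B(\bar c)|$, each at most $1$, contradicting $\gamma(\mathcal B)\ge2$.

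\textbf{Main obstacle.} The key difficulty is the uniqueness step in Step 3. In Claim~\ref{dnea} it used $e>d=a$ to get $j>a$. Now the order is reversed ($d<a$), so I expect to need $a<b$ and the minimality of $a$, rather than the ordering of $d,e$, to decide the direction of the relevant shift.
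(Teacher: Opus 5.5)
Your Step 1 is correct. The push-down idea of Step 2 does settle the sub-case $x=d$:
\begin{itemize}
\item there $a\notin M_1$ by Claim \ref{2-7-5}, so some member of $\mathcal B_1$ gives a set of $\mathcal B$ containing $\{1,b,d\}$ and avoiding $\{a,c\}$;
\item some $B_0\in\mathcal B(\bar1)$ avoids $c$ by Claim \ref{cnotinm};
\item shifting both inside $[2,n]\setminus\{a,b,c,d\}$ puts $\{b,d\}\cup R_0$ into $\mathcal B(1)\cap\mathcal B(a)$, a contradiction.
\end{itemize}

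The sub-case $x=a$, $y=d$ is a genuine gap. There every member of $\mathcal B_1$ contains $a$, so every member of $\mathcal B(\{1,d\}\overline{\{a\}})$ contains $c$, and those coming from $\mathcal B_2$ miss $b$. On the other hand, every $H=B_0\setminus\{a,d\}$ contains $b\in M$. Shifts inside $[2,n]\setminus\{a,b,c\}$ never change whether $b$ or $c$ is present. So you would need a member of $\mathcal B$ containing $\{1,b,c,d\}$ and missing $a$, together with some $B_0\ni c$. Neither has been established at this point, and the nonemptiness of $\mathcal B(\{1,d\}\overline{\{a\}})$ does not help.

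Your fallback also fails in exactly this sub-case. When $d<j<a$, the set $(B\setminus\{j\})\cup\{a\}$ replaces $j$ by a larger element. Its absence from $\mathcal B$ therefore witnesses no non-trivial shift. The minimality of $a$ only tells you that $S_{1,j}$ acts trivially for $2\le j<a$, which says nothing here. The alternative bound through $\mathcal B(\bar c)$ used in Claim \ref{dnea} is also unavailable now. Since $c\notin M$, you get $|\mathcal B(\bar c)|\ge|\mathcal B_1|+|\mathcal B(\bar1\bar c)|\ge 2$.

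The missing ingredient is the lexicographic minimality of $(b,c)$, and it finishes both sub-cases at once. If $e=a$, then $d<e=a<b<c$.
\begin{itemize}
\item If $x=d$, Claim \ref{2-16-1} with $S=\{c,x\}$ makes $S_{d,c}(\mathcal B)$ a star.
\item If $y=d$, Claim \ref{2-16-1} with $S=\{b,y\}$ makes $S_{d,b}(\mathcal B)$ a star.
\end{itemize}
In either case the pair is disjoint from $\{1,a\}$ and precedes $(b,c)$ lexicographically because $d<b$. This contradicts the choice of $(b,c)$, and it is exactly the paper's two-line proof.
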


\begin{proof}
Suppose, for contradiction, that $e=a$. Since $a<b$, 
$d<e=a<b<c$. 
Suppose $x=d$. Then $x=d<c$. Applying Claim \ref{2-16-1} with $S=\{c,x\}$, we see that $S_{d,c}(\m B)$ is a star. However, $a\ne d<b$, contradicting the minimal choice of $(b,c)$. 
Suppose $x=e$. Then $y=d$ and $y<b$. Applying Claim \ref{2-16-1} with $S=\{b,y\}$, we see that $S_{y,b}(\m B)$ is a star. However, $a\ne y<b<c$, contradicting the minimal choice of $(b,c)$. 
\end{proof}

We show that $x=d$ and $y=e.$ 
By Claim \ref{cnotinm}, $c\not\in M$. Since $d\in M$ and $M$ consists of the first $|M|$ elements of $[2,n]$, $d<c$. 
Suppose $x=e$. Then $y=d$. Applying Claim \ref{2-16-1} with $S=\{b,y\}$, we see that $S_{y,b}(\m B)$ is a star if $y=d<b$, or 
$S_{b,y}(\m B)$ is a star if $y=d>b$. However, $a\ne y=d<c$, contradicting the minimal choice of $(b,c)$. This proves $x\ne e$. Thus $x=d$ and $y=e$, as claimed.  

Since $e\ne a$ and $\m B(1)\cap \m B(a)=\emptyset$, $|\m F_1|+|\m G_1|\le {n-5\choose k-5}$ and $|\m F_2|+|\m G_4|\le {n-5\choose k-3}$.
Clearly, $|\m F_7|\le {n-5\choose k-3}$, $|\m G_2|+|\m G_3|\le 2 {n-6\choose k-4}$ (note that every member of $\mathcal G_2$ contains $\{a,b,c,d\}$, avoids
$\{1,e\}$, and every member of $\mathcal G_3$ contains
$\{a,b,d,e\}$, avoids $\{1,c\}$.).
By (\ref{2-22-2}) and (\ref{minAB}),  
\begin{equation}\label{2-20-1}
   {n-3\choose k-3}+{n-4\choose k-3}+1 \le |\m B|=\sum_{i\in [7]}|\m F_i|+\sum_{i\in [4]}|\m G_i|\le 2{n-4\choose k-3}+{n-5\choose k-5}+2{n-6\choose k-4}.
\end{equation}
Then
\begin{equation}\label{6-6-1}
2{n-4\choose k-3}+{n-5\choose k-5}+2{n-6\choose k-4}-|\m B|< {n-6\choose k-4}-{n-6\choose k-5}.
\end{equation}
The upper bound in \eqref{2-20-1} is obtained by adding the six estimates: $|\m F_1|+|\m G_1|\le {n-5\choose k-5}$, $|\m F_2|+|\m G_4|\le {n-5\choose k-3}$, $|\m F_7|\le {n-5\choose k-3}$, $|\m G_2|+|\m G_3|\le 2 {n-6\choose k-4}$, $|\m F_3|+|\m F_6|\le {n-5\choose k-4}$, and $|\m F_4|+|\m F_5| \le {n-5\choose k-4}$. 
Put
\[
D:={n-6\choose k-4}-{n-6\choose k-5}.
\]
By \eqref{6-6-1}, the total deficit from these six upper bounds is less
than $D$. Hence no single one of these six estimates can have deficit at
least $D$.
This implies the following claim. 
\begin{claim}\label{6-6-2}
(i) $\m G_2\ne \emptyset$, $\m G_3\ne \emptyset$, and $|\m G_2|+|\m G_3|> 2 {n-7\choose k-5}$;

(ii) $ |\m F_7|>{n-5\choose k-3}-\left({n-6\choose k-4}-{n-6\choose k-5}\right)={n-6\choose k-3}+{n-6\choose k-5}$.
\end{claim}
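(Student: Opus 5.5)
The claim follows from a deficit-counting argument built on \eqref{2-20-1} and \eqref{6-6-1}. Write the upper bound in \eqref{2-20-1} as the sum of the six estimates listed after \eqref{6-6-1}. For each estimate, its \emph{deficit} is the upper bound minus the actual left-hand side. Every deficit is nonnegative, and by \eqref{6-6-1} the sum of the six deficits is less than $D={n-6\choose k-4}-{n-6\choose k-5}$. So each individual deficit is less than $D$, and the plan is to read off (i) and (ii) from this.

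For (ii), the estimate $|\m F_7|\le{n-5\choose k-3}$ must have deficit less than $D$, so
\[
|\m F_7|>{n-5\choose k-3}-{n-6\choose k-4}+{n-6\choose k-5}.
\]
Pascal's identity gives ${n-5\choose k-3}-{n-6\choose k-4}={n-6\choose k-3}$, which is the stated bound.

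For (i), apply the same reasoning to the estimate $|\m G_2|+|\m G_3|\le 2{n-6\choose k-4}$. Its deficit is less than $D$, so
\[
|\m G_2|+|\m G_3|>2{n-6\choose k-4}-{n-6\choose k-4}+{n-6\choose k-5}={n-6\choose k-4}+{n-6\choose k-5}.
\]
To reach the form in the claim, write ${n-6\choose k-4}={n-7\choose k-4}+{n-7\choose k-5}$ and ${n-6\choose k-5}={n-7\choose k-5}+{n-7\choose k-6}$. This gives the lower bound ${n-7\choose k-4}+2{n-7\choose k-5}+{n-7\choose k-6}\ge 2{n-7\choose k-5}$.

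The main point needing care is nonemptiness of each of $\m G_2$ and $\m G_3$ separately. Each family is individually bounded by ${n-6\choose k-4}$: members of $\m G_2$ contain $\{a,b,c,d\}$ and avoid $\{1,e\}$, and members of $\m G_3$ contain $\{a,b,d,e\}$ and avoid $\{1,c\}$. If $\m G_2=\emptyset$, then $|\m G_2|+|\m G_3|\le{n-6\choose k-4}$, so the deficit of that estimate is at least ${n-6\choose k-4}\ge D$, contradicting the bound above. The same argument applies if $\m G_3=\emptyset$.

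The only routine checks are that $D>0$ (or the claim is vacuous) and the binomial identities. For $k=4$ one has ${n-7\choose k-5}=0$, so the bound in (i) reduces to nonemptiness, which is handled by the argument above.
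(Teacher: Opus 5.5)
Your proposal is correct and uses the same deficit-counting argument as the paper: the six estimates sum exactly to the upper bound in \eqref{2-20-1}, so by \eqref{6-6-1} each individual deficit is below $D$. This yields (ii) via Pascal's identity, and yields (i), with nonemptiness of $\m G_2$ and $\m G_3$ coming from ${n-6\choose k-4}\ge D$. The only cosmetic difference is in the bound of (i): you first derive $|\m G_2|+|\m G_3|>{n-6\choose k-4}+{n-6\choose k-5}\ge 2{n-7\choose k-5}$, whereas the paper checks directly that the deficit $2{n-6\choose k-4}-2{n-7\choose k-5}$ would exceed $D$.
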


\begin{proof}
If $\mathcal G_2=\emptyset$ or $\mathcal G_3=\emptyset$, then the deficit
in the estimate $|\mathcal G_2|+|\mathcal G_3|\le 2{n-6\choose k-4}$ is at least ${n-6\choose k-4}\ge D$, a contradiction. Thus
$\mathcal G_2,\mathcal G_3\ne\emptyset$.
Similarly, if $|\mathcal G_2|+|\mathcal G_3|\le 2{n-7\choose k-5},$ then the same estimate has deficit at least $2{n-6\choose k-4}-2{n-7\choose k-5}>D,$
again a contradiction. We get (i).
If $|\mathcal F_7|\le {n-6\choose k-3}+{n-6\choose k-5},$
then the deficit in $|\mathcal F_7|\le {n-5\choose k-3}$
is at least ${n-5\choose k-3}-{n-6\choose k-3}-{n-6\choose k-5}
=
{n-6\choose k-4}-{n-6\choose k-5}
=D,$
contradicting \eqref{6-6-1}. This proves (ii).
\end{proof}

We claim $M=\{a,b,d\}$. Suppose not, and choose 
$z\in M\setminus\{a,b,d\}.$
Since $c\notin M$, we have $z\ne c$. If $z=e$, then every member of
$\mathcal B(\bar1)$ contains $e$, and hence $\mathcal G_2=\emptyset$,
contradicting Claim \ref{6-6-2}(i).
Thus $z\notin\{c,e\}$. Then every member of $\mathcal G_2\cup\mathcal G_3$
contains $z$ in addition to $\{a,b,d\}$ and one of $c,e$. Therefore $|\mathcal G_2|+|\mathcal G_3|\le 2{n-7\choose k-5}$,
contradicting Claim \ref{6-6-2}(i). Hence $M=\{a,b,d\}$.

Consequently, $M\cap\{b,c\}=\{b\}$ and $M\cap\{d,e\}=\{d\}$. By Claim \ref{bcdestar}, both $S_{1,b}(\m B)$ and  $S_{1,d}(\m B)$ are stars.
By the minimal choice of $a$, we get 
\[
a<b<c, \,\, a<d.
\]

\begin{claim}\label{2-27-3}
 If $k= 4$, then $(\m A, \m B)$ is isomorphic to $(\m D_1, \m D_2)$, and $|\m A||\m B|<h(n,k)^2$. 
\end{claim}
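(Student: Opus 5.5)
The plan is to exploit the fact that for $k=4$ the slack in \eqref{2-20-1} is essentially zero, so that every one of the six estimates summed there must be tight. For $k=4$ we have ${n-5\choose k-5}=0$, ${n-6\choose k-5}=0$, and hence $D={n-6\choose 0}-0=1$. Thus \eqref{6-6-1} says that the total deficit of the six upper bounds is $<1$, i.e.\ equal to $0$. Concretely, we will record:
\begin{itemize}
\item $\m F_1=\m G_1=\emptyset$;
\item $|\m F_2|+|\m G_4|={n-5\choose 1}$;
\item $|\m F_7|={n-5\choose 1}$;
\item $|\m G_2|+|\m G_3|=2$;
\item $|\m F_3|+|\m F_6|=|\m F_4|+|\m F_5|=1$.
\end{itemize}
Moreover, \eqref{2-20-1} then holds with equality, $|\m B|=2(n-3)$, and in fact $|\m B|\le 2(n-3)$ is forced. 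Since $M=\{a,b,d\}$, $x=d$ and $y=e$, the members of $\m B(\bar 1)$ are exactly the $4$-sets $\{a,b,d,w\}$ lying in $\m B$. We have $\m G_2=\{\{a,b,c,d\}\}$ and $\m G_3=\{\{a,b,d,e\}\}$. Tightness of $|\m F_7|$ gives $\{1,c,e,w\}\in\m B$ for every $w\notin\{1,b,c,d,e\}$.

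Next I will pin down the remaining pieces of $\m B$, using $\m B(1)\cap\m B(a)=\emptyset$ and the star properties (P$_{bc}$), (P$_{de}$). The tight pair $|\m F_2|+|\m G_4|=n-5$ splits the $n-5$ candidates $w$ between $\{1,b,d,w\}$ and $\{a,b,d,w\}$, one for each $w$, and the tight singletons $\m F_3\cup\m F_6$ and $\m F_4\cup\m F_5$ each contribute exactly one set. The target structure is
\[
\m B=\{B:\{p,q,r\}\subset B\}\cup\{B:\{s,t,u\}\subset B\}
\]
for two disjoint triples. I expect these to be $\{a,b,d\}$ and $\{1,c,e\}$, with the roles of $1$ and $a$ exchangeable on the $\{b,d,w\}$-sets. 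The natural way to force this is to show, via Claim~\ref{1-30-1} and the minimality choices of $a$, $(b,c)$, $(d,e)$, that either all $\{1,b,d,w\}$ or all $\{a,b,d,w\}$ occur. From there one matches the triples with $\{2,3,4\}$ and $\{1,5,6\}$ of $\m D_2$ through a suitable relabeling, so that $\m B\cong\m D_2$.

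Once $\m B\cong\m D_2$, maximality of $|\m A||\m B|$ forces $\m A$ to be the largest family cross-intersecting $\m B$. A $4$-set meets every set containing $\{2,3,4\}$ and every set containing $\{1,5,6\}$ if and only if it meets both triples, and it must in fact contain an element from each triple, since it has to meet $\{2,3,4,w\}$ for every $w$. This is exactly the family $\m D_1$, so $(\m A,\m B)\cong(\m D_1,\m D_2)$, and \eqref{2-27-4} gives $|\m A||\m B|<h(n,4)^2$. Note that $n\ge 2k=8$ holds, so \eqref{2-27-4} applies.

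The main obstacle is the middle step: converting the numerical tightness into the exact set structure of $\m B$. In particular, one must exclude a mixed distribution of the $w$'s between $\m F_2$ and $\m G_4$, and locate the single sets in $\m F_3\cup\m F_6$ and $\m F_4\cup\m F_5$. For this I would first try the shift-star dichotomy of Assumption~\ref{asm} applied to $S_{1,a}$-type and $S_{i,j}$-type shifts, exactly as in the proof of Claim~\ref{dnea}: any mixing should create a non-trivial shift whose image is not a star, which is a contradiction.
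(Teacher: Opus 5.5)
Your tightness step matches the paper: for $k=4$ the two sides of \eqref{2-20-1} coincide, so all six estimates are tight. Your final step is also the paper's: maximality makes $\mathcal A$ the family of all $4$-sets meeting both triples, i.e.\ $\mathcal D_1$, and then \eqref{2-27-4} applies. The gap is the middle step, which carries the whole content of the claim and which you leave as ``I would first try''.

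The plan you sketch for that step also targets the wrong statement. You propose to show that either all $\{1,b,d,w\}$ or all $\{a,b,d,w\}$ lie in $\mathcal B$, and then to relabel, treating $1$ and $a$ as interchangeable. They are not interchangeable. In the first branch $\mathcal G_4=\emptyset$, so $\mathcal B(\bar1)=\{\{a,b,c,d\},\{a,b,d,e\}\}$. Then $\mathcal B(1)$ contains both the sets $\{c,e,w\}$ from $\mathcal F_7$ and the sets $\{b,d,w\}$ from $\mathcal F_2$, and $|\mathcal B(1)|=(2n-6)-2=2n-8>n-2=\Delta(\mathcal D_2)$. That family is not isomorphic to $\mathcal D_2$ under any relabeling, so it must be \emph{ruled out}; excluding ``mixing'' is not enough. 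What you need is $\{a,b,d,w\}\in\mathcal B$ for every $w\notin\{1,a,b,c,d,e\}$.

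The missing steps are short.
\begin{enumerate}
\item Since $\{a,b,c,d\}\in\mathcal G_2$ and $c\notin M$, Claim~\ref{1-30-1} gives $\{1,a,b,d\}\in\mathcal B$.
\item Since $\{a,b,c,d\},\{a,b,d,e\}\in\mathcal B$ and $\mathcal B(1)\cap\mathcal B(a)=\emptyset$, we get $\{1,b,c,d\},\{1,b,d,e\}\notin\mathcal B$, i.e.\ $\mathcal F_3=\mathcal F_4=\emptyset$. Tightness then forces $\{1,b,c,e\},\{1,c,d,e\}\in\mathcal B$, so every $4$-set containing $\{1,c,e\}$ lies in $\mathcal B$.
\item Suppose $\{a,b,d,i\}\notin\mathcal B$ for some $i\in[2,n]\setminus\{a,b,d\}$. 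L-initiality of the section $\mathcal B_M$ (Claim~\ref{B-sections-lex}) gives $i>c$. Tightness of $|\mathcal F_2|+|\mathcal G_4|$ gives $\{1,b,d,i\}\in\mathcal B$. L-initiality of the section $\mathcal B_{\{1,b,d\}}$ then forces $\{1,b,c,d\}\in\mathcal B$, a contradiction. This is the paper's route.
\end{enumerate}

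Your shift idea can also do step 3, but only with the witness $\{1,a,b,d\}$. This set misses $\{c,i\}$, so by Lemma~\ref{12-22-1} neither $S_{c,i}(\mathcal B)$ nor $S_{i,c}(\mathcal B)$ is a star, and by Assumption~\ref{asm} the shift must act trivially. But $\{1,b,d,i\}\mapsto\{1,b,c,d\}$ (if $c<i$) or $\{a,b,c,d\}\mapsto\{a,b,d,i\}$ (if $i<c$) shows that it does not.

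Once every $\{a,b,d,i\}$ is in $\mathcal B$, the disjointness $\mathcal B(1)\cap\mathcal B(a)=\emptyset$ removes every $\{1,b,d,i\}$ with $i\neq a$. Hence $\mathcal B=\{B:\{1,c,e\}\subset B\}\cup\{B:\{a,b,d\}\subset B\}$, and your final step finishes the proof.
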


\begin{proof}
Substituting $k=4$ into inequality (\ref{2-20-1}) shows that
$|\mathcal{B}|$ exactly attains the upper bound.
This forces $|\m F_2|+|\m G_4|={n-5\choose k-3}$, $|\m F_7|={n-5\choose k-3}$, and $|\m F_3|+|\m F_6|=|\m F_4|+|\m F_5|= {n-5\choose k-4}$ (by (\ref{2-22-2})). 
Note that $M=\{a,b,d\}$ and $a\ne d$. 
Since $\m G_2\ne \emptyset$ and $\m G_3\ne \emptyset$ (by Claim \ref{6-6-2}(i)), $\{a,b,c,d\}$ and $\{a,b,d,e\}$ are in $\m B(\bar{1})$. 
By Claim \ref{1-30-1}, $\{1,a,b,d\}\in \m B$ since $c\not\in M$. 
Since $S_{1,a}(\m B)$ is a star, $\{1,b,c,d\}$ and $\{1,b,d,e\}$ are not in $\m B$, i.e. $\m F_3=\m F_4=\emptyset$. 
So $|\m F_5|+|\m F_6|+|\m F_7|=2{n-5\choose k-4}+{n-5\choose k-3}={n-3\choose k-3}$. Thus, 
\begin{equation}\label{1ce}
\text{All $4$-sets $B$ with $\{1,c,e\}\subset B\subset [n]$ belong to $\m B$. }
\end{equation}

We show that 
\begin{equation}\label{4-bar1-full}
\m B(\bar{1})=\{\{a,b,d\}\cup \{i\}: i\in [n]\setminus \{a,b,d\}\}.
\end{equation}

Suppose, for contradiction, that there exists a set $\{a,b,d,i\}\not\in \m B$ for some $i\in [n]\setminus \{a,b,d\}$. 
Since $\{1,a,b,d\}\in \m B$, $i\in [2,n]\setminus \{a,b,d\}$.
Recall that $M=\{a,b,d\}$ and $\{a,b,c,d\}\in\m B(\bar{1})$. 
Then $\m B(1)\cap \m B(a)=\emptyset$ (since $S_{1,a}(\m B)$ is a star) gives $\{1,b,c,d\}\not\in \m B$.
Since $\m B(\bar1)(M)$ is L-initial on $[2,n]\setminus M$ (by Claim \ref{B-sections-lex}), we have $i>c$. 
As $|\m F_2|+|\m G_4|={n-5\choose k-3}$, the assumption $\{a,b,d,i\}\not\in \m B(\bar{1})$ forces $\{1,b,d,i\}\in \m B$. 
Applying Claim \ref{B-sections-lex} with $D=\{b,d\}$, since $M=\{a,b,d\}$ and $i>c$, $\{1,b,d,i\}\in \m B$ implies $\{1,b,c,d\}\in \m B$, a contradiction. This proves (\ref{4-bar1-full}).

We claim $\m B[\{1,b,d\}\overline{\{a\}}]=\emptyset$. Indeed, for any $i\in [2,n]\setminus \{a,b,d\}$, by (\ref{4-bar1-full}), we have $\{a,b,d,i\}\in \m B$, and hence $\{1,b,d,i\}\not\in \m B$ (by $\m B(1)\cap \m B(a)=\emptyset$), so 
$\m B[\{1,b,d\}\overline{\{a\}}]=\emptyset$, as claimed. 
Thus, 
\[
\m B[1]=\{\{1,a,b,d\}\}\cup \m B[\{1,c,e\}].
\]
Combining this with (\ref{1ce}) and (\ref{4-bar1-full}),
we conclude that
\[
\m B=\left\{B\in {[n]\choose 4}: \{1,c,e\}\subset B\right\}\cup \left\{B\in {[n]\choose 4}: \{a,b,d\}\subset B\right\}.
\]
Write $\m C:=\{C: |C\cap \{a,b,d\}|=|C\cap \{1,c,e\}|=1, |C|=2 \}$.
Since $n\ge2k$, combining (\ref{1ce}) and (\ref{4-bar1-full}), the maximal choice of $(\m A, \m B)$ gives
$$\m A=\left\{A\in {[n]\choose 4}: \exists \, C\in \m C \,s.t.\, C \subset A\right\},$$
so $(\m A, \m B)\cong(\m D_1, \m D_2)$.
By (\ref{2-27-4}), we get $|\m A||\m B|<h(n,k)^2$.
\end{proof}

We may next assume $k\ge 5$. 

\begin{claim}\label{ce-min}
$\min\{c,e\}=\min([2,n]\setminus M).$
\end{claim}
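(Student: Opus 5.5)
Write $Y=[2,n]\setminus M$ as before, with $M=\{a,b,d\}$. The plan is to argue by contradiction: set $m:=\min Y$ and suppose $m\notin\{c,e\}$. Then I exhibit a member of $\mathcal B(1)$ that misses $\{d,e\}$, which contradicts (P$_{de}$).

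\emph{Step 1: $c,e\in Y$ and $m<e$.} Since $\{d,e\}\subset[2,n]\setminus\{b,c\}$, we have $e\ne b$ and $e\ne d$. We showed $e\ne a$, so $e\notin M$. By Claim \ref{cnotinm}, $c\notin M$. Hence $c,e\in Y$. By the choice of $m$ and the assumption $m\ne e$, we get $m<e$. Also $m\notin\{1,a,b,d\}$.

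\emph{Step 2: $S_{m,e}$ acts trivially on $\mathcal B$.} Suppose $S_{m,e}(\mathcal B)\ne\mathcal B$. This shift is a non-trivial $S^Q_{\{m\},\{e\}}$-shift of $(\mathcal A,\mathcal B)$. By Assumption \ref{asm}, $S_{m,e}(\mathcal B)$ is then a star. Claim \ref{1-29-4} forces $\{m,e\}\cap M\ne\emptyset$, which is false since both $m,e\in Y$. So $S_{m,e}(\mathcal B)=\mathcal B$. Consequently, whenever $B\in\mathcal B$ with $e\in B$ and $m\notin B$, the set $(B\setminus\{e\})\cup\{m\}$ also lies in $\mathcal B$.

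\emph{Step 3: some member of $\mathcal F_7$ avoids $m$.} Recall $x=d$ and $y=e$. So $\mathcal F_7$ consists of the $(k-1)$-sets $F\in\mathcal B(1)$ with $\{c,e\}\subset F$ and $F\cap\{b,d\}=\emptyset$. If every member of $\mathcal F_7$ contained $m$, each would be $\{c,e,m\}$ plus a $(k-4)$-subset of $[2,n]\setminus\{b,c,d,e,m\}$. This would give
\[
|\mathcal F_7|\le\binom{n-6}{k-4}.
\]
Claim \ref{6-6-2}(ii) gives
\[
|\mathcal F_7|>\binom{n-6}{k-3}+\binom{n-6}{k-5}.
\]
Since $n\ge 2k$, we have $n-6\ge 2k-7$, and hence $\binom{n-6}{k-3}\ge\binom{n-6}{k-4}$, a contradiction. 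So there is $F\in\mathcal F_7$ with $m\notin F$.

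\emph{Step 4: contradiction.} Put $B:=\{1\}\cup F\in\mathcal B$. By Step 2, the set $B':=(B\setminus\{e\})\cup\{m\}$ lies in $\mathcal B$. We have $1\in B'$, while $B'$ avoids $b$, $d$ and $e$; in particular $B'\setminus\{1\}\in\mathcal B(1)$ misses $\{d,e\}$. This contradicts (P$_{de}$), which says every set in $\mathcal B(1)$ meets $\{d,e\}$. Hence $m\in\{c,e\}$, that is, $\min\{c,e\}=\min Y$.

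The main point requiring care is Step 3: the counting must use the sharp lower bound on $|\mathcal F_7|$ from Claim \ref{6-6-2}(ii) together with $n\ge 2k$. Steps 2 and 4 follow directly from Assumption \ref{asm}, Claim \ref{1-29-4} and (P$_{de}$).
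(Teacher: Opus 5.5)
Your proof is correct. Its first half coincides with the paper's: both use Claim~\ref{6-6-2}(ii) and $n\ge 2k$ to find $F\in\mathcal F_7$ avoiding the small element of $Y$ (the paper's $t$, your $m$).

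The routes diverge at the finish. The paper uses (P$_{bc}$) and (P$_{de}$) to show that $F_c,F_e\notin\mathcal B$. By Assumption~\ref{asm}, both $S_{t,c}(\mathcal B)$ and $S_{t,e}(\mathcal B)$ are then stars, so every member of $\mathcal G_2\cup\mathcal G_3$ contains $t$. This gives $|\mathcal G_2|+|\mathcal G_3|\le 2\binom{n-7}{k-5}$, contradicting Claim~\ref{6-6-2}(i).

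You instead observe that $m,e\in Y$, so Claim~\ref{1-29-4} forbids $S_{m,e}(\mathcal B)$ from being a star; equivalently, this is Claim~\ref{no-external-Q-shift} with $U=\{m\}$, $V=\{e\}$. Hence the shift acts trivially, $F_e\in\mathcal B$, and (P$_{de}$) is violated at once.

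Your version is shorter and needs neither the $\{t,c\}$ shift nor Claim~\ref{6-6-2}(i). It also shows that the paper's intermediate conclusions ``$S_{t,e}(\mathcal B)$ is a star'' and ``$S_{t,c}(\mathcal B)$ is a star'' are each already contradictory on their own, since $c,e,t\in Y$. The paper's version, in exchange, stays inside the counting framework built around \eqref{2-20-1} rather than going back to the structural information about $M$.
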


\begin{proof}
Suppose, for contradiction, that there exists $t\in [2,n]\setminus M$ with $t<\min\{c,e\}.$
Then $t<c$ and $t<e$. In particular, 
$t\notin \{a,b,c,d,e\},$
since $M=\{a,b,d\}$ and $c,e\notin M$.

We first show that there exists $F\in\mathcal F_7$ such that $t\notin F$.
Indeed, if every member of $\mathcal F_7$ contains $t$, then, since
every member of $\mathcal F_7$ contains $\{c,e\}$ and avoids $\{b,d\}$,
we would have 
$|\mathcal F_7|\le {n-6\choose k-4},$ contradicting Claim \ref{6-6-2}(ii) (using $n\ge 2k$ and $k\ge5$).
 Thus we may choose
 $F\in\mathcal F_7$ with $t\notin F.$

Recall that $F\in\mathcal B(1)$, so $\{1\}\cup F\in\mathcal B$. Also,
by the definition of $\mathcal F_7$, we have
$\{c,e\}\subset F$ and $\{b,d\}\cap F=\emptyset.$
Since $t<c$, consider
$F_c:=\bigl((\{1\}\cup F)\setminus\{c\}\bigr)\cup\{t\}.$
Then
$F_c\cap\{b,c\}=\emptyset,$
and hence $F_c\notin\mathcal B$ by (P$_{bc}$). Therefore
$S_{t,c}$-shift acts non-trivially on $\mathcal B$. By Assumption \ref{asm},
$S_{t,c}(\mathcal B)$ is a star. Hence, by Lemma \ref{12-22-1}, 
\begin{equation}\label{eachBtc}
\text{every member of $\mathcal B$ intersects $\{t,c\}$.}  
\end{equation}
Similarly, since $t<e$, consider
$F_e:=\bigl((\{1\}\cup F)\setminus\{e\}\bigr)\cup\{t\}.$
Then
$F_e\cap\{d,e\}=\emptyset,$
and hence $F_e\notin\mathcal B$ by (P$_{de}$). Therefore
$S_{t,e}$-shift acts non-trivially on $\mathcal B$, and again by
Assumption \ref{asm}, $S_{t,e}(\mathcal B)$ is a star. Thus 
\begin{equation}\label{eachBte}
\text{every member of $\mathcal B$ intersects $\{t,e\}$.}
\end{equation}
Combining (\ref{eachBtc}) and (\ref{eachBte}), every member of both
$\mathcal G_2$ and $\mathcal G_3$ contains $t$, then
$|\mathcal G_2|\le {n-7\choose k-5}$ and $|\mathcal G_3|\le {n-7\choose k-5},$
and hence $|\mathcal G_2|+|\mathcal G_3|\le 2{n-7\choose k-5},$
contradicting Claim \ref{6-6-2}(i).
\end{proof}

Let $p=\min \{c,e\}$ and $q=\max \{c,e\}$.
Combining Claim \ref{ce-min} and $\m B(\bar 1)(M)$ is L-initial on $[2,n]\setminus M$ (by Claim \ref{B-sections-lex}), we observe that every set in $\m B(\bar1)$ containing $\{p,q\}$ precedes
any set in $\m B(\bar1)$ containing $q$ but not $p$ in the lex order. Therefore, by $\m G_2\ne \emptyset$ and $\m G_3\ne \emptyset$, we conclude that 
\begin{equation}\label{full-abdce}
\text{every $k$-set $B$ with $\{a,b,c,d,e\}\subset B\subset [2,n]$ belongs to $\mathcal B$.}
\end{equation}

Recall that $S_{1,a}(\m B)$, $S_{1,b}(\m B)$ and  $S_{1,d}(\m B)$ are stars. Together with (\ref{full-abdce}), we obtain
\begin{equation}\label{2-23-1}
\m B(\{1,a,b,c,e\},\overline{\{d\}})=\m B(\{1,a,c,d,e\},\overline{\{b\}})=\m B(\{1,b,c,d,e\},\overline{\{a\}})=\emptyset.
\end{equation}

We claim 
$|\{B\in \m B(1): B\cap \{a,b,c,d,e\}=\{a,c,e\}\}|\ge 2$, since otherwise, 
$|\m F_7|\le 1+ |\{B\in \m B(1): B\cap \{a,b,c,d,e\}=\{c,e\}\}|\le 1+{n-6\choose k-3}$, contradicting Claim \ref{6-6-2}(ii).
So there exist $B, B'\in \{B\in \m B(1): B\cap \{a,b,c,d,e\}=\{a,c,e\}\}$ and $i$ such that $i\in B\setminus B'$. Moreover, $b<i$ (by $b\in M$ and (\ref{6-9-1}).
Let $B''=((\{1\}\cup B)\setminus \{i\})\cup \{b\}$. Then $\{1,a,b,c,e\}\subset B''$ and $d\not\in B''$. In view of (\ref{2-23-1}), $B''\not\in \m B$, together with $b<i$, this gives $S_{b,i}(\m B)\ne \m B$, and hence   
$S_{b,i}(\m B)$ is a star by
Assumption \ref{asm}. Thus, every set in $\m B(1)$ intersects $\{b,i\}$ by Lemma \ref{12-22-1}.  However, $B'\cap \{b,i\}=\emptyset$, a contradiction. 
This completes the proof of Proposition \ref{cover2-nonshifted}. 
\end{proof}

\section{Characterization of extremal families: Proof of Theorem \ref{thm:equality}}\label{sec:equality}
In this section, we characterize all extremal families that attain the
extremal value $h(n,k)^2$ for $n\ge 2k$ and $k\ge 3$. 
The following result, by Frankl and Kupavskii \cite{FK-diversity}, is useful for handling $k\ge 4$. 
\begin{theorem}[Frankl--Kupavskii \cite{FK-diversity}]
\label{FK-cross-diversity}
Let $n\ge2k$, let $3\le u\le k$, and let
$\m A,\m B\subset\binom{[n]}k$ be cross-intersecting.
Suppose that
\[
|\m A|,|\m B|
>
\binom{n-1}{k-1}
-\binom{n-u-1}{k-1}
+\binom{n-u-1}{k-u}.
\]
Then
\[
\gamma(\m A),\gamma(\m B)
<
\binom{n-u-1}{k-u}.
\]
Moreover, $\m A$ and $\m B$ have the same unique element of
maximum degree.
\end{theorem}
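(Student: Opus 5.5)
The plan is to reduce the statement to a two-part sum inequality for the links at a single element, and to handle each part with Theorem~\ref{12-1-1}.

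Fix $x$, an element of maximum degree in $\m A$. Suppose, for contradiction, that $\gamma(\m A)=|\m A(\bar x)|\ge\binom{n-u-1}{k-u}$. Write
\[
T:=\binom{n-1}{k-1}-\binom{n-u-1}{k-1}+\binom{n-u-1}{k-u}.
\]
Since $\m A$ and $\m B$ are cross-intersecting, the pairs $(\m A(\bar x),\m B(x))$ and $(\m A(x),\m B(\bar x))$ are each cross-intersecting on the ground set $Y=[n]\setminus\{x\}$. Here $|Y|=n-1\ge k+(k-1)$, and the two pairs have uniformities $(k,k-1)$ and $(k-1,k)$ respectively.

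The core tool is the following auxiliary inequality.

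\emph{Sum lemma.} Let $\m F\subset\binom{Y}{k}$ and $\m G\subset\binom{Y}{k-1}$ be cross-intersecting, with $|\m F|\ge\binom{n-u-1}{k-u}$ and $|\m G|$ not too small. Then $|\m F|+|\m G|\le T$.

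To prove the sum lemma, first apply Theorem~\ref{12-1-1} to replace both families by L-initial ones. After this, $\m F$ contains every $k$-set through the first $u$ elements of $Y$. This already forces every member of $\m G$ to meet those $u$ elements, so
\[
|\m G|\le\binom{n-1}{k-1}-\binom{n-u-1}{k-1}.
\]
This gives the bound $T$ exactly at the endpoint $|\m F|=\binom{n-u-1}{k-u}$. For larger $|\m F|$, I would move $\m F$ forward in lex order one block at a time. I would use the $k$-partner description of Lemma~\ref{3-12-1}, which makes the maximal $\m G$ explicit, and show that each step loses at least as much from $\m G$ as it gains in $\m F$. This is the same ``the sum is maximized at an endpoint'' argument as in Frankl--Tokushige (Theorem~\ref{12-3-2}). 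The other endpoint, where $\m G$ is tiny and $\m F$ is huge, is where a lower bound on $|\m G|$ is genuinely needed.

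Granting the sum lemma, the main argument is a symmetric exchange.

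\begin{enumerate}
\item Apply the lemma to $(\m A(\bar x),\m B(x))$. This gives $|\m B(x)|\le T-|\m A(\bar x)|$.
\item Since $|\m B|>T$, it follows that $|\m B(\bar x)|>|\m A(\bar x)|\ge\binom{n-u-1}{k-u}$.
\item The roles are now reversed: $\m B(\bar x)$ is a $k$-uniform family above the threshold. Apply the lemma to $(\m B(\bar x),\m A(x))$, which gives $|\m A(x)|+|\m B(\bar x)|\le T$.
\item Hence $|\m A|=|\m A(x)|+|\m A(\bar x)|\le T-|\m B(\bar x)|+|\m A(\bar x)|<T$, which contradicts the hypothesis on $|\m A|$.
\end{enumerate}

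By symmetry, $\gamma(\m B)<\binom{n-u-1}{k-u}$ as well.

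For the ``moreover'' part, let $y$ be a maximum-degree element of $\m B$. We now know that $|\m A(\bar x)|$ and $|\m B(\bar y)|$ are both small. If $x\neq y$, then $\m B(\bar x)\supseteq\m B[y]\setminus\m B[x]$ is large, and so is $\m A(\bar y)$. Running the same exchange with these two elements should then give a contradiction. Uniqueness of the maximum-degree element follows similarly, since a second element of maximum degree would make $\gamma$ at least about half the family.

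The main obstacle is the lower bound on $|\m G|$ needed in the sum lemma, that is, showing that $|\m B(x)|$ and $|\m A(x)|$ are large before the lemma is invoked. Without such a bound the lemma is false: for example, $\m G$ a single set allows $|\m F|$ of order $\binom{n-1}{k}$. I would first try to get this bound from Proposition~\ref{lowerupper}. The hypothesis $|\m A|>T$ together with $\Delta(\m A)\ge|\m A|/k$-type averaging should give $|\m A(x)|$ well above $\binom{n-2}{k-2}$. Proposition~\ref{lowerupper}(ii) then bounds $|\m B(\bar x)|$, and $|\m B|>T$ converts this into a lower bound on $|\m B(x)|$. Finally, the endpoint monotonicity in the sum lemma only needs to be checked on the range $|\m G|\ge$ that bound.
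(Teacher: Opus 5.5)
The paper imports this theorem from Frankl--Kupavskii without proof, so I am judging your plan on its own terms.

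\textbf{What works.} The exchange in steps 1--4 is sound \emph{given} the sum lemma. The ``moreover'' part is also easy once the diversity bound is known. If $x\ne y$ were maximum-degree elements of $\mathcal A$ and $\mathcal B$, then $\mathcal A(x\bar y)$ and $\mathcal B(y\bar x)$ would be cross-intersecting $(k-1)$-uniform families on $n-2\ge 2(k-1)$ points. Each would have size greater than $\binom{n-2}{k-1}-\binom{n-u-1}{k-1}>\binom{n-3}{k-2}$, contradicting Theorem~\ref{3-17-1}.

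\textbf{The gap: the sum lemma's side condition.} It needs far more than ``$\mathcal G$ not tiny''. Take $y\in Y$, $\mathcal F=\{F\in\binom{Y}{k}:y\in F\}$ and $\mathcal G=\{G\in\binom{Y}{k-1}:y\in G\}$. Then $|\mathcal F|=\binom{n-2}{k-1}\ge\binom{n-u-1}{k-u}$ and $|\mathcal G|=\binom{n-2}{k-2}$ is a full star. Yet $|\mathcal F|+|\mathcal G|=\binom{n-1}{k-1}>T$ whenever $n>2k$, because then $\binom{n-u-1}{k-1}>\binom{n-u-1}{k-u}$.

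In your block picture, the sums at $|\mathcal F|=\binom{n-1-j}{k-j}$ are $s_j=\binom{n-1}{k-1}-\binom{n-1-j}{k-1}+\binom{n-1-j}{k-j}$. For $n>2k$ they satisfy $s_1>s_2=s_3\le s_4\le\cdots\le s_u=T$. So along block endpoints the sum does not increase from $|\mathcal F|=\binom{n-u-1}{k-u}$ up to $\binom{n-3}{k-2}$. It then exceeds $T$ again at $|\mathcal F|=\binom{n-2}{k-1}$, where $\mathcal G$ is a full star, not tiny.

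Even a star plus one set fails. Take $\mathcal G=\{G:y\in G\}\cup\{G_0\}$ with $y\notin G_0$, and $\mathcal F=\{F:y\in F,\ F\cap G_0\ne\emptyset\}$. This gives $|\mathcal F|+|\mathcal G|=h(n,k)>T$ when $u<k$. So the lemma can only hold when $|\mathcal G|$ exceeds $\binom{n-2}{k-2}$ by a definite margin.

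\textbf{Why you cannot supply that bound.} Step 1 needs $|\mathcal B(x)|$, and step 3 needs $|\mathcal A(x)|$, to be well above $\binom{n-2}{k-2}$, where $x$ is the maximum-degree element of $\mathcal A$. Your proposed sources do not give this.
\begin{itemize}
\item Averaging yields only $\Delta(\mathcal A)\ge k|\mathcal A|/n$. For $|\mathcal A|$ just above $T\le(u+1)\binom{n-2}{k-2}$, this falls below $\binom{n-2}{k-2}$ once $n$ is large compared with $uk$.
\item The contradiction hypothesis $\gamma(\mathcal A)\ge\binom{n-u-1}{k-u}$ bounds $\Delta(\mathcal A)$ only from above.
\item Proposition~\ref{lowerupper}(ii) needs $|\mathcal A(x)|\ge\binom{n-2}{k-2}+\binom{n-2-i}{k-1-i}$ as input, so it cannot bootstrap.
\end{itemize}

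More fundamentally, nothing in your setup ties $x$ to the degrees of $\mathcal B$. A lower bound on $|\mathcal B(x)|$ essentially says that $\mathcal A$ and $\mathcal B$ share an element of large degree. You only derive that afterwards, from the diversity bound, so the argument is circular.

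The configuration the lemma cannot handle is $\mathcal A(\bar x)$ and $\mathcal B(x)$ both concentrated on a second element $y$. That is exactly what the theorem must exclude. In the single-family (intersecting) version of this exchange it is excluded automatically, because $x$ has maximum degree in the same family. For a cross-intersecting pair, the two degree sequences are a priori unrelated.

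\textbf{What is missing.} You need an independent argument forcing a common element of large degree in both families, or a sum lemma whose side condition is a max-degree condition you can actually verify. That is where the real content of the theorem lies.
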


For $k=3$, Frankl \cite{11-6-2} established the upper bound, without describing the extremal families; for completeness, we give all extremal families for
$k=3$ here. Since the result of Frankl and Kupavskii does not apply when $k=3$, this case needs more analysis. We prove the following result, whose proof is self-contained and does not rely on the main text, so we place it in the {\bf Appendix}.

\begin{proposition}\label{end3}
Let $n\ge 7$, $\m A, \m B \subset {[n]\choose 3}$ be non-trivial cross-intersecting families with $|\m A|=|\m B|=3n-8$. Then $(\m A, \m B)$ is of cross-HM type or cross-triangle type.
\end{proposition}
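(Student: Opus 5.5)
We argue by the degree structure of the two families, along the lines of Section~\ref{sec:asymmetric}, but now keeping track of the equality cases. Here $h(n,3)=3n-8$. Let $x$ be an element of maximum degree in $\m A$.

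\textbf{Step 1: the maximum degrees are large.} First observe that $(\m A,\m B)$ is maximal non-trivial cross-intersecting: by Theorem~\ref{thm:main}, adding any set to either family would push the product above $h(n,3)^2$. Suppose $\Delta(\m A)\le \binom{n-2}{1}=n-2$. Then $\gamma(\m A)\ge 2n-6$, and Proposition~\ref{lowerupper}(ii) bounds $\Delta(\m B)$. We would then rerun the shifting argument of Lemma~\ref{mainlem0}. That argument needs lex-minimality, so instead of a direct appeal we use only the counting: families of size $3n-8$ whose degrees are all at most $n-2$ lead, via Proposition~\ref{1-1-2}-type reasoning, to a contradiction. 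Hence $\Delta(\m A),\Delta(\m B)\ge n-1$.

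\textbf{Step 2: common centre and diversity.} Next we show that both families have maximum degree at the same element $x$, and that $\gamma(\m A),\gamma(\m B)\le n-3$. Since $|\m A(x)|\ge n-1>\binom{n-2}{1}$, Proposition~\ref{lowerupper}(ii) forces $|\m B(\bar x)|\le n-3$; symmetrically $|\m A(\bar x)|\le n-3$, provided $\m B$ also has its maximum degree at $x$. The shared centre is obtained as in Claim~\ref{samemaxd}. Now set $\m A_\gamma=\m A(\bar x)$ and $\m B_\gamma=\m B(\bar x)$, both nonempty. These are 3-sets in $[n]\setminus\{x\}$, and each member of $\m A(x)$, a 2-set, must meet every member of $\m B_\gamma$. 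Maximality gives
\[
\m A(x)=\{P\in\tbinom{[n]\setminus\{x\}}{2}: P\cap B\neq\emptyset \text{ for all } B\in\m B_\gamma\},
\]
and symmetrically for $\m B(x)$. Therefore
\[
|\m A|=|\m A_\gamma|+t(\m B_\gamma), \qquad |\m B|=|\m B_\gamma|+t(\m A_\gamma),
\]
where $t(\m G)$ denotes the number of 2-sets meeting every member of $\m G$. In addition, $\m A_\gamma$ and $\m B_\gamma$ are cross-intersecting.

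\textbf{Step 3: the case analysis, which is the main obstacle.} The task is a finite optimization: with $\m A_\gamma,\m B_\gamma$ cross-intersecting families of 3-sets on $n-1$ points, we must have $|\m A_\gamma|+t(\m B_\gamma)=|\m B_\gamma|+t(\m A_\gamma)=3n-8$. We split on the structure of $\m B_\gamma$.
\begin{itemize}
\item If $\m B_\gamma=\{B_0\}$, then $t=3(n-1)-3-3=3n-9$, which counts the pairs meeting $B_0$. Hence $|\m A_\gamma|=1$, say $\m A_\gamma=\{A_0\}$ with $A_0\cap B_0\ne\emptyset$. This is the cross-HM type.
\item If $|\m B_\gamma|\ge2$, then $t(\m B_\gamma)$ drops sharply unless all members of $\m B_\gamma$ share a common pair $Q$. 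In that case $t$ counts the pairs meeting $Q$, together with $Q$ itself. We then need $|\m A_\gamma|$ to compensate, which forces $\m A_\gamma$ to be the full set of $n-3$ triples through some pair $P$, with $P\cap Q\ne\emptyset$. This is the cross-triangle type.
\item In all other configurations, namely $\m B_\gamma$ having no common pair, or a pair-covering structure of $\m B_\gamma$ given by pairs with cover number two, we verify directly that $t(\m B_\gamma)\le 2n$ or so. Then $|\m A|<3n-8$ for $n\ge7$, a contradiction.
\end{itemize}
The delicate point is the bookkeeping for small $n$, namely $n=7,8$. There, several borderline configurations must be checked by hand: triangle-like $\m B_\gamma$ such as $\{123,124,134\}$, and the $\m T(n)$-type families that arise when $P=Q$. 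We would enumerate the possible covering structures of $\m B_\gamma$ by their 2-transversals explicitly, since these are few.
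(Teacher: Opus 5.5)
\paragraph{Step 1 is not carried out.}
Degree counting cannot give the contradiction: $3(3n-8)\le n(n-2)$ for every $n\ge 8$, so a $3$-uniform family of size $3n-8$ with all degrees at most $n-2$ certainly exists. Proposition~\ref{1-1-2} is not available either, because it needs some $S_{i,j}(\mathcal B)$ to be a star, and that only comes from lex-minimality (Observation~\ref{12-25-2}). Replacing $(\mathcal A,\mathcal B)$ by a lex-minimal pair of the same sizes destroys exactly the degree information you need about the original pair, whose structure is what you must determine.

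The missing idea is the one in Lemma~\ref{end1}. Let $1$ be a max-degree vertex of $\mathcal B$, and regard each of the at least $2n-6$ members of $\mathcal A(\bar 1)$ as a $3$-element vertex cover of the graph $\mathcal B(1)$.
\begin{itemize}
\item A vertex of degree $\ge 3$ in $\mathcal B(1)$ would lie in at least $2n-7>n-2$ members of $\mathcal A$, so $\Delta(\mathcal B(1))\le 2$.
\item If $\tau(\mathcal B(1))\le 2$, then $|\mathcal B(1)|\le 4$ and $|\mathcal B|\le 4n/3$.
\item If $\tau(\mathcal B(1))=3$, there are at most $8$ three-element covers, so $|\mathcal A|\le n+6$.
\item The case $n=7$ falls to degree counting.
\end{itemize}
This argument yields only that one of $\Delta(\mathcal A),\Delta(\mathcal B)$ is at least $n-1$; the other has to be derived afterwards.

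\paragraph{Step 2 misapplies Proposition~\ref{lowerupper}(ii).}
The hypothesis $|\mathcal A(x)|\ge n-1=\binom{n-2}{1}+\binom{n-4}{0}$ is the case $i=2$. It yields only $|\mathcal B(\bar x)|\le\binom{n-2}{2}-\binom{n-4}{2}=2n-7$; getting $n-3$ requires $|\mathcal A(x)|\ge 2n-5$. The implication you use is genuinely false: in a cross-HM pair, an element $y\in A_0\cap B_0$ has $|\mathcal A(y)|=n-1$ and $|\mathcal B(\bar y)|=2n-7$. The common centre ``as in Claim~\ref{samemaxd}'' also relies on lex-minimality and on the non-star hypothesis of that proof, neither of which you have.

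\paragraph{Consequences for Step 3.}
\begin{itemize}
\item Nothing bounds $|\mathcal A_\gamma|$.
\item Your third bullet would in any case need $t(\mathcal B_\gamma)\le 2n-6$, not ``$2n$ or so''. At $n=7$ the natural bounds exceed this: $t\le n+2$ when $\mathcal B_\gamma$ has a common element but no common pair, and $t\le 9$ when $\tau(\mathcal B_\gamma)\ge 2$.
\item The second bullet's conclusion about $\mathcal A_\gamma$ is unargued; it needs the same analysis applied to $\mathcal A_\gamma$. Also, there $t(\mathcal B_\gamma)\in\{2n-5,2n-4\}$: the pairs meeting $Q$ already include $Q$, and an extra cover appears when $|\mathcal B_\gamma|=2$.
\item Your first bullet (a singleton diversity part gives the cross-HM type) is correct, but only once a valid $x$ is in hand.
\end{itemize}

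\paragraph{How the paper closes these gaps.}
Your reformulation of $\mathcal A(x)$ as the $2$-covers of $\mathcal B(\bar x)$, via maximality, is a reasonable dual of the paper's counting, but it only helps after the above is secured. The paper avoids the common-centre and diversity questions altogether. Once $|\mathcal B(1)|\ge n-1$, Lemma~\ref{end2} gives $|\mathcal B(1)|+|\mathcal A(\bar1)|\le 3n-8$, lists the three equality configurations, and gives $|\mathcal A(\bar 1)|\le 2n-7$. Hence $|\mathcal A(1)|\ge n-1$, and the same lemma gives $|\mathcal A(1)|+|\mathcal B(\bar1)|\le 3n-8$. Since $|\mathcal A|+|\mathcal B|=6n-16$, equality is forced in both inequalities.
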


Now, we are ready to prove Theorem \ref{thm:equality}. 

\begin{proof}[Proof of Theorem \ref{thm:equality}]
A direct calculation shows that the three types of constructions in the statement are extremal. 
Let $\m A, \m B\subset {[n]\choose k}$ be non-trivial cross-intersecting families with $|\m A||\m B|=h(n,k)^2$. By symmetry, we may assume $|\m A|\ge h(n,k)$. 
We claim 
\begin{equation}\label{6-19-1}
|\m A|+|\m B|\le 2h(n,k).
\end{equation}
Indeed, we choose the lex-minimal non-trivial cross-intersecting pair $(\m A', \m B')$ such that $|\m A'|=|\m A|$ and $|\m B'|=|\m B|$. Then also $|\m A'||\m B'|=h(n,k)^2$. 
Since $|\m A'|=|\m A|\ge h(n,k)$, by Lemma \ref{mainlem0}, $\Delta(\m A')\ge {n-2\choose k-2}+1$. If $\Delta(\m B')\ge {n-2\choose k-2}+1$, then by Lemma \ref{mainlem1}, we have $|\m A'|+|\m B'|\le 2h(n,k),$ and hence $|\m A|+|\m B|=|\m A'|+|\m B'|\le 2h(n,k)$, as required. If $\Delta(\m B')\le {n-2\choose k-2}$, then by Corollary \ref{mainlem2-coro}, either $|\m A'||\m B'|< h(n,k)^2$ or  $|\m A'|+|\m B'|\le 2h(n,k)$. The former alternative is impossible since otherwise $|\m A||\m B|=|\m A'||\m B'|< h(n,k)^2$, a contradiction. The latter case implies (\ref{6-19-1}). This proves (\ref{6-19-1}).  

Combining $|\m A||\m B|=h(n,k)^2$ with (\ref{6-19-1}), we have
\begin{equation}\label{6-19-2}
|\m A|=|\m B|=h(n,k).
\end{equation}

We first consider the case $n=2k$. Let $\overline{\m A}:=\{[2k]\setminus A:A\in \m A\}$. 
 Since $\m A$ and $\m B$ are
cross-intersecting, $\m B\subset
\binom{[2k]}k\setminus\overline{\m A}$. 
Both sides have size $h(2k,k)$; hence $\m B=
\binom{[2k]}k\setminus\overline{\m A}
=\m A^\perp.$ Taking $\m A=\m C$ and $\m B=\m C^{\perp}$, we conclude that $(\m A, \m B)$ is of complement-dual type.

Next, we may assume $n>2k$. For $k=3$, Theorem \ref{thm:equality}  follows from Proposition \ref{end3}. Assume $k\ge 4$. 
By (\ref{6-19-2}), a direct
calculation gives
\[
|\m A|,\,|\m B|>\binom{n-1}{k-1}-\binom{n-k}{k-1}+\binom{n-k}{1}.
\]
Applying Theorem~\ref{FK-cross-diversity} with $u=k-1$, we
find that $\m A$ and $\m B$ have the same unique element of
maximum degree, say $1$, and $\gamma(\m A),\gamma(\m B)<n-k$.
Since $n>2k$ and $k\ge4$ give $n-k\le\binom{n-3}{k-2}$, we
have $\gamma(\m A),\gamma(\m B)\le\binom{n-3}{k-2}$, so the
proof of Lemma~\ref{1-21-1} applies, with $1$ as the common
maximum-degree element.

We claim $\gamma(\m A)=\gamma(\m B)=1$.
In the notation of the proof of Lemma~\ref{1-21-1}, the
equality $|\m A|+|\m B|=2h(n,k)$ forces
$|\m A_1|+|\m B_2|=2h(n,k)$ with $|\m A_1|=|\m A|=h(n,k)$,
hence $|\m B_2|=|\m B|=h(n,k)$; consequently both
non-trivial intersecting families $\m A'\cup\m B_2(\bar1)$
and $\m B'\cup\m A_1(\bar1)$ have size exactly $h(n,k)$.
Since $n>2k$ and $k\ge4$, the uniqueness part of the
Hilton--Milner theorem (Theorem~\ref{hm-nontrivial}) forces
each of them to be isomorphic to $\m{HM}(n,k)$, whose
diversity part is a single set. As $1$ is the center of
these two families, their diversity parts are
$\m B_2(\bar1)$ and $\m A_1(\bar1)$, respectively; hence
$|\m A_1(\bar1)|=|\m B_2(\bar1)|=1$. Note that
$|\m A_1(\bar1)|=|\m A(\bar1)|=\gamma(\m A)$ and
$|\m B_2(\bar1)|=|\m B(\bar1)|=\gamma(\m B)$. Therefore
$\gamma(\m A)=\gamma(\m B)=1$. 

Write $\m A(\bar1)=\{A_0\}$ and $\m B(\bar1)=\{B_0\}$; then
$1\notin A_0\cup B_0$ and $A_0\cap B_0\ne\emptyset$. Then
$\m A=\{A:1\in A,\ A\cap B_0\ne\emptyset\}\cup\{A_0\}
=\m H_1(A_0;B_0)$ and $\m B=\m H_1(B_0;A_0)$. Hence $(\m A,\m B)$ is
of cross-HM type, as required.
This completes the proof.
\end{proof}

\section{Concluding remarks}\label{sec:concluding}
It is natural to ask for the maximum of
$|\m A||\m B|$ when $\m A$ and $\m B$ have different
uniformities.
Let $n \ge 2a \ge 2b \ge 4$ be integers and fix
$A_0 \in \binom{[2,n]}{a}$, $B_0 \in \binom{[2,n]}{b}$ with
$A_0 \cap B_0 \ne \emptyset$. Define
\[
\mathcal{A}_0 = \{A_0\} \cup \left\{ A \in \binom{[n]}{a} :
1 \in A,\ A \cap B_0 \ne \emptyset \right\},
\]
\[
\mathcal{B}_0 = \{B_0\} \cup \left\{ B \in \binom{[n]}{b} :
1 \in B,\ B \cap A_0 \ne \emptyset \right\}.
\]
Frankl and Wang \cite{FW2026} proposed the following conjecture.

\begin{conjecture}[Frankl--Wang \cite{FW2026}]\label{FW-conj}
Let $n \ge 2a \ge 2b \ge 4$. Suppose that
$\mathcal{A} \subset \binom{[n]}{a}$,
$\mathcal{B} \subset \binom{[n]}{b}$ are non-trivial cross-intersecting families. Then
\[
|\mathcal{A}||\mathcal{B}| \le |\mathcal{A}_0||\mathcal{B}_0|.
\]
\end{conjecture}
We show that Conjecture \ref{FW-conj} fails in general.
Let us introduce a counterexample.

\begin{construction}\label{3-11-1}
    \begin{align*}
        &\m C_a=\left\{C\in {[n]\choose a}: \{1,2\}\subset C\right\} \cup \left\{[3,a+2]\right\},\\
        &\m C_b=\left\{C\in {[n]\choose b}: \{i,j\}\subset C, i\in [2],j\in [3,a+2]\right\}.
    \end{align*}
\end{construction}

Taking $a=b=k$ in the above construction, we have $\m C_1=\m C_a$ and $\m C_2=\m C_b$ (where $\m C_1, \m C_2$ are defined in Construction \ref{2-27-1}). By Proposition \ref{3-11-2}, if $a=b$, then $|\m C_a||\m C_b|<h(n,a)h(n,b)=|\m A_0||\m B_0|$. However, when $a>b$, for example, $a=9, b=3, n=18$, we have $|\m C_a||\m C_b|>|\m A_0||\m B_0|$.

\section*{Acknowledgments}
I am grateful to Andrey Kupavskii for bringing this problem to my attention.

\section*{Appendix}
In this section, we give the proof of Proposition \ref{end3}. We need the following two lemmas.

\begin{lemma}\label{end1}
Let $n\ge 7$, and let
$\m A,\m B\subset\binom{[n]}{3}$ be non-trivial
cross-intersecting families. If $|\m A|=|\m B|=3n-8$, then $\Delta(\m A)\ge n-1$ or $\Delta(\m B)\ge n-1$.
\end{lemma}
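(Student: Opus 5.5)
The plan is to argue by contradiction, mirroring the proof of Lemma~\ref{mainlem0} but with the specific numbers for $k=3$. Here $h(n,3)=3n-8$ and $\binom{n-2}{1}=n-2$. Suppose $\Delta(\m A)\le n-2$ and $\Delta(\m B)\le n-2$. Since $|\m A|=|\m B|=h(n,3)$, the pair attains the maximum product by Theorem~\ref{thm:main}. We may therefore pass to a lex-minimal pair with the same sizes, as in the proof of Theorem~\ref{thm:main}, provided the maximum-degree hypotheses survive the replacement. Rather than rely on that, I would work directly with the given pair and use the maximality of $|\m A||\m B|$: both families are maximal non-trivial cross-intersecting, because enlarging either one would beat $h(n,3)^2$.

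\emph{Step 1 (no shift can make either family a star).} For any $i<j$, suppose $S_{i,j}(\m A)$ or $S_{i,j}(\m B)$ were a star. Then Proposition~\ref{1-1-2} would give the other family maximum degree at least $n-1$, contradicting our assumption. So whenever $S_{i,j}$ acts non-trivially it preserves non-triviality, as well as the sizes and the cross-intersecting property. It also preserves maximality, since the product is unchanged.

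\emph{Step 2 (reduce to a shifted pair).} Shifting repeatedly, the degrees could a priori grow to $n-1$. To handle this, I apply the Step~1 argument at each stage: whenever the current pair has both degrees at most $n-2$, the next shift keeps both families non-trivial. If at some stage a degree reaches $n-1$, I stop and treat that configuration directly; see Step~4. Otherwise we reach a shifted pair with $\Delta\le n-2$ for both families. Repeating the argument of Claim~\ref{12nonfull}, we get $|\m A(\bar1)|\ge 3n-8-(n-2)=2n-6>n-2$. Hence $\m A(\bar1\bar2)\neq\emptyset$, and similarly for $\m B$, which makes $\{1,2\}$ non-full in both. Lemma~\ref{spl}(i) with $R=\{1,2\}$ then yields an $S^Q_{U,V}$-shift with $\{1,2\}\subset U$. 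Lemma~\ref{SUV'} keeps the pair cross-intersecting. Exactly as at the end of the proof of Lemma~\ref{mainlem0}, the sets through $2$ avoiding $1$ are stable, so neither image is a star. This strictly decreases the lex weight, so iterating it leads to a contradiction, namely an eventually L-initial non-trivial family, which Observation~\ref{notlinitial} forbids.

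\emph{Step 4 (main obstacle: degree jumps during shifting).} The delicate part is a shift that raises some degree to $n-1$ or more while keeping both families non-trivial. In that case I would argue directly with $k=3$ counting. Take $d_{\m A}(x)\ge n-1$ with the original family having degree at most $n-2$. I would compare the shifted pair with the original through Lemma~\ref{12-22-1}: in the shift $S_{i,j}$ that created the jump, only sets meeting $\{i,j\}$ in a single element move. I expect that the original pair then satisfies $\m A(\bar i\bar j)=\m B(\bar i\bar j)=\emptyset$, or is close to it. Proposition~\ref{1-1-1} then bounds $|\m A|+|\m B|$ by the explicit Frankl--Tokushige bound (Theorem~\ref{12-3-2}) on the pieces. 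The equality analysis of that bound, at $n\ge7$, forces a one-set family $\m A(i\bar j)$ or $\m B(\bar i j)$, which in turn forces degree at least $n-1$ in the original family. This contradicts the assumption and completes the proof.
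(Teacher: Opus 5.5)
There is a genuine gap, and it sits exactly where you flag the ``delicate part''. Steps~1 and~2 are sound as far as they go, but they never produce a contradiction. As long as both maximum degrees are at most $n-2$, your procedure always finds a further non-trivial shift: an $S_{i,j}$, or the $S^Q_{U,V}$ from Lemma~\ref{spl}. That shift keeps both families non-trivial and strictly lowers the total lex weight. Since the weight cannot decrease forever, the process is \emph{forced} to end with some degree reaching $n-1$, so the entire proof rests on Step~4. This is the same obstruction that stops you from passing to a lex-minimal pair and quoting Lemma~\ref{mainlem0}: the hypothesis $\Delta\le n-2$ is not preserved by shifting.

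Step~4 is a hope rather than an argument. Suppose $S_{i,j}$ raises $d_{\mathcal A}(i)$ from at most $n-2$ to at least $n-1$. This only says that enough members of $\mathcal A$ containing $j$ but not $i$ were moved. It tells you nothing about $\mathcal A(\bar i\bar j)$ or $\mathcal B(\bar i\bar j)$. Lemma~\ref{12-22-1} is unavailable, since its hypothesis is that the shifted family is a star, which Step~1 has ruled out. Without $\mathcal A(\bar i\bar j)=\mathcal B(\bar i\bar j)=\emptyset$, Proposition~\ref{1-1-1} does not apply, and ``close to it'' gives nothing usable. There are two further problems. The jump can equally come from one of the $S^Q_{U,V}$-shifts with $|U|\ge 2$ in Step~2, which raise several degrees at once and are not treated in Step~4. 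Also, the equality case of Theorem~\ref{12-3-2} with $a=b=2$ includes two stars with a common centre, not only a one-set family.

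The paper uses no shifting here; it counts directly on the given pair. Assume both degrees are at most $n-2$ and $\mathcal B_\Delta=\mathcal B(1)$.
\begin{itemize}
\item For $n=7$: $|\mathcal B|\le n(n-2)/3<3n-8$, a contradiction.
\item For $n\ge 8$: $|\mathcal A(\bar1)|\ge 2n-6$, and every member of $\mathcal A(\bar1)$ is a $3$-element vertex cover of the graph $\mathcal B(1)$.
\item A vertex $x$ of degree at least $3$ in this graph would lie in all but at most one member of $\mathcal A(\bar1)$. That gives $d_{\mathcal A}(x)\ge 2n-7>n-2$, so the graph has maximum degree at most $2$.
\item If $\tau(\mathcal B(1))\le 2$, then $\Delta(\mathcal B)\le 4$ and $|\mathcal B|\le 4n/3<3n-8$.
\item If $\tau(\mathcal B(1))=3$, the matching number is $2$ or $3$. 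In either case there are at most $8$ three-element covers, so $|\mathcal A|\le n-2+8<3n-8$.
\end{itemize}
This elementary count is the missing idea. It uses the degree hypothesis on the given pair directly, instead of trying to carry it through shifts that do not preserve it.
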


\begin{proof}
Suppose, for contradiction, that $\Delta(\m A)\le n-2$ and $\Delta(\m B)\le n-2$.
We may w.l.o.g. assume $\m B_{\Delta}=\m B(1)$. Then $|\m B(1)|\le n-2$, and hence, $|\m A(\bar1)|\ge(3n-8)-(n-2)=2n-6.$ 
If $n=7$, then $|\m B|=\frac{1}{3}\sum_{x\in[n]}d_{\m B}(x)\le \frac{1}{3}n\Delta(\m B)\le \frac{35}{3}<3n-8$, a contradiction. Hence we may assume $n\ge8$. 

We show that $\Delta(\m B(1))\le 2$. Indeed, if $\Delta(\m B(1))\ge 4$, then  there is $x\in [2,n]$ and four different elements $x_1,x_2,x_3, x_4\in [2,n]\setminus \{x\}$ such that all $\{1,x,x_1\}, \{1,x,x_2\}, \{1,x,x_3\}, \{1,x,x_4\}$ belong to $\m B$. This forces every set in $\m A(\bar1)$ to contain $x$. This implies that $|\m A(x)|\ge|\m A(\bar1)|\ge 2n-6$, contradicting $\Delta(\m A)\le n-2$. If $\Delta(\m B(1))= 3$, then w.l.o.g., we may assume $\{1,x,x_1\}, \{1,x,x_2\}, \{1,x,x_3\}$ belong to $\m B$, this implies that $\m A(\bar1\bar x)\subset \{\{x_1,x_2,x_3\}\}$, and hence $|\m A(x)|\ge|\m A(\bar1)|-1\ge 2n-7$, contradicting $\Delta(\m A)\le n-2$.

Note that $\tau(\m B(1))\le 3$.
If $\tau(\m B(1))\le2$, then using $\Delta(\m B(1))\le 2$, we obtain $\Delta(\m B)\le 4$, and hence $|\m B|=\frac{1}{3}\sum_{i\in [n]}d_{\m B}(i)\le \frac{4n}{3}<3n-8$ (by $n\ge 8$), a contradiction. 
It remains to consider $\tau(\m B(1))=3$. Let $\nu(\m B(1))$ denote the matching
number of $\m B(1)$. Then $\nu(\m B(1))\in\{2,3\}.$
If $\nu(\m B(1))=3$, then $|\m A(\bar1)|\le 2^3=8$, and hence $|\m A|\le n-2+8<3n-8$, a contradiction. 
If $\nu(\m B(1))=2$, fix a maximum matching: $E_1,E_2\in \m B(1)$. Given a
$3$-element vertex cover, choose one of its vertices from each of
$E_1,E_2$. There are at most four possible choices for this pair.
Since $\tau(\m B(1))=3$, the chosen pair is not a vertex cover. The third
vertex must therefore cover all sets missed by the pair, and there
are at most two possibilities for this element. Thus $|\m A(\bar1)|\le 8$, again $|\m A|\le n-2+8<3n-8$, a contradiction. 
\end{proof}

\begin{lemma}\label{end2}
Let $n\ge 7$, $\m A, \m B \subset {[n]\choose 3}$ be non-trivial cross-intersecting families. 
If $|\m B(1)|\ge n-1$, then $|\m B(1)|+|\m A(\bar1)|\le 3n-8$, and equality holds only in one of the following three cases.
\begin{itemize}
\item[\rm (i)]
There exists $3$-set $S\subset [2,n]$ such that
$\m A(\bar1)=\{S\}$ and 
$\m B(1)=\left\{E\in\binom{[2,n]}{2}:E\cap S\ne\emptyset\right\}$.
\item[\rm (ii)]
There exists $2$-set $P\subset [2,n]$ such that $\m B(1)=\left\{E\in\binom{[2,n]}{2}:E\cap P\ne\emptyset\right\}$ and
$\m A(\bar1)=\{E\in \binom{[2,n]}{3}:P\subset E\}$.
\item[\rm (iii)]
There exist $x\in [2,n]$ and $P\in\binom{[2,n]\setminus\{x\}}2$ such that
$\m A(\bar1)=\{E\in \binom{[2,n]}{3}:x\in E, P\cap E\ne\emptyset\}$ and 
$\m B(1)=\left\{E\in\binom{[2,n]}{2}:x\in E\right\}\cup\{P\}$.
\end{itemize}
Moreover, $|\m A(\bar1)|\le 2n-7$. 
\end{lemma}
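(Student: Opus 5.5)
We reduce Lemma~\ref{end2} to statements about the two link families
\[
\m E:=\m B(1)\subset\binom{[2,n]}{2},\qquad \m S:=\m A(\bar1)\subset\binom{[2,n]}{3},
\]
which are cross-intersecting on the ground set $[2,n]$ of size $m:=n-1\ge 6$. Note that $3n-8=(2m-3)+(m-2)$. The hypothesis $|\m E|\ge m$ says that the graph $\m E$ has at least as many edges as vertices. Non-triviality of $\m A$ gives $\m S\ne\emptyset$. We split according to the cover number of $\m S$.

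\emph{Case $\tau(\m S)=1$, with common vertex $x$.} Every $2$-set through $x$ meets $\m S$, so we can write $\m E=\m E_x\cup\m E'$, where $\m E_x$ consists of the edges through $x$ and $\m E'$ of the edges avoiding $x$. Then $|\m E_x|\le m-1$, and the links $\m S(x)\subset\binom{[2,n]\setminus\{x\}}{2}$ and $\m E'$ are cross-intersecting $2$-uniform families on $m-1\ge5$ points.

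\begin{itemize}
\item If $\m E'=\emptyset$, then $|\m E|\le m-1<m$, which is excluded.
\item Otherwise Theorem~\ref{12-3-2} (with $a=b=2$, $n-1\ge 5>4$) gives $|\m S(x)|+|\m E'|\le \binom{m-1}{2}-\binom{m-3}{2}+1=2m-4$.
\item Hence $|\m E|+|\m S|\le (m-1)+(2m-4)=3m-5=3n-8$.
\end{itemize}

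Equality forces $\m E_x$ to be full, and it forces the equality case of Theorem~\ref{12-3-2}. That equality case is either $\m E'=\{P\}$ with $\m S(x)=\{E: E\cap P\neq\emptyset\}$, which is case (iii), or $\m S(x)$ a single edge with $\m E'$ the edges meeting it. In the latter case we should recheck maximality: it gives $\m S$ consisting of one set containing $x$, and the count $1+(m-1)+(2m-5)$ must be compared. I expect this to land in (i) or (ii) (with $|\m S|=1$, i.e. (i)), or to be non-tight. The star option for $a=b=2$ leads to (ii) when the common vertices coincide appropriately.

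\emph{Case $\tau(\m S)\ge2$.}

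\begin{itemize}
\item Each edge of $\m E$ is a transversal of $\m S$. Since $\tau(\m S)\ge 2$, no vertex covers $\m S$, so every edge of $\m E$ is a genuine $2$-element cover.
\item If $\m S$ contains two disjoint sets $S_1,S_2$, then every edge lies in $S_1\times S_2$, so $|\m E|\le 9$. In that situation we bound $|\m S|$ using that $\m E$ has at least $m\ge 6$ edges: a dense bipartite graph inside $S_1\times S_2$ forces every member of $\m S$ to meet few vertices, giving $|\m S|\le$ a constant. The total is then $\le 9+O(1)$, which must be compared to $3m-5\ge13$. This needs care for small $m$; we would use $|\m E|\ge m$ and a short case check.
\item Otherwise $\m S$ is intersecting with $\tau\ge2$, and we bound $|\m E|\le$ the number of $2$-covers of $\m S$.
\item If $\m E$ contains two disjoint edges $e,f$, then every $S\in\m S$ meets both, so $|\m S|\le 4(m-4)+\ldots$. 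The hypothesis $|\m E|\ge m$ forces $\m E$ to contain either a vertex of degree $\ge 3$ or a cycle structure, and this cuts $\m S$ down further.
\item The cleanest route seems to be: if $\m E$ has a vertex $y$ of degree $\ge 4$, then every member of $\m S$ contains $y$ (a member of $\m S$ avoiding $y$ would have to contain all four neighbours), so $\tau=1$, a contradiction.
\item Thus $\Delta(\m E)\le 3$ and $|\m E|\le 3m/2$. Then $\m S$ meets every edge of a graph with $\ge m$ edges and maximum degree $\le 3$. Such a graph has covering number $\ge m/3\cdot$something, which forces $|\m S|$ to be bounded by a small constant. We expect this to yield a strict inequality, and equality only in case (ii) when $\tau(\m E)$ is a $2$-set $P$ and $\m S\supset P$ (but then $\tau(\m S)=1$, so not here). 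Hence there is no equality in this case.
\end{itemize}

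Finally, the bound $|\m A(\bar1)|\le 2n-7=2m-5$ follows from the same splitting. In case $\tau(\m S)=1$ we have $|\m S|=|\m S(x)|\le 2m-4-|\m E'|\le 2m-5$, using $\m E'\ne\emptyset$. In case $\tau(\m S)\ge2$ the constant bound suffices for $m\ge6$.

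The main obstacle is the $\tau(\m S)\ge2$ case with small $n$ (near $7$). There the crude constant bounds must be checked against $3n-8$, and a careful finite case analysis on the structure of $\m E$ (matching number and degree $\le 3$) will be needed.
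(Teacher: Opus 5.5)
Your case $\tau(\mathcal S)=1$ is correct, and it takes a genuinely different route from the paper. The paper splits on $\tau(\mathcal B(1))\in\{2,3\}$ and counts $3$-element covers by hand. You instead apply Theorem~\ref{12-3-2} with $a=b=2$ on the $m-1\ge 5$ points of $[2,n]\setminus\{x\}$, together with $|\mathcal E_x|\le m-1$. This gives $3m-5$ at once, and the equality configurations of Theorem~\ref{12-3-2} are exactly the three cases of the lemma, which avoids the paper's bookkeeping in its Case~1. No ``recheck'' is needed when $\mathcal S(x)=\{Q\}$: then $\mathcal S=\{Q\cup\{x\}\}$ and $\mathcal E$ is the set of all edges meeting $Q\cup\{x\}$, which is (i). The star option has a common centre $y$ and gives (ii) with $P=\{x,y\}$.

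The gap is the case $\tau(\mathcal S)\ge2$, where you only state expectations.

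\begin{itemize}
\item Your degree observation is right. Its useful consequence is not $|\mathcal E|\le 3m/2$, but that $\mathcal E$ has maximum degree $\le 3$ and a $3$-element cover, so $m\le|\mathcal E|\le 9$.
\item That leaves $6\le m\le 9$, and separate constant bounds cannot finish. Even granting at most $8$ three-element covers when $\tau(\mathcal E)=3$, you only get $|\mathcal E|+|\mathcal S|\le 17$, whereas $3m-5=13$ and $16$ for $m=6,7$. What is needed is a trade-off between the number of edges and the number of covers, and the proposal contains none.
\item In your disjoint-$S_1,S_2$ subcase, density does not make members of $\mathcal S$ meet few vertices. If $\mathcal E$ is the six edges from $u_1,u_2\in S_1$ to $S_2$, every $\{u_1,u_2,z\}$ is a cover; only $|\mathcal E|\ge m$ forces $m=6$ there.
\item The ``moreover'' at $m=6$ needs $|\mathcal S|\le 7$, not $8$. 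It should simply be read off as $|\mathcal S|\le 3m-5-|\mathcal E|\le 2m-5$ once the main inequality holds.
\end{itemize}

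The missing ingredient is the count in the paper's Case~2, applied to a minimum cover of $\mathcal E$.

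\begin{itemize}
\item If $\tau(\mathcal E)=2$, maximum degree $\le 3$ forces $|\mathcal E|=m=6$, with a cover $\{u,v\}$ of two non-adjacent degree-$3$ vertices. The $3$-covers are the four sets $\{u,v,z\}$ and possibly $N(u)=N(v)$, so the sum is at most $11<13$.
\item If $\tau(\mathcal E)=3$, take a minimum cover $S=\{x_1,x_2,x_3\}$. Let $N_i$ be the neighbours of $x_i$ outside $S$, $a_i=|N_i|\ge1$, and $q$ the number of edges inside $S$.
\item Besides $S$ itself, the $3$-covers are:
 \begin{itemize}
 \item $(S\setminus\{x_i\})\cup N_i$, possible only if $a_i=1$;
 \item for each $i$, at most one $T$ with $T\cap S=\{x_i\}$, possible only if $x_jx_k\notin\mathcal E$ (so at most $3-q$ of these);
 \item at most one cover disjoint from $S$, possible only if $q=0$.
 \end{itemize}
\item There is one exceptional subcase: a cover disjoint from $S$ exists and all three covers meeting $S$ in one point exist. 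Then $q=0$, every $a_i\le2$, and the total is at most $11$.
\item Otherwise, adding $|\mathcal E|=a_1+a_2+a_3+q$ gives
$|\mathcal S|+|\mathcal E|\le 4+\#\{i:a_i=1\}+a_1+a_2+a_3\le 3(m-3)+4=3m-5.$
\item Equality holds only if $a_i=m-3$ for all $i$, $q=3$, and $\mathcal S=\{S\}$. This contradicts $\tau(\mathcal S)\ge2$.
\end{itemize}

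With this inserted, your argument is complete.
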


\begin{proof}
Note that
every set of $\m A(\bar 1)$ is a $3$-element vertex cover of
$\m B(1)$. Thus, $\tau(\m B(1))\le 3$.
Since $|\m B(1)|>n-2$, $\tau(\m B(1))\ge 2$. Together with $\tau(\m B(1))\le3$, we have $\tau(\m B(1))\in\{2,3\}.$ 

{\bf Case 1: $\tau(\m B(1))=2$. }
Let $\{u,v\}$ be a minimum vertex
cover. Put 
\[
a:=|\m B[\{1,u\}\overline{\{v\}}]|,
\quad
b:=|\m B[\{1,v\}\overline{\{u\}}]|,
\quad
\eta:=|\m A(\overline{\{1,u,v\}})|,
\quad
\varepsilon:=
\begin{cases}
1,& \text{ if } \{1,u,v\}\in \m B,\\
0,& \text{otherwise}.
\end{cases}
\]
Then $|\m B(1)|=a+b+\varepsilon,$ $a,b\ge1$, $|\m A(\{u,v\}\overline{\{1\}})|\le n-3$, $a,b \le n-3$,
and if $\eta=1$, then $\varepsilon=0$.

First, consider the case $a,b\ge3$. If $\varepsilon=1$, then every set in $\m A(\bar1)$ contains $\{u,v\}$, i.e. $|\m A(\bar1)|\le {n-3}$, so $|\m A(\bar1)|+|\m B(1)|\le {n-3}+a+b+1\le 3(n-3)+1=3n-8$.
If $\varepsilon=0$, then either $\eta=1$, in which case $a=b=3$, $|\m B(1)|=6$, and $|\m A(\bar1)|\le n-2$, and hence $|\m B(1)|+|\m A(\bar1)|<3n-8$; or $\eta=0$, in which case 
$|\m A(\bar1)|\le n-3$ and $|\m B(1)|\le 2(n-3)$, and hence, 
$|\m B(1)|+|\m A(\bar1)|\le 3(n-3)<3n-8$. Thus, if $a,b\ge3$, then $|\m B(1)|+|\m A(\bar1)|\le 3n-8$ with equality only if $a=b=n-3$ and $\varepsilon=1$, which is case \rm(ii). Moreover, $|\m A(\bar1)|\le n-2<2n-7$. 

Next consider the case $a=2$ or $b=2$. If, say, $b=2$, then $a\ge n-4$, $\eta=0$, $|\m A(\bar1)|\le n-3+1=n-2$, and $|\m B(1)|\le 1+2+n-3=n$, thus,
$|\m B(1)|+|\m A(\bar1)|\le n-2+n<3n-8$. In this case, $|\m A(\bar1)|\le n-2<2n-7$. 

Finally consider the case $a=1$ or $b=1$. If, say, $b=1$, then $|\m B(1)|\ge n-1$ and  $a\le n-3$ forces $a= n-3$ and $\varepsilon=1$. 
Consequently, $|\m B(1)|= n-1$ and $|\m A(\bar1)|\le 2n-7$,
and case \rm(iii) follows. 

{\bf Case 2: $\tau(\m B(1))=3$.}
Let $S=\{x_1,x_2,x_3\}$ 
be a minimum vertex cover. 
For each $i\in[3]$, let $a_i:=|\m B(\{1,x_i\}\overline{S\setminus \{x_i\}})|$, and let $q:=|\{B\in \m B(1): |B\cap S|=2\}|$.
Since $S$ is a minimum cover, $a_i\ge1$ for every $i$, and $|\m B(1)|=a_1+a_2+a_3+q.$
Besides $S$ itself, a three-element cover containing exactly two members of $S$ is obtained by omitting $x_i$ precisely when $a_i=1$. Let $s:=|\{i:a_i=1\}|$. Then $s=|\{A\in \m A(\bar1): |A\cap S|=2\}|$.
Let $t:=|\{A\in \m A(\bar1): |A\cap S|=1\}|$.
Then $t+q\le 3$.
Let $z:=|\{A\in \m A(\bar1): A\cap S=\emptyset\}|$. Then $z\in\{0,1\}$.
Thus $|\m A(\bar1)|=1+s+t+z$.
Let $r:=|[2,n]\setminus S|$. Note that $s\le\sum_{i=1}^3(r-a_i).$ 
Then $|\m A(\bar1)|=1+s+t+z \le 1+ \sum_{i=1}^3(r-a_i)+3-q.$
Indeed, this is immediate when $z=0$. If $z=1$, then $q=0$; the only additional exceptional possibility is $t=3$, in which case the three sets $a_1=a_2=a_3=1$, and the displayed inequality is again strict.
Therefore 
\[
|\m A(\bar1)|+|\m B(1)|\le 1+ \sum_{i=1}^3(r-a_i)+3-q+a_1+a_2+a_3+q=3r+4=3n-8.
\]
If equality holds, then $s=\sum_{i=1}^3(r-a_i).$ 
Since $r=n-4\ge3$, this forces $a_1=a_2=a_3=n-4.$
It then follows that $t=z=0$ and $q=3$. Thus $\m B(1)$ consists of all
edges meeting $S$, and $S$ is its unique three-element vertex cover.
This is case \rm(i).
Note that $|\m A(\bar1)|\le 1+ \sum_{i=1}^3(r-a_i)+3-q$ and $|\m B(1)|=a_1+a_2+a_3+q$, since $|\m B(1)|\ge n-1$, $|\m A(\bar1)|\le 3n-8-|\m B(1)|\le 2n-7$.
This completes the proof.
\end{proof}

Now we are ready to prove Proposition \ref{end3}. 
\begin{proof}[Proof of Proposition \ref{end3}]
By Lemma \ref{end1}, $\Delta(\m A)\ge n-1$ or $\Delta(\m B)\ge n-1$. 
Assume $\Delta(\m B)\ge n-1$ and $\m B_{\Delta}=\m B(1)$. Then $|\m B(1)|\ge n-1$. By Lemma \ref{end2}, we have $|\m A(\bar1)|\le 2n-7$ and 
\begin{equation}\label{6-21-1}
|\m B(1)|+|\m A(\bar1)|\le 3n-8.
\end{equation}
Since $|\m A|= 3n-8$, $|\m A(1)|\ge n-1$, then applying Lemma \ref{end2} (when $\m A(1)$ and $\m B(1)$ are exchanged), we obtain 
\begin{equation}\label{6-21-2}
|\m A(1)|+|\m B(\bar1)|\le 3n-8.
\end{equation}
Since $|\m A|=|\m B|=3n-8$, $|\m A|+|\m B|=2(3n-8)$, which forces both equalities in (\ref{6-21-1}) and (\ref{6-21-2}) to hold. By Lemma \ref{end2}, two pairs $(\m A(1), \m B(\bar1))$ and $(\m A(\bar1), \m B(1))$ satisfy conditions (i) or (ii) or (iii). 
Combining with $|\m A|=|\m B|=3n-8$, a direct calculation shows that the two pairs satisfy the same condition, i.e., they both satisfy (i), both satisfy (ii), or both satisfy (iii); no mixed case can occur. If they both satisfy (i), then $(\m A, \m B)$ is of cross-HM type; if they both satisfy (ii), $(\m A, \m B)$ is of cross-triangle type; otherwise, $(\m A, \m B)$ is of cross-HM type or cross-triangle type.
\end{proof}

\end{document}